\definecolor{mycolor1}{rgb}{0.20000,0.30000,0.70000}
\newcommand{\Ro}{\mathcal{R}}
\newtheorem{theorem}{Theorem}
\newtheorem{lemma}[theorem]{Lemma}
\newtheorem{proposition}[theorem]{Proposition}
\newtheorem{defn}[theorem]{Definition}
\newtheorem{remark}[theorem]{Remark}
\def\cH{{\mathcal{H}}}
\newcommand{\ddp}[2]{\frac{\partial#1}{\partial#2}}
\newcommand{\outside}{\mathbb{R}^3\setminus \overline{D}}
\newcommand{\D}{\partial D}
\newcommand{\K}{\mathcal{K}}
\renewcommand{\S}{\mathcal{S}}
\newcommand{\C}{C^\mathrm{vol}}
\newcommand{\Crystal}{\mathcal{C}}
\newcommand{\de}{\: \mathrm{d}}
\renewcommand*{\Re}{\operatorname{Re}}
\renewcommand*{\Im}{\operatorname{Im}}
\renewcommand{\i}{\mathrm{i}}
\renewcommand{\S}{\mathcal{S}}
\renewcommand*{\Re}{\operatorname{Re}}
\renewcommand*{\Im}{\operatorname{Im}}
\newcommand{\R}{\mathbb{R}}
\newcommand{\iu}{\mathrm{i}\mkern1mu}
\renewcommand{\L}{\mathcal{L}}
\newcommand{\N}{\mathbb{N}}
\renewcommand{\O}{\mathcal{O}}
\renewcommand{\S}{\mathcal{S}}
\newcommand{\Z}{\mathbb{Z}}
\newcommand{\p}{\partial}
\renewcommand{\epsilon}{\varepsilon}
\newcommand{\dx}{\: \mathrm{d}}
\renewcommand{\b}[1]{\textbf{#1}}
\newcommand{\ie}{\textit{i.e.}}
\newcommand{\nm}{\noalign{\smallskip}}
\newcommand{\ds}{\displaystyle}
\numberwithin{equation}{section}
\numberwithin{theorem}{section}
\numberwithin{figure}{section}
\title{Wave interaction with subwavelength resonators}
\author{Habib Ammari\thanks{\footnotesize Department of Mathematics, ETH Z\"urich, R\"amistrasse 101, CH-8092 Z\"urich, Switzerland (habib.ammari@math.ethz.ch, bryn.davies@sam.math.ethz.ch, erik.orvehed.hiltunen@sam.math.ethz.ch).}\and Bryn Davies\footnotemark[1]  \and Erik Orvehed Hiltunen\footnotemark[1] \and  Hyundae Lee\thanks{\footnotesize  Department of Mathematics, Inha University,  253 Yonghyun-dong Nam-gu,  Incheon 402-751,  Korea (hdlee@inha.ac.kr).} \and Sanghyeon Yu\thanks{\footnotesize Department of Mathematics, Korea University, Seoul 02841, S. Korea (sanghyeon\_yu@korea.ac.kr).}}
\date{}
\begin{document}
	\maketitle
	
	\begin{abstract} 
	
	The aim of this review is to cover recent developments in the mathematical analysis of subwavelength resonators. The use of sophisticated mathematics in the field of metamaterials is reported, which provides a mathematical framework for focusing, trapping, and guiding of waves at subwavelength scales. Throughout this review, the power of layer potential techniques combined with asymptotic analysis for solving challenging wave propagation problems at subwavelength scales is demonstrated. 	
		
\end{abstract}

\maketitle

\section{Introduction}

The ability to focus, trap, and guide the propagation of waves on subwavelength scales is of fundamental importance in physics. Systems of subwavelength resonators have, in particular, been shown to have desirable and sometimes remarkable properties thanks to their tendency to interact very strongly with waves on small length scales \cite{kaina2015negative,phononic1, pendry}. A subwavelength resonator is a cavity with material parameters that are greatly different from the background medium and which experiences resonance in response to wavelengths much greater than its size.  The large material contrast  is an essential prerequisite for the subwavelength resonant response.

In this review, we consider  wave interaction with systems of subwavelength resonators. At particular low frequencies, known as subwavelength resonances, subwavelength resonators behave as strong wave scatterers. Using layer potential techniques and Gohberg-Sigal theory, we first derive  a formula for the  resonances of a system of  resonators of arbitrary shapes. Then, we  derive an effective medium theory for  wave propagation in  systems of resonators. We start with a multiple scattering formulation of the scattering problem in which an incident wave impinges on a large number of small, identical resonators in a homogeneous medium. Under certain conditions on the configuration of the resonator distribution, the point interaction approximation holds and yields an effective medium theory for the system of resonators as the number of resonators tends to infinity. As a consequence, below the resonant frequency of a single resonator, the obtained effective media may be highly refractive, making the focusing of waves at subwavelength scales achievable.

Then,  we provide a mathematical theory for understanding the mechanism behind the double-negative refractive index phenomenon in systems of subwavelength resonators. The design of double-negative metamaterials generally requires the use of two different kinds of subwavelength resonators, which may limit the applicability of double-negative metamaterials. Herein we rely on media that consists of only a single type of resonant element, and show how to turn the  metamaterial with a single negative effective property  into a negative refractive index metamaterial, which acts as a superlens. Using  dimers made of two identical resonators, we show that both  effective material parameters can be negative near the anti-resonance of the two hybridized  resonances for a single constituent dimer of subwavelength resonators. 

Furthermore,  we consider periodic structures of subwavelength resonators where subwavelength band gap opening typically  occurs. This can induce rich physics on the subwavelength scale which cannot be understood by the standard homogenization theory. To demonstrate the opening of a subwavelength band gap, we exploit the strong interactions produced by subwavelength resonators among the cells in a periodic structure. We derive an approximate formula in terms of the contrast for the quasi-periodic subwavelength resonant frequencies of an arbitrarily shaped subwavelength resonator. Then, we consider the behavior of the first Bloch eigenfunction near the critical frequency where a subwavelength band
gap of the periodic structure opens. For a square lattice, we show that the critical frequency occurs at the corner of the Brillouin zone where the Bloch eigenfunctions are antiperiodic. We develop a high-frequency homogenization technique to describe the rapid variations of the Bloch eigenfunctions on the microscale (the scale of the elementary crystal cell). Compared to the effective medium theory, an effective equation can be derived only for the envelope of this first Bloch eigenfunction. 

Defect modes and guided modes can be shown to exist in perturbed subwavelength resonator crystals. We use the  subwavelength band gap  to demonstrate cavities and waveguides of subwavelength dimensions. First, by perturbing the size of a single resonator inside the crystal, we show that this crystal has a localized eigenmode close to the defect resonator. Further, by modifying the sizes of the subwavelength resonators along a line in a crystal, we show  that the line defect acts as a waveguide; waves of certain frequencies will be localized to, and guided along, the line defect. 

Topological properties of periodic lattices of subwavelength resonators are also considered, and we investigate the existence of Dirac cones in honeycomb lattices and topologically protected edge modes in chains of subwavelength resonators.  
We first show the existence of a Dirac dispersion cone in a honeycomb  crystal comprised
of subwavelength  resonators of arbitrary shape. The high-frequency homogenization technique shows that, near the Dirac points, the Bloch eigenfunction is the sum of two eigenmodes. Each eigenmode can be decomposed into two components: one which is slowly varying and satisfies a homogenized equation, while the other is periodic and highly oscillating. The slowly oscillating components of the eigenmodes satisfy a system of Dirac equations. This yields a near-zero effective refractive index near the Dirac points for the plane-wave envelopes of the Bloch eigenfunctions in a subwavelength metamaterial. The opening of a Dirac cone can create topologically protected edge modes, which are stable against geometric errors of the structure.  We study a crystal which consists of a chain of subwavelength resonators arranged as dimers (often known as an SSH chain) and show that it exhibits a topologically non-trivial band gap, leading to robust localization properties at subwavelength scales.  

Finally, we present a bio-inspired system of subwavelength resonators designed to mimic the cochlea. The system is inspired by the graded properties of the cochlear membranes, which are able to perform spatial frequency separation. Using layer potential techniques, the resonant modes of the system are computed and the model's ability to decompose incoming signals is explored. 

This review is organized as follows. In Section~\ref{sec2}, after stating the subwavelength resonance problem and introducing some preliminaries on the layer potential techniques and Gohberg-Sigal theory, we prove the existence of subwavelength resonances for systems of resonators and show a modal decomposition for the associated eigenmodes.  Then, we study in Section~\ref{sec3} the behavior of the coupled subwavelength resonant modes when the subwavelength resonators are brought close together. In Section~\ref{dilutesect} we derive an effective medium theory for dilute systems of subwavelength resonators. Section \ref{sec5} is devoted to the spectral analysis of periodic structures of subwavelength resonators. After recalling some preliminaries on the Floquet theory and quasi-periodic layer potentials, we prove the occurrence of subwavelength band gap opening in square lattices of subwavelength resonators and characterize the behavior of the first Bloch eigenfunction near the critical frequency where a subwavelength band gap of the periodic structure opens. In Section~\ref{sec6}, we consider honeycomb lattices of subwavelength resonators and prove the existence of Dirac cones. We also study a chain of subwavelength resonators which exhibit a topologically non-trivial band gap. Finally, in Section~\ref{sec7}, we present a graded array of subwavelength resonators which is designed to mimic the frequency separation proprieties of the cochlea.  The review ends with some concluding remarks.

\section{Subwavelength resonances} \label{sec2}
We begin by describing the resonance problem and the main mathematical tools we will use to study a finite collection of subwavelength resonators.
\subsection{Problem setting}
We are interested in studying wave propagation in a homogeneous background medium with $N\in\mathbb{N}$ disjoint bounded inclusions, which we label as $D_1,D_2,\dots,D_N\subset\mathbb{R}^3$. We assume that the boundaries are all of class $\mathcal{C}^{1, \eta}$ with $0<\eta <1$ and write $D=D_1\cup\dots\cup D_N$.

We denote the material parameters within the bounded regions $D$ by $\rho_b$ and $\kappa_b$, respectively. The corresponding parameters for the background medium are $\rho$ and $\kappa$ and the wave speeds in $D$ and $\outside$ are given by
$v_b=\sqrt{{\kappa_b}/{\rho_b}}$ and $v=\sqrt{{\kappa}/{\rho}}$. We define the wave numbers as
	\begin{equation} \label{defkv} k = \frac{\omega}{v}, \quad k_b = \frac{\omega}{v_b}.\end{equation} 
We also define the dimensionless contrast parameter
\begin{equation} \label{defdelta}
\delta=\frac{\rho_b}{\rho}.
\end{equation}
We assume that 
\begin{equation} \label{defp}
\delta\ll1 \mbox{ while } v_b =\O(1) \mbox{ and } v =\O(1).\end{equation} 
This high-contrast assumption will give the desired subwavelength behaviour, which we will study through an asymptotic analysis in terms of $\delta$.

For $\omega \in \mathbb{C}$, we study the scattering problem
	\begin{equation} \label{eq:scattering}
	\left\{
	\begin{array} {ll}
	\ds \Delta {u}+ k^2 {u}  = 0 & \text{in } \R^3 \setminus \overline{D}, \\[0.3em]
	\ds \Delta {u}+ k_b^2 {u}  = 0 & \text{in } D, \\
	\nm
	\ds  {u}|_{+} -{u}|_{-}  = 0  & \text{on } \partial D, \\
	\nm
	\ds  \delta \frac{\partial {u}}{\partial \nu} \bigg|_{+} - \frac{\partial {u}}{\partial \nu} \bigg|_{-} = 0 & \text{on } \partial D, \\
	\nm
	\ds u(x) - u^{in}(x) & \text{satisfies the Sommerfeld radiation} \\ & \text{condition as }  |x| \rightarrow \infty,
	\end{array}
	\right.
	\end{equation}
	where $|_+$ and $|_-$ denote the limits from the outside and inside of $D$. Here, $u^{in}$ is the incident field which we assume satisfies $\Delta u^{in} + k^2u^{in} = 0$ in $\R^3$ and $\nabla u^{in}\big|_D = \O(\omega)$. We restrict to frequencies such that $\Re(k) > 0$, whereby the Sommerfeld radiation condition is given by
	\begin{equation}\label{eq:sommerfeld}
	\lim_{|x|\rightarrow \infty}|x| \left(\frac{\p}{\p |x|} -\iu k\right)u = 0,
	\end{equation}
	which corresponds to the case where $u$ radiates energy  outwards (and not inwards).

\begin{defn}[Subwavelength resonant frequency] \label{def:res^}
	We define a {subwavelength resonant frequency} to be $\omega=\omega(\delta)$ such that $\Re(\omega) > 0$ and:
	\begin{itemize}
		\item[(i)] there exists a non-trivial solution to  \eqref{eq:scattering} when $u^{in}=0$,
		\item[(ii)] $\omega$ depends continuously on $\delta$ and is such that $\omega(\delta)\to0$ as $\delta\to0$.
	\end{itemize}
\end{defn}	
	The scattering problem \eqref{eq:scattering} is a model problem for subwavelength resonators with high-contrast materials. It  can be effectively studied using representations in terms of integral operators.

\subsection{Layer potential theory on bounded domains and Gohberg-Sigal theory}
 \label{sec:layerpot}
 The layer potential operators are the main mathematical tools used in the study of the resonance problem described above. These are operator-valued holomorphic functions, and can be studied using Gohberg-Sigal theory.
 
 \subsubsection{Layer potential operators}
	Let $\S_D^k$ be the single layer potential, defined by
	\begin{equation} \label{eq:Sdef}
	\S_D^k[\phi](x) := \int_{\partial D} G^k(x-y)\phi(y) \; \dx \sigma(y), \quad x \in \R^3,
	\end{equation}
	where $G^k(x)$ is the outgoing Helmholtz Green's function, given by
	$$
	G^k(x) := -\frac{e^{\iu k|x|}}{4\pi|x|}, \quad x \in \R^3, \ \Re(k)\geq 0.
	$$
	Here, ``outgoing'' refers to the fact that $G^k$ satisfies the Sommerfeld radiation condition \eqref{eq:sommerfeld}. For $k=0$ we omit the superscript and write the fundamental solution to the Laplacian as $G$. 

	For the single layer potential corresponding to the Laplace equation, $\S_D^0$, we also omit the superscript and write $\S_D$. It is well known that the trace operator $\S_D: L^2(\p D) \rightarrow H^1(\p D)$ is invertible, where $H^1(\p D)$ is the space of functions that are square integrable on $\p D$ and have a weak first derivative that is also square integrable. We denote by $\L(L^2(\partial D),H^1(\partial D))$ the set of bounded linear operators from $L^2(\partial D)$ into $H^1(\partial D)$.
	
	The Neumann-Poincar\'e operator $\K_D^{k,*}: L^2(\partial D) \rightarrow L^2(\partial D)$ is defined by
	\begin{equation*}\label{eq:K_def}
	\K_D^{k,*}[\phi](x) := \int_{\partial D} \frac{\partial }{\partial \nu_x}G^k(x-y) \phi(y) \; \dx \sigma(y), \quad x \in \partial D,
	\end{equation*}
	where $\partial/\partial \nu_x$ denotes the outward normal derivative at $x\in\p D$.  For $k=0$ we omit the superscript and write $\K_D^{*}$. 
	
	The behaviour of $\S_D^k$ on the boundary $\partial D$ is described by the following relations, often known as \emph{jump relations},
	\begin{equation}\label{eq:jump1}
	\S_D^k[\phi]\big|_+ = \S_D^k[\phi]\big|_-,
	\end{equation}
	and
	\begin{equation}\label{eq:jump2}
	\frac{\partial }{\partial \nu}\S_D^k[\phi]\Big|_{\pm}  =  \left(\pm\frac{1}{2} I + \K_D^{k,*}\right) [\phi],
	\end{equation}
	where $|_\pm$ denote the limits from outside and inside $D$. When $k$ is small, the single layer potential satisfies
	\begin{equation} \label{eq:exp_S}
	\S_D^k =\S_D + k\S_{D,1} + k^2\S_{D,2} + k^3 \S_{D,3} +  \O(k^4),
	\end{equation}
	where the error term is with respect to the operator norm $\|.\|_{\L(L^2(\partial D),H^1(\partial D))}$, and the operators $\S_{D,n}: L^2(\partial D) \rightarrow H^1(\partial D)$ for $n=1,2,3$ are given by
	$$
	\S_{D,n}[\phi](x) =  - \frac{\iu^{n}}{4 \pi n!} \int_{\p D}| x - y |^{n-1} \phi(y) \; \de \sigma(y) \qquad x\in \D.
	$$
	Moreover, we have
	\begin{equation} \label{eq:exp_K}
	\K_D^{k,*} = \K_D^{0,*} + k^2\K_{D,2} + k^3\K_{D,3} + \O(k^4),
	\end{equation}
	where the error term is with respect to the operator norm $\|.\|_{\L(L^2(\partial D),L^2(\partial D))}$ and where 
	$$\K_{D,2}[\phi](x) = \frac{1}{8\pi}\int_{\p D}\frac{(x-y)\cdot \nu_x}{|x-y|}\phi(y)\de \sigma (y), \quad \K_{D,3}[\phi](x) = \frac{\iu}{12\pi}\int_{\p D}(x-y)\cdot \nu_x\phi(y)\de \sigma (y),$$ for $x\in \partial D$. 	
	We have the following lemma from \cite{ammari2017double}.
	\begin{lemma} \label{lem:ints}
		Let $N=2$. For any $\varphi\in L^2(\D)$ we have, for $i=1,2$,
		\begin{equation} \label{eq:properties}
		\begin{split}
		\int_{\D_i}\left(-\frac{1}{2}I+\K_D^{*}\right)[\varphi]\de\sigma=0,
		\qquad&\int_{\D_i}\left(\frac{1}{2}I+\K_D^{*}\right)[\varphi]\de\sigma=\int_{\D_i}\varphi\de\sigma,\\
		\int_{\D_i} \K_{D,2}[\varphi]\de\sigma=-\int_{D_i}\S_D[\varphi]\de x, \qquad &\int_{\D_i} \K_{D,3}[\varphi]\de\sigma=\frac{\iu|D_i|}{4\pi}\int_{\D}\varphi\de\sigma.
		\end{split}
		\end{equation}
	\end{lemma}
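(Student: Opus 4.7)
My plan is to establish each of the four identities by reducing to the divergence theorem on $D_i$, combined with the explicit form of the kernels appearing in $\K_{D,2}$, $\K_{D,3}$ and in the expansion of $\S_D^k$.

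For the first two identities, I would consider $u:=\S_D[\varphi]$. By standard theory $u$ is harmonic in each $D_i$, continuous across $\partial D$, and satisfies the jump relation \eqref{eq:jump2} in the form $\partial u/\partial \nu|_\pm = (\pm\tfrac12 I + \K_D^*)[\varphi]$. Applying the divergence theorem to $u$ on $D_i$ yields
\[
\int_{\partial D_i}\left(-\tfrac12 I+\K_D^*\right)[\varphi]\,\de\sigma=\int_{\partial D_i}\frac{\partial u}{\partial \nu}\bigg|_-\de\sigma=\int_{D_i}\Delta u\,\de x=0,
\]
which is the first identity. Subtracting this from $\int_{\partial D_i}(\tfrac12 I+\K_D^*)[\varphi]\,\de\sigma$ and using that the difference of the two jump operators is the identity gives the second.

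For the third identity, the key observation is that the kernel $|x-y|$ defining $\S_{D,2}$ is Lipschitz, so $v(x):=\S_{D,2}[\varphi](x)=\tfrac{1}{8\pi}\int_{\D}|x-y|\varphi(y)\,\de\sigma(y)$ is $C^1$ across $\partial D$ with $\nabla v$ given by differentiating under the integral sign; its boundary normal derivative is exactly $\K_{D,2}[\varphi]$, with no jump. A short computation using $\Delta_x|x-y|=2/|x-y|$ shows that $\Delta v=-\S_D[\varphi]$ pointwise in $\R^3$. Therefore the divergence theorem on $D_i$ gives
\[
\int_{\partial D_i}\K_{D,2}[\varphi]\,\de\sigma=\int_{\partial D_i}\frac{\partial v}{\partial \nu}\,\de\sigma=\int_{D_i}\Delta v\,\de x=-\int_{D_i}\S_D[\varphi]\,\de x.
\]

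For the fourth identity I would simply use Fubini to exchange the order of integration, so that the inner integral becomes $\int_{\partial D_i}(x-y)\cdot \nu_x\,\de\sigma(x)$. By the divergence theorem applied coordinate-wise, $\int_{\partial D_i}\nu_x\,\de\sigma(x)=0$ and $\int_{\partial D_i}x\cdot\nu_x\,\de\sigma(x)=\int_{D_i}\nabla\cdot x\,\de x=3|D_i|$, so the inner integral equals $3|D_i|$ independently of $y\in\partial D$; multiplying by $\iu/(12\pi)$ and reinserting the integration against $\varphi(y)$ yields $(\iu|D_i|/(4\pi))\int_{\partial D}\varphi\,\de\sigma$. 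None of the steps is hard in isolation; the only point that requires a moment's care is the regularity of $v$ in the third identity, which justifies passing the normal derivative inside the integral and applying the divergence theorem without any jump contribution.
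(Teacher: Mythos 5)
Your proof is correct and follows the standard argument: the first two identities via the jump relation for $\partial_\nu\S_D[\varphi]$ and the divergence theorem, the third by recognizing $\K_{D,2}[\varphi]$ as the (jump-free) normal derivative of $\S_{D,2}[\varphi]$ with $\Delta\S_{D,2}[\varphi]=-\S_D[\varphi]$, and the fourth by Fubini and $\int_{\D_i}(x-y)\cdot\nu_x\,\de\sigma(x)=3|D_i|$. The paper states this lemma without proof, citing \cite{ammari2017double}, where the argument is essentially the one you give, so there is nothing to add.
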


	A thorough presentation of other properties of the layer potential operators and their use in wave-scattering problems can be found in \emph{e.g.} \cite{ammari2018mathematical}.

\subsubsection{Generalized argument principle and generalized Rouch\'e's theorem}

The Gohberg-Sigal theory refers to the generalization to operator-valued functions of two classical results in complex analysis, the argument principle and Rouch\'e's theorem \cite{gohsig, gohberg2009holomorphic, ammari2018mathematical}.

Let $\mathcal{B}$ and $\mathcal{B}^\prime$ be two Banach spaces and denote by $\mathcal{L }(\mathcal{B},\mathcal{B}^\prime)$ the space of bounded linear operators from $\mathcal{B}$ into $\mathcal{B}^\prime$. A point $z_{0}$ is called a  {\it
characteristic value} of the operator-valued function  $z \mapsto A(z) \in \mathcal{L }(\mathcal{B},\mathcal{B}^\prime)$  if $A(z)$ is holomorphic in some neighborhood of $z_0$, except possibly at
$z_0$ and there
exists a vector-valued function $\phi(z)$ with values in
$\mathcal{B}$ such that
\begin{enumerate}
\item[(i)] $\phi(z)$ is holomorphic at $z_{0}$ and $\phi(z_{0}) \not= 0$,
\item[(ii)]
 $A(z)\phi(z)$ is holomorphic at $z_{0}$
 and vanishes at this point.
\end{enumerate}

Let $V$ be a simply connected bounded domain with rectifiable
boundary $\partial V$. An operator-valued function 
$A(z)$ is normal with respect to $\p V$ if  it is finitely
meromorphic and of Fredholm type in $V$, continuous on $\p {V}$, and
invertible for all $z\in \p V$.

If  $A(z)$ is normal
with respect to the contour $\partial V$  and $z_{j},$ $
j=1,\ldots,\sigma$, are all its characteristic values and poles lying in $V$, the full multiplicity $\mathcal{M}(A;\partial V)$ of $A(z)$ for
$ z \in V$ is the number of characteristic
 values of $A(z)$ for $ z\in V$, counted with their multiplicities, minus the
 number of poles of $A(z)$ in $V$, counted with their multiplicities:
 $$
\mathcal{M}(A;\partial V) := \sum_{j=1}^{\sigma} M ( A(z_{j})),
 $$
with $M ( A(z_{j}))$ being the multiplicity of $z_j$; see \cite[Chap. 1]{ammari2018mathematical}. 

The following results are from \cite{gohsig}. 
\begin{theorem}[Generalized argument principle] \label{thmprincipal}
Suppose that $A(z)$ is an operator-valued function which is normal
with respect to  $\partial V$. Let $f(z)$ be a scalar function which
is holomorphic in $ V$ and continuous in $\overline{ V}$. Then
 \begin{eqnarray*}
 \frac{1}{2\pi \i} \mbox{ {\rm tr} }\int_{\partial
 V} f(z)A(z)^{-1}\frac{d}{dz}A(z)dz =
 \sum_{j=1}^{\sigma} M ( A(z_{j}))f(z_{j}),
 \end{eqnarray*}
where $z_{j},$  $j=1,\ldots,\sigma$, are all the points in $ V$
which are either poles or characteristic values of $A(z)$.
\end{theorem}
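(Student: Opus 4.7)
The plan is to localise the contour integral to small neighbourhoods of each characteristic value or pole $z_j \in V$ and then identify the residue of $f(z)\,\mathrm{tr}\bigl(A(z)^{-1} A'(z)\bigr)$ at each $z_j$. Since $A(z)$ is normal with respect to $\partial V$, the integrand $f(z) A(z)^{-1} A'(z)$ is an operator-valued meromorphic function on $V$ with finitely many poles, all lying in $\{z_1,\dots,z_\sigma\}$, and a standard Cauchy contour-deformation argument reduces the integral over $\partial V$ to a sum of integrals over small positively oriented loops $\gamma_j$ encircling each $z_j$.

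The key technical ingredient is a local Smith-type factorisation for finitely meromorphic operator-valued functions of Fredholm type. In a neighbourhood of each $z_j$ there exist holomorphic operator-valued functions $E_j(z),F_j(z)$, invertible at $z_j$, finitely many mutually orthogonal projections $P_{j,0},P_{j,1},\dots,P_{j,n_j}$ summing to the identity (with $P_{j,i}$ of rank one for $i\ge 1$), and integers $k_1,\dots,k_{n_j}\in\mathbb{Z}$ (the partial multiplicities) such that
\begin{equation*}
A(z) = E_j(z)\,D_j(z)\,F_j(z), \qquad D_j(z) := P_{j,0} + \sum_{i=1}^{n_j} (z-z_j)^{k_i}\,P_{j,i},
\end{equation*}
and by definition $M(A(z_j)) = \sum_{i=1}^{n_j} k_i$. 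Differentiating this factorisation and using $A^{-1} = F_j^{-1} D_j^{-1} E_j^{-1}$ yields
\begin{equation*}
A^{-1} A' = F_j^{-1}\,D_j^{-1} E_j^{-1} E_j'\,D_j\,F_j \;+\; F_j^{-1}\,D_j^{-1} D_j'\,F_j \;+\; F_j^{-1} F_j'.
\end{equation*}

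The singular part of $A^{-1} A'$ at $z_j$ lives entirely in the finite-dimensional subspace $(I-P_{j,0})(\mathcal{B})$, so it is of finite rank and its trace is unambiguously defined. The term $F_j^{-1} F_j'$ is holomorphic at $z_j$, and a short calculation using cyclicity of trace on finite-rank pieces (together with the fact that $E_j^{-1} E_j'$ is holomorphic) shows that $\mathrm{tr}(F_j^{-1} D_j^{-1} E_j^{-1} E_j' D_j F_j) - \mathrm{tr}(E_j^{-1} E_j')$ has no pole at $z_j$. Hence the only residue-carrying contribution to $\mathrm{tr}(A^{-1} A')$ at $z_j$ comes from $\mathrm{tr}(D_j^{-1} D_j') = \sum_i k_i/(z-z_j)$.

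Multiplying by $f(z)$ and applying the classical residue theorem on each loop $\gamma_j$ yields
\begin{equation*}
\frac{1}{2\pi\iu}\,\mathrm{tr}\int_{\gamma_j} f(z)\,A(z)^{-1} A'(z)\,dz = f(z_j) \sum_{i=1}^{n_j} k_i = f(z_j)\,M\bigl(A(z_j)\bigr),
\end{equation*}
and summing over $j$ gives the stated identity. The main obstacle is the careful justification of trace manipulations in infinite dimensions; I would handle this by isolating the finite-rank singular part of $A^{-1} A'$ (on which trace cyclicity is automatic) and discarding the holomorphic remainder (which integrates to zero by Cauchy's theorem). A secondary structural difficulty is the establishment of the local canonical factorisation itself, which is a nontrivial consequence of the finitely meromorphic Fredholm hypothesis and is proved by an inductive reduction to the finite-dimensional case.
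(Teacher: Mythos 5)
The paper does not prove this theorem itself -- it is quoted as a classical result from the Gohberg--Sigal reference -- and your argument is precisely the standard proof given there: localise to small loops around each $z_j$, invoke the local Smith-type factorisation $A=E\,D\,F$ with $D(z)=P_{0}+\sum_i (z-z_j)^{k_i}P_i$, show via the finite-rank structure and cyclicity that only $\mathrm{tr}(D^{-1}D')=\sum_i k_i/(z-z_j)$ contributes a residue, and apply the residue theorem. The outline is correct and matches the cited source's approach, with the trace and factorisation subtleties correctly identified as the points needing care.
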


A generalization \index{generalized Rouch\'e's theorem} of
Rouch{\'e}'s theorem to operator-valued functions is stated below.

\begin{theorem}[Generalized Rouch\'e's theorem] \label{rouche}
Let $A(z)$ be an operator-valued function which is normal with
respect to $\partial V$. If an operator-valued function $S(z)$
which is finitely meromorphic in $V$ and continuous on $\partial
V$ satisfies the condition
\begin{eqnarray*}
 \|A(z)^{-1}S(z)\|_{ \mathcal{L }(\mathcal{B},\mathcal{B})} < 1, \hspace{0.4cm}
z \in \partial V, \end{eqnarray*} then $A + S $ is also
normal with respect to $\partial V$ and
\begin{eqnarray*} \mathcal{M}(A;\partial V)=
\mathcal{M}(A + S;\partial V).
\end{eqnarray*}
\end{theorem}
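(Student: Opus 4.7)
My plan is to first show that $A + S$ is normal with respect to $\partial V$, and then to deduce equality of the multiplicities by a homotopy argument based on the generalized argument principle (Theorem~\ref{thmprincipal}). For the normality, since $A(z)$ is invertible for every $z \in \partial V$, I factor
\[
A(z) + S(z) = A(z)\bigl(I + A(z)^{-1}S(z)\bigr), \qquad z \in \partial V.
\]
The hypothesis $\|A(z)^{-1}S(z)\|_{\mathcal{L}(\mathcal{B},\mathcal{B})} < 1$ makes $I + A(z)^{-1}S(z)$ invertible on $\partial V$ via a convergent Neumann series, so $A+S$ is invertible there. Continuity on $\partial V$ and finite meromorphy of Fredholm type in $V$ are inherited from the corresponding properties of $A$ and $S$, hence $A+S$ is normal with respect to $\partial V$.

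For the equality of multiplicities, I introduce the homotopy $F_t(z) := A(z) + tS(z)$ for $t \in [0,1]$. Since $\|A(z)^{-1}(tS(z))\| \le t\,\|A(z)^{-1}S(z)\| < 1$ uniformly on $\partial V$, the same factorization shows that $F_t$ is normal with respect to $\partial V$ for every $t \in [0,1]$, and the Neumann bound gives uniform control in $(t,z) \in [0,1]\times\partial V$. Applying Theorem~\ref{thmprincipal} to $F_t$ with $f \equiv 1$ yields
\[
N(t) \;:=\; \mathcal{M}(F_t;\partial V) \;=\; \frac{1}{2\pi \i}\, \mathrm{tr} \int_{\partial V} F_t(z)^{-1}\,\frac{d}{dz}F_t(z)\,dz.
\]
I would then verify that the right-hand side depends continuously on $t \in [0,1]$; since $N(t)$ takes values in $\mathbb{Z}$, continuity forces $N$ to be constant on $[0,1]$, and evaluating at $t=0$ and $t=1$ gives the desired identity $\mathcal{M}(A;\partial V) = \mathcal{M}(A+S;\partial V)$.

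The main obstacle lies in the last step: justifying that this trace-integral is genuinely a continuous function of $t$. This requires (i) showing $t \mapsto F_t(z)^{-1}$ is continuous in the operator norm uniformly in $z \in \partial V$, which follows from the uniform Neumann bound together with boundedness of $A^{-1}$ on the compact contour $\partial V$; and (ii) invoking the fact that within the finitely-meromorphic Fredholm framework underlying Theorem~\ref{thmprincipal}, the contour integral $\int_{\partial V} F_t(z)^{-1}\frac{d}{dz}F_t(z)\,dz$ is trace class and its trace is continuous under uniform operator-norm perturbations of the integrand. Once these standard but technical continuity properties are established, the integer-valued continuous function argument closes the proof cleanly.
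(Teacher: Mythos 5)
The paper itself does not prove this theorem: it is imported verbatim from Gohberg and Sigal, so there is no internal argument to compare with. Your scheme --- factor $A+S=A\,(I+A^{-1}S)$ on $\partial V$, run the homotopy $F_t=A+tS$, apply the generalized argument principle with $f\equiv 1$, and conclude by integer-valuedness plus continuity in $t$ --- is exactly the classical Gohberg--Sigal strategy, so the outline is the right one. Two of its steps, however, are thinner than you present them.

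First, normality of $F_t$ is not simply ``inherited''. Finite meromorphy and continuity on $\partial V$ do pass to the sum, but the Fredholm-type property in $V$ does not: $S$ is only assumed finitely meromorphic, its values at interior points are arbitrary bounded operators, and adding a non-compact operator can destroy Fredholmness; the norm hypothesis is available only on the contour, not inside $V$. The standard repair is to pass to the Calkin algebra (quotient by the compacts): the finite-rank principal parts of $A$, of $A^{-1}$ (which is finitely meromorphic and of Fredholm type in $V$ by a Gohberg--Sigal lemma) and of $S$ vanish there, so the image of $A^{-1}S$ is holomorphic in $V$ and continuous up to $\partial V$; the maximum principle for its subharmonic norm propagates the bound $<1$ from $\partial V$ into $V$, so $I+tA^{-1}S$ is invertible modulo compacts at every interior point, which gives the Fredholm property of $F_t$ there. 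Second, your justification of the continuity of $N(t)$ invokes the principle that the trace is continuous under uniform operator-norm perturbations of the integrand; that principle is false in general (the trace is continuous only in the trace norm). The continuity has to be extracted from the finite-rank structure behind the logarithmic residue --- for instance via the multiplicativity property $\mathcal{M}(A_1A_2;\partial V)=\mathcal{M}(A_1;\partial V)+\mathcal{M}(A_2;\partial V)$, which reduces the problem to the family $I+tA^{-1}S$, or via the local factorization of normal operator functions used in the proof of the argument principle --- rather than from operator-norm convergence of $F_t^{-1}\frac{d}{dz}F_t$ alone. With those two points repaired, your argument is the standard proof of the theorem.
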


\subsection{Capacitance matrix analysis}

\noindent The existence of subwavelength resonant frequencies is stated in the following theorem, which was proved in \cite{davies2019fully, ammari2018minnaert} using Theorem \ref{rouche}. 

\begin{theorem}
	A system of $N$ subwavelength resonators exhibits $N$ subwavelength resonant frequencies with positive real parts, up to multiplicity.
\end{theorem}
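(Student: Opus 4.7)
The plan is to recast the homogeneous version of \eqref{eq:scattering} as an operator-valued holomorphic equation $\mathcal{A}(\omega,\delta)(\phi,\psi)^T=0$ and then apply the generalized Rouch\'e theorem (Theorem \ref{rouche}) to count its characteristic values in a small disc $V$ around $\omega=0$. Concretely, I would represent any solution of \eqref{eq:scattering} with $u^{in}=0$ through single-layer potentials,
\begin{equation*}
u(x) = \begin{cases} \S_D^{k_b}[\phi](x), & x\in D, \\ \S_D^{k}[\psi](x), & x\in \R^3\setminus\overline{D}, \end{cases}
\end{equation*}
with $(\phi,\psi)\in L^2(\partial D)^2$. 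The Helmholtz equations in $D$ and $\outside$ and the Sommerfeld condition are then automatic, and the transmission conditions, enforced via the jump relations \eqref{eq:jump1}--\eqref{eq:jump2}, yield
\begin{equation*}
\mathcal{A}(\omega,\delta) := \begin{pmatrix} \S_D^{k_b} & -\S_D^{k} \\ -\tfrac{1}{2}I + \K_D^{k_b,*} & -\delta\bigl(\tfrac{1}{2}I + \K_D^{k,*}\bigr) \end{pmatrix},
\end{equation*}
which is a holomorphic map $L^2(\partial D)^2 \to H^1(\partial D)\times L^2(\partial D)$ in $\omega$. The subwavelength resonances in the sense of Definition \ref{def:res^} are exactly the characteristic values of $\mathcal{A}(\cdot,\delta)$ that collapse to $0$ as $\delta\to 0$.

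Next, I would analyze the reference operator $\mathcal{A}(\cdot,0)$ via the small-$k$ expansions \eqref{eq:exp_S}--\eqref{eq:exp_K}. At leading order,
\begin{equation*}
\mathcal{A}(0,0) = \begin{pmatrix} \S_D & -\S_D \\ -\tfrac{1}{2}I + \K_D^{*} & 0 \end{pmatrix}.
\end{equation*}
Since $\S_D : L^2(\partial D) \to H^1(\partial D)$ is invertible in three dimensions, the kernel of $\mathcal{A}(0,0)$ consists of pairs $(\phi,\phi)$ with $\phi \in \ker(-\tfrac{1}{2}I + \K_D^{*})$. Classical potential theory identifies this latter kernel as exactly $N$-dimensional: if $(-\tfrac{1}{2}I + \K_D^{*})[\phi]=0$ then $\S_D[\phi]$ has zero interior Neumann data and is therefore locally constant on $D$, producing one degree of freedom per component $D_j$ (this is the basis dual to the capacitance matrix). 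Hence $\omega=0$ is an isolated zero of $\mathcal{A}(\cdot,0)$ in a suitably small disc $V$ with geometric kernel of dimension $N$.

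To deduce the counting statement, I would compare $\mathcal{A}(\omega,\delta)$ and $\mathcal{A}(\omega,0)$ on $\partial V$. The difference is $\O(\delta)$ uniformly on $\partial V$, while $\mathcal{A}(\omega,0)^{-1}$ is uniformly bounded on $\partial V$ (since $V$ can be chosen to avoid all other characteristic values by analytic Fredholm theory). For $\delta$ sufficiently small, Theorem \ref{rouche} then gives
\begin{equation*}
\mathcal{M}\bigl(\mathcal{A}(\cdot,\delta);\partial V\bigr) = \mathcal{M}\bigl(\mathcal{A}(\cdot,0);\partial V\bigr) = N,
\end{equation*}
so $\mathcal{A}(\cdot,\delta)$ has exactly $N$ characteristic values in $V$ counted with multiplicity, each depending continuously on $\delta$ and tending to $0$, as required. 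Positivity of $\Re(\omega)$ for each branch would then follow by projecting the eigenvalue equation onto the $N$-dimensional kernel of $\mathcal{A}(0,0)$ and using the identities of Lemma \ref{lem:ints} to extract a leading-order symmetric $N\times N$ capacitance eigenvalue problem for $\omega^2$ whose eigenvalues are real and non-negative; selecting the square-root branch with positive real part delivers $\Re(\omega)>0$.

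The main obstacle will be the multiplicity bookkeeping at the unperturbed level: one must show that the higher-order $\omega$-terms entering through \eqref{eq:exp_S}--\eqref{eq:exp_K} do not create generalized Jordan chains that inflate $\mathcal{M}(\mathcal{A}(\cdot,0);\partial V)$ beyond $N$. The way to resolve this is a root-function expansion $(\phi(\omega),\psi(\omega)) = \sum_{n\geq 0}\omega^n(\phi_n,\psi_n)$ solved order by order on the kernel of $\mathcal{A}(0,0)$, with Lemma \ref{lem:ints} supplying the Fredholm solvability conditions at each order to pin down the length of the would-be Jordan chains. Once that bookkeeping confirms a total multiplicity of exactly $N$, the Rouch\'e step and the capacitance eigenvalue computation together deliver the theorem.
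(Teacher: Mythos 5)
Your overall strategy is the paper's: the same single-layer representation, the same operator $\mathcal{A}(\omega,\delta)$ mapping $L^2(\partial D)^2$ to $H^1(\partial D)\times L^2(\partial D)$, the identification of resonances with characteristic values, and the generalized Rouch\'e comparison of $\mathcal{A}(\cdot,\delta)$ with $\mathcal{A}(\cdot,0)$ on a small contour around the origin. The gap is in the multiplicity bookkeeping, and it is not a technicality you can "confirm" in the direction you propose: the full multiplicity $\mathcal{M}(\mathcal{A}(\cdot,0);\partial V)$ is $2N$, not $N$. The dimension $N$ you compute for $\mathrm{Ker}\,\mathcal{A}(0,0)$ is only the geometric multiplicity; each kernel element carries a Jordan chain (root function of rank two), essentially because near the kernel the frequency enters at leading order only through $\omega^2$ (this is exactly what produces $\omega_n^2 \approx v_b^2\lambda_n\delta$ in Theorem \ref{thm:res}). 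So the root-function expansion you sketch, if carried out with Lemma \ref{lem:ints} as solvability conditions, would return total multiplicity $2N$, contradicting the count of $N$ on which your Rouch\'e step relies.

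The factor of two is not cosmetic, because it is tied to your positivity argument, which as stated does not work. Rouch\'e gives $2N$ characteristic values of $\mathcal{A}(\cdot,\delta)$ inside $V$; by the symmetry of the scattering problem ($\omega$ a pole implies $-\overline{\omega}$ a pole, as recalled in Section \ref{sec7}), these occur in pairs $\omega_n$, $-\overline{\omega_n}$. You cannot "select the square-root branch with positive real part": both branches are genuine characteristic values lying in $V$, so a count of $N$ inside $V$ with all of them having $\Re(\omega)>0$ is inconsistent with this symmetry. The correct conclusion, as in the paper, is that there are $2N$ characteristic values near the origin depending continuously on $\delta$ and vanishing at $\delta=0$, of which exactly $N$ have positive real part; these $N$ are the subwavelength resonant frequencies. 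With the multiplicity corrected to $2N$ and the positivity obtained from the pairing rather than branch selection, your argument coincides with the paper's proof.
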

\begin{proof} 
The solution $u$ to the scattering problem \eqref{eq:scattering} can be represented as 
	\begin{equation} \label{eq:layer_potential_representation}
	u(x) = \begin{cases}
	u^{in}+\S_{D}^{k}[\psi](x), & x\in\outside,\\
	\nm
	\S_{D}^{k_b}[\phi](x), & x\in D,
	\end{cases}
	\end{equation} 
	for some surface potentials $(\phi,\psi)\in L^2(\D)\times L^2(\D)$, which must be chosen so that $u$ satisfies the transmission conditions across $\D$. Using the jump relation between $\S_{D}^k$ and $\K_{D}^{k,*}$, we see that in order to satisfy the transmission conditions on $\D$ the densities $\phi$ and $\psi$ must satisfy, for $x\in \D$,
	\begin{equation}
	\label{repformula}
	\begin{cases}
	\S_{D}^{k_b}[\phi](x)-\S_{D}^{k}[\psi](x)=u^{in}(x),\\
	\nm
	\left(-\frac{1}{2}I+\K_{D}^{k_b,*}\right)[\phi](x)-\delta\left(\frac{1}{2}I+\K_{D}^{k,*}\right)[\psi](x)=\delta \ddp{u^{in}}{\nu}(x).
	\end{cases}
	\end{equation}
Therefore,  $\phi$ and $\psi$ satisfy the following system of boundary integral equations:
\begin{equation}  \label{eq-boundary}
\mathcal{A}(\omega, \delta)[\Psi] =F,  
\end{equation}
where
\begin{equation} \label{page450}
\mathcal{A}(\omega, \delta) = 
 \begin{pmatrix}
  \mathcal{S}_D^{k_b} &  -\mathcal{S}_D^{k}  \\
  \nm
  -\frac{1}{2}I + (\mathcal{K}_D^{k_b})^*& -\delta( \frac{1}{2}I + (\mathcal{K}_D^{k})^*)
\end{pmatrix}, 
\,\, \Psi= 
\begin{pmatrix}
\phi\\
\psi
\end{pmatrix}, 
\,\,F= 
\begin{pmatrix}
u^{in}\\
\delta \frac{\partial u^{in}}{\partial \nu}
\end{pmatrix}.
\end{equation}

One can show that the scattering problem (\ref{eq:scattering}) is equivalent to the system of boundary integral equations (\ref{eq-boundary}). It is clear that $\mathcal{A}(\omega, \delta)$ is a bounded linear operator from $\mathcal{H}:= L^2(\p D) \times L^2(\p D)$ to $\mathcal{H}_1:=H^1(\p D) \times L^2(\p D)$. As defined in \Cref{def:res^}, the resonant frequencies to the scattering problem (\ref{eq:scattering}) are the complex numbers $\omega$ with positive imaginary part such that there exists a nontrivial solution to the following equation:
\begin{equation}  \label{eq-resonance}
\mathcal{A}(\omega, \delta)[\Psi] =0.
\end{equation}
These can be viewed as the characteristic values of the holomorphic operator-valued  function (with respect to $\omega$)
$\mathcal{A}(\omega, \delta)$. The subwavelength resonant frequencies lie in the right half of a small neighborhood of the origin in the complex plane. 
In what follows, we apply the Gohberg-Sigal theory to find these frequencies. 

We first look at the limiting case when $\delta =\omega=0$.
It is clear that

\begin{equation}  \label{eq-A_0-3d}
\mathcal{A}_0:= \mathcal{A}(0, 0) = 
 \begin{pmatrix}
  \mathcal{S}_D &  -\mathcal{S}_D  \\
  \nm
  -\frac{1}{2}I + \mathcal{K}_D^{*}& 0
\end{pmatrix},
\end{equation}
where $\mathcal{S}_D$ and $\mathcal{K}_D^{*}$ are respectively the single-layer potential and the Neumann--Poincar\'e operator on $\p D$ associated with the Laplacian.
 
Since $\mathcal{S}_D: L^2(\partial D) \rightarrow H^1(\partial D)$ is invertible in dimension three and $\mathrm{Ker } ( -\frac{1}{2}I + \mathcal{K}_D^{*})$ has dimension equal to the number of connected components of $D$, it follows that $\mathrm{Ker } (\mathcal{A}_0)$ is of dimension $N$. 
This shows that $\omega=0$ is a characteristic value for the holomorphic operator-valued  function $\mathcal{A}(\omega, 0)$ of full multiplicity $2N$. 
By the generalized Rouch\'e's theorem, we can conclude that for any $\delta$, sufficiently small, there exist $2N$ characteristic values to the holomorphic operator-valued  function $\mathcal{A}(\omega, \delta)$
such that $\omega_n(0)=0$ and 
$\omega_n$ depends on $\delta$ continuously. $N$ of these characteristic values,
$\omega_n= \omega_n(\delta), n=1, \ldots, N,$ have positive real parts, and these are precisely the subwavelength resonant frequencies of the scattering problem (\ref{eq:scattering}). 
\end{proof}

Our approach to approximate the subwavelength resonant frequencies is to study the \emph{(weighted) capacitance matrix}, which offers a rigorous discrete approximation to the differential problem. The eigenstates of this $N\times N$-matrix characterise, at leading order in $\delta$, the subwavelength resonant modes of the system of $N$ resonators.

In order to introduce the notion of capacitance, we define the functions $\psi_j$, for $j=1,\dots,N$, as 
\begin{equation} \label{defpsi}
\psi_j:=\S_D^{-1}[\chi_{\D_j}],
\end{equation}
where $\chi_A:\mathbb{R}^3\to\{0,1\}$ is used to denote the characteristic function of a set $A\subset\mathbb{R}^3$.	The capacitance matrix $C=(C_{ij})$ is defined, for $i,j=1,\dots,N$, as
\begin{equation} \label{defCap}
C_{ij}:=-\int_{\D_i} \psi_j\de\sigma.
\end{equation}
In order to capture the behaviour of an asymmetric array of resonators we need to introduce the weighted capacitance matrix $\C=(\C_{ij})$, given by
\begin{equation} \label{defCapw}
\C_{ij}:=\frac{1}{|D_i|} C_{ij},
\end{equation}
which accounts for the differently sized resonators (see \emph{e.g.} \cite{ ammari2020exceptional, davies2020close, ammari2017double} for other variants in different settings). 

We define the functions $S_n^\omega$, for $n=1\dots,N$, as
$$S_n^\omega(x) := \begin{cases}
\S_{D}^{k}[\psi_n](x), & x\in\outside,\\
\S_{D}^{k_b}[\psi_n](x), & x\in D.\\
\end{cases}
$$

\begin{lemma} \label{lem:modal}
	The solution to the scattering problem can be written, for $x\in\mathbb{R}^3$, as
	\begin{equation*} 
	u(x)-u^{in}(x) = \sum_{n=1}^N q_nS_n^\omega(x) - \S_D^k\left[\S_D^{-1}[u^{in}]\right](x) + \O(\omega),
	\end{equation*} 
	for coefficients $\underline{q}=(q_1,\dots,q_N)$ which satisfy, up to an error of order $\O(\delta \omega+\omega^3)$,
	\begin{equation*}\label{eq:eval_C}
	\left({\omega^2}-{v_b^2\delta}\,\C\right)\underline{q}
	=
	{v_b^2\delta}\begin{pmatrix} \frac{1}{|D_1|} \int_{\D_1}\S_D^{-1}[u^{in}]\de\sigma \\ \vdots\\
	\frac{1}{|D_N|}\int_{\D_N}\S_D^{-1}[u^{in}]\de\sigma \end{pmatrix}.
	\end{equation*}
\end{lemma}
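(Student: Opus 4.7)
The plan is to leverage the boundary integral formulation \eqref{eq-boundary}--\eqref{page450} used in the existence theorem and reduce it to an $N\times N$ discrete system by projecting the interior density onto the $N$-dimensional kernel of $-\tfrac{1}{2}I+\K_D^{*}$. The asymptotic expansions \eqref{eq:exp_S}--\eqref{eq:exp_K} provide the bookkeeping, and Lemma~\ref{lem:ints} (which carries over from $N=2$ to general $N$ with identical proof) is the key identity that converts the integrated surface equations into the weighted capacitance matrix.

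I would first extract the $-\S_D^k[\S_D^{-1}[u^{in}]]$ piece from the first equation of \eqref{eq-boundary}. Since $\S_D^{k_b}=\S_D+\O(\omega)$, $\S_D^k=\S_D+\O(\omega)$, and $\S_D:L^2(\D)\to H^1(\D)$ is invertible, one obtains $\phi-\psi=\S_D^{-1}[u^{in}]+\O(\omega)$, hence $\psi=\phi-\S_D^{-1}[u^{in}]+\O(\omega)$. Substituted into the exterior representation in \eqref{eq:layer_potential_representation}, this gives
\begin{equation*}
u-u^{in}=\S_D^k[\phi]-\S_D^k\,\S_D^{-1}[u^{in}]+\O(\omega)\quad\text{in }\outside.
\end{equation*}
Next, the second equation of \eqref{eq-boundary} together with \eqref{eq:exp_K} yields $(-\tfrac{1}{2}I+\K_D^{*})[\phi]=\O(\omega^2+\delta)$. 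Because the kernel of $-\tfrac{1}{2}I+\K_D^{*}$ is exactly $\mathrm{span}\{\psi_1,\ldots,\psi_N\}$ (as used in the existence argument via $\dim\mathrm{Ker}(\mathcal{A}_0)=N$), one deduces $\phi=\sum_{n=1}^N q_n\psi_n+\O(\omega^2+\delta)$ in $L^2(\D)$, and the asserted modal decomposition of $u-u^{in}$ in terms of $S_n^\omega$ follows after noting that inside $D$ one similarly has $\S_D^{k_b}[\phi]=\sum_n q_n\S_D^{k_b}[\psi_n]+\O(\omega)$.

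To pin down $\underline{q}$, I would integrate the second equation of \eqref{eq-boundary} over each component $\D_i$. By Lemma~\ref{lem:ints}, the principal term $\int_{\D_i}(-\tfrac{1}{2}I+\K_D^{*})[\phi]\,\de\sigma$ vanishes, and the next correction becomes
\begin{equation*}
k_b^2\int_{\D_i}\K_{D,2}[\phi]\,\de\sigma=-k_b^2\int_{D_i}\S_D[\phi]\,\de x=-k_b^2|D_i|\,q_i,
\end{equation*}
using that $\S_D[\psi_n]=\chi_{D_n}$ on $D$ (the harmonic extension of the prescribed constant data). On the right-hand side, Lemma~\ref{lem:ints} and the expansion for $\psi$ give
\begin{equation*}
\delta\int_{\D_i}\bigl(\tfrac{1}{2}I+\K_D^{k,*}\bigr)[\psi]\,\de\sigma=-\delta\sum_{n=1}^N C_{in}q_n-\delta\int_{\D_i}\S_D^{-1}[u^{in}]\,\de\sigma+\O(\delta\omega),
\end{equation*}
while $\delta\int_{\D_i}\partial u^{in}/\partial\nu\,\de\sigma=\delta\int_{D_i}\Delta u^{in}\,\de x=-\delta k^2\int_{D_i}u^{in}\,\de x=\O(\delta\omega^2)$ by the divergence theorem. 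Dividing through by $-|D_i|$, using $k_b^2=\omega^2/v_b^2$ and $\C_{in}=C_{in}/|D_i|$, and multiplying by $v_b^2$, one obtains the announced linear system with cumulative error $\O(\delta\omega+\omega^3)$.

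The principal obstacle is uniform control of the projection: one must verify that the remainder in $\phi=\sum q_n\psi_n+\O(\omega^2+\delta)$ is small enough \emph{in the $L^2(\D)$ norm} that its contribution to $k_b^2\int_{\D_i}\K_{D,2}[\phi]\,\de\sigma$ still fits inside the stated $\O(\omega^3+\delta\omega)$ budget, rather than producing a lower-order cross-term of size $\omega^2\delta$ that would need separate treatment; this is where invertibility of the leading operator $\mathcal{A}_0$ restricted to the complement of $\mathrm{span}\{\psi_n\}$ (a consequence of the Gohberg--Sigal analysis already carried out for the existence theorem) is essential.
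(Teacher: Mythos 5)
Your argument is correct and follows essentially the same route as the paper: expand the layer potentials to get $\psi=\phi-\S_D^{-1}[u^{in}]+\O(\omega)$, decompose $\phi$ over the kernel basis $\{\psi_1,\dots,\psi_N\}$ of $-\tfrac{1}{2}I+\K_D^{*}$, and integrate over each $\partial D_i$ using the identities of Lemma~\ref{lem:ints} to arrive at the weighted-capacitance system. The only cosmetic difference is that the paper first combines the two boundary equations into a single equation for $\phi$ before integrating, and your worry about a $\delta\omega^2$ cross-term is unnecessary since such a term is already contained in the $\O(\delta\omega+\omega^3)$ error budget as $\omega\to0$.
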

\begin{proof}
Using the asymptotic expansions \eqref{eq:exp_S} and \eqref{eq:exp_K} for $\S_{D}^k$ and $\K_{D}^{k,*}$ in \eqref{repformula}, we can see that 
	\begin{equation*}\label{eq:psi}
	\psi=\phi-\S_D^{-1}[u^{in}]+ \O(\omega),
	\end{equation*} 
	and, further, that
	\begin{multline}
	\left(-\frac{1}{2}I+\K_D^*+\frac{\omega^2}{{v}_b^2}\K_{D,2}-\delta\left(\frac{1}{2}I+\K_D^*\right)\right)[\phi]=\\-\delta \left(\frac{1}{2}I+\K_D^*\right)\S_D^{-1}[u^{in}]+ \O(\delta\omega+\omega^3). \label{eq:phi}
	\end{multline}
	At leading order, \eqref{eq:phi} says that $\left(-\frac{1}{2}I+\K_D^{*}\right)[\phi]=0$ so, in light of the fact that $\{\psi_1,\dots,\psi_N\}$ forms a basis for $\mathrm{Ker } \left(-\frac{1}{2}I+\K_D^{*}\right)$, the solution can be written as
	\begin{equation} \label{eq:psi_basis}
	\phi=\sum_{n=1}^N q_n\psi_n+ \O(\omega^2+\delta),
	\end{equation}
	for coefficients $\underline{q}=(q_1,\dots,q_N)$.
	
	Finally, integrating \eqref{eq:phi} over $\D_i$, for $1\leq i\leq N$, gives us  that
	\begin{equation*}
	-\omega^2\int_{D_i}\S_D[\phi]\de x -{v}_b^2\delta\int_{\D_i}\phi\de\sigma=-{v_b^2\delta}\int_{\D_i}\S_D^{-1}[u^{in}]\de\sigma,  \label{eq:D}
	\end{equation*}
	up to an error of order $\O(\delta\omega+\omega^3)$. Substituting the expression \eqref{eq:psi_basis} gives the desired result.
\end{proof}

\begin{theorem} \label{thm:res}
	As $\delta \rightarrow 0$, the subwavelength resonant frequencies satisfy the asymptotic formula
	$$\omega_n = \sqrt{v_b^2\lambda_n\delta} -\i\tau_n\delta+ \O(\delta^{3/2}),$$
	for  $n = 1,\dots,N$, where $\lambda_n$ are the eigenvalues of the weighted capacitance matrix $\C$ and $ \tau_n$ are non-negative real numbers that depend on $C$, $v$ and $v_b$.
\end{theorem}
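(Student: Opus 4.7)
The plan is to exploit Lemma~\ref{lem:modal} in the homogeneous case $u^{in}=0$, so that any subwavelength resonance corresponds to a nontrivial coefficient vector $\underline{q}$ satisfying
\begin{equation*}
    \bigl(\omega^2 I - v_b^2 \delta\, \C\bigr)\underline{q} \;=\; O(\delta\omega + \omega^3).
\end{equation*}
Since $\omega = O(\sqrt{\delta})$ by Definition~\ref{def:res^}, the remainder is $O(\delta^{3/2})$, so at leading order we must have $\omega^2/(v_b^2\delta)$ equal to one of the $N$ eigenvalues $\lambda_1,\dots,\lambda_N$ of $\C$. This immediately gives the real leading-order term $\omega_n = v_b\sqrt{\lambda_n\delta} + O(\delta)$ asserted in the theorem; the remaining task is to isolate the $O(\delta)$ correction.

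To pin down that correction, my plan is to sharpen the derivation of Lemma~\ref{lem:modal} by keeping the next-order contributions in the expansions \eqref{eq:exp_S} and \eqref{eq:exp_K}. The key observation is that the odd-in-$k$ coefficients in those expansions---namely $\S_{D,1}$, $\S_{D,3}$ and $\K_{D,3}$---are \emph{purely imaginary}, whereas the even-in-$k$ coefficients are real. Solving the first line of \eqref{repformula} for $\psi$ in terms of $\phi$ to one order beyond that used in the proof of Lemma~\ref{lem:modal}, expanding the second line of \eqref{repformula} to order $\omega^3$, integrating the result over each $\partial D_i$, and applying Lemma~\ref{lem:ints} (with its straightforward extension to arbitrary $N$) produces a sharpened matrix equation of the form
\begin{equation*}
    \bigl(\omega^2 I - v_b^2\delta\,\C + \i\,\omega\,\delta\, B_1 + \i\,\omega^3 B_2\bigr)\underline{q} \;=\; O(\delta^2),
\end{equation*}
where $B_1,B_2$ are real $N\times N$ matrices built explicitly from the volumes $|D_i|$, the unweighted capacitance matrix $C$ and the speeds $v,v_b$. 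Inserting the ansatz $\omega_n = v_b\sqrt{\lambda_n\delta} + \delta\,\omega_n^{(1)} + O(\delta^{3/2})$ and applying first-order perturbation theory for the simple eigenvalue $\lambda_n$ of $\C$ then isolates $\omega_n^{(1)} = -\i\tau_n$ for a real $\tau_n$ of the desired form.

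The main obstacle is verifying that $\tau_n \ge 0$, and the cleanest route is a radiation-balance argument. A subwavelength resonance is a nontrivial outgoing solution $u$ of \eqref{eq:scattering} with $u^{in}=0$; applying Green's identity separately in $D$ and in $B_R\setminus\overline{D}$ to $u$ and $\bar u$, using the transmission conditions on $\partial D$ together with the Sommerfeld condition \eqref{eq:sommerfeld} in the limit $R\to\infty$, yields an identity equating $\Im(\omega^2)$ with a non-positive multiple of the integrated far-field intensity of $u$. This forces $\Im(\omega_n)\le 0$ for all sufficiently small $\delta$, and combined with the expansion $\Im(\omega_n) = -\tau_n\delta + O(\delta^{3/2})$ derived above, this produces $\tau_n \ge 0$, completing the proof.
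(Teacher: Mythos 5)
Your expansion of the resonant frequencies follows essentially the same route as the paper: leading order from Lemma~\ref{lem:modal} with $u^{in}=0$, then a sharpened version of the same computation keeping the $\O(\omega^3)$ and $\O(\delta\omega)$ terms (whose coefficients are indeed purely imaginary), integration over each $\partial D_i$ via the identities of Lemma~\ref{lem:ints}, and the ansatz \eqref{eq:omega_ansatz}. The paper's sharpened equation is exactly of the form you predict, with $\i\omega^3 B_2$ coming from $-\frac{\omega^3}{4\pi v_b^3}JC$ and $\i\omega\delta B_1$ from $\frac{\delta\omega}{4\pi}(\frac1{v_b}-\frac1{v})\C JC$, $J$ being the all-ones matrix. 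One technical point to be careful about in your "first-order perturbation theory" step: $\C$ is not symmetric, only self-adjoint with respect to the weighted inner product $\langle x,y\rangle_D=\sum_i|D_i|x_iy_i$, so the pairing must be done with that inner product (this is precisely why the paper's formula carries the normalization $\|\underline{v}_n\|_D^2$); you also implicitly assume $\lambda_n$ simple, which the paper does not need to state because it extracts $\tau_n$ directly from the explicit matrix identity.

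Where you genuinely diverge is the proof that $\tau_n\ge0$. The paper gets this algebraically and for free: it computes $\tau_n=\frac{v_b^2}{8\pi v}\,\underline{v}_n\cdot CJC\underline{v}_n/\|\underline{v}_n\|_D^2$, and since $J=\mathbf{1}\mathbf{1}^T$ and $C$ is symmetric, $\underline{v}_n\cdot CJC\underline{v}_n=\big(\sum_i(C\underline{v}_n)_i\big)^2\ge0$. Your radiation-balance argument reaches the same conclusion by a physically natural but less explicit route, and it needs one repair as stated: for a resonance with $\Im(\omega)<0$ the outgoing mode grows exponentially at infinity, so the flux integral over $\partial B_R$ does not converge and the identity "$\Im(\omega^2)=$ non-positive multiple of the far-field intensity" cannot be applied directly to the mode itself. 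The correct form of the argument is by contradiction: if $\Im(\omega)>0$ the outgoing solution decays, Green's identity over all of $\R^3$ plus the transmission conditions forces $u\equiv0$; if $\Im(\omega)=0$ the flux identity gives vanishing far field, and Rellich's lemma plus unique continuation forces $u\equiv0$. Hence any nontrivial resonance has $\Im(\omega_n)<0$, and combining with $\Im(\omega_n)=-\tau_n\delta+\O(\delta^{3/2})$ and letting $\delta\to0$ gives $\tau_n\ge0$. With that adjustment your approach is sound; its advantage is that it explains the sign physically (radiation losses), while the paper's approach additionally yields the explicit value of $\tau_n$, which is used later (e.g.\ in Remark~\ref{remark12} and formula \eqref{tau10}).
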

\begin{proof}
	If $u^{in} = 0$, we find from Lemma~\ref{lem:modal} that there is a non-zero solution to the resonance problem when $\omega^2/v_b^2\delta$ is an eigenvalue of $\C$, at leading order.
	
	To find the imaginary part, we adopt the ansatz
	\begin{equation} \label{eq:omega_ansatz}
	\omega_n=\sqrt{v_b^2\lambda_n\delta} -\i\tau_n\delta+ \O(\delta^{3/2}).
	\end{equation}
	Using a few extra terms in the asymptotic expansions for $\S_{D}^k$ and $\K_{D}^{k,*}$, we have that
	\begin{equation*}
	\psi=\phi+\frac{k_b-k}{4\pi\i}\left(\sum_{n=1}^N\psi_n\right)\int_{\D}\phi\de\sigma+ \O(\omega^2),
	\end{equation*} 
	and, hence, that
	\begin{multline*}
	\left(-\frac{1}{2}I+\K_D^*+k_b^2\K_{D,2}+k_b^3\K_{D,3}-\delta\left(\frac{1}{2}I+\K_D^*\right)\right)[\phi]\\-\frac{\delta(k_b-k)}{4\pi\i}\left(\sum_{n=1}^N\psi_n\right)\int_{\D}\phi\de\sigma= \O(\delta\omega^2+\omega^4).
	\end{multline*}
	We then substitute the decomposition \eqref{eq:psi_basis} and integrate over $\D_i$, for $i=1,\dots,N$, to find that, up to an error of order $\O(\delta\omega^2+\omega^4)$, it holds that
	\begin{equation*}
	\bigg(-\frac{\omega^2}{v_b^2}-\frac{\omega^3\i}{4\pi v_b^3}JC+\delta \C+\frac{\delta\omega\i}{4\pi}\bigg(\frac{1}{v_b}-\frac{1}{v}\bigg)\C JC \bigg)
	\underline{q}=0,
	\end{equation*}
	where $J$ is the $N\times N$ matrix of ones (\emph{i.e.} $J_{ij}=1$ for all $i,j=1,\dots,N$). 
	Then, using the ansatz \eqref{eq:omega_ansatz} for $\omega_n$ we see that, if $\underline{v}_n$ is the eigenvector corresponding to $\lambda_n$, it holds that
	\begin{equation}
	\tau_n= \frac{v_b^2}{8\pi v} \frac{\underline{v}_n\cdot C J C\underline{v}_n}{\| \underline{v}_n\|_D^2},
	\end{equation}
	where we use the norm $\| x\|_D:=\big(\sum_{i=1}^N |D_i| x_i^2\big)^{1/2}$ for $x\in\mathbb{R}^N$. Since $C$ is symmetric, we can see that $\tau_n\geq0$.
\end{proof}

It is more illustrative to rephrase Lemma~\ref{lem:modal} in terms of basis functions that are associated with the resonant frequencies. Denote by $\underline{v}_n=(v_{1,n},\dots,v_{N,n})$ the eigenvector of $\C$ with eigenvalue $\lambda_n$. Then, we have a modal decomposition with coefficients that depend on the matrix $V=(v_{i,j})$, assuming the system is such that $V$ is invertible. The following result follows from Lemma~\ref{lem:modal} by diagonalising the matrix $\C$.

\begin{lemma} \label{lem:modal_res}
	Suppose that the resonators' geometry is such that the matrix of eigenvectors $V$ is invertible. Then if $\omega=\O(\sqrt{\delta})$ the solution to the scattering problem can be written, for $x\in\mathbb{R}^3$, as
	\begin{equation*} 
	u^s(x):= u(x)-u^{in}(x) = \sum_{n=1}^N a_n u_n(x) - \S_D\left[\S_D^{-1}[u^{in}]\right](x) + \O(\omega),
	\end{equation*} 
	for coefficients which satisfy, up to an error of order $\O(\omega^3)$, 
	\begin{equation*}
	a_n(\omega^2-\omega_n^2)=-A\nu_n\Re(\omega_n)^2,
	\end{equation*}
	where $\nu_n=\sum_{j=1}^{N} [V^{-1}]_{n,j}$, \emph{i.e.} the sum of the $n$\textsuperscript{th} row of $V^{-1}$.
\end{lemma}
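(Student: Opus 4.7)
The plan is to diagonalise the linear system from Lemma~\ref{lem:modal} in the eigenbasis of the weighted capacitance matrix $\C$. Writing $\C = V \Lambda V^{-1}$ with $\Lambda = \mathrm{diag}(\lambda_1, \dots, \lambda_N)$, I introduce new coordinates $\underline{a} = V^{-1} \underline{q}$, so that $\sum_n q_n S_n^\omega = \sum_n a_n u_n$ with the natural eigenmode basis
\[
u_n(x) := \sum_{j=1}^N v_{j,n} S_j^\omega(x).
\]
In this basis the linear system of Lemma~\ref{lem:modal} decouples into $N$ scalar equations
\[
(\omega^2 - v_b^2 \delta \lambda_n)\, a_n = v_b^2 \delta\, [V^{-1} \underline{b}]_n + \O(\delta\omega+\omega^3),
\]
where $b_i := \frac{1}{|D_i|} \int_{\p D_i} \S_D^{-1}[u^{in}]\, \mathrm{d}\sigma$.

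Next I exploit the subwavelength assumption $\nabla u^{in}|_D = \O(\omega)$, which lets me replace $u^{in}$ by a single scalar near-field value $A$ on $D$ at leading order; this is the $A$ that appears in the statement. Using $\sum_{j=1}^N \psi_j = \S_D^{-1}[\chi_{\p D}]$ together with the definition \eqref{defCap} of capacitance, one finds
\[
\int_{\p D_i} \S_D^{-1}[u^{in}]\, \mathrm{d}\sigma = -A\sum_{j=1}^N C_{ij} + \O(\omega),
\]
so $\underline{b} = -A\, \C\, \underline{1} + \O(\omega)$ with $\underline{1}$ the all-ones vector. Applying $V^{-1}$ and using $V^{-1}\C = \Lambda V^{-1}$ yields $[V^{-1}\underline{b}]_n = -A\lambda_n\, \nu_n + \O(\omega)$ with $\nu_n = \sum_j [V^{-1}]_{n,j}$ as defined in the statement.

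Substituting back, and invoking Theorem~\ref{thm:res}, which gives $v_b^2\delta\lambda_n = \Re(\omega_n)^2 + \O(\delta^2)$ and $v_b^2\delta\lambda_n = \omega_n^2 + \O(\delta^{3/2})$, the decoupled system becomes
\[
a_n\,(\omega^2 - \omega_n^2) = -A\, \nu_n\, \Re(\omega_n)^2 + \O(\omega^3),
\]
as claimed. The modal decomposition of $u^s$ itself then follows at once from the layer-potential representation \eqref{eq:layer_potential_representation}, the relation $\psi = \phi - \S_D^{-1}[u^{in}] + \O(\omega)$ established in the proof of Lemma~\ref{lem:modal}, and the expansion $\S_D^k = \S_D + \O(\omega)$ from \eqref{eq:exp_S}.

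The main obstacle I anticipate is careful bookkeeping of the error terms: one has to verify that the corrections turning $v_b^2\delta\lambda_n$ into $\omega_n^2$ on the left and into $\Re(\omega_n)^2$ on the right (including the imaginary part $-\i\tau_n\delta$ from Theorem~\ref{thm:res}) contribute only at the claimed order $\O(\omega^3)$ after being multiplied by $a_n$ and rearranged. A secondary subtlety is pinning down $A$ precisely as the leading-order near-field value of $u^{in}$ on $D$, which is well-defined thanks to the hypothesis $\nabla u^{in}|_D = \O(\omega)$ from \eqref{eq:scattering} and the subwavelength scaling $\omega = \O(\sqrt{\delta})$.
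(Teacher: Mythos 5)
Your proposal is correct and follows essentially the same route as the paper, which proves this result simply by diagonalising $\C$ in the system of Lemma~\ref{lem:modal}; your change of variables $\underline{a}=V^{-1}\underline{q}$, the identification of $A$ as the leading-order value of $u^{in}$ on $D$ via $\sum_j\psi_j=\S_D^{-1}[\chi_{\partial D}]$, and the substitution of $v_b^2\delta\lambda_n$ by $\omega_n^2$ and $\Re(\omega_n)^2$ using Theorem~\ref{thm:res} are exactly the intended steps. The error bookkeeping you flag works out because $\omega=\O(\sqrt{\delta})$ makes $\delta\omega$ and $\delta^{3/2}$ both of order $\O(\omega^3)$.
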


\begin{remark} \label{remark12}
When $N=1$, the subwavelength resonant frequency $\omega_1$ is called the Minnaert resonance. 
By writing an asymptotic expansion of
$\mathcal{A}(\omega, \delta)$ in terms of $\delta$ 
and applying the generalized argument principle (Theorem \ref{thmprincipal}), one can prove that
$\omega_1$  satisfies as $\delta$ goes to zero the asymptotic formula  \cite{ammari2018minnaert} 
\begin{equation} \label{defomegaM}
 \omega_1 = 
 \underbrace{{\sqrt{\frac{\mathrm{Cap}_D}{|D|}} v_b {\sqrt{\delta}}}}_{:= \omega_M}  - {\i} \underbrace{{\left(\frac{\mathrm{Cap}_D^2 v_b^2}{8 \pi  v |D|} {\delta}\right)}}_{:= \tau_M} + \O(\delta^{\frac{3}{2}}),
 \end{equation}
 where  \begin{equation} \label{defcap}
 \mathrm{Cap}_D:= \ds - \int_{\partial D} \mathcal{S}_D^{-1}[\chi_{\partial D}] \; \dx\sigma \end{equation} is the capacity of $\partial D$. Moreover, the following 
 {monopole approximation} of the scattered field for $\omega$ near $\omega_M$  holds \cite{ammari2018minnaert}:
\begin{equation} \label{monopole}
u^s(x) = {g(\omega,\delta, D)}(1+\O(\omega)+\O(\delta)+o(1))u^{in}(0) {G^{k}(x)},
\end{equation}
with the origin $0 \in D$ and the {scattering coefficient} $g$ being given by 
\begin{equation} \label{defg}
g(\omega,\delta, D) = \frac{\mathrm{Cap}_D}{1-{(\frac{\omega_M}{\omega})^2} + \i {\gamma_M}}, 
\end{equation}
where the damping constant $\gamma_M$ is given  by
$$
\gamma_M :=  \frac{(v+v_b) \mathrm{Cap}_D \omega}{8 \pi v v_b} - \frac{(v-v_b)}{v} \frac{v_b \mathrm{Cap}_D^2 \delta}{8 \pi |D| \omega}.
$$
This shows the scattering enhancement near $\omega_M$.

When $N=2$,  there are 
{two subwavelength resonances} with positive real part for the {resonator dimer} $D$. 
Assume that $D_1$ and $D_2$ are symmetric with respect to the origin $0$ and let $C_{ij}$, for $i,j=1,2$, be defined by \eqref{defCap}. Then, as $\delta \rightarrow 0$, by using Lemma \ref{lem:ints} it follows that \cite{ammari2017double}
\begin{align} \label{tau10}
 \omega_1 &=   \underbrace{{\sqrt{ (C_{11}+ C_{12})}}   v_b  {\sqrt{\delta}}}_{:= \omega_{M,1}}   - \i\tau_1 {\delta} +\O(\delta^{3/2}), 
 \end{align}
 \begin{align} \label{tau20}
 \omega_2 &= \underbrace{{\sqrt{(C_{11}-C_{12})}}  v_b  {\sqrt{\delta}}}_{:= \omega_{M,2}} +  {\delta^{3/2}} \hat\eta_1 + \i\delta^2 \hat\eta_2 + \O(\delta^{5/2}),
\end{align}
where $\hat\eta_1$ and $\hat\eta_2$ are real numbers  determined by $D$, $v$, and $v_b$ and  
$$ 
\tau_1 = \frac{v_b^2}{4\pi  v}(C_{11}+C_{12})^2.
$$ 
The resonances $\omega_1$ and $\omega_2$ are called the {hybridized} resonances of the resonator dimmer $D$. 

On the other hand, the resonator dimer can be approximated as a {point scatterer} with {resonant monopole} and {resonant dipole} modes. Assume that $D_1$ and $D_2$ are symmetric with respect to the origin. Then for $\omega=\O(\delta^{1/2})$ and  $\delta \rightarrow 0$, and $|x|$ being sufficiently large, we have \cite{ammari2017double}
\begin{equation} \label{dimerh} \begin{array}{lll}
u^s(x)&=&  \underbrace{g^0(\omega)u^{in}(0)  {G^k(x)}}_{{monopole}} \\
\nm && + \underbrace{\nabla u^{in}(0)\cdot g^1(\omega)  {\nabla G^k(x)}}_{{dipole}} +\O(\delta|x|^{-1}),
\end{array}
\end{equation}
where the
{scattering coefficients} $g^0(\omega)$ and $g^1(\omega)=(g^1_{ij}(\omega))$ are given by
\begin{align}
&g^0(\omega)=\frac{C(1,1)}{{1- \omega_1^2/\omega^2}}(1+\O(\delta^{1/2})), \quad C(1,1):= C_{11} + C_{12} + C_{21} + C_{22},\\
&~g^1_{ij}(\omega)= \int_{\partial D}  \mathcal{S}_D^{-1}[x_i](y) y_j - \frac{\delta v_b^2}{\omega^2 |D|({1- \omega_2^2/\omega^2})} P^2\delta_{i1}\delta_{j1},
\end{align}
with \begin{align}
 P:=  \int_{\partial D} y_1(\psi_1-\psi_2)(y) \; \dx \sigma(y), \label{defP}
\end{align}
$\psi_i$, for $i=1,2,$ being defined by (\ref{defpsi}), and  $\delta_{i1}$ and $\delta_{j1}$ being the Kronecker delta. 

As shown in \eqref{monopole}-\eqref{defg}, around $\omega_M$, a single resonator in free-space scatters waves with a greatly enhanced amplitude. If a second resonator is introduced, coupling interactions will occur giving according to \eqref{dimerh} a system that has both monopole and dipole resonant modes. This pattern continues for larger number $N$ of resonators  \cite{davies2019fully}. 

\end{remark}

\begin{remark}
	The invertibility of $V$ is a subtle issue and depends only on the geometry of the inclusions $D=D_1\cup\dots\cup D_N$. In the case that the resonators are all identical, $V$ is clearly invertible since $\C$ is symmetric. 
\end{remark}

\begin{remark}
	In many cases $\tau_n=0$ for some $n$ (see for instance formula (\ref{tau20})), meaning the imaginary parts exhibit higher-order behaviour in $\delta$. For example, the second (dipole) frequency for a pair of identical resonators is known to be $\O(\delta^{2})$ \cite{ammari2017double}. In any case, the resonant frequencies will have negative imaginary parts, due to the radiation losses.
\end{remark}

\section{Close-to-touching subwavelength resonators} \label{sec3}

In this section, we study the behaviour of the coupled subwavelength resonant modes when two subwavelength resonators are brought close together. We consider the case of a pair of spherical resonators and use bispherical coordinates to derive explicit representations for the capacitance coefficients which, as shown in Theorem \ref{thm:res}, capture the system's resonant behaviour at leading order. 
We derive estimates for the rate at which the gradient of the scattered wave blows up as the resonators are brought together.

For simplicity, we study the effect of scattering by a pair of spherical inclusions, $D_1$ and $D_2$, with the same radius, which we denote by $r$,  and separation distance $\epsilon$ (so that their centres are separated by $2r +\epsilon$). We refer to \cite{davies2020close} for the case of arbitrary sized spheres.

We choose the separation distance $\epsilon$ as a function of $\delta$ and will perform an asymptotic analysis in terms of $\delta$. We choose $\epsilon$ to be such that, for some $0<\beta<1$,
\begin{equation} \label{assum_epsilon}
\epsilon\sim e^{-1/\delta^{1-\beta}} \text{ as } \delta\to0.
\end{equation}
As we will see shortly, with $\epsilon$ chosen to be in this regime the subwavelength resonant frequencies are both well behaved (\emph{i.e.} $\omega=\omega(\delta)\to0$ as $\delta\to0$) and we can compute asymptotic expansions in terms of $\delta$.

From Theorem \ref{thm:res} (see also Remark \ref{remark12}), 
	there exist two subwavelength resonant modes, $u_1$ and $u_2$, with associated resonant frequencies $\omega_1$ and $\omega_2$ with positive real part, labelled such that $\Re(\omega_1)<\Re(\omega_2)$.

\begin{figure}
    \centering
    \captionsetup{width=.6\linewidth}
	\includegraphics[width=.7\linewidth]{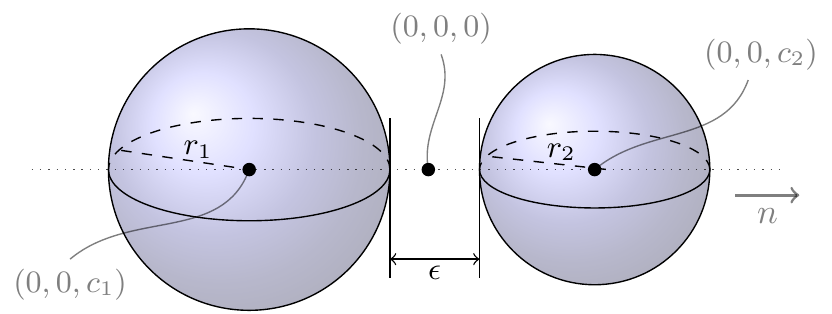}
    \caption{Two close-to-touching spheres, annotated with the bispherical coordinate system outlined in \Cref{sec:coordinates}.}
    \label{fig:coordinates}
\end{figure}

\subsection{Coordinate system} \label{sec:coordinates}

The Helmholtz problem \eqref{eq:scattering} is invariant under translations and rotations so we are free to choose the coordinate axes. Let $R_j$ be the reflection with respect to $\partial D_j$ and let $p_1$ and $p_2$ be the unique fixed points of the combined reflections $R_1\circ R_2$ and $R_2\circ R_1$, respectively. Let $n$ be the unit vector in the direction of $p_2-p_1$. We will make use of the Cartesian coordinate system $(x_1,x_2,x_3)$ defined to be such that $p=(p_1+p_2)/2$ is the origin and the $x_3$-axis is parallel to the unit vector $n$. Then one can see that \cite{kang2019quantitative}
\begin{equation}
p_1 = (0,0,-\alpha)\quad \mbox{and} \quad p_2 = (0,0,\alpha),
\end{equation}
where
\begin{equation}
\alpha:=\frac{\sqrt{\epsilon (4r +\epsilon)}}{2}.
\end{equation}
Moreover, the sphere $D_i$ is centered at $(0,0,c_i)$ where
\begin{equation} \label{eq:centres}
c_i=(-1)^i \sqrt{r^2+\alpha^2}.
\end{equation}

We then introduce a bispherical coordinate system $(\xi,\theta,\varphi)$ which is related to the Cartesian coordinate system $(x_1,x_2,x_3)$ by
\begin{equation} \label{def:bispherical_coordinates}
x_1=\frac{\alpha\sin\theta\cos\varphi}{\cosh\xi-\cos\theta}\,,\quad 
x_2=\frac{\alpha\sin\theta\sin\varphi}{\cosh\xi-\cos\theta}\,,\quad 
x_3=\frac{\alpha\sinh\xi}{\cosh\xi-\cos\theta}\,,
\end{equation}
and is chosen to satisfy $-\infty<\xi<\infty$, $0\leq\theta<\pi$ and $0\leq\varphi<2\pi$. The reason for this choice of coordinate system is that $\D_1$ and $\D_2$ are given by the level sets
\begin{equation}
\D_1=\big\{\xi=- \sinh^{-1}\left(\frac{\alpha}{r}\right) \big\},\qquad \D_2= \big\{\xi= \sinh^{-1}\left(\frac{\alpha}{r}\right) \big\}.
\end{equation}
This is depicted in \Cref{fig:coordinates} (for arbitrary sized spheres). The Cartesian coordinate system is chosen so that we can define a bispherical coordinate system \eqref{def:bispherical_coordinates} such that the boundaries of the two resonators are convenient level sets.

\subsection{Resonant frequency hybridization and gradient blow-up}

Firstly, the resonant frequencies are given, in terms of the capacitance coefficients, by (see \eqref{tau10} and \eqref{tau20})
\begin{equation} \label{eq:sym_resonances}
\begin{split}
	\omega_1&=\sqrt{\delta \frac{3v_b^2}{4\pi r^3}(C_{11}+C_{12})}+\O(\delta),\\
	\omega_2&=\sqrt{\delta \frac{3v_b^2}{4\pi r^3}(C_{11}-C_{12})}+\O(\delta).
\end{split}
\end{equation}
Further to this, the capacitance coefficients are given by
\begin{equation} \label{eq:sym_cap}
\begin{split}
C_{11}&=C_{22}=8\pi\widetilde{\alpha}\sum_{n=0}^{\infty} \frac{e^{(2n+1)\xi_0}}{e^{2(2n+1)\xi_0}-1}, \\
C_{12}&=C_{21}=-8\pi\widetilde{\alpha}\sum_{n=0}^{\infty}\frac{1}{e^{2(2n+1)\xi_0}-1},
\end{split}
\end{equation}
where
\begin{equation*}
\widetilde{\alpha}:=\sqrt{\epsilon(r+\epsilon/4)}, \qquad
\xi_0:=\sinh^{-1}\left(\frac{\widetilde{\alpha}}{r}\right).
\end{equation*}
From \cite{lekner2011near}, we know the asymptotic behaviour of the series in \eqref{eq:sym_cap} as $\xi_0\to0$, from which we can see that as $\epsilon\to0$,
\begin{equation} \label{eq:cap_sym_asym}
\begin{split}
C_{11}&=2\pi \frac{\widetilde{\alpha}}{\xi_0}\left[\gamma+2\ln 2+\ln \left(\sqrt{r}\right)-\ln \left(\sqrt{\epsilon}\right)\right]+\O(\epsilon),\\
C_{12}&=-2\pi \frac{\widetilde{\alpha}}{\xi_0}\left[\gamma+\ln \left(\sqrt{r}\right)-\ln \left(\sqrt{\epsilon}\right)\right]+\O(\epsilon),
\end{split}
\end{equation}
where $\gamma\approx0.5772\dots$ is the Euler–Mascheroni constant.

Combining \eqref{eq:sym_resonances} and \eqref{eq:cap_sym_asym} we reach the fact that the resonant frequencies are given, as $\delta\to0$, by
\begin{equation} \label{eq:res_identical}
\begin{split}
\omega_1&=\sqrt{\delta \frac{3v_b^2\ln 2}{r^2}}+ \O\left(\delta\right),\\
\omega_2&=\sqrt{\delta \frac{3v_b^2}{2r^2} \left(\ln \left(\frac{r}{\epsilon}\right)+2\gamma+2\ln 2\right)}+ \O\left(\sqrt{\delta}\right).
\end{split}
\end{equation}
Thus, the choice of $\epsilon\sim e^{-1/\delta^{1-\beta}}$, where $0<\beta<1$, means that as $\delta\to0$ we have that  $\omega_1\sim\sqrt{\delta}$ and $\omega_2\sim\delta^{\beta/2}$.

The two resonant modes, $u_1$ and $u_2$, correspond to the two resonators oscillating in phase and in antiphase with one another, respectively. Since the eigenmode $u_2$ has different signs on the two resonators, $\nabla u_2$ will blow up as the two resonators are brought together. Conversely, $u_1$ takes the same value on the two resonators so there will not be a singularity in the gradient. In particular, if we normalise the eigenmodes so that for any $x\in\D$
\begin{equation}
\lim_{\delta\to0} |u_1(x)|\sim1,\qquad \lim_{\delta\to0} |u_2(x)|\sim1,
\end{equation}
then the choice of $\epsilon$ to satisfy the regime $\epsilon\sim e^{-1/\delta^{1-\beta}}$ means that the maximal gradient of each eigenmode has the asymptotic behaviour, as $\delta\to0$,
\begin{equation} \label{eq:sym_mode_blowup}
\max_{x\in\outside}|\nabla u_1(x)|\sim 1, \qquad
\max_{x\in\outside}|\nabla u_2(x)|\sim \frac{1}{\epsilon}.
\end{equation}
By decomposing the scattered field into the two resonant modes, we can use \eqref{eq:sym_mode_blowup} to understand the singular behaviour exhibited by the acoustic pressure. The solution $u$ to the scattering problem \eqref{eq:scattering} with incoming plane wave $u^{in}$ with frequency $\omega=\O(\delta^{1/2})$ is given, for $x\in\outside$, by
\begin{equation}
u(x)=u^{in}(x) +au_1(x)+bu_2(x),
\end{equation}
where the coefficients $a$ and $b$ satisfy, as $\delta\to0$, the equations
\begin{align*}
a(\omega^2-\omega_1^2)&= \frac{\delta v_b^2}{|D|}\int_{\D} \S_D^{-1}[u^{in}] \de\sigma +\O(\delta^{\hat{\beta}}),\\
b(\omega^2-\omega_2^2)&= -\frac{\delta v_b^2}{|D|}\left(\int_{\D_1} \S_D^{-1}[u^{in}] \de \sigma-\frac{|D_1|}{|D_2|}\int_{\D_2} \S_D^{-1}[u^{in}] \de\sigma \right)+\O(\delta^{\hat{\beta}}),
\end{align*}
with $\hat{\beta}:=\min(2-\beta,3/2)$ and $|D|$ being the volume of $D=D_1\cup D_2$.

\section{Effective medium theory for subwavelength resonators} \label{dilutesect}

\subsection{High refractive index effective media} 
We  consider a domain $\Omega$ which contains a large number of small, identical resonators. If $D_0$ is a fixed domain, then for some $r>0$ the $N$ resonators are given, for $1\leq j\leq N$, by
	\begin{equation*}
	D_{0,j}^{r,N}=r D_0+z_j^N,
	\end{equation*}
	for positions $z_j^N$. We always assume that $r$ is sufficiently small such that the resonators are not overlapping and that $D_{0}^{r,N}=\bigcup_{j=1}^N D_{0,j}^{r,N} \Subset \Omega$. 
	
We find the effective equation in the specific case that the frequency $\omega =\O(1)$ and satisfies 
\begin{equation} \label{ass:frequency}
1- (\frac{\omega_M}{\omega})^2=  \beta_0 r^{\epsilon_0},
\end{equation}	
for some fixed $ 0<\epsilon_0<1$ and constant $\beta_0$. 
We note that there are two cases depending on whether $\omega > \omega_M$ or $\omega < \omega_M$. In the former case, $\beta_0 >0$ while in the latter case we have $\beta_0<0$. 

Moreover, we assume that there exists some positive number $\Lambda$ independent of $N$ such that
	\begin{equation} \label{ass:number}
	r^{1-\epsilon_0}N=\Lambda \quad \mbox{and } \Lambda \mbox{ is large}.
	\end{equation} 

Since the resonators are small, we can use the point-scatter approximation from \eqref{monopole} to describe how they interact with incoming waves. To do so, we must make some extra assumptions on the regularity of the distribution $\{z_j^N:1\leq j\leq N\}$ so that the system is well behaved as $N\to\infty$ (under the assumption \eqref{ass:number}). In particular, we assume that there exists some constant $\eta$ such that for any $N$ it holds that
	\begin{equation} \label{ass:dist}
	\min_{i\neq j} |z_i^N-z_j^N| \geq \frac{\eta}{N^{1/3}},
	\end{equation}
	and, further, there exists some $0<\epsilon_1<1$ and constants $C_1,C_2>0$ such that for all $h\geq 2\eta N^{-1/3}$,
	\begin{align}
	\sum_{|x-z_j^N|\geq h} \frac{1}{|x-y_j^N|^2}\leq C_1 N |h|^{-\epsilon_1}, \qquad&\text{uniformly for all } x\in\Omega,\label{ass:reg1}\\
	\sum_{2\eta N^{-1/3}\leq|x-z_j^N|\leq 3h} \frac{1}{|x-y_j^N|}\leq C_2 N |h|, \qquad&\text{uniformly for all } x\in\Omega. \label{ass:reg2}
	\end{align}
	Finally, we  also need that 
	\begin{equation} \label{ass:epsilon}
	\epsilon_2:=\frac{\epsilon_0}{1-\epsilon_0}-\frac{\epsilon_1}{3}>0.
	\end{equation}
	If we represent the field that is scattered by the collection of resonators $D_0^{r,N}=\bigcup_{j=1}^N D_{0,j}^{r,N}$ as
	\begin{equation*} 
	u^N(x)= \begin{cases}
	u^{in}(x)+\S_{D_0^{r,N}}^{k}[\psi^N](x), & x\in\R^3 \setminus \overline{D_0^{r,N}},\\
	\S_{D_0^{r,N}}^{k_0}[\phi^N](x), & x\in D_0^{r,N},
	\end{cases}
	\end{equation*} 
	for some $\psi^N, \phi^N\in L^2(\p D_0^{r,N})$,	then we have the following lemma, which follows from \cite[Proposition~3.1]{ammari2017effective}. This justifies using a point-scatter approximation to describe the total incident field acting on the resonator $D_{0,j}^{r,N}$ and the scattered field due to $D_{0,j}^{r,N}$, defined respectively as
	\begin{equation*}
	u_j^{in,N}=u^{in}+\sum_{i\neq j} \S_{D_{0,i}^{r,N}}^k[\psi^N] \qquad \text{and} \qquad 	u_j^{s,N}=\S_{D_{0,j}^{r,N}}^k[\psi^N].
	\end{equation*}

	\begin{lemma} \label{lem:points}
		Under the assumptions \eqref{ass:frequency}--\eqref{ass:epsilon}, it holds that the total incident field acting on the resonator $D_{0,j}^{r,N}$ is given, at $z_j^N$, by
		\begin{equation*}
		u_j^{in,N}(z_j^N)=u^{in}(z_j^N)+\sum_{i\neq j} \frac{r\mathrm{Cap}_{D_0}}{1-(\frac{\omega_M}{\omega})^2} G^k(z_j^N-z_i^N)u^{in}(z_j^N),
		\end{equation*}
		up to an error of order $\O(N^{-\epsilon_2})$. Similarly, it holds that the scattered field due to the resonator $D_{0,j}^{r,N}$ is given, at $x$ such that $|x-z_j^N|\gg r$, by
		\begin{equation*}
		u_j^{s,N}(x)=  \frac{r\mathrm{Cap}_{D_0}}{1-(\frac{\omega_M}{\omega})^2} G^k(x-z_j^N) u_j^{in,N}(z_j^N) ,
		\end{equation*}
		up to an error of order $\O(N^{-\epsilon_2}+r|x-z_j^N|^{-1})$.
	\end{lemma}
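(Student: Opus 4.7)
The plan is to combine the single-resonator monopole approximation from Remark \ref{remark12}, equation \eqref{monopole}, with the Foldy-Lax self-consistency relation for the $N$ inclusions, and to close the resulting discrete system using the regularity hypotheses \eqref{ass:dist}--\eqref{ass:reg2} on the distribution of centres.

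First I would treat the second identity, which is essentially a direct application of \eqref{monopole}--\eqref{defg} to the single resonator $D_{0,j}^{r,N}=rD_0+z_j^N$ regarded as a scatterer in the effective external field $u_j^{in,N}$. Under the scaling $\mathrm{Cap}_{D_{0,j}^{r,N}}=r\,\mathrm{Cap}_{D_0}$, and after absorbing the damping term $\gamma_M$ (which is subdominant once \eqref{ass:frequency} is used) together with the $\O(\omega)+\O(\delta)+o(1)$ relative correction in \eqref{monopole}, one obtains $u_j^{s,N}(x)=g\,G^k(x-z_j^N)\,u_j^{in,N}(z_j^N)$ with $g=r\,\mathrm{Cap}_{D_0}/(1-(\omega_M/\omega)^2)$. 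The $\O(r|x-z_j^N|^{-1})$ part of the remainder is the cost of replacing the surface integral by a point evaluation at the centre $z_j^N$ in the regime $|x-z_j^N|\gg r$, while the $\O(N^{-\epsilon_2})$ part is inherited from the error that will appear in the next step on $u_j^{in,N}(z_j^N)$ itself.

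For the first identity, I would use the defining relation $u_j^{in,N}=u^{in}+\sum_{i\neq j}\S_{D_{0,i}^{r,N}}^k[\psi^N]$ and insert the monopole approximation for each $\S_{D_{0,i}^{r,N}}^k[\psi^N]=u_i^{s,N}$ evaluated at $x=z_j^N$. This yields the discrete linear system
\[
U_j=u^{in}(z_j^N)+\sum_{i\neq j}g\,G^k(z_j^N-z_i^N)\,U_i+\mathrm{err}_j,\qquad U_j:=u_j^{in,N}(z_j^N),
\]
to which a Neumann-series inversion of $I-\mathbf{T}$, with $\mathbf{T}_{ji}=g\,G^k(z_j^N-z_i^N)$, applies. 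Assumption \eqref{ass:frequency} gives $|g|\sim r^{1-\epsilon_0}$, and \eqref{ass:number} then bounds the effective coupling $|g|\cdot N\sim\Lambda$. At leading order each $U_i$ on the right-hand side may be replaced by $u^{in}(z_j^N)$ (using the smoothness of $u^{in}$ and the decay of $g\,G^k$) to recover the stated expression, the discrepancy being reabsorbed into the $\O(N^{-\epsilon_2})$ remainder.

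The main obstacle is controlling, uniformly in $N$, the discrete sums arising from the singular Green's function kernel. The three regularity hypotheses are tailored for precisely this: \eqref{ass:dist} prevents any pair of scatterers from being closer than $\sim N^{-1/3}$, which keeps each weighted entry of $\mathbf{T}$ finite; \eqref{ass:reg1} and \eqref{ass:reg2} are the discrete counterparts of $\int|x-y|^{-2}\,\mathrm{d}y$ and $\int|x-y|^{-1}\,\mathrm{d}y$ needed to split $\mathbf{T}$ into near-field and far-field parts and bound each independently of $N$; and \eqref{ass:epsilon} ensures that the accumulated error from truncating the monopole expansion, truncating the Neumann series, and replacing $U_i$ by $u^{in}(z_j^N)$ can all be consolidated into the single bound $\O(N^{-\epsilon_2})$ stated in the lemma.
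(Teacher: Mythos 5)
The paper does not actually prove this lemma: it is quoted directly from \cite[Proposition~3.1]{ammari2017effective}, whose proof works on the coupled boundary-integral system for all $N$ resonators at once and estimates the densities $\psi^N$ on each $\partial D_{0,i}^{r,N}$ with explicit rates, uniformly in $i$ and $N$. Your outline has the right spirit (point-scatterer approximation plus the regularity hypotheses on the centres), but two of its steps do not hold as written. First, you obtain the point-scatterer form for $D_{0,j}^{r,N}$ by invoking the single-resonator monopole formula \eqref{monopole}--\eqref{defg} with $u_j^{in,N}$ in place of $u^{in}$. That formula carries only a qualitative relative error $\O(\omega)+\O(\delta)+o(1)$, is proved for an incident field satisfying the Helmholtz equation in all of $\R^3$ with $\nabla u^{in}\big|_D=\O(\omega)$, and says nothing about uniformity in $j$ or $N$. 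Here $u_j^{in,N}$ contains the near fields of neighbours at distances as small as $\eta N^{-1/3}$, so its size and gradient on $D_{0,j}^{r,N}$ must themselves be estimated before any such formula can be applied; the quantitative rate $\O(N^{-\epsilon_2})$ cannot simply be ``inherited''. This is precisely the content of the cited proposition, where the assumptions \eqref{ass:dist}--\eqref{ass:epsilon} enter through uniform estimates of the interaction sums, not as an afterthought.

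Second, the closing step for the first identity fails. You propose a Neumann-series inversion of $I-\mathbf{T}$, $\mathbf{T}_{ji}=g\,G^k(z_j^N-z_i^N)$, followed by replacing each $U_i$ by the bare incident field. But, as your own scaling shows, the row sums of $\mathbf{T}$ are of order $|g|N\sim\Lambda$ (use \eqref{ass:reg2} with $h$ comparable to the diameter of $\Omega$), and \eqref{ass:number} assumes $\Lambda$ is \emph{large}; hence $\|\mathbf{T}\|$ is not small, the Neumann series need not converge, and there is no smallness justifying the substitution of $u^{in}$ for $U_i$: the multiple-scattering part of $U_i$ is of the same order $\Lambda$ as the sum being evaluated, so this substitution is not an $\O(N^{-\epsilon_2})$ perturbation. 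In fact no inversion or decoupling is needed for the lemma: the first identity follows by inserting the point-scatterer approximation of each neighbour's scattered field into the definition $u_j^{in,N}=u^{in}+\sum_{i\neq j}\S_{D_{0,i}^{r,N}}^{k}[\psi^N]$, keeping the neighbours' self-consistent values (as in the source statement), and summing the individual errors with the help of \eqref{ass:dist}--\eqref{ass:reg2}; any attempt to eliminate the self-consistent values by a contraction argument contradicts the standing assumption that $\Lambda$ is large, and solvability is only addressed later, at the effective-medium stage of Theorem~\ref{thm:homogenized}.
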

	
	In order for the sums in \ref{lem:points} to be well behaved as $N\to\infty$, we make one additional assumption on the regularity of the distribution: that there exists a real-valued function $\widetilde{V}\in \mathcal{C}^1(\overline{\Omega})$ such that for any $f\in \mathcal{C}^{0,\alpha}(\Omega)$, with $0<\alpha\leq1$, there is a constant $C_3$ such that 
	\begin{equation} \label{ass:integral}
	\max_{1\leq j\leq N} \left| \frac{1}{N}\sum_{i\neq j} G^k(z_j^N-z_i^N)f(z_i^N)-\int_\Omega G^k(z_j^N-y)\widetilde{V}(y)f(y) \de y \right| \leq C_3\frac{1}{N^{\alpha/3}}\|f\|_{\mathcal{C}^{0,\alpha}(\Omega)}.
	\end{equation}
	\begin{remark}
		It  holds that $\widetilde{V}\geq0$. If the resonators' centres $\{z_j^N:j=1,\dots,N\}$ are uniformly distributed, then $\widetilde V$ will be a positive constant, $\widetilde V = {1}/{|\Omega|}.$
	\end{remark}

	Under all these assumptions, we are able to derive effective equations for the system with an arbitrarily large number of small resonators. If we let $\epsilon_3\in(0,\tfrac{1}{3})$, then we will seek effective equations on the set given by
	\begin{equation} \label{defomegaN}
	Y_{\epsilon_3}^N:=\left\{x\in\mathbb{R}^3:|x-z_j^N|\geq N^{\epsilon_3-1} \text{ for all }1\leq j\leq N\right\},
	\end{equation}
	which is the set of points that are sufficiently far from the resonators, avoiding the singularities of the Green's function. The following result from \cite{ammari2017effective} holds. 
	
	\begin{theorem} \label{thm:homogenized}
		Let $\omega < \omega_M$. Under the assumptions \eqref{ass:number}--\eqref{ass:integral}, the solution $u^N$ to the scattering problem \eqref{eq:scattering} with the system of resonators $D_0^{r,N}=\bigcup_{j=1}^N D_{0,j}^{r,N}$ converges to the solution of
		\begin{equation*}
		\begin{cases}
		\left(\Delta+k^2-\frac{\Lambda \mathrm{Cap}_{D_0}}{\beta_0}\widetilde{V}(x)\right)u(x)=0, &  x\in\Omega, \\
		\left(\Delta+k^2\right)u(x)=0, & x\in\mathbb{R}^3\setminus \overline{\Omega},\\
		u|_+ = u|_- & \mbox{on } \partial \Omega, 	
		\end{cases}
		\end{equation*}
		as $N\to\infty$, together with a radiation condition governing the behaviour in the far field, which says that uniformly for all $x\in Y_{\epsilon_3}^N$ it holds that
		\begin{equation*}
		|u^N(x)-u(x)|\leq C N^{-\min\left\{\frac{1-\epsilon_0}{6},\epsilon_2,\epsilon_3,\frac{1-\epsilon_3}{3}\right\}}.
		\end{equation*}
	\end{theorem}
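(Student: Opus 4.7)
The plan is to reduce the multi-scatterer problem to a discrete Foldy--Lax system, pass to a Lippmann--Schwinger integral equation in the limit $N\to\infty$, and then convert that integral equation into the stated PDE. First I evaluate the first identity of Lemma~\ref{lem:points} at each scatterer centre $z_j^N$, obtaining, up to $\O(N^{-\epsilon_2})$, a coupled linear system for the excitation values $\{u_j^{in,N}(z_j^N)\}_{j=1}^N$:
\[
u_j^{in,N}(z_j^N) = u^{in}(z_j^N) + \sum_{i\neq j} \frac{r\,\mathrm{Cap}_{D_0}}{1-(\omega_M/\omega)^2}\, G^k(z_j^N-z_i^N)\,u_i^{in,N}(z_i^N).
\]
Using the frequency regime \eqref{ass:frequency} and the density relation \eqref{ass:number}, the prefactor collapses to $\frac{\Lambda\,\mathrm{Cap}_{D_0}}{\beta_0 N}$, so the system is a Riemann-sum discretisation of a Fredholm integral operator with kernel $G^k$.

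Next I pass to the continuum. Positing a continuous limit $u$ with $u(z_j^N)$ approximating $u_j^{in,N}(z_j^N)$, the hypothesis \eqref{ass:integral} converts the discrete sum to $\int_\Omega G^k(\,\cdot\,-y)\widetilde V(y)u(y)\,\de y$, yielding the Lippmann--Schwinger equation
\[
u(x) = u^{in}(x) + \frac{\Lambda\,\mathrm{Cap}_{D_0}}{\beta_0}\int_{\Omega} G^k(x-y)\widetilde V(y)\,u(y)\,\de y, \qquad x\in\R^3.
\]
Using the second identity of Lemma~\ref{lem:points} to write $u^N(x)=u^{in}(x)+\sum_j u_j^{s,N}(x)$ at $x\in Y_{\epsilon_3}^N$, and applying \eqref{ass:integral} once more, shows that $u^N(x)$ is a perturbation of the integral operator evaluated at $x$, so the limit $u^N\to u$ follows by comparison. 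Since $G^k$ is the outgoing Green's function of $\Delta+k^2$ and $u^{in}$ solves the homogeneous equation, applying $\Delta+k^2$ to the Lippmann--Schwinger identity produces the stated PDE: $\bigl(\Delta+k^2-\tfrac{\Lambda\,\mathrm{Cap}_{D_0}}{\beta_0}\widetilde V\bigr)u=0$ in $\Omega$ and $(\Delta+k^2)u=0$ outside. Continuity of the volume potential across $\partial\Omega$ furnishes $u|_+ = u|_-$, and the Sommerfeld condition is inherited from $G^k$.

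The main obstacle is the uniform error analysis on $Y_{\epsilon_3}^N$. Writing the Foldy--Lax system as $(I-K_N)\underline{u} = \underline{u}^{in}$, one needs $I-K_N$ to be uniformly invertible as $N\to\infty$ and the discrete solution vector to carry enough Hölder regularity for \eqref{ass:integral} to apply to it, not only to smooth test data. The final rate in the theorem is then the minimum of four competing error contributions: the $\O(N^{-\epsilon_2})$ point-scatterer error from Lemma~\ref{lem:points}; the quadrature error $\O(N^{-(1-\epsilon_3)/3})$ from \eqref{ass:integral} applied with effective Hölder exponent $\alpha=1-\epsilon_3$; the distance cutoff $\O(N^{-\epsilon_3})$ arising from $|x-z_j^N|\geq N^{\epsilon_3-1}$ on $Y_{\epsilon_3}^N$ combined with the $r|x-z_j^N|^{-1}$ term in Lemma~\ref{lem:points}; and a stability loss $\O(N^{-(1-\epsilon_0)/6})$ from inverting $(I-K_N)$, which reflects the interplay between the near-diagonal singularity of $G^k$ and the minimum separation \eqref{ass:dist} through the regularity estimates \eqref{ass:reg1}--\eqref{ass:reg2}. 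Propagating all four through the comparison with the continuous equation gives the stated bound, with the worst exponent dictating the global rate.
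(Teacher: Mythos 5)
The paper does not actually prove this theorem itself --- it is quoted from \cite{ammari2017effective} --- and your Foldy--Lax/point-interaction reduction to a Lippmann--Schwinger equation, followed by applying $\Delta+k^2$ to recover the PDE and reading off the transmission and radiation conditions from the volume potential, is exactly the strategy of that reference, including the correct identification of the per-scatterer coupling constant $\Lambda\,\mathrm{Cap}_{D_0}/(\beta_0 N)$ from \eqref{ass:frequency} and \eqref{ass:number} and the correct self-consistent form of the discrete system (with $u_i^{in,N}(z_i^N)$ on the right-hand side). So your outline is the right proof in essentially the same way as the cited source, with the caveat that the quantitative core --- uniform invertibility of $(I-K_N)$, the H\"older regularity of the discrete solution needed to invoke \eqref{ass:integral}, and the actual derivation of the exponents $\tfrac{1-\epsilon_0}{6}$, $\epsilon_2$, $\epsilon_3$, $\tfrac{1-\epsilon_3}{3}$ --- is identified but not carried out, so as written it is a faithful sketch rather than a complete argument.
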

	
	By our assumption, $k=\O(1)$, $\widetilde{V}=\O(1)$, and $\beta_0 <0$. When $- \Lambda \mathrm{Cap}_{D_0}/{\beta_0} \gg 1$, we see that we have an effective high refractive index medium. As a consequence, this together with \cite{ammari2015mathematical} gives a rigorous mathematical theory for the super-focusing experiment in \cite{fink}. Similarly to Theorem \ref{thm:homogenized}, if $\omega > \omega_M$, to the scattering problem \eqref{eq:scattering} with the system of resonators $D_0^{r,N}$ converges to the solution of the following dissipative equation
		\begin{equation*}
		\begin{cases}
		\left(\Delta+k^2-\frac{\Lambda \mathrm{Cap}_{D_0}}{\beta_0}\widetilde{V}(x)\right)u(x)=0, &  x\in\Omega, \\
		\left(\Delta+k^2\right)u(x)=0, & x\in\mathbb{R}^3\setminus \overline{\Omega},\\
		u|_+ = u|_- & \mbox{on } \partial \Omega, 	
		\end{cases}
		\end{equation*}
		as $N\to\infty$, together with a radiation condition governing the behaviour in the far field, which says that uniformly for all $x\in Y_{\epsilon_3}^N$ it holds that
		\begin{equation*}
		|u^N(x)-u(x)|\leq C N^{-\min\left\{\frac{1-\epsilon_0}{6},\epsilon_2,\epsilon_3,\frac{1-\epsilon_3}{3}\right\}}.
		\end{equation*}

\begin{remark}
At the resonant frequency $\omega= \omega_M$, the scattering coefficient $g$ defined by (\ref{defg}) is of order one. 
Thus each scatterer is  a point source with magnitude one.  As a consequence, the addition or removal of one resonator from the medium affects the total field by a magnitude of the same order as the incident field. Therefore, we cannot expect any effective medium theory for the  medium at this resonant frequency.  
\end{remark}

\subsection{Double-negative metamaterials}

In this subsection, we show  that, using dimers of identical subwavelength resonators, the effective material parameters of dilute  system of dimers can both be negative over a non empty range of frequencies \cite{ammari2017double}. 
As shown in \eqref{dimerh}, a dimer of identical subwavelength resonators can be approximated as a point scatterer with monopole and dipole modes.  It features two slightly different subwavelength resonances, called the hybridized  resonances. The hybridized resonances are fundamentally different modes. The first mode is, as in the case of a single resonator,  a monopole mode, while the second mode is a dipole mode. The resonance associated with the dipole mode is usually referred to as the anti-resonance. 

For an appropriate volume fraction, when the excitation frequency is close to the anti-resonance, a double-negative effective $\rho$ and $\kappa$ for media consisting of a large number of  dimers with certain conditions on their distribution can be obtained. The dipole modes in the background medium contribute to the effective $\rho$ while the monopole modes contribute to the effective $\kappa$.

Here we consider the scattering of an incident plane wave $u^{in}$ by $N$ identical dimers with different orientations distributed in a homogeneous medium in $\R^3$.  The $N$ identical dimers are generated by scaling the normalized  dimer $D$ by a factor $r$, and then rotating the orientation and translating the center. More precisely,  the dimers occupy the domain
$$
D^N:=\cup_{1\leq j \leq N}D_j^N,
$$ 
where $D_j^N=z_j^N + r R_{d_j^N}D$ for $1\leq j \leq N$, with $z_j^N$ being the center of the dimer $D_j^N$, $r$ being the characteristic size, and $R_{d_j^N}$ being the rotation in $\R^3$ which aligns the dimer $D_j^N$ in the direction $d_j^N$.  Here, $d_j^N$ is a vector of unit length in $\R^3$. For simplicity, we suppose that $D$ is made of two identical spherical resonators. 
We also assume that $0< r \ll 1$, $N\gg 1$ and that $\{z_j^N: 1\leq j \leq N\} \subset \Omega$ where $\Omega$ is a bounded domain.  See Figure \ref{doublef}.

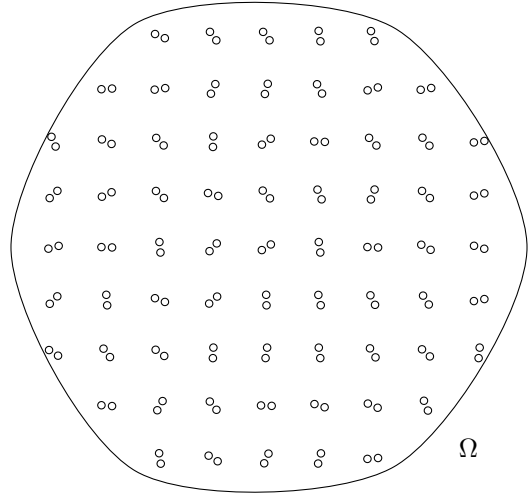
\begin{figure}
\begin{center}
\begin{tikzpicture}[scale=0.7]
	\pgfmathsetmacro{\rad}{0.07pt}	
	\pgfmathsetmacro{\sep}{0.1pt}
	
	\foreach \x in {-3,...,3}
	\foreach \y in {-3,...,3}
	{
	\begin{scope}[xshift = \x cm, yshift=\y cm, rotate=rand*360]
	\draw (-\sep,0) circle (\rad);
	\draw (\sep,0) circle (\rad);
	\end{scope}
	}	
	\foreach \c in {-2,...,2}
	{
		\begin{scope}[xshift = \c cm, yshift=4 cm, rotate=rand*360]
		\draw (-\sep,0) circle (\rad);
		\draw (\sep,0) circle (\rad);
		\end{scope}
		\begin{scope}[xshift = \c cm, yshift=-4 cm, rotate=rand*360]
		\draw (-\sep,0) circle (\rad);
		\draw (\sep,0) circle (\rad);
		\end{scope}
		\begin{scope}[xshift = 4 cm, yshift=\c cm, rotate=rand*360]
		\draw (-\sep,0) circle (\rad);
		\draw (\sep,0) circle (\rad);
		\end{scope}
		\begin{scope}[xshift = -4 cm, yshift=\c cm, rotate=rand*360]
		\draw (-\sep,0) circle (\rad);
		\draw (\sep,0) circle (\rad);
		\end{scope}
	}	

	\draw[name path = O] plot [smooth cycle] coordinates {(0:4.9) (60:4.8) (120:4.9) (180:4.8) (240:4.9) (300:4.8) };
	\draw (3.8,-3.8) node{$\Omega$};
\end{tikzpicture}
\end{center} \caption{Illustration of the dilute system of subwavelength dimers.} \label{doublef}
\end{figure}

The scattering of  waves by the dimers can be modeled by the following system of equations: 
\begin{equation} \label{eq-scattering2}
\left\{
\begin{array} {ll}
&\ds \nabla \cdot \frac{1}{\rho} \nabla  u^N+ \frac{\omega^2}{\kappa} u^N  = 0 \quad \mbox{in } \R^3 \backslash \overline{D^N}, \\
\nm
&\ds \nabla \cdot \frac{1}{\rho_b} \nabla  u^N+ \frac{\omega^2}{\kappa_b} u^N  = 0 \quad \mbox{in } D^N, \\
\nm
& \ds u^N_{+} -u^N_{-}  =0   \quad \mbox{on } \partial D^N, \\
\nm
&  \ds \frac{1}{\rho} \frac{\p u^N}{\p \nu} \bigg|_{+} - \frac{1}{\rho_b} \frac{\p u^N}{\p \nu} \bigg|_{-} =0 \quad \mbox{on } \partial D^N,\\
\nm
&  \ds u^N- u^{in}  \,\,\,  \mbox{satisfies the Sommerfeld radiation condition}, 
  \end{array}
 \right.
\end{equation}
where $u^N$ is the total field.

We make the following assumptions:
\begin{itemize}
\item[(i)] 
$\delta = \mu^2 r^2$ for some positive number $\mu >0$;
\item[(ii)] $\omega = \omega_{M,2} + a r^2$ for some real number $a < \mu^3 
\hat \eta_1 $, where $\omega_{M,2}$ is defined in (\ref{tau20}); 
\item[(iii)] $r N = \Lambda $ for some positive number $\Lambda >0$; 
\item[(iv)] The dimers are regularly distributed in the sense that
 \[
\min_{i \neq j } |z^N_i -z^N_j|  \geq  \eta N^{-\frac{1}{3}},
\]
for some constant $\eta$ independent of $N$; 
\item[(v)] 
There exists a function $\widetilde V \in \mathcal{C}^1(\bar{\Omega})$ such that for any $f \in \mathcal{C}^{0, \alpha}(\Omega)$ with $0 < \alpha \leq 1$, \eqref{ass:integral} holds 
for some constant $C$ independent of $N$;
\item[(vi)] 
There exists a matrix valued function $\widetilde B \in \mathcal{C}^1(\bar{\Omega})$ such that
for $f \in (\mathcal{C}^{0, \alpha}(\Omega))^3$ with $0 < \alpha \leq 1$, 
$$ \begin{array}{l} \ds
\max_{1\leq j \leq N} |\frac{1}{N} \sum_{i\neq j} (f(z_i^N )\cdot d_i^N)( d_i^N \cdot \nabla G^k(z_i^N - z_j^N)) -
\int_{\Omega} f(y) \widetilde B \nabla_y G^k(y-z_j^N) \; \dx y| \\
\nm \qquad \ds \leq C \frac{1}{N^{\frac{\alpha}{3}}}\|f\|_{\mathcal{C}^{0, \alpha}(\Omega)}  
\end{array} $$
for some constant $C$ independent of $N$;
\item[(vii)] 
There exists a constant $C>0$ such that
$$
\max_{1\leq j \leq N}  \frac{1}{N} \sum_{i \neq j} \frac{1}{|z_j^N-z_i^N|} 
  \leq C,\quad \max_{1\leq j \leq N}  \frac{1}{N} \sum_{i \neq j} \frac{1}{|z_j^N-z_i^N|^2} 
  \leq C,
$$
for all $1\leq j \leq N$. 
\end{itemize}

We introduce the two constants  
$$
\widetilde g^0 = \frac{2(C_{11}+C_{12})}{1- \omega_{M,1}^2/\omega_{M,2}^2}, \quad \widetilde g^1 = 
\frac{\mu^2  v_b^2 }{2|D|\omega_{M,2} (\mu^3 \hat \eta_1 -a)} P^2, 
$$
where $P$ is defined by (\ref{defP}), $\omega_{M,1}$ and $\omega_{M,2}$ are the leading orders  in  the asymptotic expansions (\ref{tau10}) and (\ref{tau20}) of the hybridized resonant frequencies  as $\delta\rightarrow 0$, and the two functions $$
M_1 = \begin{cases}
I &\quad \mbox{in } \mathbb{R}^3\setminus {\Omega},
 \\
I- \Lambda \widetilde g^1 \widetilde B &\quad \mbox{in }   \Omega,
 \end{cases}
 $$
 and
 $$
 M_2 = \begin{cases}
 k^2 &\quad \mbox{in }  \mathbb{R}^3\setminus {\Omega},
 \\
 k^2- \Lambda\widetilde g^0 \widetilde V &\quad \mbox{in } \Omega.
 \end{cases}
$$
 The following result from \cite{ammari2017double} holds. 
\begin{theorem}  \label{lem-pointapprox1}
Suppose that there exists a unique solution $u$ to
\begin{equation} \label{eq-pde}
\nabla \cdot M_1(x) \nabla u(x) + M_2(x)u(x) =0 \quad \mbox{in } \mathbb{R}^3,
\end{equation}
such that $u-u^{in}$ satisfies the Sommerfeld radiation condition at infinity. Then, under  assumptions [(i)]--[(vii)], we have $u^N(x) \rightarrow u(x)$ uniformly for $x\in\mathbb{R}^3$ such that $|x-z_j^N|\gg N^{-1} \text{ for all }1\leq j\leq N$. 
\end{theorem}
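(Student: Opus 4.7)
The plan is to mirror the single-resonator argument of Lemma~\ref{lem:points} and Theorem~\ref{thm:homogenized}, but using the full monopole--dipole point-scatterer approximation \eqref{dimerh} in place of the monopole-only formula \eqref{monopole}. The asymptotic regime in assumptions (i)--(ii) is precisely the one in which the monopole coefficient $g^0(\omega)$ near a single dimer is of order one (giving rise to $\widetilde g^0$) while the dipole coefficient $g^1(\omega)$ along the dimer axis is resonantly enhanced by the factor $1/(1-\omega_2^2/\omega^2)\sim 1/r^2$ near the anti-resonance $\omega_{M,2}$ (giving rise to $\widetilde g^1$ after rescaling). Using $D^N_j = z_j^N + r R_{d_j^N} D$, rescaling converts the capacitance and $P$-quantities attached to $D$ to ones with an overall factor $r$, which combined with $rN = \Lambda$ is what makes the discrete sums of order one in $N$.

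First, I would represent $u^N$ via single-layer potentials on each dimer and apply \eqref{dimerh} separately on the $j$th dimer to write, for $x$ far from all resonators,
\begin{equation*}
u^N(x) = u^{in}(x) + \sum_{j=1}^N \Bigl[ r\widetilde g^0\, G^k(x-z_j^N)\, U_j + r^2 \widetilde g^1\, (V_j\cdot d_j^N)\, d_j^N\cdot\nabla G^k(x-z_j^N)\Bigr] + E_N(x),
\end{equation*}
where $U_j \approx u^{in,N}_j(z_j^N)$ and $V_j \approx \nabla u^{in,N}_j(z_j^N)$ are the total exciting field and its gradient at the centre of dimer $j$, and $E_N$ collects the $\O(\delta|x-z_j^N|^{-1})$-type remainders from \eqref{dimerh}. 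Taking $x=z_j^N$ (and removing the self term) yields a coupled Foldy--Lax system
\begin{align*}
U_j &= u^{in}(z_j^N) + \sum_{i\ne j}\Bigl[ r\widetilde g^0 G^k(z_j^N-z_i^N)U_i + r^2\widetilde g^1 (V_i\cdot d_i^N)\,d_i^N\cdot\nabla G^k(z_j^N-z_i^N)\Bigr], \\
V_j &= \nabla u^{in}(z_j^N) + \sum_{i\ne j}\Bigl[ r\widetilde g^0\,\nabla G^k(z_j^N-z_i^N)U_i + r^2\widetilde g^1 (V_i\cdot d_i^N)\,d_i^N\cdot\nabla\nabla G^k(z_j^N-z_i^N)\Bigr],
\end{align*}
again modulo uniform $\O(N^{-\epsilon})$ errors controlled by assumption (vii) and the Hölder regularity of the inputs.

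Next I would pass to the continuum limit $N\to\infty$ with $rN=\Lambda$. Assumption \eqref{ass:integral} converts the monopole sums into $\Lambda\widetilde g^0\int_\Omega G^k(x-y)\widetilde V(y)\,\cdot\,\mathrm{d}y$, while assumption (vi) converts the directional dipole sums into $\Lambda\widetilde g^1\int_\Omega \widetilde B(y)\nabla_y G^k(y-x)\cdot\,\mathrm{d}y$. A standard equicontinuity/compactness argument, using the uniform distance bound in (iv) and the kernel bounds in (vii), shows that $U_j$ and $V_j$ converge uniformly to $U(z_j^N)$ and $V(z_j^N)$ where $(U,V)$ solves the continuum Lippmann--Schwinger system
\begin{equation*}
U(x) = u^{in}(x) + \Lambda\widetilde g^0 \int_\Omega G^k(x-y)\widetilde V(y)U(y)\,\mathrm{d}y + \Lambda\widetilde g^1 \int_\Omega \widetilde B(y)\nabla_y G^k(y-x)\cdot V(y)\,\mathrm{d}y,
\end{equation*}
together with $V=\nabla U$. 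Applying the identity $(\Delta+k^2)G^k = -\delta_0$ and an integration by parts converts this into exactly the PDE $\nabla\cdot M_1\nabla u + M_2 u = 0$ with the Sommerfeld condition at infinity inherited from $G^k$. By the assumed uniqueness of $u$, a standard contraction/perturbation argument on the discrete system then yields $u^N(x)\to u(x)$ uniformly on $\{x: |x-z_j^N|\gg N^{-1}\}$.

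The main obstacle is the uniform control of the coupled discrete system in the presence of the $\nabla G^k$ and $\nabla\nabla G^k$ kernels, which are singular of order $|x-y|^{-2}$ and $|x-y|^{-3}$ in $\mathbb{R}^3$ and thus only barely admissible. The minimum-separation bound in (iv), the sum bounds in (vii), and the specialisation of the dipole contribution to the direction $d_j^N$ (so that only a scalar $V_j\cdot d_j^N$ enters, smoothed by the averaging hypothesis (vi)) are what make this viable. Once this uniform stability is in hand, the rest of the argument is a direct adaptation of the proof of Theorem~\ref{thm:homogenized}.
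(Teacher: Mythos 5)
Your proposal follows essentially the same route as the source the paper relies on: the review itself states this theorem without proof, citing \cite{ammari2017double}, and the argument there is exactly the monopole--dipole point-interaction (Foldy--Lax) approximation for each rotated dimer, uniform control of the coupled discrete system via assumptions (iv) and (vii), passage to the continuum limit through the averaging hypotheses (v)--(vi) with $rN=\Lambda$, and identification of the limiting Lippmann--Schwinger equation with the PDE $\nabla\cdot M_1\nabla u + M_2 u=0$, concluding by the assumed uniqueness. Your outline, including the resonant $1/(1-\omega_2^2/\omega^2)$ enhancement of the dipole coefficient that produces $\widetilde g^1$ and the scalar reduction of the dipole coupling along $d_j^N$, matches that strategy.
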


Note that from (\ref{eq:sym_cap}), it follows that $\omega_{M,2} > \omega_{M,1}$. 
Therefore, for large $\Lambda$, both the matrix $M_1$ and the scalar function $M_2$ are negative in $\Omega$. See Figure \ref{effectivef}.

\begin{figure} 
\centering
{\includegraphics[scale=0.7]{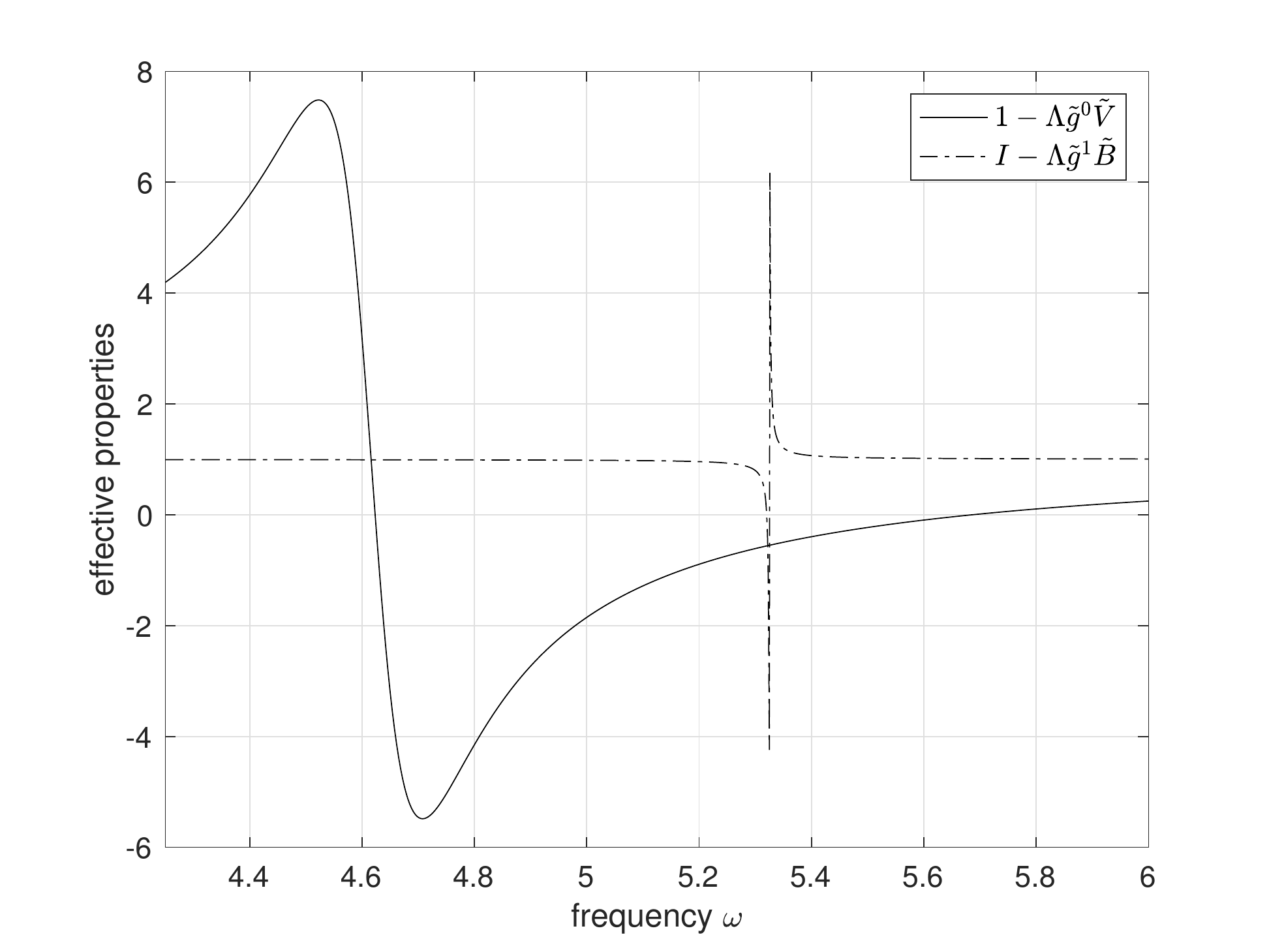}} \caption{Effective properties of the homogenized medium.} \label{effectivef}
\end{figure}

\section{Periodic structures of subwavelength resonators} \label{sec5}

In this section we investigate whether  there is a possibility of  subwavelength 
band gap opening in subwavelength resonator crystals. 
We first formulate the spectral problem for a subwavelength resonator crystal. Then we derive an asymptotic formula for the quasi-periodic  resonances in terms of the contrast between the densities outside and inside the resonators. We prove the existence of a subwavelength band gap and estimate its width.

\subsection{Floquet theory} \label{sec:floquet}
Let $f(x)$ for $x \in \R^d, d=1,2,3,$  be a function decaying sufficiently fast. We let $l_1,..., l_d$ be linearly independent lattice vectors, and define the unit cell $Y$ and the lattice $\Lambda$ as 
$$Y = \left\{\sum_{n=1}^d s_n l_n \ \Big| \ 0 < s_n < 1 \right\}, \qquad \Lambda = \left\{\sum_{n=1}^d q_n l_n \ \Big| \ q_n \in \N \right\}.$$
The Floquet transform of $f$  is defined as:
\begin{equation} \label{floquettransform} \mathcal{U}[f](x,\alpha) = \sum_{n
\in \Lambda} f(x-n) e^{\i \alpha \cdot n}. 
\end{equation}
This transform is an analogue of the Fourier transform for the periodic case. The parameter $\alpha$ is called
the quasi-periodicity, and it is an analogue
of the dual variable in the Fourier transform. If we shift $x$ by
a period $m \in \Lambda$, we get the 
Floquet condition (or quasi-periodic condition)
\begin{equation} \label{floquetcondition}
\mathcal{U}[f](x+m,\alpha) = e^{\i \alpha \cdot m}
\mathcal{U}[f](x,\alpha),\end{equation} which shows that it suffices to know
the function $\mathcal{U}[f](x,\alpha)$ on the unit cell
$Y$ in order to recover it completely as a function of
the $x$-variable. Moreover, $\mathcal{U}[f](x,\alpha)$ is periodic
with respect to  $\alpha$: 
\begin{equation}
\label{floquetperiodic} \mathcal{U}[f](x,\alpha + q) =
 \mathcal{U}[f](x,\alpha), \quad q \in \Lambda^*.
 \end{equation}
Here, $\Lambda^*$ is the dual lattice, generated by the dual lattice vectors $\alpha_1,...,\alpha_d$ defined through the relation
$$\l_i \alpha_j = 2\pi \delta_{ij}, \qquad 0 \leq i,j \leq d.$$
Therefore, $\alpha$ can be considered as an element of the torus
$\R^d/\Lambda^*$. Another way of saying this is that all
information about $\mathcal{U}[f](x,\alpha)$ is contained in its
values for $\alpha$ in the fundamental domain $Y^*$ of the dual
lattice $\Lambda^*$. This domain is referred to as the
 Brillouin zone and is depicted in Figure \ref{bzone} for a square lattice in two dimensions.
 
 \begin{figure}[h]
 \begin{center}
 \includegraphics[height=7cm]{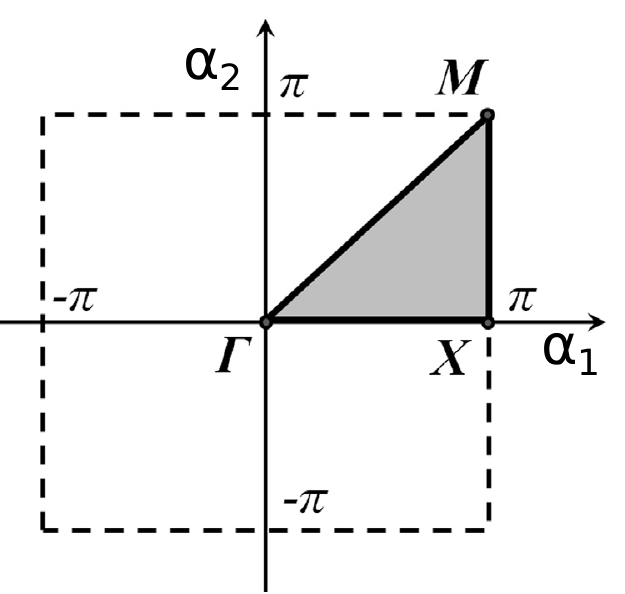}
 \caption{(First) Brillouin {zone} for a square lattice in two dimensions, with the symmetry points $\Gamma, X$ and $M$. The highlighted triangle is known as the reduced Brillouin zone. \label{bzone}}
 \end{center}
 \end{figure}

The following result is an analogue of the  Plancherel theorem when one uses the Fourier transform.
Suppose that the measures $\dx\alpha$ and the Brillouin zone
$Y^*$ are normalized. The following theorem holds \cite{kuchment2}.
\begin{theorem}[Plancherel-type theorem]
The transform $$\mathcal{U} : L^2(\R^d) \rightarrow
L^2(Y^* , L^2(Y))$$ is isometric. Its inverse is
given by
$$\mathcal{U}^{-1}[g](x) = \int_{Y^*} g(x,\alpha)
\dx\alpha,$$ where the function $g(x,\alpha) \in L^2(Y\times Y^*)$ is extended from $Y$ to all $x \in \R^d$
according to the Floquet condition \eqref{floquetcondition}.
\end{theorem}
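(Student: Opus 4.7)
The plan is to prove the result first for a dense subclass of nicely-behaved functions (say, Schwartz functions, or simply $C_c^\infty(\R^d)$), where the defining series in \eqref{floquettransform} converges absolutely and uniformly on compact sets, and then extend by density. For such $f$, the function $\mathcal{U}[f](\cdot,\alpha)$ is smooth, the Floquet condition \eqref{floquetcondition} holds pointwise, and $\mathcal{U}[f](x,\cdot)$ is smooth and $\Lambda^*$-periodic by \eqref{floquetperiodic}. This reduces everything to honest integrals over the compact sets $Y$ and $Y^*$.

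Next I would verify the isometry identity on this dense subclass by a direct computation. Expanding the modulus squared and using Fubini,
\begin{equation*}
\int_{Y^*}\!\int_Y |\mathcal{U}[f](x,\alpha)|^2\,\mathrm{d}x\,\mathrm{d}\alpha
=\sum_{n,m\in\Lambda}\int_Y f(x-n)\overline{f(x-m)}\,\mathrm{d}x\int_{Y^*} e^{\iu\alpha\cdot(n-m)}\,\mathrm{d}\alpha.
\end{equation*}
The key input is the orthogonality relation $\int_{Y^*} e^{\iu\alpha\cdot k}\,\mathrm{d}\alpha = \delta_{k,0}$ for $k\in\Lambda$, which follows from the definition of $\Lambda^*$ together with the chosen normalization of the measure on $Y^*$. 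This collapses the double sum to the diagonal and leaves $\sum_{n\in\Lambda}\int_Y|f(x-n)|^2\,\mathrm{d}x = \int_{\R^d}|f(x)|^2\,\mathrm{d}x$, since the translates $Y+n$ for $n\in\Lambda$ tile $\R^d$ up to a set of measure zero. This gives the isometry identity on the dense subclass, and a standard density argument extends $\mathcal{U}$ to an isometry $L^2(\R^d)\to L^2(Y^*,L^2(Y))$.

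For the inversion formula I would proceed similarly: for Schwartz $f$, interchange the $\alpha$-integral and the absolutely convergent sum to get
\begin{equation*}
\int_{Y^*}\mathcal{U}[f](x,\alpha)\,\mathrm{d}\alpha
=\sum_{n\in\Lambda} f(x-n)\int_{Y^*} e^{\iu\alpha\cdot n}\,\mathrm{d}\alpha
=f(x),
\end{equation*}
by the same orthogonality relation used above. Combined with surjectivity of $\mathcal{U}$ onto $L^2(Y^*,L^2(Y))$ (which is a consequence of the isometry plus the fact that $\mathcal{U}(C_c^\infty)$ contains a dense set of quasi-periodic trigonometric-type functions, or equivalently of the formal adjoint computation above), the inversion formula extends from Schwartz functions to all of $L^2$, remembering to interpret $g(x,\alpha)$ as extended from $Y$ to $\R^d$ via \eqref{floquetcondition}.

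The only real obstacle is bookkeeping: checking that the measure normalizations on $Y$ and $Y^*$ are compatible so that no stray $(2\pi)^d$ or $|Y|$ factor appears in the isometry, and justifying the interchange of sum and integral for general $L^2$ data (done by approximating in $L^2$ by Schwartz functions and using continuity of $\mathcal{U}$). Everything else is an application of the orthogonality of characters on the torus $\R^d/\Lambda^*$ and of the tiling property of $\Lambda$.
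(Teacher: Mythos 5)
The paper itself gives no proof of this statement -- it is quoted directly from Kuchment's work -- so there is nothing to compare against line by line; judged on its own merits, your argument is the standard one and is essentially correct. The isometry computation on a dense class via Fubini, the character orthogonality $\int_{Y^*}e^{\iu\alpha\cdot k}\,\mathrm{d}\alpha=\delta_{k,0}$ for $k\in\Lambda$ (valid precisely because the measure on $Y^*$ is normalized, as the theorem assumes), the tiling of $\R^d$ by $\Lambda$-translates of $Y$, and the inversion formula for Schwartz data are all sound, and the density extension is routine.

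The one soft spot is surjectivity, which you dispatch in a parenthesis by asserting that $\mathcal{U}(C_c^\infty)$ contains a dense family of ``quasi-periodic trigonometric-type functions''; as written this is an assertion, not an argument, and without it you have only an isometry \emph{into} $L^2(Y^*,L^2(Y))$, whereas the theorem claims a two-sided inverse. The clean repair, which is also the route taken in Kuchment's treatment, is to apply the candidate inverse to an \emph{arbitrary} $g\in L^2(Y^*,L^2(Y))$: set $f(x)=\int_{Y^*}g(x,\alpha)\,\mathrm{d}\alpha$, where $g$ is extended to $x\in\R^d$ by the Floquet condition \eqref{floquetcondition}. Then for $n\in\Lambda$,
\begin{equation*}
f(x-n)=\int_{Y^*}g(x-n,\alpha)\,\mathrm{d}\alpha=\int_{Y^*}e^{-\iu\alpha\cdot n}g(x,\alpha)\,\mathrm{d}\alpha ,
\end{equation*}
so for a.e.\ $x\in Y$ the numbers $f(x-n)$ are exactly the Fourier coefficients of the $\Lambda^*$-periodic function $\alpha\mapsto g(x,\alpha)$ (recall \eqref{floquetperiodic}). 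Classical Fourier series theory on the torus $\R^d/\Lambda^*$ then gives both $\mathcal{U}[f](x,\cdot)=\sum_{n}f(x-n)e^{\iu\alpha\cdot n}=g(x,\cdot)$ in $L^2(Y^*)$ and, by Parseval, $\sum_n|f(x-n)|^2=\int_{Y^*}|g(x,\alpha)|^2\,\mathrm{d}\alpha$; integrating the latter over $Y$ shows $f\in L^2(\R^d)$ with the right norm. This yields surjectivity, the inversion formula, and (as a byproduct) an alternative proof of the isometry in one stroke, and it avoids having to justify density of the image separately. With that substitution your proof is complete; the rest of your bookkeeping concerns (normalization of $\mathrm{d}\alpha$, interchange of sum and integral for Schwartz data) are handled exactly as you indicate.
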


Consider now a linear partial differential operator $L(x,
\partial_x)$, whose coefficients are periodic with respect to
$\Lambda$.
Due to periodicity, the operator commutes with the Floquet transform
$$ \mathcal{U}[Lf](x,\alpha) = L(x, \partial_x) \mathcal{U}[f](x,\alpha).$$
For each $\alpha$, the operator $L(x,
\partial_x)$ now acts on functions satisfying the corresponding
Floquet condition \eqref{floquetcondition}. Denoting this operator by
$L(\alpha)$, we see that the Floquet transform $\mathcal{U}$
expands the periodic partial differential operator $L$ in
$L^2(\R^d)$ into the direct integral of
operators \begin{equation} \label{directintegral} \int^{\oplus}_{Y^*} L(\alpha) \dx \alpha.
\end{equation} 
If  $L$ is a self-adjoint operator, one can prove the main
spectral result: \begin{equation} \label{spectralstatement} \sigma(L) = \ds
\bigcup_{\alpha \in Y^*} \sigma(L(\alpha)),\end{equation} where $\sigma$
denotes the spectrum.

If $L$ is  elliptic, the operators $L(\alpha)$ have compact
resolvents and hence discrete spectra. If $L$ is bounded from
below, the spectrum of $L(\alpha)$ accumulates only at $+\infty$.
Denote by $\mu_n(\alpha)$ the $n$th eigenvalue of $L(\alpha)$
(counted in increasing order with their multiplicity). The
function $\alpha \mapsto \mu_n(\alpha)$ is continuous in $Y^*$. It
is one branch of the dispersion relations  and is called a \emph{band  function}. We
conclude that the spectrum $\sigma(L)$ consists of the closed
intervals (called the spectral bands)
$$\bigg[\min_\alpha \mu_n(\alpha), \max_\alpha \mu_n(\alpha)\bigg],$$ where
$\min_\alpha \mu_n(\alpha) \rightarrow +\infty$ when $n
\rightarrow +\infty$.

\subsection{Quasi-periodic layer potentials} \label{sectquasiGH}
We introduce a quasi-periodic version of the layer potentials. Again, we let $Y$ and $Y^*$ be the unit cell and dual unit cell, respectively. For $\alpha \in Y^*$, the function $G^{\alpha, k}$ is defined to satisfy
$$ (\Delta_x + k^2) G^{\alpha, k} (x,y) = \sum_{m\in \Lambda} \delta(x-y-n) e^{\i m\cdot \alpha},$$
where $\delta$ is the Dirac delta function and  $G^{\alpha, k} $ is $\alpha$-quasi-periodic, \emph{i.e.}, $e^{- \i \alpha\cdot x} G^{\alpha, k}(x,y)$ is periodic in $x$ with respect to $Y$.  It is known that $G^{\alpha, k} $ can be written as
$$ G^{\alpha, k}(x,y) = \sum_{q\in \Lambda^*} \frac{e^{\i (\alpha+q)\cdot (x-y)}}{k^2- |\alpha + q|^2},$$
if $k \ne |\alpha + q|$ for any $q \in Y^*$. We remark that 
\begin{align} 
G^{\alpha,k}(x, y)=   G^{\alpha,0}(x,y) -  G_l^{\alpha,\#}(x -y) := G^{\alpha,0}(x,y) - \sum_{l=1}^\infty k^{2 l}\sum_{q\in \Lambda ^*} \frac{e^{\i (\alpha+q)\cdot (x-y)}}{|\alpha+q|^{2(l+1)}} \label{eq:defGk2}
\end{align}
when $\alpha \neq 0$, and $k \rightarrow 0$.

We let $D$ be as in \Cref{sec-2} and additionally assume $D\Subset Y$. Then the quasi-periodic single layer potential $\mathcal{S}_D^{\alpha,k}$ is defined by
\begin{equation} \label{singlealpha} \mathcal{S}_D^{\alpha,k}[\phi](x) = \int_{\partial D} G^{\alpha,k} (x,y) \phi(y) \dx \sigma(y),\quad x\in \mathbb{R}^3. \end{equation}
It satisfies the following jump formulas:
\begin{equation*}
\S_D^{\alpha,k}[\phi]\big|_+ = \S_D^{\alpha,k}[\phi]\big|_-,
\end{equation*}
and
$$ \frac{\p}{\p\nu} \mathcal{S}_D^{\alpha,k}[\phi] \Big|_{\pm}  = \left( \pm \frac{1}{2} I +( \mathcal{K}_D^{-\alpha,k} )^*\right)[\phi]\quad \mbox{on}~ \p D,$$
where $(\mathcal{K}_D^{-\alpha,k})^*$ is the operator given by
$$ (\mathcal{K}_D^{-\alpha, k} )^*[\phi](x)= \int_{\p D} \frac{\p}{\p\nu_x} G^{\alpha,k}(x,y) \phi(y) \dx \sigma(y).$$
We remark that  $\mathcal{S}_D^{\alpha,0} : L^2(\p D) \rightarrow H^1(\p D)$ is invertible for $\alpha \ne 0$ \cite{ammari2018mathematical}. Moreover, the following decomposition holds for the layer potential $\mathcal{S}_D^{\alpha,k}$: 
\begin{equation} \label{series-s2}
\mathcal{S}_{D}^{\alpha,k} =  \mathcal{S}_D^{\alpha, 0} + k^{2}\mathcal{S}_{D,1}^{\alpha} + \O(k^4) \quad \text{with} \quad \mathcal{S}_{D,1}^\alpha[\psi] := \int_{\p D} G_1^{\alpha,\#}(x -y) \psi(y) \dx \sigma(y),
\end{equation}
where the error term is with respect to the operator norm $\|.\|_{\mathcal{L}(L^2(\partial D), H^1(\partial D))}$. Furthermore, analogously to (\ref{eq:exp_K}), we have
\begin{equation} \label{series-k2}
(\mathcal{K}_{D}^{-\alpha,k})^* =  (\mathcal{K}_{D}^{-\alpha,k})^*  + k^2 \mathcal{K}_{D,1}^\alpha + \O(k^3),
\end{equation}
where the error term is with respect to the operator norm $\|.\|_{\mathcal{L}(L^2(\partial D), L^2(\partial D))}$.

Finally, we introduce the $\alpha$-quasi capacity of $D$, denoted by $\mathrm{Cap}_{D,\alpha}$, 
$$
\mathrm{Cap}_{D,\alpha}: = \int_{Y\setminus \overline{D}} |\nabla u|^2\; \dx y, 
$$
where $u$ is the $\alpha$-quasi-periodic harmonic function in $Y\setminus \overline{D}$ with $u=1$ on $\p D$. For $\alpha\neq 0$,  we have $u(x) =\mathcal{S}_D^{\alpha,0} \left(\mathcal{S}_D^{\alpha,0}\right)^{-1}[\chi_{\partial D}](x)$ for $ x \in Y \setminus \overline{D}$ and
\begin{equation} \label{alphacap}
\mathrm{Cap}_{D,\alpha} :=-\int_{\p D}  \left( \mathcal{S}_D^{\alpha,0}\right)^{-1}[\chi_{\partial D}](y)\; \dx \sigma(y). 
\end{equation}
Moreover, we have a variational definition of $\mathrm{Cap}_{D,\alpha}$. 
Indeed, let $\mathcal{C}_{\alpha}^{\infty}(Y)$ be the set of $\mathcal{C}^{\infty}$ functions in $Y$ which can be extended to $\mathcal{C}^{\infty}$ $\alpha$-quasi-periodic functions in $\R^3$. Let $\mathcal{H}_{\alpha}$ be the closure of the set $\mathcal{C}_{\alpha}^{\infty}(Y)$ in $H^1(Y)$, and let
$\mathcal{V}_\alpha:= \{  v\in \mathcal{H}_{\alpha} : v=1~\mbox{on }\p D\}$. Then we can show that
\begin{equation} \label{vcalpha}
\mathrm{Cap}_{D,\alpha}= \min_{v\in\mathcal{V}_\alpha}\int_{Y\setminus \overline{D}} |\nabla v|^2 \dx y.
\end{equation}

\subsection{Square lattice subwavelength resonator crystal}
We first describe the  crystal  under consideration. 
Assume that the resonators occupy  $\cup_{n\in \mathbb{Z}^d} (D+n)$ for a bounded and simply connected domain $D \Subset Y$ with $\p D \in \mathcal{C}^{1, \eta}$ with $0<\eta <1$. See Figure \ref{figy}.
As before, we denote by $\rho_b$ and $\kappa_b$ the material parameters inside the resonators and by $\rho$ and $\kappa$ the corresponding parameters for the background media and let $v, v_b, k,$ and  $k_b$ be defined by (\ref{defkv}). We also let the  dimensionless contrast parameter $\delta$  be defined by (\ref{defdelta}) and assume for simplicity that $v_b/v=1$. 

\begin{figure}[h]
	\centering
	\begin{tikzpicture}[scale=1.2]
	\begin{scope}[xshift=-4cm,scale=1]
	\coordinate (a) at (1,0);		
	\coordinate (b) at (0,1);	
	
	\draw (-0.5,-0.5) -- (0.5,-0.5) -- (0.5,0.5) -- (-0.5,0.5) -- cycle; 
	\draw (0,0) circle(6pt);
	\draw[opacity=0.2] (-0.5,0) -- (0,0)
	(0.5,0) -- (0,0)
	(0,-0.5) -- (0,0)
	(0,0.5) -- (0,0);
	
	\begin{scope}[shift = (a)]
	\draw (0,0) circle(6pt);
	\draw[opacity=0.2] (-0.5,0) -- (0,0)
	(0.5,0) -- (0,0)
	(0,-0.5) -- (0,0)
	(0,0.5) -- (0,0);
	\end{scope}
	\begin{scope}[shift = (b)]
	\draw (0,0) circle(6pt);
	\draw[opacity=0.2] (-0.5,0) -- (0,0)
	(0.5,0) -- (0,0)
	(0,-0.5) -- (0,0)
	(0,0.5) -- (0,0);
	\end{scope}
	\begin{scope}[shift = ($-1*(a)$)]
	\draw (0,0) circle(6pt);
	\draw[opacity=0.2] (-0.5,0) -- (0,0)
	(0.5,0) -- (0,0)
	(0,-0.5) -- (0,0)
	(0,0.5) -- (0,0);
	\end{scope}
	\begin{scope}[shift = ($-1*(b)$)]
	\draw (0,0) circle(6pt);
	\draw[opacity=0.2] (-0.5,0) -- (0,0)
	(0.5,0) -- (0,0)
	(0,-0.5) -- (0,0)
	(0,0.5) -- (0,0);
	\end{scope}
	\begin{scope}[shift = ($(a)+(b)$)]
	\draw (0,0) circle(6pt);
	\draw[opacity=0.2] (-0.5,0) -- (0,0)
	(0.5,0) -- (0,0)
	(0,-0.5) -- (0,0)
	(0,0.5) -- (0,0);
	\end{scope}
	\begin{scope}[shift = ($-1*(a)-(b)$)]
	\draw (0,0) circle(6pt);
	\draw[opacity=0.2] (-0.5,0) -- (0,0)
	(0.5,0) -- (0,0)
	(0,-0.5) -- (0,0)
	(0,0.5) -- (0,0);
	\end{scope}
	\begin{scope}[shift = ($(a)-(b)$)]
	\draw (0,0) circle(6pt);
	\draw[opacity=0.2] (-0.5,0) -- (0,0)
	(0.5,0) -- (0,0)
	(0,-0.5) -- (0,0)
	(0,0.5) -- (0,0);
	\end{scope}
	\begin{scope}[shift = ($-1*(a)+(b)$)]
	\draw (0,0) circle(6pt);
	\draw[opacity=0.2] (-0.5,0) -- (0,0)
	(0.5,0) -- (0,0)
	(0,-0.5) -- (0,0)
	(0,0.5) -- (0,0);
	\end{scope}
	\end{scope}
	
	\draw[dashed,opacity=0.5,->] (-3.9,0.65) .. controls(-2.9,1.8) .. (0.5,0.7);
	\begin{scope}[xshift=2cm,scale=2.8]	
	\coordinate (a) at (1,{1/sqrt(3)});		
	\coordinate (b) at (1,{-1/sqrt(3)});	
	\coordinate (Y) at (1.8,0.45);
	\coordinate (c) at (2,0);
	\coordinate (x1) at ({2/3},0);
	\coordinate (x0) at (1,0);
	\coordinate (x2) at ({4/3},0);

	\pgfmathsetmacro{\rb}{0.25pt}
	\pgfmathsetmacro{\rs}{0.2pt}\
	
	\draw[->] (-0.5,-0.5) -- (-0.5,0.5) node[left]{}; 
	\draw[->] (-0.5,-0.5) -- (0.5,-0.5) node[below]{}; 
	\draw (0.5,-0.5) -- (0.5,0.5) -- (-0.5,0.5);
	\draw (0,0) circle(6pt);
	\draw (0.3,0) node{$D$};
	
	\draw (0.5,0.5) node[right]{$Y$};
	\end{scope}
	\end{tikzpicture}
	\caption{Illustration of the square lattice crystal and quantities in $Y$.}\label{figy}
\end{figure}
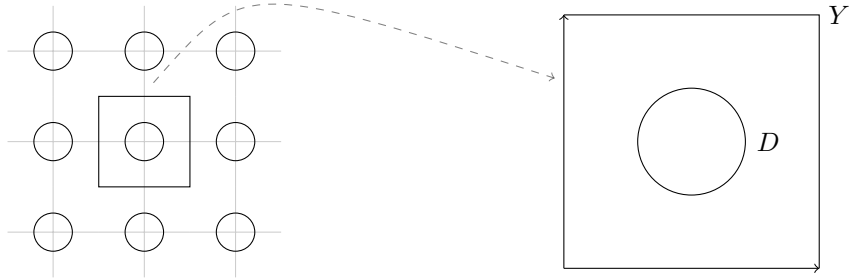

To investigate the phononic gap of the  crystal we consider the following $\alpha-$quasi-periodic equation in the unit cell $Y=[-1/2,1/2)^3$:
\begin{equation} \label{eq-scatteringB}
\left\{
\begin{array} {ll}
&\ds \nabla \cdot \frac{1}{\rho} \nabla  u+ \frac{\omega^2}{\kappa} u  = 0 \quad \text{in} \quad Y \backslash \overline{D}, \\
\nm
&\ds \nabla \cdot \frac{1}{\rho_b} \nabla  u+ \frac{\omega^2}{\kappa_b} u  = 0 \quad \text{in} \quad D, \\
\nm
&\ds  u |_{+} -u |_{-}  =0   \quad \text{on} \quad \partial D, \\
\nm
& \ds  \frac{1}{\rho} \frac{\p u}{\p \nu} \bigg|_{+} - \frac{1}{\rho_b} \frac{\p u}{\p \nu} \bigg|_{-} =0 \quad \text{on} \quad \partial D,\\
\nm
&  e^{-\i \alpha \cdot x} u  \,\,\,  \mbox{is periodic.}
  \end{array}
 \right.
\end{equation}

By choosing proper physical units, we may assume that the resonator size is of order one.  We assume  that the wave speeds outside and inside the resonators are comparable to each other and that condition (\ref{defp}) holds.

\subsection{Subwavelength band gaps and Bloch modes} \label{sec-2}
As described in \Cref{sec:floquet}, the problem \eqref{eq-scatteringB} has nontrivial solutions for discrete values of $\omega$ such as 
$$  0 \le \omega_1^\alpha \le \omega_2^\alpha \le \cdots$$
and we have the following band structure of propagating frequencies for the given periodic structure:
$$ [0, \max_\alpha \omega_1^\alpha] \cup [ \min_\alpha \omega_2^\alpha, \max_\alpha \omega_2^\alpha] \cup  [ \min_\alpha \omega_3^\alpha, \max_\alpha \omega_3^\alpha] \cup \cdots. $$

A non-trivial solution to this problem and its corresponding frequency is called a Bloch eigenfunction and a Bloch eigenfrequency. The Bloch eigenfrequencies $\omega_i^\alpha, \ i=1,2,\ldots$ with positive real part, seen as functions of $\alpha$, are the {band functions}.

We use the quasi-periodic  single-layer potential introduced in (\ref{singlealpha}) to represent the solution to the scattering problem (\ref{eq-scatteringB}) in $Y\setminus \overline{D}$.
We look for a solution $u$ of~\eqref{eq-scatteringB} of the form:
\begin{equation} \label{Helm-solutionB}
u =
\begin{cases}
\mathcal{S}_{D}^{\alpha,k} [\psi]\quad & \text{in} ~ Y \setminus \overline{D},\\
\nm
 \mathcal{S}_{D}^{k_b} [\psi_b]   &\text{in} ~   {D},
\end{cases}
\end{equation}
for some surface potentials $\psi, \psi_b \in  L^2(\p D)$. 
Using the jump relations for the single-layer potentials, one can show that~\eqref{eq-scatteringB} is equivalent to the boundary integral equation
\begin{equation}  \label{eq-boundaryB}
\mathcal{A}(\omega, \delta)[\Psi] =0,  
\end{equation}
where
\[
\mathcal{A}(\omega, \delta) = 
 \begin{pmatrix}
  \mathcal{S}_D^{k_b} &  -\mathcal{S}_D^{\alpha,k}  \\
  \nm
  -\frac{1}{2} I + \mathcal{K}_D^{k_b, *}& -\delta( \frac{1}{2} I + (\mathcal{K}_D^{ -\alpha,k})^*)
\end{pmatrix}, 
\,\, \Psi= 
\begin{pmatrix}
\psi_b\\
\psi
\end{pmatrix}.
\]

As before, we denote by $$\mathcal{H} = L^2(\p D) \times L^2(\p D) \quad \mbox{ and } \quad \mathcal{H}_1 = H^1(\p D) \times L^2(\p D).$$  It is clear that $\mathcal{A}(\omega, \delta)$ is a bounded linear operator from $\mathcal{H}$ to $\mathcal{H}_1$, \emph{i.e.}
$\mathcal{A}(\omega, \delta) \in \mathcal{L}(\mathcal{H}, \mathcal{H}_1)$. We first look at the limiting case when $\delta =0$. The operator $\mathcal{A}(\omega, \delta)$ is a perturbation of
\begin{equation}  \label{eq-A_0-3dB}
 \mathcal{A}(\omega, 0) = 
 \begin{pmatrix}
  \mathcal{S}_D^{k_b} &  -\mathcal{S}_D^{\alpha,k}  \\
  \nm
  -\frac{1}{2} I + \mathcal{K}_D^{k_b, *} & 0
\end{pmatrix}.
\end{equation}
We see that $\omega_0$ is a characteristic value of $\mathcal{A}(\omega,0)$ if and only if $(\omega_0 v^{-1}_b)^2$ is a Neumann eigenvalue of $D$ or $(\omega_0 v^{-1})^2$ is a Dirichlet eigenvalue of $Y\backslash \overline{D}$ with $\alpha$-quasi-periodicity on $\partial Y$.  Since 
zero is a Neumann eigenvalue of $D$, 
$0$ is a characteristic value for the holomorphic operator-valued  function $\mathcal{A}(\omega,0)$. {By noting that there is a positive lower bound for the other Neumann eigenvalues of $D$ and all the Dirichlet eigenvalues of $Y\backslash \overline{D}$ with $\alpha$-quasi-periodicity on $\partial Y$,}  we can conclude the following result by the Gohberg-Sigal theory.
\begin{lemma}
For any $\delta$ sufficiently small, there exists  {one and only one} characteristic value 
$\omega_0= \omega_0(\delta)$ in a neighborhood of the origin in the complex plane to the holomorphic operator-valued  function 
$\mathcal{A}(\omega, \delta)$.
Moreover,  
$\omega_0(0)=0$ and $\omega_0$ depends on $\delta$ continuously.
\end{lemma}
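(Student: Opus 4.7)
The plan is to apply the generalized Rouch\'e theorem (Theorem~\ref{rouche}) to compare $\mathcal{A}(\omega, \delta)$ with $\mathcal{A}(\omega, 0)$ on a small circle centered at $\omega = 0$. First I would analyze $\mathcal{A}(\omega, 0)$ at $\omega = 0$. Here $k = k_b = 0$, so the operator reduces to a block matrix whose upper-left and upper-right entries are $\mathcal{S}_D$ and $-\mathcal{S}_D^{\alpha, 0}$, both invertible (the former in three dimensions, the latter because $\alpha \neq 0$, as recalled in \Cref{sectquasiGH}), while the lower-left block is $-\tfrac{1}{2} I + \mathcal{K}_D^*$, whose kernel is one-dimensional since $D$ is connected. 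A direct computation then shows $\dim \mathrm{Ker}(\mathcal{A}(0, 0)) = 1$, and Fredholm theory identifies $\omega = 0$ as a characteristic value of finite full multiplicity $m \geq 1$.

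Next I would show that $\omega = 0$ is isolated among the characteristic values of $\mathcal{A}(\cdot, 0)$. The remaining characteristic values correspond to frequencies $\omega$ for which either $(\omega / v_b)^2$ is a positive Neumann eigenvalue of $-\Delta$ on $D$ or $(\omega / v)^2$ is an $\alpha$-quasi-periodic Dirichlet eigenvalue of $-\Delta$ on $Y \setminus \overline{D}$; both spectra are discrete with a strictly positive lower bound, so there exists $r_0 > 0$ such that $\mathcal{A}(\omega, 0)$ is invertible on the punctured disk $\{0 < |\omega| \leq r_0\}$. In particular, $\|\mathcal{A}(\omega, 0)^{-1}\|_{\mathcal{L}(\mathcal{H}_1, \mathcal{H})}$ is uniformly bounded on the circle $\partial V := \{|\omega| = r_0\}$.

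The perturbation
\begin{equation*}
S(\omega, \delta) := \mathcal{A}(\omega, \delta) - \mathcal{A}(\omega, 0) = \begin{pmatrix} 0 & 0 \\ 0 & -\delta\left(\tfrac{1}{2} I + (\mathcal{K}_D^{-\alpha, k})^*\right) \end{pmatrix}
\end{equation*}
is holomorphic in $\omega$ and has operator norm $O(\delta)$ uniformly on $\partial V$. For $\delta$ small enough we obtain $\|\mathcal{A}(\omega, 0)^{-1} S(\omega, \delta)\|_{\mathcal{L}(\mathcal{H}, \mathcal{H})} < 1$ on $\partial V$, and Theorem~\ref{rouche} yields $\mathcal{M}(\mathcal{A}(\cdot, \delta); \partial V) = \mathcal{M}(\mathcal{A}(\cdot, 0); \partial V) = m$. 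Hence a single characteristic value $\omega_0(\delta)$ emerges inside $V$ with $\omega_0(0) = 0$; continuity in $\delta$ follows by rerunning the argument on arbitrarily small circles around $\omega_0(\delta)$.

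The step I expect to be most delicate is the second one, namely establishing a quantitative lower bound on the relevant Dirichlet and Neumann spectra that yields a workable $r_0$ (and, if one wants uniformity, one independent of $\alpha$ on subsets of $Y^*$ bounded away from $0$). Once this is in hand, the Rouch\'e comparison is routine: the block structure of the perturbation makes the $O(\delta)$ estimate transparent, and the uniqueness of the emerging characteristic value reflects the one-dimensionality of $\mathrm{Ker}(-\tfrac{1}{2} I + \mathcal{K}_D^*)$ established in the first step.
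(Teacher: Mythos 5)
Your overall route is the same as the paper's: characterise the characteristic values of $\mathcal{A}(\omega,0)$ through the Neumann eigenvalues of $D$ and the $\alpha$-quasi-periodic Dirichlet eigenvalues of $Y\setminus\overline{D}$, use the positive lower bound on the nonzero part of these spectra to isolate $\omega=0$, and then compare $\mathcal{A}(\omega,\delta)$ with $\mathcal{A}(\omega,0)$ on a small circle via the generalized Rouch\'e theorem (Theorem~\ref{rouche}), the perturbation being exactly the $\O(\delta)$ lower-right block you wrote down. Up to that point the argument is sound, and note that for this lemma $\alpha\neq 0$ is fixed, so the uniformity in $\alpha$ you flag as delicate is not actually needed here; it only becomes relevant later, in the proof of Theorem~\ref{main}.

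The genuine gap is in the last step. Theorem~\ref{rouche} preserves the \emph{full multiplicity} $\mathcal{M}(\cdot;\partial V)$, not the number of distinct characteristic values, and you have left the multiplicity $m$ of $\omega=0$ undetermined; from $\mathcal{M}(\mathcal{A}(\cdot,\delta);\partial V)=m$ alone one cannot conclude that ``a single characteristic value emerges''. Your proposed justification --- that uniqueness reflects $\dim\mathrm{Ker}(-\tfrac{1}{2}I+\mathcal{K}_D^*)=1$ --- conflates kernel dimension with full multiplicity: in the finite-resonator setting of Section~\ref{sec2} the kernel of $\mathcal{A}_0$ has dimension $N$ while the full multiplicity of $\omega=0$ is $2N$. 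The same doubling occurs here: the scalar reduction \eqref{eq:Aomegadelta} vanishes at $\delta=0$ to second order in $\omega$, so $m=2$, and for $\delta>0$ Rouch\'e produces characteristic values of total multiplicity two inside $V$, approximately $\pm\sqrt{c_2\,\mathrm{Cap}_D/|D|}\,v_b\,\delta^{1/2}$; only the branch with positive real part is the resonance in the sense of Definition~\ref{def:res^}, and it is this branch that the paper retains when it solves $A(\omega,\delta)=0$ and selects the positive root leading to Theorem~\ref{approx_thm}. To close your argument you should therefore determine $m$ (for instance from the expansion of $\mathcal{A}(\omega,0)$ along the one-dimensional kernel, as in \eqref{expdA}--\eqref{eq:Aomegadelta}) and then state explicitly how the unique $\omega_0(\delta)$ of the lemma is singled out among the characteristic values inside $V$, e.g.\ as the continuous branch with positive real part. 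With that bookkeeping added, the remaining ingredients of your proof (uniform bound on $\mathcal{A}(\omega,0)^{-1}$ over $\partial V$, the $\O(\delta)$ estimate of the perturbation, continuity in $\delta$ by shrinking circles) coincide with the paper's argument.
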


\subsubsection{Asymptotic behavior of the first Bloch eigenfrequency $\omega_1^\alpha$}
In this section we assume $\alpha \ne 0$. 
We define
\begin{equation} \label{defA0}
\mathcal{A}_0 :=\mathcal{A}(0,0)= 
 \begin{pmatrix}
  \mathcal{S}_D&  -\mathcal{S}_D^{\alpha,0}  \\
  \nm
  -\frac{1}{2} I + \mathcal{K}_D^{*}& 0
\end{pmatrix},
\end{equation} 
and let  $\mathcal{A}_0^* : \cH_1 \to \cH$ be the adjoint of $\mathcal{A}_0$. We choose an element $\psi_0\in L^2(\p D)$ such that
$$ \big( -\frac{1}{2} I + \mathcal{K}_D^*  \big)[\psi_0] =0,\quad  \int_{\partial D} \psi_0 = 1.$$
We recall the definition (\ref{defcap}) of   the capacity of the set $D$,  $\mathrm{Cap}_D$, which is equivalent to
\begin{equation}\label{capacityB} 
\mathcal{S}_D [\psi_0] = - \frac{1}{\mathrm{Cap}_D} \quad \mbox{on } {\p D}.
\end{equation}

Then we can easily check that $\mathrm{Ker } (\mathcal{A}_0)$ and $ \mathrm{Ker } (\mathcal{A}_0^*)$ are spanned respectively by
\[
\Psi_0 = \begin{pmatrix}
    \psi_0\\
  \widetilde\psi_0\end{pmatrix}
  \quad \text{and} \quad
  \Phi_0 = \begin{pmatrix}
    0\\
  1 \end{pmatrix},
\]
where  $\widetilde \psi_0 =( \mathcal{S}_D^{\alpha,0})^{-1} \mathcal{S}_D[\psi_0]$.
We now perturb  $ \mathcal{A}_0 $ by a rank-one operator $\mathcal{P}_0$ from $\mathcal{H}$ to $\mathcal{H}_1$
given by 
$
\mathcal{P}_0[\Psi]:= (\Psi, \Psi_0)\Phi_0,
$
and denote it by
$
\widetilde{\mathcal{A}_0}= \mathcal{A}_0 + \mathcal{P}_0
$.
Then the followings hold:
\begin{enumerate}
\item[(i)] $\widetilde{\mathcal{A}_0}[\Psi_0]= \| \Psi_0 \|^2 \Phi_0 $, $\widetilde{\mathcal{A}_0}^*[\Phi_0] = \| \Phi_0 \|^2\Psi_0$. 

\item[(ii)] The operator $\widetilde{\mathcal{A}_0}$ and its adjoint $\widetilde{\mathcal{A}_0}^*$ are invertible in
$\mathcal{L}(\mathcal{H}, \mathcal{H}_1)$ and  $\mathcal{L}(\mathcal{H}_1, \mathcal{H})$, respectively. 

\end{enumerate}

Using (\ref{eq:exp_S}), (\ref{eq:exp_K}), (\ref{series-s2}), and (\ref{series-k2}), we can expand $\mathcal{A}(\omega,\delta)$ as
\begin{equation} \label{expdA}
\begin{array}{l}
\mathcal{A}(\omega, \delta):=\mathcal{A}_0 + \mathcal{B}(\omega, \delta)
= \mathcal{A}_0 + \omega \mathcal{A}_{1, 0}+ \omega^2 \mathcal{A}_{2, 0}
+ \omega^3 \mathcal{A}_{3, 0} + \delta \mathcal{A}_{0, 1}+ \delta \omega^2\mathcal{A}_{2, 1}\\
\nm \qquad \qquad \ds  + \O(| \omega| ^4 + |\delta \omega^3|) \end{array}
\end{equation}
where 
\[
\mathcal{A}_{1,0} = \begin{pmatrix}
  v_b^{-1} \mathcal{S}_{D,1} & 0 \\
  \nm
  0& 0
\end{pmatrix},
\,\, \mathcal{A}_{2,0}= 
\begin{pmatrix}
  v_b^{-2} \mathcal{S}_{D,2} &  -v^{-2} \mathcal{S}_{D,1}^\alpha  \\
  \nm
   v_b^{-2} \mathcal{K}_{D,2} & 0
\end{pmatrix},
\,\, \mathcal{A}_{3,0}= 
\begin{pmatrix}
  v_b^{-3} \mathcal{S}_{D,3} & 0  \\
  \nm
 v_b^{-3}  \mathcal{K}_{D,3} & 0
\end{pmatrix},
\]
\[
\mathcal{A}_{0, 1}=
\begin{pmatrix}
0& 0\\
\nm
0 &  -(\frac{1}{2}+ (\mathcal{K}_{D}^{-\alpha,0})^*)
\end{pmatrix},
\,\,  \mathcal{A}_{2, 1}=
\begin{pmatrix}
0& 0\\
\nm
0 &  -v^{-2} \mathcal{K}^{\alpha}_{D,1}
\end{pmatrix}.
\]
From the above expansion, it follows that
\begin{equation} \label{eq:Aomegadelta}
\begin{array}{lll}
A(\omega, \delta) &= & \ds - \omega^2 \frac{ v_b^{-2} | D |}{\mathrm{Cap}_D} - \omega^3 v_b^{-3} \frac{\i c_1 | D |}{4 \pi } + c_2\delta + \omega \delta \frac{\i c_1c_2  v_b^{-1} \mathrm{Cap}_D}{4 \pi}  \\
\nm &&\ds  + \O( | \omega |^4 + | \delta | \, |\omega|^2  + | \delta |^2),
\end{array} 
\end{equation}
where \begin{equation}
c_1:=\frac{\|\psi_0\|^2}{\|\psi_0\|^2+\|\widetilde\psi_0\|^2},\end{equation}
and \begin{equation}  \label{defc2B}
c_2:= \ds \int_{\partial D} \widetilde\psi_0 \; \big(1/2 + \mathcal{K}_D^{-\alpha,0}\big)[\chi_{\partial D}]\; \dx \sigma.\end{equation}

We now solve $A(\omega, \delta) =0$. 
It is clear that $\delta = \O(\omega^2)$ and thus $\omega_0(\delta) = \O(\sqrt{\delta})$. 
We write 
$$
\omega_0(\delta) = a_1 \delta^{\frac{1}{2}} + a_2 \delta + \O(\delta^{\frac{3}{2}}),
$$
and get
\begin{align*}
	&  -  \frac{ v_b^{-2} | D |}{ \mathrm{Cap}_D} \left( a_1 \delta^{\frac{1}{2}} + a_2 \delta + \O(\delta^{\frac{3}{2}}) \right)^2 
	 - \frac{\i c_1  v_b^{-3} | D |}{4 \pi}\left( a_1 \delta^{\frac{1}{2}} + a_2 \delta + \O(\delta^{\frac{3}{2}}) \right)^3 \\
	 & \qquad
	 +c_2 \delta + \frac{\i c_1c_2 v_b^{-1} \mathrm{Cap}_D}{4 \pi }  \left( a_1 \delta^{\frac{3}{2}} + a_2 \delta^2 + \O(\delta^{\frac{5}{2}}) \right) + \O(\delta^2) = 0.
\end{align*}
From the coefficients of the $\delta$ and $\delta^{\frac{3}{2}}$ terms, we obtain 
\[
	- a_1^2  \frac{v_b^{-2} | D |}{ \mathrm{Cap}_D} + c_2 = 0
	 \]
	 and
	 \[
	 2 a_1 a_2 \frac{- v_b^{-2} | D |}{ \mathrm{Cap}_D} - a_1^3 \frac{\i c_1 v_b^{-3} | D |}{4\pi }  + a_1 \frac{\i c_1c_2 v_b^{-1} \mathrm{Cap}_D}{4 \pi } = 0, 
\]
which yields 
\[
	a_1 =  \pm \sqrt{ \frac{ c_2 \mathrm{Cap}_D}{| D |} } v_b
	 \quad \text{and} \quad 
	 a_2 = 0.
\]
From the definition (\ref{alphacap}) of the $\alpha$-quasi-periodic capacity, it follows that
\begin{align*} c_2 &= \frac{\mathrm{Cap}_{D,\alpha}}{\mathrm{Cap}_D}.\end{align*}
Therefore, the following result from \cite{bandgap} holds. 
\begin{theorem}\label{approx_thm} For $\alpha \ne 0$ and sufficiently small $\delta$, we have
\begin{align}
\omega_1^\alpha= \omega_M \sqrt{\frac{\mathrm{Cap}_{D,\alpha}}{\mathrm{Cap}_D}} + \O(\delta^{3/2}), \label{o_1_alpha}
\end{align}
where $\omega_M$ is defined in (\ref{defomegaM}) by $$\ds \omega_M= \sqrt{ \frac{\delta  \mathrm{Cap}_D}{|D|} } v_b.$$
\end{theorem}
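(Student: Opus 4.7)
The plan is to locate $\omega_1^\alpha$ as the unique small characteristic value of the operator-valued pencil $\mathcal{A}(\omega,\delta)$ introduced in \eqref{eq-boundaryB}, whose existence and uniqueness near the origin (for $\alpha\neq 0$) has already been established by the preceding lemma via Gohberg-Sigal theory. I would first write $\mathcal{A}(\omega,\delta) = \mathcal{A}_0 + \mathcal{B}(\omega,\delta)$ as in \eqref{expdA}, using the small-$k$ expansions \eqref{eq:exp_S}, \eqref{eq:exp_K}, \eqref{series-s2}, \eqref{series-k2} to identify the correction terms explicitly in powers of $\omega$ and $\delta$.

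Next I would perform a Lyapunov-Schmidt-type reduction to a scalar equation. Since $\mathcal{A}_0$ from \eqref{defA0} has one-dimensional left and right kernels spanned by $\Psi_0$ and $\Phi_0$ respectively, properties (i)-(ii) of the regularised operator $\widetilde{\mathcal{A}_0}$ show that the characteristic values of $\mathcal{A}(\omega,\delta)$ in a small neighborhood of the origin are precisely the zeros of a scalar function $A(\omega,\delta)$ obtained by projecting the range-corrected operator $\widetilde{\mathcal{A}_0}^{-1}\mathcal{A}(\omega,\delta)$ onto $\mathrm{span}\{\Psi_0\}$. Evaluating this projection order-by-order with the help of \Cref{lem:ints} and the normalisation \eqref{capacityB} produces exactly the scalar expansion \eqref{eq:Aomegadelta}, whose coefficients depend on $\mathrm{Cap}_D$, $|D|$, and the constant $c_2$ from \eqref{defc2B}.

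Inserting the ansatz $\omega_0(\delta) = a_1\delta^{1/2} + a_2\delta + \O(\delta^{3/2})$ into $A(\omega,\delta)=0$ and balancing the $\delta$ and $\delta^{3/2}$ terms forces $a_1^2 = c_2\,\mathrm{Cap}_D\,v_b^2/|D|$ and $a_2 = 0$, giving $\omega_0(\delta) = v_b\sqrt{c_2\,\mathrm{Cap}_D/|D|}\,\sqrt{\delta} + \O(\delta^{3/2})$. The final task is to identify $c_2$ with $\mathrm{Cap}_{D,\alpha}/\mathrm{Cap}_D$: using \eqref{capacityB} to write $\widetilde{\psi}_0 = -\mathrm{Cap}_D^{-1}(\mathcal{S}_D^{\alpha,0})^{-1}[\chi_{\p D}]$, then pairing with $(1/2 + \mathcal{K}_D^{-\alpha,0})[\chi_{\p D}]$ in \eqref{defc2B} and passing to the adjoint via the quasi-periodic jump relation, one matches the expression with the boundary definition \eqref{alphacap} of the $\alpha$-quasi-periodic capacity. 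Substituting this identification back and recalling $\omega_M = v_b\sqrt{\delta\,\mathrm{Cap}_D/|D|}$ yields the claimed formula.

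The main obstacle I expect is the final identification $c_2 = \mathrm{Cap}_{D,\alpha}/\mathrm{Cap}_D$. The expansion leading to \eqref{eq:Aomegadelta} is largely bookkeeping once the orders in $\omega$ and $\delta$ are tracked carefully, but recognising that the scalar coefficient $c_2$ really does encode the quasi-periodic capacity requires a careful use of the adjoint jump formula for $(\mathcal{K}_D^{-\alpha,0})^*$ together with the energy characterisation \eqref{vcalpha} (equivalently \eqref{alphacap}). Particular care is needed with the sign conventions and with the direction of quasi-periodicity, since $\alpha$ and $-\alpha$ enter the Green's function expansion \eqref{eq:defGk2} in a non-symmetric way.
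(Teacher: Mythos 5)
Your proposal follows essentially the same route as the paper: the expansion \eqref{expdA} of $\mathcal{A}(\omega,\delta)$ around $\mathcal{A}_0$, the reduction to the scalar equation $A(\omega,\delta)=0$ via the rank-one regularisation $\widetilde{\mathcal{A}_0}$ (your Lyapunov--Schmidt projection is exactly how \eqref{eq:Aomegadelta} is obtained), the ansatz $\omega_0(\delta)=a_1\delta^{1/2}+a_2\delta+\O(\delta^{3/2})$ with the balancing of the $\delta$ and $\delta^{3/2}$ coefficients, and the identification $c_2=\mathrm{Cap}_{D,\alpha}/\mathrm{Cap}_D$ from \eqref{capacityB}, the jump relations and the definition \eqref{alphacap}. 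The argument is correct and matches the paper's proof in all essential respects.
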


Now from \eqref{o_1_alpha}, we can see that $$\omega_{M,\alpha}:= \omega_M \sqrt{\frac{\mathrm{Cap}_{D,\alpha}}{\mathrm{Cap}_D}} \rightarrow 0$$ as $\alpha\to 0$   because 
 $$ \big( (1/2) I +( \mathcal{K}_D^{-\alpha,0})^*\big)(\mathcal{S}_D^{\alpha,0})^{-1} [\chi_{\partial D}] \rightarrow 0,$$  and  so $\mathrm{Cap}_{D,\alpha}  \rightarrow 0$ as $\alpha\rightarrow0$. Moreover,  it is clear that $\omega_{M,\alpha}$ lies in a small neighborhood of zero. 

We define $\omega^1_*:= \max_{\alpha} \omega_{M,\alpha}$. Then we deduce the following
result regarding a subwavelength band gap opening.
\begin{theorem}\label{main}
For every $\epsilon>0$, there exists $\delta_0>0$  and {$\widetilde \omega>  \omega^1_*+\epsilon$} such that 
\begin{equation}
 [ \omega^1_*+\epsilon, \widetilde\omega ] \subset [\max_\alpha \omega_1^\alpha, \min_\alpha \omega_2^\alpha]
 \end{equation} 
 for $\delta<\delta_0$.
\end{theorem}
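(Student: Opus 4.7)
The plan is to establish the two inclusions separately: an upper bound $\max_{\alpha} \omega_1^\alpha < \omega^1_* + \epsilon$ valid for $\delta$ small, and a lower bound $\min_\alpha \omega_2^\alpha \geq \widetilde\omega$ for some $\widetilde\omega > \omega^1_* + \epsilon$. Combining these two estimates immediately yields $[\omega^1_*+\epsilon,\widetilde\omega] \subset [\max_\alpha\omega_1^\alpha,\min_\alpha\omega_2^\alpha]$, which is the claimed subwavelength band gap.

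For the upper bound I would apply Theorem \ref{approx_thm} together with the definition $\omega^1_* = \max_\alpha \omega_{M,\alpha}$. For every fixed $\alpha\ne 0$ one has $\omega_1^\alpha = \omega_{M,\alpha} + \O(\delta^{3/2})$, so the task reduces to showing that the $\O(\delta^{3/2})$ remainder is uniform in $\alpha \in Y^*$. For $\alpha$ bounded away from $0$, the coefficients appearing in the operator expansion \eqref{expdA} of $\mathcal{A}(\omega,\delta)$ depend continuously on $\alpha$, so the implicit constants entering the Gohberg-Sigal analysis are uniform on any compact set that excludes the origin. For $\alpha$ in a small neighborhood of $0$, I would combine the fact that $\omega_{M,\alpha}\to 0$ (noted immediately after Theorem \ref{approx_thm}, since $\mathrm{Cap}_{D,\alpha}\to 0$) with the uniqueness of the subwavelength characteristic value to control $\omega_1^\alpha$ itself. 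This yields $\max_\alpha \omega_1^\alpha \leq \omega^1_* + C\delta^{3/2}$ for some constant $C$ independent of $\alpha$, and hence $\max_\alpha \omega_1^\alpha < \omega^1_* + \epsilon$ for all $\delta$ sufficiently small.

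For the lower bound my strategy is to apply the generalized Rouch\'e theorem (Theorem \ref{rouche}) to the splitting $\mathcal{A}(\omega,\delta) = \mathcal{A}(\omega,0) + \bigl(\mathcal{A}(\omega,\delta)-\mathcal{A}(\omega,0)\bigr)$. Aside from $\omega = 0$, the characteristic values of the unperturbed operator $\mathcal{A}(\omega,0)$ correspond to $\omega$ for which either $(\omega/v_b)^2$ is a positive Neumann eigenvalue of the Laplacian on $D$, or $(\omega/v)^2$ is a Dirichlet eigenvalue of $Y\setminus\overline D$ with $\alpha$-quasi-periodic boundary data on $\partial Y$. Both families are bounded below by positive constants, and the Dirichlet eigenvalues depend continuously on $\alpha$ on the compact torus $Y^*$, so there exists $c_0 > 0$, independent of $\alpha$, such that $\mathcal{A}(\omega,0)$ has no characteristic values in the annular region $\{0 < |\omega| \leq 2 c_0\}$ for any $\alpha\in Y^*$. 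Since $\mathcal{A}(\omega,\delta)-\mathcal{A}(\omega,0) = \O(\delta)$ uniformly in $\omega$ on compact subsets and uniformly in $\alpha$, Theorem \ref{rouche} implies that for $\delta$ sufficiently small the perturbed operator $\mathcal{A}(\omega,\delta)$ has no characteristic values in $\{c_0 \leq |\omega| \leq 2 c_0\}$ either. Consequently $\omega_2^\alpha \geq c_0$ uniformly in $\alpha$, and since $\omega^1_* = \O(\sqrt{\delta})\to 0$ as $\delta\to 0$, we may pick any $\widetilde\omega \in (\omega^1_* + \epsilon, c_0)$.

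The main obstacle I anticipate is securing this uniformity in $\alpha$ near the degenerate point $\alpha = 0$: the layer-potential formalism used throughout this section rests on invertibility of $\mathcal{S}_D^{\alpha,0}$, which fails precisely at $\alpha = 0$, so Theorem \ref{approx_thm} is stated only for $\alpha\ne 0$ and the neighborhood of $0$ in which the Gohberg-Sigal argument gives one and only one subwavelength characteristic value may a priori shrink as $\alpha\to 0$. To circumvent this I would split the torus $Y^*$ into the region $|\alpha|\geq \alpha_0$, where \eqref{expdA} and Theorem \ref{approx_thm} apply with uniform error constants, and a small region $|\alpha| < \alpha_0$, where $\omega_{M,\alpha}$ (and therefore the upper-bound target for the first band) is small by continuity, while the lower bound $\omega_2^\alpha \geq c_0$ can still be extracted either from a variational argument on \eqref{eq-scatteringB} or from a direct continuity analysis of the Dirichlet eigenvalue problem as $\alpha\to 0$.
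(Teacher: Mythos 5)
Your overall strategy is the same as the paper's: bound the first band from above and the second band from below, split the Brillouin zone into $|\alpha|\geq\alpha_0$ (where the derivation of Theorem \ref{approx_thm} is uniform) and a small neighbourhood of $\alpha=0$ (where the layer-potential formula degenerates), and use continuity there. For the upper bound this matches the paper, except that your intermediate claim $\max_\alpha\omega_1^\alpha\leq\omega_*^1+C\delta^{3/2}$ is not justified near $\alpha=0$, where the asymptotic formula \eqref{o_1_alpha} is not available; the continuity argument you fall back on (as the paper does, using $\omega_1^0=0$ and continuity of $\omega_1^\alpha$ in $\alpha$ and $\delta$) only yields the weaker bound $\omega_*^1+\epsilon$, which is all that is needed.

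The genuine gap is in your lower bound for $\omega_2^\alpha$. First, showing that $\mathcal{A}(\omega,\delta)$ has no characteristic values in the annulus $\{c_0\leq|\omega|\leq 2c_0\}$ does not give $\omega_2^\alpha\geq c_0$: you must also know that the disk $\{|\omega|<c_0\}$ contains exactly one characteristic value, i.e.\ you need the Gohberg--Sigal counting on the whole disk, not just emptiness of an annulus. The uniqueness lemma supplies this only in ``a neighbourhood of the origin'' whose radius you have not shown to be comparable to $c_0$ uniformly in $\alpha$, so a second band value could a priori hide in $\{|\omega|<c_0\}$. Second, the uniformity in $\alpha$ on which your annulus argument rests is false near $\alpha=0$: the quasi-periodic kernel contains the terms $(k^2-|\alpha+q|^2)^{-1}$, so $\mathcal{S}_D^{\alpha,k}$ and $(\mathcal{K}_D^{-\alpha,k})^*$ (hence both $\mathcal{A}(\omega,0)$ and the perturbation $\mathcal{A}(\omega,\delta)-\mathcal{A}(\omega,0)$) have poles at $\omega=v|\alpha+q|$, and for small $|\alpha|$ the pole at $\omega=v|\alpha|$ sweeps through any fixed annulus $\{c_0\leq|\omega|\leq2c_0\}$; thus neither the uniform bound $\mathcal{A}(\omega,\delta)-\mathcal{A}(\omega,0)=\O(\delta)$ nor uniform invertibility of $\mathcal{A}(\omega,0)$ holds there, and Theorem \ref{rouche} cannot be applied as you propose exactly in the regime you yourself flag as delicate. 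Your suggested remedies (a variational argument, or continuity of the $\delta=0$ Dirichlet eigenvalues) concern the unperturbed problem and do not by themselves control the $\delta>0$ characteristic values near $\alpha=0$. The paper closes this differently: for $\alpha$ bounded away from $0$ the counting already carried out in the proof of Theorem \ref{approx_thm} shows there is no second small characteristic value; at $\alpha=0$ it perturbs $\mathcal{A}(\omega,0)$ near $\omega_2^0\neq0$ to get $\omega_2^0(\delta)>\omega_*^1+\epsilon$; and it then invokes continuity of $\omega_2^\alpha$ in $\alpha$ on the compact Brillouin zone. You would need to incorporate these steps (or an equivalent uniform disk counting that accounts for the pole structure) to make your lower bound complete.
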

\begin{proof}
 Using  $\omega_1^0=0$ and the continuity of $\omega_1^\alpha$ in $\alpha$ and $\delta$, we get $\alpha_0$ and $\delta_1$ such that $\omega_1^\alpha < \omega^1_*$ for every $| \alpha|<\alpha_0$ and $\delta<\delta_1$. Following the derivation of \eqref{o_1_alpha},  we can check that it is valid uniformly in $\alpha$ as far as $|\alpha| \ge \alpha_0$. Thus there exists $\delta_0 < \delta_1$ such $\omega_1^\alpha \le \omega^1_* +\epsilon$ for $|\alpha| \ge \alpha_0$.  We have shown that $ \max_\alpha \omega_1^\alpha \le \omega^1_*+\epsilon$ for sufficiently small $\delta$. To have  $\min_\alpha \omega_2^\alpha > \omega^1_* +\epsilon$ for small $\delta$, it is enough to check that $\mathcal{A}(\omega,\delta)$ has no small characteristic value other than $\omega_1^\alpha$. For $\alpha$ away from $0$, we can see that it is true following the proof of Theorem \ref{approx_thm}. If $\alpha=0$, we have
\begin{equation}
\mathcal{A}(\omega,\delta)=\mathcal{A}(\omega,0) + \O(\delta),\end{equation}
near $\omega_2^0$ with $\delta=0$. Since  $\omega_2^0\ne 0$, we have $\omega_2^0(\delta) > \omega^1_* + \epsilon$ for sufficiently small $\delta$. Finally, using the continuity of $\omega_2^\alpha$ in $\alpha$, we obtain  $\min_\alpha \omega_2^\alpha > \omega^1_* +\epsilon$ for small $\delta$.  This completes the proof.
\end{proof}

As shown in \cite{highfrequency}, the first Bloch eigenvalue $\omega_1^\alpha$ attains its maximum $\omega^1_*$ at $\alpha_*=(\pi,\pi,\pi)$ (\textit{i.e.} the corner $M$ of the Brillouin zone). The proof relies on the variational characterization (\ref{vcalpha}) of the quasi-periodic capacity. 

\begin{theorem} \label{main2} Assume that $D$ is symmetric with respect to $\{ x_j=0\}$ for $j=1,2,3$. Then both $\mathrm{Cap}_{D,\alpha}$ and $\omega_1^\alpha$ attain their maxima at $\alpha_*=(\pi,\pi,\pi)$. 
\end{theorem}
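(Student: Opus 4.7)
The plan is to reduce the claim on $\omega_1^\alpha$ to one on $\mathrm{Cap}_{D,\alpha}$ via the asymptotic formula \eqref{o_1_alpha}, and then show that, for symmetric $D$, the unique minimizer in the variational problem \eqref{vcalpha} at $\alpha=\alpha_*$ automatically vanishes on $\partial Y$. Once this is in hand, that minimizer can be used as an admissible test function for $\mathcal{V}_\alpha$ for every other $\alpha\in Y^*$, yielding the desired maximality.

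I would start by showing that the minimizer $u_*$ of \eqref{vcalpha} at $\alpha=\alpha_*$, which exists and is unique by strict convexity of the Dirichlet integral on the affine set $\mathcal{V}_{\alpha_*}$ (using connectedness of $Y\setminus\overline{D}$), is fully symmetric. Since $-\alpha_*\equiv\alpha_*$ modulo $\Lambda^*=2\pi\Z^3$, the complex conjugate $\overline{u_*}$ also belongs to $\mathcal{V}_{\alpha_*}$ with the same Dirichlet energy, so uniqueness forces $u_*$ to be real. For each $j$, the reflection $R_j: x_j \mapsto -x_j$ sends $\alpha_*$ to itself modulo $\Lambda^*$ and preserves $D$, so $u_*\circ R_j\in\mathcal{V}_{\alpha_*}$ also minimizes the same energy, whence $u_*\circ R_j=u_*$. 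Combining the resulting evenness $u_*(-\tfrac{1}{2},y,z)=u_*(\tfrac{1}{2},y,z)$ with the $\alpha_*$-anti-periodicity $u_*(\tfrac{1}{2},y,z)=-u_*(-\tfrac{1}{2},y,z)$ forces $u_*$ to vanish on the face $\{x_1=\pm\tfrac{1}{2}\}$; the analogous argument in the remaining coordinate directions gives $u_*\equiv 0$ on all of $\partial Y$. Thus $u_*$ is the unique harmonic function in $Y\setminus\overline{D}$ with boundary data $1$ on $\partial D$ and $0$ on $\partial Y$, so $\mathrm{Cap}_{D,\alpha_*}$ coincides with the classical Dirichlet capacity of $D$ inside the cube $Y$.

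The second step exploits this vanishing. Given an arbitrary $\alpha\in Y^*$, I extend $u_*$ from $Y$ to $\R^3$ by the rule $\widetilde{u}(x+n):=e^{\iu\alpha\cdot n}u_*(x)$ for $x\in Y$ and $n\in\Z^3$. Because $u_*=0$ on $\partial Y$, this extension is continuous across every cell face and therefore lies in $H^1_{\mathrm{loc}}$; it is $\alpha$-quasi-periodic by construction and equals $1$ on $\partial D$, so $\widetilde{u}\in\mathcal{V}_\alpha$. Inserting $\widetilde{u}$ as a test function in \eqref{vcalpha} and using $|e^{\iu\alpha\cdot n}|=1$ yields
$$
\mathrm{Cap}_{D,\alpha}\leq\int_{Y\setminus\overline{D}}|\nabla\widetilde{u}|^2 \dx x=\int_{Y\setminus\overline{D}}|\nabla u_*|^2 \dx x=\mathrm{Cap}_{D,\alpha_*},
$$
so $\mathrm{Cap}_{D,\alpha}$ attains its maximum at $\alpha_*$. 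The statement for $\omega_1^\alpha$ then follows from \eqref{o_1_alpha} and the monotonicity of the square root, since for sufficiently small $\delta$ the $O(\delta^{3/2})$ remainder cannot displace the leading-order maximum. The step requiring the most care is the identification $u_*\equiv 0$ on $\partial Y$, which rests on $\alpha_*$ being the unique vertex of the Brillouin zone fixed, modulo $\Lambda^*$, by both $\alpha\mapsto-\alpha$ and each reflection $R_j$, together with uniqueness of the minimizer.
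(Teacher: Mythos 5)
Your treatment of the capacitance claim is correct and is precisely the argument the paper points to: the review gives no proof beyond remarking that the result from \cite{highfrequency} "relies on the variational characterization \eqref{vcalpha}", and your chain --- uniqueness of the minimizer $u_*$ at $\alpha_*$ by strict convexity, its invariance under conjugation and under the reflections $x_j\mapsto -x_j$ (which fix $\alpha_*$ modulo $\Lambda^*$ and preserve $D$), the vanishing of $u_*$ on $\partial Y$ obtained by combining evenness with antiperiodicity, and the transplantation of $u_*$ as an admissible test function in \eqref{vcalpha} for every other $\alpha$ --- is exactly that variational argument, and it is sound. One cosmetic correction: what makes $\alpha_*$ special is not that it is the \emph{unique} quasi-momentum fixed by $\alpha\mapsto-\alpha$ and by each reflection ($\Gamma$ and points such as $(\pi,0,0)$, $(\pi,\pi,0)$ are fixed as well), but that it is the unique point at which the Bloch condition is antiperiodic in all three coordinates, which is what forces the trace of $u_*$ to vanish on every face of $\partial Y$.

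The step I would not accept as written is the passage from $\mathrm{Cap}_{D,\alpha}$ to $\omega_1^\alpha$. Formula \eqref{o_1_alpha} gives $\omega_1^\alpha=\omega_{M,\alpha}+\mathcal{O}(\delta^{3/2})$ with $\omega_{M,\alpha}=\mathcal{O}(\sqrt{\delta})$, and $\alpha\mapsto \mathrm{Cap}_{D,\alpha}$ has a smooth maximum at $\alpha_*$ with at most quadratic decay, so that for $|\alpha-\alpha_*|\lesssim\sqrt{\delta}$ the leading-order gap $\omega_{M,\alpha_*}-\omega_{M,\alpha}=\mathcal{O}\bigl(\sqrt{\delta}\,|\alpha-\alpha_*|^2\bigr)$ is itself of size $\mathcal{O}(\delta^{3/2})$, i.e.\ comparable to the remainder. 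Hence "the $\mathcal{O}(\delta^{3/2})$ remainder cannot displace the leading-order maximum" is not a valid inference: the expansion alone only shows that the maximizer of $\omega_1^\alpha$ lies in an $\mathcal{O}(\sqrt{\delta})$ neighbourhood of $\alpha_*$ and that $\max_\alpha\omega_1^\alpha$ equals $\omega^1_*$ at leading order, which is in fact the sense in which the result is used in Theorem \ref{main} (where $\omega^1_*:=\max_\alpha\omega_{M,\alpha}$). Pinning the exact argmax at $\alpha_*$ for each fixed small $\delta$ would require additional input --- for instance an $\alpha$-differentiable control of the remainder combined with the symmetries of $\omega_1^\alpha$ (which by themselves only give criticality at $\alpha_*$), or a direct argument on the exact Bloch eigenvalue --- none of which your proposal supplies.
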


The results of Theorems \ref{main} and \ref{main2} are illustrated in Figure \ref{bloch}.
\begin{figure}[h]
\centering
  \includegraphics[height=8cm]{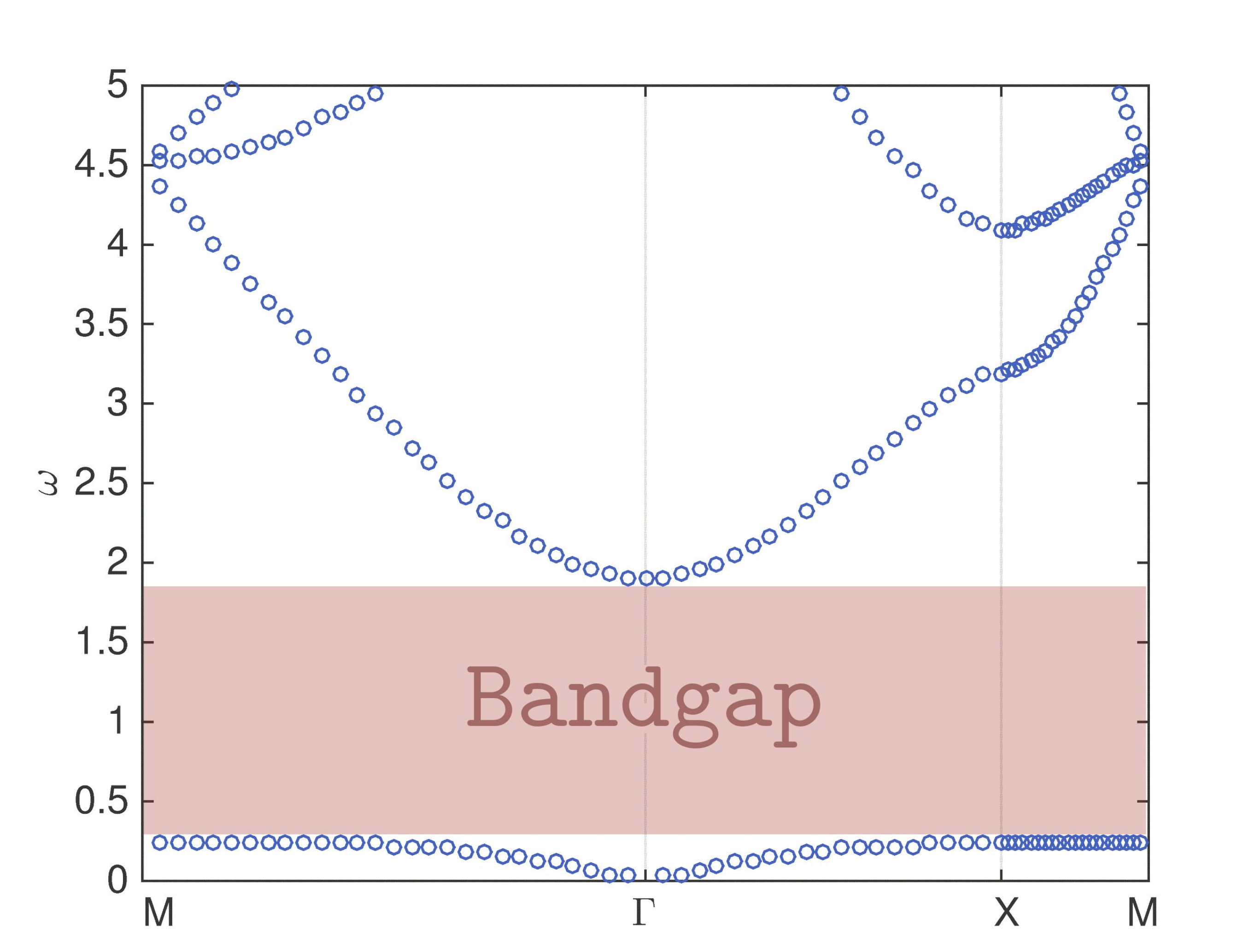}
\caption{Subwavelength band gap opening.} \label{bloch}
\end{figure}

Next, we consider the behavior of the first Bloch eigenfunction. 
In \cite{highfrequency}  a high-frequency  homogenization approach for subwavelength resonators has been developed. An asymptotic expansion of the Bloch eigenfunction near the critical frequency  has been  computed. It is proved that the eigenfunction can be decomposed into two parts: one is slowly varying and satisfies a homogenized equation, while the other is periodic and varying at the microscopic scale. The microscopic oscillations explain why these structures can be used to achieve super-focusing, while the exponential decay of the slowly varying part proves the band gap opening above the critical frequency. 

We need the following lemma from \cite{highfrequency}. 

\begin{lemma} For $\epsilon>0$ small enough, 
\begin{align*}
\mathrm{Cap}_{D, \alpha_*+\epsilon\widetilde\alpha}= \mathrm{Cap}_{D, \alpha_*} +  \epsilon^2 {\Lambda_D^{\widetilde\alpha}} + \O(\epsilon^4), 
\end{align*}
where $\Lambda_D^{\widetilde\alpha}$ is a {negative semi-definite quadratic function} of $\widetilde \alpha$:
$$
\frac{v_b^2}{|D|} \Lambda_D^{\widetilde\alpha} = - \sum_{1\leq i,j\leq 3} \lambda_{ij} \widetilde\alpha_i \widetilde\alpha_j
$$
with $(\lambda_{ij})$ being symmetric and positive semi-definite. 
\end{lemma}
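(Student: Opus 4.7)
The plan is to combine the variational characterization \eqref{vcalpha} of $\mathrm{Cap}_{D,\alpha}$ with a Bloch-wave expansion of its minimizer in powers of $\epsilon$. Setting $\alpha = \alpha_* + \epsilon\widetilde\alpha$, I would factor out the phase from the minimizer, writing $u_\alpha(x) = e^{\i\epsilon\widetilde\alpha\cdot x}f_\epsilon(x)$. Then $f_\epsilon$ lies in the $\alpha_*$-quasi-periodic class (independent of $\epsilon$), is harmonic in $Y\setminus\overline{D}$, and satisfies the boundary condition $f_\epsilon(x) = e^{-\i\epsilon\widetilde\alpha\cdot x}$ on $\partial D$. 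This reduces the perturbation problem to a variable-coefficient equation on a fixed function space.

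Expanding $f_\epsilon = u_0 + \epsilon w_1 + \epsilon^2 w_2 + \O(\epsilon^3)$ and matching orders in the identity
\begin{equation*}
\Delta f_\epsilon + 2\i\epsilon\widetilde\alpha\cdot\nabla f_\epsilon - \epsilon^2|\widetilde\alpha|^2 f_\epsilon = 0,
\end{equation*}
together with the Taylor expansion $e^{-\i\epsilon\widetilde\alpha\cdot x} = 1 - \i\epsilon\widetilde\alpha\cdot x - \tfrac{\epsilon^2}{2}(\widetilde\alpha\cdot x)^2 + \O(\epsilon^3)$ on $\partial D$, yields boundary value problems for $w_1$ and $w_2$ whose data depend linearly and quadratically on $\widetilde\alpha$ respectively. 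Substituting the ansatz into $\int_{Y\setminus\overline{D}}|\nabla u_\alpha|^2 \dx y$ and applying Green's identity (the boundary integrals on $\partial Y$ cancel by $\alpha_*$-quasi-periodicity) produces an asymptotic expansion of $\mathrm{Cap}_{D,\alpha_*+\epsilon\widetilde\alpha}$ in powers of $\epsilon$.

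The next step is to show that odd powers of $\epsilon$ vanish. The clean way is to observe that $\mathrm{Cap}_{D,\alpha}$ is invariant under $\alpha \mapsto -\alpha$ (by complex conjugation of the minimizer), and since $-\alpha_* = \alpha_* - (2\pi,2\pi,2\pi) \in \alpha_* + \Lambda^*$, the map $\epsilon\mapsto \mathrm{Cap}_{D,\alpha_*+\epsilon\widetilde\alpha}$ is an even function of $\epsilon$. Hence both the $\O(\epsilon)$ and $\O(\epsilon^3)$ terms vanish identically. One may also verify this directly at first order: the coefficient reduces, via Green's identity, to an integral of $(\widetilde\alpha\cdot x)\frac{\partial u_0}{\partial \nu}$ over $\partial D$, and the integrand is odd under $x\mapsto -x$ because $u_0$ is real and even (which in turn follows from the assumed symmetry of $D$, the identity $-\alpha_*\equiv\alpha_*\!\!\pmod{\Lambda^*}$, and uniqueness of the capacitary potential).

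The $\epsilon^2$ coefficient is manifestly a quadratic form in $\widetilde\alpha$, since $w_1$ and $w_2$ depend linearly and quadratically on $\widetilde\alpha$; this defines $\Lambda_D^{\widetilde\alpha}$, and after pulling out the prefactor $v_b^2/|D|$ one reads off a symmetric matrix $(\lambda_{ij})$. The main obstacle — showing that $(\lambda_{ij})$ is positive semi-definite (equivalently, $\Lambda_D^{\widetilde\alpha}\le 0$) — is actually immediate from Theorem \ref{main2}: since $\mathrm{Cap}_{D,\alpha}$ attains its maximum at $\alpha_*$, the Hessian there is negative semi-definite. The remaining technicality is to justify the expansion rigorously and control the $\O(\epsilon^4)$ remainder; this follows from the analytic dependence of the quasi-periodic Green's function $G^{\alpha,0}$ and the operator $\mathcal{S}_D^{\alpha,0}$ on $\alpha$ near $\alpha_*$, which makes $\alpha \mapsto \mathrm{Cap}_{D,\alpha}$ real-analytic, and is then upgraded from $\O(\epsilon^3)$ to $\O(\epsilon^4)$ by the parity argument above.
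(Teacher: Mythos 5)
There is no in-paper proof to compare you against: the review states this lemma as imported from \cite{highfrequency}, remarking only that the relevant arguments rest on the variational characterization \eqref{vcalpha}, so your attempt can only be judged on its own merits, and on those terms it is essentially correct and in the same spirit as what the paper hints at. The key ingredients all work: factoring the phase, $u_{\alpha_*+\epsilon\widetilde\alpha}=e^{\i\epsilon\widetilde\alpha\cdot x}f_\epsilon$ with $f_\epsilon$ in the fixed $\alpha_*$-quasi-periodic class; the evenness of $\epsilon\mapsto \mathrm{Cap}_{D,\alpha_*+\epsilon\widetilde\alpha}$, which follows from conjugation invariance $\mathrm{Cap}_{D,-\alpha}=\mathrm{Cap}_{D,\alpha}$, periodicity in $\alpha$, and $-\alpha_*\equiv\alpha_* \pmod{\Lambda^*}$, and which kills the $\epsilon$ and $\epsilon^3$ terms; analyticity of $\alpha\mapsto\mathrm{Cap}_{D,\alpha}$ near $\alpha_*\neq 0$ to make the expansion rigorous and the remainder $\O(\epsilon^4)$, uniformly for $\widetilde\alpha$ on a bounded set; and reading off the sign of the quadratic term from Theorem \ref{main2} (a smooth function maximized at $\alpha_*$ has negative semi-definite Hessian there), which is legitimate in this paper's logical order, provided you note that it imports the symmetry hypothesis of Theorem \ref{main2}, which the lemma tacitly carries in context. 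A few small repairs: $f_\epsilon$ is \emph{not} harmonic -- it satisfies exactly the variable-coefficient equation you write in the following line, so nothing downstream is affected, but the sentence should be corrected; the realness of $u_0$ needs no symmetry of $D$ (conjugation plus $-\alpha_*\equiv\alpha_*$ and uniqueness suffice), whereas evenness $u_0(-x)=u_0(x)$ does require $D=-D$ -- though this direct first-order check is redundant once you have the parity-in-$\epsilon$ argument; and you should add one line on solvability of the cell problems for $w_1,w_2$ (immediate, since $\mathcal{S}_D^{\alpha_*,0}$ is invertible for $\alpha_*\neq 0$, or by Lax--Milgram in the quasi-periodic class) and on the fact that the $\epsilon^2$ coefficient is real because $\mathrm{Cap}_{D,\alpha}$ is, so that it is indeed represented by a real symmetric matrix after extracting the positive prefactor $v_b^2/|D|$.
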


Assume that the resonators are arranged with period  $r>0$ and {$\delta = \O(r^2)$}.
Then, by a scaling argument, the critical frequency $\omega_*^r = (1/r) \omega_*^1 = \O(1)$ as {$r\rightarrow 0$}.

\begin{theorem} \label{superf}
For $\omega$ near the critical frequency $\omega_*^r$: 
$(\omega_*^r)^2  -\omega^2  = \O(r^2)$, 
the following asymptotic of the first {Bloch eigenfunction} $u_{1,r}^{\alpha_*/r + \widetilde\alpha}$ holds:
$$
u_{1,r}^{\alpha_*/r + \widetilde\alpha}(x) = \underbrace{e^{\i \widetilde\alpha \cdot x}}_{{\mbox{macroscopic behavior}}} \underbrace{S \left(\frac{x}{r}\right)}_{{\mbox{microscpic behavior}}} + \; \O(r).
$$
The macroscopic {plane wave} $e^{\i \widetilde\alpha \cdot x}$ satisfies:
$$
{ \sum_{1\leq i, j \leq 3}\lambda_{ij} \partial_i \partial_j \widetilde{ u} (x)+ \frac{(\omega_*^r)^2 -\omega^2}{\delta}\widetilde{ u}(x)= 0}.
$$
\end{theorem}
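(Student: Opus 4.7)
\textbf{Proof proposal for Theorem~\ref{superf}.} The plan is to combine the dispersion formula of Theorem~\ref{approx_thm} with the Taylor expansion of the quasi-periodic capacity provided by the preceding lemma, and then perform a two-scale decomposition of the Bloch eigenfunction using the analytic dependence of the quasi-periodic layer potentials on $\alpha$. First, I would exploit the standard rescaling that relates Bloch data on the lattice of period $r$ to those on the normalized unit cell: if $(\omega_1^\alpha,u_1^\alpha)$ denotes the first Bloch pair on the unit cell with quasi-momentum $\alpha$, then on the $r$-lattice with quasi-momentum $\alpha_*/r+\widetilde\alpha$ the first pair is $(\omega_1^{\alpha_*+r\widetilde\alpha}/r,\,u_1^{\alpha_*+r\widetilde\alpha}(\cdot/r))$. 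In particular $\omega_*^r=\omega_*^1/r$, and the problem is reduced to expanding $u_1^{\alpha_*+r\widetilde\alpha}$ around $r=0$.

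Next, I would derive the envelope equation from the dispersion relation. Substituting the lemma's expansion $\mathrm{Cap}_{D,\alpha_*+r\widetilde\alpha}=\mathrm{Cap}_{D,\alpha_*}-r^2\,\frac{|D|}{v_b^2}\sum_{i,j}\lambda_{ij}\widetilde\alpha_i\widetilde\alpha_j+\O(r^4)$ into the formula of Theorem~\ref{approx_thm}, squaring, and using $\omega_M^2=\delta\,\mathrm{Cap}_D\,v_b^2/|D|$, I obtain
\[
(\omega_1^{\alpha_*+r\widetilde\alpha})^2=(\omega_*^1)^2-\delta r^2\sum_{i,j}\lambda_{ij}\widetilde\alpha_i\widetilde\alpha_j+\O(\delta r^4)+\O(\delta^{5/2}).
\]
Dividing by $r^2$ and using $\omega=\omega_1^{\alpha_*+r\widetilde\alpha}/r$ rearranges this to $((\omega_*^r)^2-\omega^2)/\delta=\sum_{i,j}\lambda_{ij}\widetilde\alpha_i\widetilde\alpha_j+o(1)$. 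Since $\partial_i\partial_j e^{\i\widetilde\alpha\cdot x}=-\widetilde\alpha_i\widetilde\alpha_j e^{\i\widetilde\alpha\cdot x}$, plugging $\widetilde u(x)=e^{\i\widetilde\alpha\cdot x}$ into the stated PDE produces exactly this dispersion identity, confirming the macroscopic equation.

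For the spatial decomposition, I would take $S$ to be the $\alpha_*$-quasi-periodic Bloch eigenfunction associated with $\omega_*^1$ on the normalized unit cell (which is simple and nondegenerate by the analysis preceding the theorem). The ansatz $u_{1,r}^{\alpha_*/r+\widetilde\alpha}(x)=e^{\i\widetilde\alpha\cdot x}S(x/r)+\O(r)$ has the correct quasi-periodicity: shifting $x\mapsto x+rn$ yields the factor $e^{\i\widetilde\alpha\cdot rn}e^{\i\alpha_*\cdot n}=e^{\i(\alpha_*/r+\widetilde\alpha)\cdot rn}$. To verify the remainder, I would substitute the ansatz into the boundary integral formulation \eqref{eq-boundaryB} with quasi-momentum $\alpha_*+r\widetilde\alpha$, expand $\mathcal{S}_D^{\alpha_*+r\widetilde\alpha,k}$ and $(\mathcal{K}_D^{-(\alpha_*+r\widetilde\alpha),k})^*$ to first order in $r\widetilde\alpha$ and in the frequency shift $\omega-\omega_*^r$, and invoke the uniform invertibility of the rank-one modified operator $\widetilde{\mathcal{A}}_0$ (introduced in the proof of Theorem~\ref{approx_thm}) to solve for the correction at order $\O(r)$.

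The principal difficulty will be the rigorous justification of the $\O(r)$ remainder. Because the expansion is carried out around a nonzero critical quasi-momentum $\alpha_*$ at the corner of the Brillouin zone, standard two-scale convergence tools do not apply directly; one must instead exploit the joint analyticity of the quasi-periodic single layer potential in $\alpha$ and $k$ together with the Gohberg--Sigal perturbative control of the characteristic value from Theorem~\ref{rouche}. One must also verify uniformity of the remainder in $\widetilde\alpha$ over compact sets and check that the microscopic factor $S$, which is singular across $\partial D$ through the jump relations \eqref{eq:jump1}--\eqref{eq:jump2}, propagates the perturbation at precisely the right order without contaminating the macroscopic envelope. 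These are essentially bookkeeping matters, but they form the technical heart of the argument.
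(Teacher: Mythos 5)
Your outline follows essentially the same route the paper takes for this result (which it states with only a sketch, citing \cite{highfrequency}): the scaling argument $\omega_*^r=\omega_*^1/r$, the expansion of $\mathrm{Cap}_{D,\alpha_*+r\widetilde\alpha}$ from the preceding lemma combined with Theorem~\ref{approx_thm} to read off the envelope equation, and the two-scale ansatz $e^{\i\widetilde\alpha\cdot x}S(x/r)$ justified by expanding the quasi-periodic layer potentials in the boundary-integral formulation and using the Gohberg--Sigal/rank-one-perturbation machinery from the proof of Theorem~\ref{approx_thm}. The only slip is minor bookkeeping: squaring the $\O(\delta^{3/2})$ error of Theorem~\ref{approx_thm} gives an $\O(\delta^2)$ (not $\O(\delta^{5/2})$) term, so the clean envelope identity requires the error's $\alpha$-dependence near $\alpha_*$ to be controlled, which is exactly what your rigorous verification step via the integral operators is meant to supply.
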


If we write
 $(\omega_*^r)^2 - \omega^2   = \beta \delta$, then 
$$\sum_{1\leq i, j \leq 3}\lambda_{ij} \widetilde \alpha_i \widetilde \alpha_j = \beta + \O(r^2).$$ Moreover, for $\beta >0$, the {plane wave Bloch eigenfunction} satisfies the homogenized equation for the  crystal while the microscopic field is periodic and varies on the scale of $r$. If $\beta <0$, then the Bloch eigenfunction is {exponentially growing or decaying} which is another way to see that a {band gap opening} occurs above the critical frequency. 

Theorem \ref{superf} shows that the super-focusing property  at subwavelength scales near the critical frequency $\omega_*^r$ holds true. Here, the mechanism is not due to effective (high-contrast below $\omega_*^r$ and negative above $\omega_*^r$) properties of the medium. The effective medium theory described in Section~\ref{dilutesect} is no longer valid in the nondilute case. 

Figure \ref{figbloch}  shows a one-dimensional plot along the $x_1$-axis of the real part of the {Bloch} {eigenfunction} of the {square lattice} over many unit cells. 

\begin{figure}[h] \begin{center}  \includegraphics[scale=0.7]{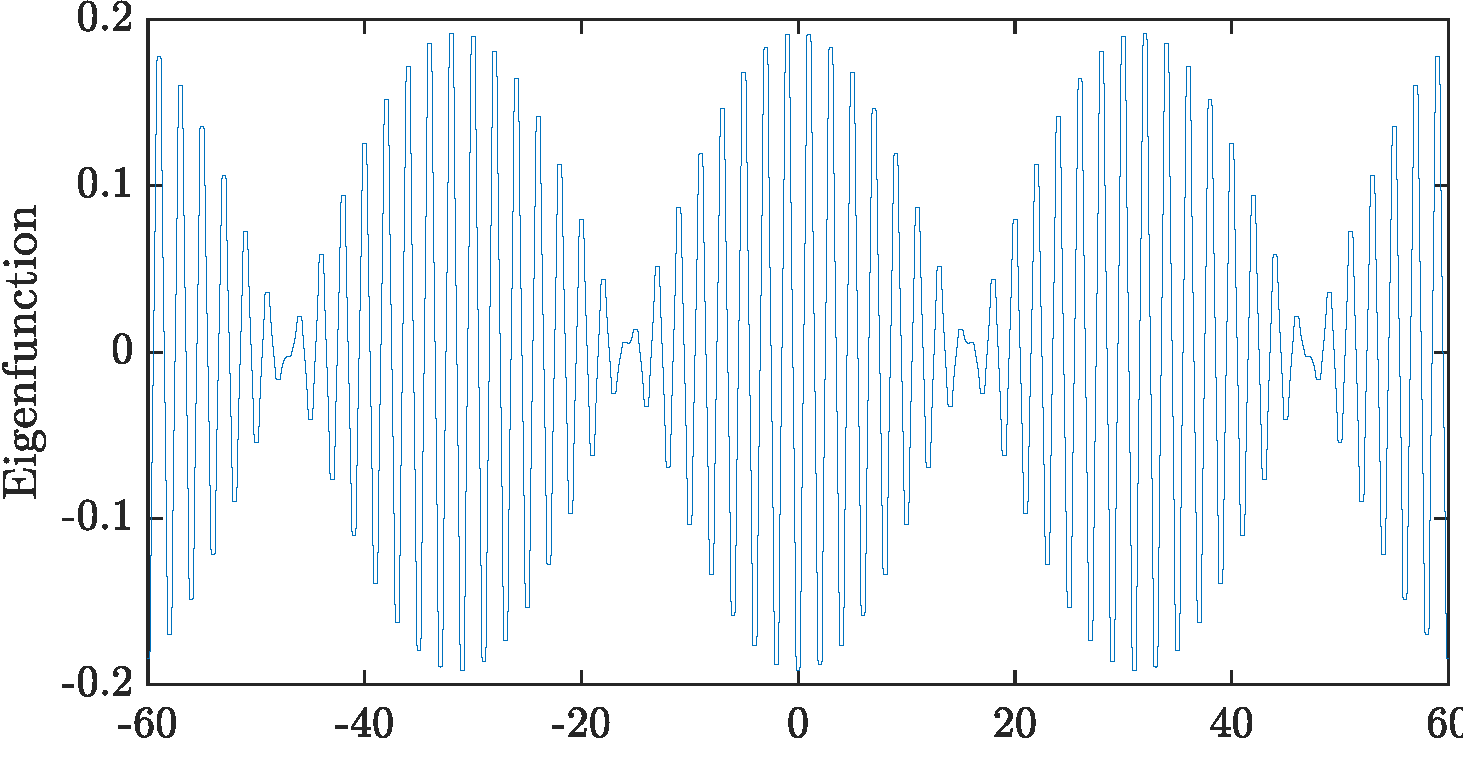} 
\caption{Real part of the {Bloch} {eigenfunction} of the {square lattice} shown over many unit cells. }  \label{figbloch} \end{center}
\end{figure}

\section{Topological metamaterials} \label{sec6}
We begin this section by studying existence and consequences of a Dirac cone singularity in a honeycomb structure. Dirac singularities are intimately connected with topologically protected edge modes, and we then study such modes in an array of subwavelength resonators.

\subsection{Dirac singularity}
The classical example of a structure with a Dirac singularity is graphene, where this singularity is responsible for many peculiar electronic properties. Graphene consists of a single layer of carbon atoms in an honeycomb lattice, and in this section we study a similar structure with subwavelength resonators.

In the homogenization theory of metamaterials, the goal is to map the metamaterial to a homogeneous material with some effective parameters. It has  been  demonstrated in the previous section  that this approach does not apply in the case of  crystals at ``high'' frequencies, \textit{i.e.}, away from the centre $\Gamma$ (corresponding to $\alpha=(0,0,0)$) of the Brillouin zone.  In Theorem \ref{superf}, it is shown that around the symmetry point $M$ (corresponding to $\alpha=(\pi,\pi,\pi)$) in the Brillouin zone of a crystal with a square lattice, the Bloch eigenmodes display oscillatory behaviour on two distinct scales: small scale oscillations on the order of the size of individual
resonators, while simultaneously the plane-wave envelope oscillates at a much larger scale and satisfies a homogenized equation.

In this section we prove the near-zero effective index property in  honeycomb crystal at the deep subwavelength scale.  We develop a homogenization theory that captures both the macroscopic behaviour of the eigenmodes and the oscillations in the microscopic scale. The near-zero effective refractive index at the macroscale is a consequence of the existence of a Dirac dispersion cone.

We consider a two-dimensional infinite honeycomb crystal in two dimensions depicted in Figure \ref{fig:honeycomb}. Define the lattice $\Lambda$ generated by the lattice vectors
$$ l_1 = L\left( \frac{\sqrt{3}}{2}, \frac{1}{2} \right),~~l_2 = L\left( \frac{\sqrt{3}}{2}, -\frac{1}{2}\right),$$
where $L$ is the lattice constant. Denote by $Y$ a fundamental domain of the given lattice. Here, we take 
$$ Y:= \left\{ s l_1+ t l_2 ~|~ 0 \le s,t \le 1 \right\}. $$
Define the three points $x_0, x_1,$ and $x_2$ as
$$x_0 = \frac{l_1 + l_2}{2}, \quad x_1 = \frac{l_1+l_2}{3}, \quad x_2 = \frac{2(l_1 + l_2)}{3} .$$

\begin{figure}[tb]
	\centering
	\begin{tikzpicture}
	\begin{scope}[xshift=-5cm,scale=1.2]
	\coordinate (a) at (1,{1/sqrt(3)});		
	\coordinate (b) at (1,{-1/sqrt(3)});	
	\pgfmathsetmacro{\rb}{0.25pt}
	\pgfmathsetmacro{\rs}{0.2pt}
	
	\draw (0,0) -- (1,{1/sqrt(3)}) -- (2,0) -- (1,{-1/sqrt(3)}) -- cycle; 
	\begin{scope}[xshift = 1.33333cm]
	\draw plot [smooth cycle] coordinates {(0:\rb) (60:\rs) (120:\rb) (180:\rs) (240:\rb) (300:\rs) };
	\end{scope}
	\begin{scope}[xshift = 0.666667cm, rotate=60]
	\draw plot [smooth cycle] coordinates {(0:\rb) (60:\rs) (120:\rb) (180:\rs) (240:\rb) (300:\rs) };
	\end{scope}
	
	\draw[opacity=0.2] ({2/3},0) -- ({4/3},0)
	($0.5*(1,{1/sqrt(3)})$) -- ({2/3},0)
	($0.5*(1,{-1/sqrt(3)})$) -- ({2/3},0)
	($(1,{1/sqrt(3)})+0.5*(1,{-1/sqrt(3)})$) -- ({4/3},0)
	($0.5*(1,{1/sqrt(3)})+(1,{-1/sqrt(3)})$) -- ({4/3},0);
	
	\begin{scope}[shift = (a)]
	\begin{scope}[xshift = 1.33333cm]
	\draw plot [smooth cycle] coordinates {(0:\rb) (60:\rs) (120:\rb) (180:\rs) (240:\rb) (300:\rs) };
	\end{scope}
	\begin{scope}[xshift = 0.666667cm, rotate=60]
	\draw plot [smooth cycle] coordinates {(0:\rb) (60:\rs) (120:\rb) (180:\rs) (240:\rb) (300:\rs) };
	\end{scope}	
	\draw[opacity=0.2] ({2/3},0) -- ({4/3},0)
	($0.5*(1,{1/sqrt(3)})$) -- ({2/3},0)
	($0.5*(1,{-1/sqrt(3)})$) -- ({2/3},0)
	($(1,{1/sqrt(3)})+0.5*(1,{-1/sqrt(3)})$) -- ({4/3},0)
	($0.5*(1,{1/sqrt(3)})+(1,{-1/sqrt(3)})$) -- ({4/3},0);
	\end{scope}
	\begin{scope}[shift = (b)]
	\begin{scope}[xshift = 1.33333cm]
	\draw plot [smooth cycle] coordinates {(0:\rb) (60:\rs) (120:\rb) (180:\rs) (240:\rb) (300:\rs) };
	\end{scope}
	\begin{scope}[xshift = 0.666667cm, rotate=60]
	\draw plot [smooth cycle] coordinates {(0:\rb) (60:\rs) (120:\rb) (180:\rs) (240:\rb) (300:\rs) };
	\end{scope}
	\draw[opacity=0.2] ({2/3},0) -- ({4/3},0)
	($0.5*(1,{1/sqrt(3)})$) -- ({2/3},0)
	($0.5*(1,{-1/sqrt(3)})$) -- ({2/3},0)
	($(1,{1/sqrt(3)})+0.5*(1,{-1/sqrt(3)})$) -- ({4/3},0)
	($0.5*(1,{1/sqrt(3)})+(1,{-1/sqrt(3)})$) -- ({4/3},0);
	\end{scope}
	\begin{scope}[shift = ($-1*(a)$)]
	\begin{scope}[xshift = 1.33333cm]
	\draw plot [smooth cycle] coordinates {(0:\rb) (60:\rs) (120:\rb) (180:\rs) (240:\rb) (300:\rs) };
	\end{scope}
	\begin{scope}[xshift = 0.666667cm, rotate=60]
	\draw plot [smooth cycle] coordinates {(0:\rb) (60:\rs) (120:\rb) (180:\rs) (240:\rb) (300:\rs) };
	\end{scope}
	\draw[opacity=0.2] ({2/3},0) -- ({4/3},0)
	($0.5*(1,{1/sqrt(3)})$) -- ({2/3},0)
	($0.5*(1,{-1/sqrt(3)})$) -- ({2/3},0)
	($(1,{1/sqrt(3)})+0.5*(1,{-1/sqrt(3)})$) -- ({4/3},0)
	($0.5*(1,{1/sqrt(3)})+(1,{-1/sqrt(3)})$) -- ({4/3},0);
	\end{scope}
	\begin{scope}[shift = ($-1*(b)$)]
	\begin{scope}[xshift = 1.33333cm]
	\draw plot [smooth cycle] coordinates {(0:\rb) (60:\rs) (120:\rb) (180:\rs) (240:\rb) (300:\rs) };
	\end{scope}
	\begin{scope}[xshift = 0.666667cm, rotate=60]
	\draw plot [smooth cycle] coordinates {(0:\rb) (60:\rs) (120:\rb) (180:\rs) (240:\rb) (300:\rs) };
	\end{scope}
	\draw[opacity=0.2] ({2/3},0) -- ({4/3},0)
	($0.5*(1,{1/sqrt(3)})$) -- ({2/3},0)
	($0.5*(1,{-1/sqrt(3)})$) -- ({2/3},0)
	($(1,{1/sqrt(3)})+0.5*(1,{-1/sqrt(3)})$) -- ({4/3},0)
	($0.5*(1,{1/sqrt(3)})+(1,{-1/sqrt(3)})$) -- ({4/3},0);
	\end{scope}
	\begin{scope}[shift = ($(a)+(b)$)]
	\begin{scope}[xshift = 1.33333cm]
	\draw plot [smooth cycle] coordinates {(0:\rb) (60:\rs) (120:\rb) (180:\rs) (240:\rb) (300:\rs) };
	\end{scope}
	\begin{scope}[xshift = 0.666667cm, rotate=60]
	\draw plot [smooth cycle] coordinates {(0:\rb) (60:\rs) (120:\rb) (180:\rs) (240:\rb) (300:\rs) };
	\end{scope}
	\draw[opacity=0.2] ({2/3},0) -- ({4/3},0)
	($0.5*(1,{1/sqrt(3)})$) -- ({2/3},0)
	($0.5*(1,{-1/sqrt(3)})$) -- ({2/3},0)
	($(1,{1/sqrt(3)})+0.5*(1,{-1/sqrt(3)})$) -- ({4/3},0)
	($0.5*(1,{1/sqrt(3)})+(1,{-1/sqrt(3)})$) -- ({4/3},0);
	\end{scope}
	\begin{scope}[shift = ($-1*(a)-(b)$)]
	\begin{scope}[xshift = 1.33333cm]
	\draw plot [smooth cycle] coordinates {(0:\rb) (60:\rs) (120:\rb) (180:\rs) (240:\rb) (300:\rs) };
	\end{scope}
	\begin{scope}[xshift = 0.666667cm, rotate=60]
	\draw plot [smooth cycle] coordinates {(0:\rb) (60:\rs) (120:\rb) (180:\rs) (240:\rb) (300:\rs) };
	\end{scope}
	\draw[opacity=0.2] ({2/3},0) -- ({4/3},0)
	($0.5*(1,{1/sqrt(3)})$) -- ({2/3},0)
	($0.5*(1,{-1/sqrt(3)})$) -- ({2/3},0)
	($(1,{1/sqrt(3)})+0.5*(1,{-1/sqrt(3)})$) -- ({4/3},0)
	($0.5*(1,{1/sqrt(3)})+(1,{-1/sqrt(3)})$) -- ({4/3},0);
	\end{scope}
	\begin{scope}[shift = ($(a)-(b)$)]
	\begin{scope}[xshift = 1.33333cm]
	\draw plot [smooth cycle] coordinates {(0:\rb) (60:\rs) (120:\rb) (180:\rs) (240:\rb) (300:\rs) };
	\end{scope}
	\begin{scope}[xshift = 0.666667cm, rotate=60]
	\draw plot [smooth cycle] coordinates {(0:\rb) (60:\rs) (120:\rb) (180:\rs) (240:\rb) (300:\rs) };
	\end{scope}
	\draw[opacity=0.2] ({2/3},0) -- ({4/3},0)
	($0.5*(1,{1/sqrt(3)})$) -- ({2/3},0)
	($0.5*(1,{-1/sqrt(3)})$) -- ({2/3},0)
	($(1,{1/sqrt(3)})+0.5*(1,{-1/sqrt(3)})$) -- ({4/3},0)
	($0.5*(1,{1/sqrt(3)})+(1,{-1/sqrt(3)})$) -- ({4/3},0);
	\end{scope}
	\begin{scope}[shift = ($-1*(a)+(b)$)]
	\begin{scope}[xshift = 1.33333cm]
	\draw plot [smooth cycle] coordinates {(0:\rb) (60:\rs) (120:\rb) (180:\rs) (240:\rb) (300:\rs) };
	\end{scope}
	\begin{scope}[xshift = 0.666667cm, rotate=60]
	\draw plot [smooth cycle] coordinates {(0:\rb) (60:\rs) (120:\rb) (180:\rs) (240:\rb) (300:\rs) };
	\end{scope}
	\draw[opacity=0.2] ({2/3},0) -- ({4/3},0)
	($0.5*(1,{1/sqrt(3)})$) -- ({2/3},0)
	($0.5*(1,{-1/sqrt(3)})$) -- ({2/3},0)
	($(1,{1/sqrt(3)})+0.5*(1,{-1/sqrt(3)})$) -- ({4/3},0)
	($0.5*(1,{1/sqrt(3)})+(1,{-1/sqrt(3)})$) -- ({4/3},0);
	\end{scope}
	\end{scope}

	\draw[dashed,opacity=0.5,->] (-3.9,0.65) .. controls(-1.8,1.5) .. (1,0.7);
	\begin{scope}[scale=2.8]	
	\coordinate (a) at (1,{1/sqrt(3)});		
	\coordinate (b) at (1,{-1/sqrt(3)});	
	\coordinate (Y) at (1.8,0.45);
	\coordinate (c) at (2,0);
	\coordinate (x1) at ({2/3},0);
	\coordinate (x0) at (1,0);
	\coordinate (x2) at ({4/3},0);

	\pgfmathsetmacro{\rb}{0.25pt}
	\pgfmathsetmacro{\rs}{0.2pt}
	
	\begin{scope}[xshift = 1.33333cm]
	\draw plot [smooth cycle] coordinates {(0:\rb) (60:\rs) (120:\rb) (180:\rs) (240:\rb) (300:\rs) };
	\draw (0:\rb) node[xshift=7pt] {$D_2$};
	\end{scope}
	\begin{scope}[xshift = 0.666667cm, rotate=60]
	\draw plot [smooth cycle] coordinates {(0:\rb) (60:\rs) (120:\rb) (180:\rs) (240:\rb) (300:\rs) };
	\end{scope}
	\draw ({0.6666667-\rb},0) node[xshift=-7pt] {$D_1$};
	
	\draw (Y) node{$Y$};
	\draw[->] (0,0) -- (a) node[above,pos=0.7]{$l_1$};
	\draw[->] (0,0) -- (b) node[below,pos=0.7]{$l_2$};
	\draw (a) -- (c) -- (b);
	\draw[fill] (x1) circle(0.5pt) node[xshift=6pt,yshift=-6pt]{$x_1$}; 
	\draw[fill] (x0) circle(0.5pt) node[yshift=4pt, xshift=6pt]{$x_0$}; 
	\draw[fill] (x2) circle(0.5pt) node[xshift=6pt,yshift=4pt]{$x_2$}; 
	\draw[dashed] (1,0.7) node[right]{$p$} -- (1,-0.7);
	\end{scope}
	\end{tikzpicture}
	\caption{Illustration of the honeycomb crystal and quantities in $Y$.} \label{fig:honeycomb}
\end{figure}
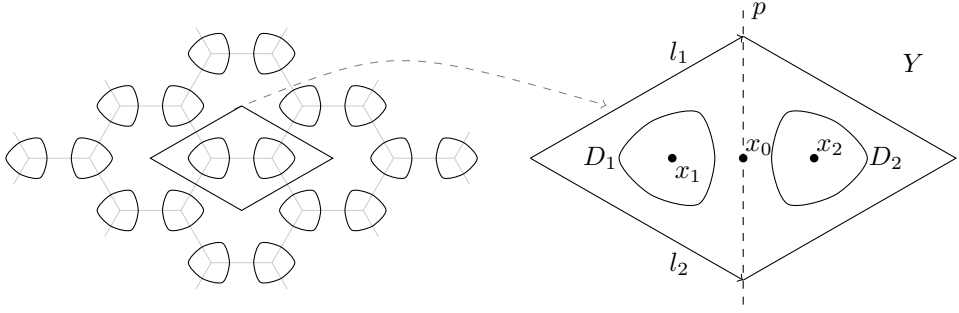

We will consider a general shape of the subwavelength resonators, under certain symmetry assumptions. Let $R_0$ be the rotation around $x_0$ by $\pi$, and let $R_1$ and $R_2$ be the rotations by $-\frac{2\pi}{3}$ around $x_1$ and $x_2$, respectively. These rotations can be written as
$$ R_1 x = Rx+l_1, \quad R_2 x = Rx + 2l_1, \quad R_0 x = 2x_0 - x , $$
where $R$ is the rotation by  $-\frac{2\pi}{3}$ around the origin. Moreover, let $R_3$ be the reflection across the line $p = x_0 + \R e_2$, where $e_2$ is the second standard basis element. Assume that the unit cell contains two subwavelength resonators $D_j$, $j=1,2$, each centred at $x_j$ such that
$$R_0 D_1 = D_2, \quad R_1 D_1 = D_1,\quad R_2 D_2 = D_2, \quad R_3D_1 = D_2.$$
We denote the pair of subwavelength resonators by $D=D_1 \cup D_2$. The dual lattice of $\Lambda$, denoted $\Lambda^*$, is generated by $\alpha_1$ and $\alpha_2$ given by
$$ \alpha_1 = \frac{2\pi}{L}\left( \frac{1}{\sqrt{3}}, 1\right),~~\alpha_2 = \frac{2\pi}{L}\left(\frac{1}{\sqrt{3}}, -1 \right).$$
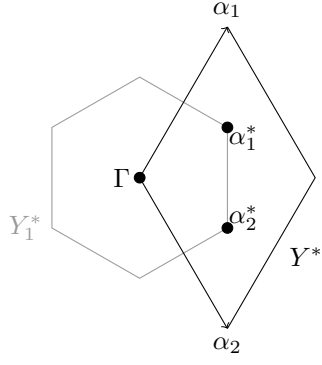
\begin{figure}[h]
	\centering
	\begin{tikzpicture}[scale=2]	
	\coordinate (a) at ({1/sqrt(3)},1);		
	\coordinate (b) at ({1/sqrt(3)},-1);
	\coordinate (c) at ({2/sqrt(3)},0);
	\coordinate (K1) at ({1/sqrt(3)},{1/3});
	\coordinate (K2) at ({1/sqrt(3)},{-1/3});
	\coordinate (K3) at (0,{-2/3});
	\coordinate (K4) at ({-1/sqrt(3)},{-1/3});
	\coordinate (K5) at ({-1/sqrt(3)},{1/3});
	\coordinate (K6) at (0,{2/3});
	
	\draw[->] (0,0) -- (a) node[above]{$\alpha_1$};
	\draw[->] (0,0) -- (b) node[below]{$\alpha_2$};
	\draw (a) -- (c) -- (b) node[pos=0.4,below right]{$Y^*$};
	\draw[fill] (K1) circle(1pt) node[xshift=6pt,yshift=-4pt]{$\alpha_1^*$}; 
	\draw[fill] (K2) circle(1pt) node[xshift=6pt,yshift=4pt]{$\alpha_2^*$}; 
	\draw[fill] (0,0) circle(1pt) node[left]{$\Gamma$}; 
	
	\draw[opacity=0.4] (K1) -- (K2) -- (K3) -- (K4) node[left]{$Y_1^*$} -- (K5) -- (K6) -- cycle; 
	\end{tikzpicture}
\caption{Illustration of the dual lattice and the Brillouin zone $Y^*$.}
\label{fig:bz}
\end{figure}
The Brillouin zone $Y^*:= {\R^2}/{\Lambda^*}$ can be represented either as  
$$Y^* \simeq \left\{ s \alpha_1+ t \alpha_2 ~|~ 0 \le s,t \le 1 \right\}, $$
or as the first Brillouin zone $Y_1^*$, which is a hexagon illustrated in Figure \ref{fig:bz}.
The points $$\alpha_1^*= \frac{2\alpha_1+\alpha_2}{3}, \quad \alpha^*_2 = \frac{\alpha_1+2\alpha_2}{3},$$ in the Brillouin zone are called \emph{Dirac points}. For simplicity, we only consider the analysis around the Dirac point $\alpha_* := \alpha_1^*$, the main difference around $\alpha_2^*$ is summarized in Remark \ref{rmk:alpha2}. 

Wave propagation in the  honeycomb lattices of subwavelength resonators is described by the following $\alpha$-quasi-periodic Helmholtz problem in $Y$:
\begin{equation}  \label{HP1}
\left\{
\begin{array} {lll}
&\ds \nabla \cdot \frac{1}{\rho} \nabla  u+ \frac{\omega^2}{\kappa} u  = 0 \quad &\text{in} \ Y \backslash \overline{D}, \\
\nm
&\ds \nabla \cdot \frac{1}{\rho_b} \nabla  u+ \frac{\omega^2}{\kappa_b} u  = 0 \quad &\text{in} \ D, \\
\nm
&\ds  u |_{+} -u |_{-}  =0   \quad &\text{on} \ \partial D, \\
\nm
& \ds  \frac{1}{\rho} \frac{\partial u}{\partial \nu} \bigg|_{+} - \frac{1}{\rho_b} \frac{\partial u}{\partial \nu} \bigg|_{-} =0 \quad &\text{on} \ \partial D, \\ 
\nm
& u(x+l)= e^{\i \alpha\cdot l} u(x) \quad & \text{for all} \ l\in \Lambda.
\end{array}
\right.
\end{equation}

Let $\psi_j^{\alpha}\in L^2(\p D)$  be given by
\begin{align} \label{psi_def}
\mathcal{S}_D^{\alpha,0}[\psi_j^{\alpha}] = \chi_{\partial D_j}\quad \mbox{on}~\partial D,\quad j=1, 2.
\end{align}
Define the capacitance  matrix $C^\alpha=(C_{ij}^\alpha)$ by
\begin{equation} \label{defcapal} C_{ij}^\alpha := - \int_{\partial D_i} \psi_j^\alpha \;\dx \sigma,\quad i,j=1, 2.\end{equation}
Using the symmetry of the honeycomb structure, it can be shown that the capacitance coefficients satisfy \cite{ammari2018honeycomb}
$$c_1^\alpha := C_{11}^\alpha = C_{22}^{\alpha}, \quad c_2^\alpha := C_{12}^{\alpha} = \overline{C_{21}^{\alpha}},$$ 
and
\begin{equation}\label{c1c2_deri}
\nabla_\alpha c_1^\alpha \Big|_{\alpha=\alpha^*} = 0,
\quad
\nabla_\alpha c_2^\alpha \Big|_{\alpha=\alpha^*} = c\begin{pmatrix} 1\\-\iu\end{pmatrix},
\end{equation}
where we denote $$c:=\frac{\partial c_2^{\alpha}}{\partial \alpha_1}\Big|_{\alpha=\alpha^*} \neq 0,$$
as proved in \cite[Lemma 3.4]{ammari2018honeycomb}.

It is shown in \cite{ammari2018honeycomb} that the first two band functions $\omega_1^\alpha$ and $\omega_2^\alpha$ form a conical dispersion relation near the Dirac point $\alpha_*$. Such a conical dispersion is referred to as a {\it Dirac cone}. More specifically, the following results which hold in the  subwavelength regime are proved in  \cite{ammari2018honeycomb}. 
\begin{theorem}\label{thm:honeycomb}
	For small $\delta$, the first two band functions $\omega_j^\alpha,j=1,2$, satisfy 
	\begin{equation} \label{eq:wasymp}
	\omega_j^\alpha = \sqrt{\frac{\delta\lambda_j^\alpha }{|D_1|}}v_b + \O(\delta),
	\end{equation}
	uniformly for $\alpha$ in a neighbourhood of $\alpha_*$, where $\lambda_j^\alpha, j=1,2,$ are the two eigenvalues of $C^\alpha$ and $|D_1|$ denotes the area of one of the subwavelength resonators. Moreover, for $\alpha$ close to $\alpha_*$ and $\delta$ small enough, the first two band functions form a Dirac cone, \ie{},
	\begin{equation} \label{eq:dirac}
	\begin{matrix}
	\ds \omega_1^\alpha = \omega_*- \lambda|\alpha - \alpha_*| \big[ 1+ \O(|\alpha-\alpha_*|) \big], \\[0.5em]
	\ds \omega_2^\alpha = \omega_*+ \lambda|\alpha - \alpha_*| \big[ 1+ \O(|\alpha-\alpha_*|) \big],
	\end{matrix}
	\end{equation}
	where $\omega_*$ and $\lambda$ are independent of $\alpha$ and satisfy
	$$\omega_*= \sqrt{\frac{\delta c_1^{\alpha_*}}{|D_1|}}v_b + \O(\delta) \quad \text{and} \quad \lambda =  |c|\sqrt\delta\lambda_0 + \O(\delta),  \quad \lambda_0=\frac{1}{2}\sqrt{\frac{v_b^2 }{|D_1|c_1^{\alpha_*}}}$$
	as $\delta \rightarrow 0$. Moreover, the error term $\O(|\alpha-\alpha_*|)$ in (\ref{eq:dirac}) is uniform in $\delta$.
\end{theorem}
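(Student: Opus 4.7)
The plan is to carry out the capacitance-matrix analysis underlying Theorem \ref{thm:res} in the $\alpha$-quasi-periodic setting, and then extract the conical shape of the two lowest band functions near the Dirac point from the honeycomb symmetries of $C^\alpha$.

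First I would set up the boundary integral formulation of \eqref{HP1} by representing $u=\mathcal{S}_D^{\alpha,k}[\psi]$ in $Y\setminus\overline{D}$ and $u=\mathcal{S}_D^{k_b}[\phi]$ in $D$, obtaining the system $\mathcal{A}(\omega,\delta)[\Psi]=0$ of \eqref{eq-boundaryB}. Expanding $\mathcal{A}(\omega,\delta)$ in $\omega$ and $\delta$ using \eqref{eq:exp_S}, \eqref{eq:exp_K}, \eqref{series-s2} and \eqref{series-k2}, and repeating the argument of Lemma \ref{lem:modal}, the leading-order equation $(-\tfrac12 I+\mathcal{K}_D^*)[\phi]=0$ forces $\phi$ to lie in the two-dimensional kernel spanned by $\mathcal{S}_D^{-1}[\chi_{\partial D_j}]$, $j=1,2$. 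Integrating the second row of the system over each $\partial D_i$ and using the quasi-periodic analogue of Lemma \ref{lem:ints} together with the definitions \eqref{psi_def} and \eqref{defcapal} of $C^\alpha$, one arrives at the leading-order eigenvalue equation $\bigl(\omega^2-\tfrac{v_b^2\delta}{|D_1|}C^\alpha\bigr)\underline{q}=0$, where I have used $|D_1|=|D_2|$. This yields \eqref{eq:wasymp}, uniformly for $\alpha$ in a neighbourhood of $\alpha_*$ by continuity of the characteristic values of $\mathcal{A}(\omega,\delta)$ in $\alpha$.

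Next I would diagonalize $C^\alpha$. The $R_0$-symmetry exchanging $D_1$ and $D_2$ forces $C^\alpha_{11}=C^\alpha_{22}=:c_1^\alpha$, and since $C^\alpha$ is Hermitian, $C^\alpha_{21}=\overline{C^\alpha_{12}}=:\overline{c_2^\alpha}$; the eigenvalues of $C^\alpha$ are therefore $c_1^\alpha\pm|c_2^\alpha|$. The decisive step is to verify the symmetry-based identities \eqref{c1c2_deri}, namely $c_2^{\alpha_*}=0$, $\nabla_\alpha c_1^\alpha|_{\alpha_*}=0$, and $\nabla_\alpha c_2^\alpha|_{\alpha_*}=c(1,-\iu)^\top$ with $c\neq 0$. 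I would prove these by differentiating the defining identity $\mathcal{S}_D^{\alpha,0}[\psi_j^\alpha]=\chi_{\partial D_j}$ in $\alpha$ and exploiting both the threefold rotational symmetry $R_1 D_1=D_1$ and the algebraic property $(R^\top-I)\alpha_*\in\Lambda^*$, so that conjugating $\mathcal{S}_D^{\alpha_*,0}$ by $R$ multiplies off-diagonal integrals by a nontrivial cube root of unity; this forces $c_2^{\alpha_*}=0$ and pins down both the vanishing of $\nabla_\alpha c_1^\alpha$ and the chiral form of $\nabla_\alpha c_2^\alpha$.

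Combining these inputs, $|c_2^\alpha|=|c|\,|\alpha-\alpha_*|\bigl(1+\O(|\alpha-\alpha_*|)\bigr)$ and $c_1^\alpha=c_1^{\alpha_*}+\O(|\alpha-\alpha_*|^2)$, so Taylor-expanding
\begin{equation*}
\omega_j^\alpha=\sqrt{\tfrac{\delta v_b^2}{|D_1|}}\,\sqrt{c_1^\alpha\pm|c_2^\alpha|}+\O(\delta)
\end{equation*}
around $c_1^{\alpha_*}$ yields \eqref{eq:dirac} with the claimed expressions for $\omega_*$ and $\lambda=|c|\sqrt{\delta}\lambda_0$; the $\O(|\alpha-\alpha_*|)$ remainder is uniform in $\delta$ because the expansion of $|c_2^\alpha|$ in $\alpha-\alpha_*$ is purely geometric and $\delta$-independent. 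The hard part is the precise derivation of \eqref{c1c2_deri}: one must differentiate the quasi-periodic Green's function $G^{\alpha,0}$ in $\alpha$ via its Fourier representation \eqref{eq:defGk2} and carefully track the cancellations forced by the threefold rotational symmetry at the Brillouin-zone corner $\alpha_*$. Everything else is a routine perturbative adaptation of the machinery developed in Section \ref{sec2}.
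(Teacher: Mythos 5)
Your overall strategy---reduce to the quasi-periodic capacitance matrix, use the honeycomb symmetries to obtain $C_{11}^\alpha=C_{22}^\alpha$, $C_{12}^\alpha=\overline{C_{21}^\alpha}$ and the identities \eqref{c1c2_deri}, then expand the eigenvalues $c_1^\alpha\pm|c_2^\alpha|$ near $\alpha_*$---is the same route as the cited source, and it does deliver the first claim \eqref{eq:wasymp} together with the correct leading-order values of $\omega_*$, $\lambda_0$ and $\lambda$. The genuine gap is in the passage from \eqref{eq:wasymp} to the Dirac cone \eqref{eq:dirac}. The expansion \eqref{eq:wasymp} carries an \emph{additive} error $\O(\delta)$ whose $\alpha$-dependence is uncontrolled, whereas the claimed splitting $\lambda|\alpha-\alpha_*|$ is only of size $\sqrt{\delta}\,|\alpha-\alpha_*|$. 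Hence for $|\alpha-\alpha_*|\lesssim\sqrt{\delta}$ the $\O(\delta)$ error swamps the cone: from \eqref{eq:wasymp} alone you cannot conclude that the two bands actually touch at a single value $\omega_*$ when $\alpha=\alpha_*$, nor that the remainder takes the multiplicative form $\lambda|\alpha-\alpha_*|\,\O(|\alpha-\alpha_*|)$ with a constant uniform in $\delta$; for all your argument shows, the crossing could be rounded into an avoided crossing of width $\O(\delta)$. Your one-line justification of the $\delta$-uniformity (that the expansion of $|c_2^\alpha|$ is purely geometric) addresses the wrong term: the dangerous term is the $\alpha$-dependence of the $\O(\delta)$ correction, not of $|c_2^\alpha|$. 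To prove \eqref{eq:dirac} one has to work with the full characteristic-value problem near $\alpha_*$: use the threefold rotational symmetry of the full boundary-integral operator at $\alpha=\alpha_*$ (not merely of the leading-order capacitance matrix) to show that the characteristic value there is \emph{exactly} twofold degenerate, and then perturb in $\alpha$ in the reduced $2\times2$ problem while tracking the $\alpha$-dependence of all corrections (errors of the type $\O(\delta|\alpha-\alpha_*|)$ rather than a blanket $\O(\delta)$), which is precisely what makes the uniformity in $\delta$ of the $\O(|\alpha-\alpha_*|)$ term a nontrivial statement.

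Two secondary points. First, symmetry alone yields $c_2^{\alpha_*}=0$, $\nabla_\alpha c_1^\alpha|_{\alpha_*}=0$ and the chiral form $\nabla_\alpha c_2^\alpha|_{\alpha_*}=c(1,-\iu)^\top$, but it does not give $c\neq0$; that nondegeneracy needs a separate quantitative argument (in the cited work it comes from an explicit analysis of the quasi-periodic Green's function), so saying the rotation ``pins down'' \eqref{c1c2_deri} overstates what symmetry provides. Second, the honeycomb problem here is two-dimensional, so the kernel basis should be taken as $(\mathcal{S}_D^{\alpha,0})^{-1}[\chi_{\partial D_j}]$ as in \eqref{psi_def}, not $\mathcal{S}_D^{-1}[\chi_{\partial D_j}]$: the two-dimensional Laplace single-layer potential need not be invertible, which is exactly why the quasi-periodic operator with $\alpha\neq0$ is used in this setting.
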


The results in Theorem \ref{thm:honeycomb} are illustrated in Figure \ref{honeycombfz}. The figure shows the first three bands. Observe that the first two bands cross at the symmetry point $K$ (corresponding to $\alpha_*$) such that the dispersion relation is linear.  Figure \ref{squarefz} shows the band gap structure in the subwavelength region for a rectangular array of subwavelength dimers. For such arrays, the two first bands cannot cross each other.

\begin{figure}[!h]
\begin{center}
 \includegraphics[height=5.cm]{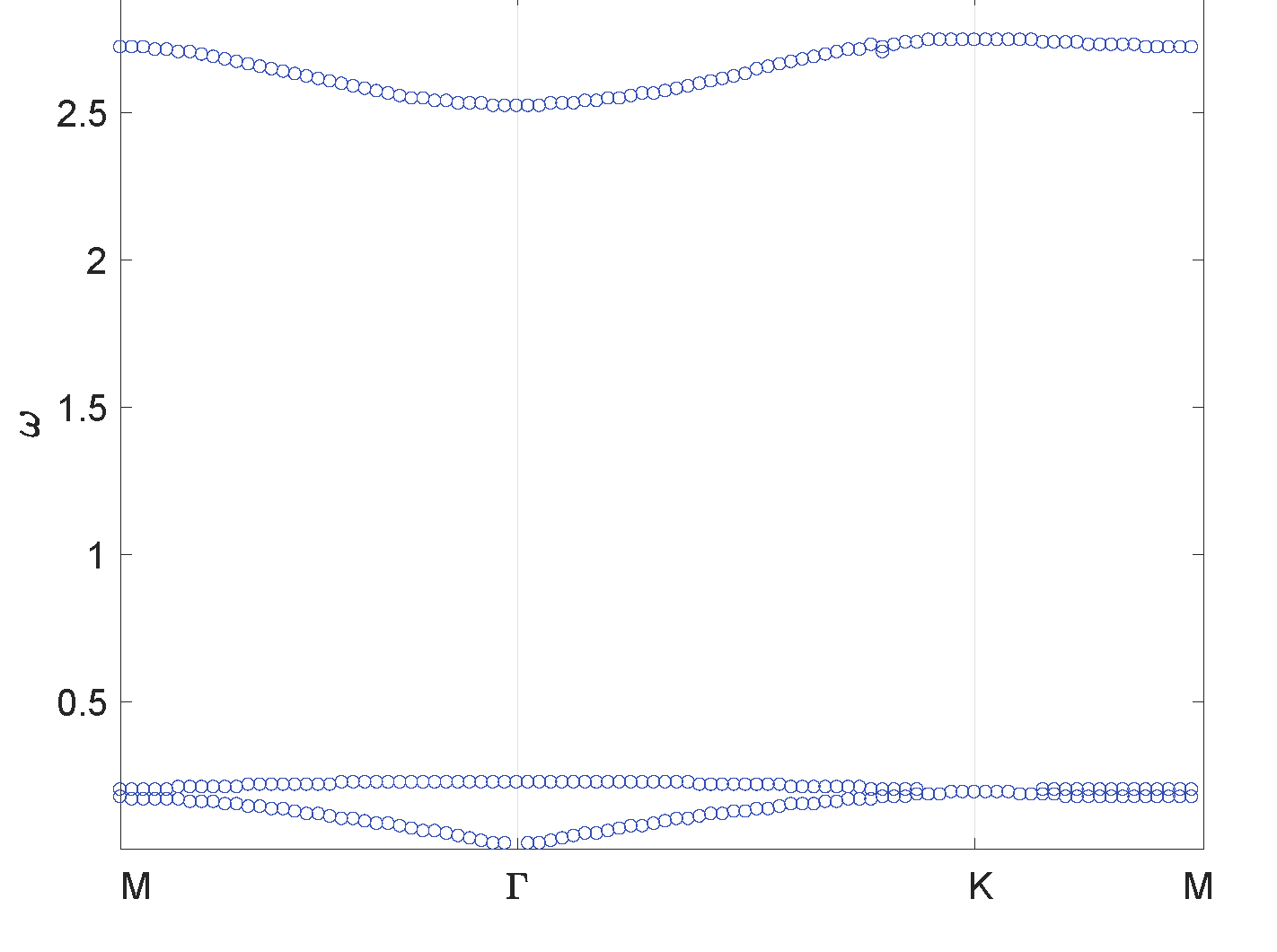}
 \hspace{0.25cm}
  \includegraphics[height=5.cm]{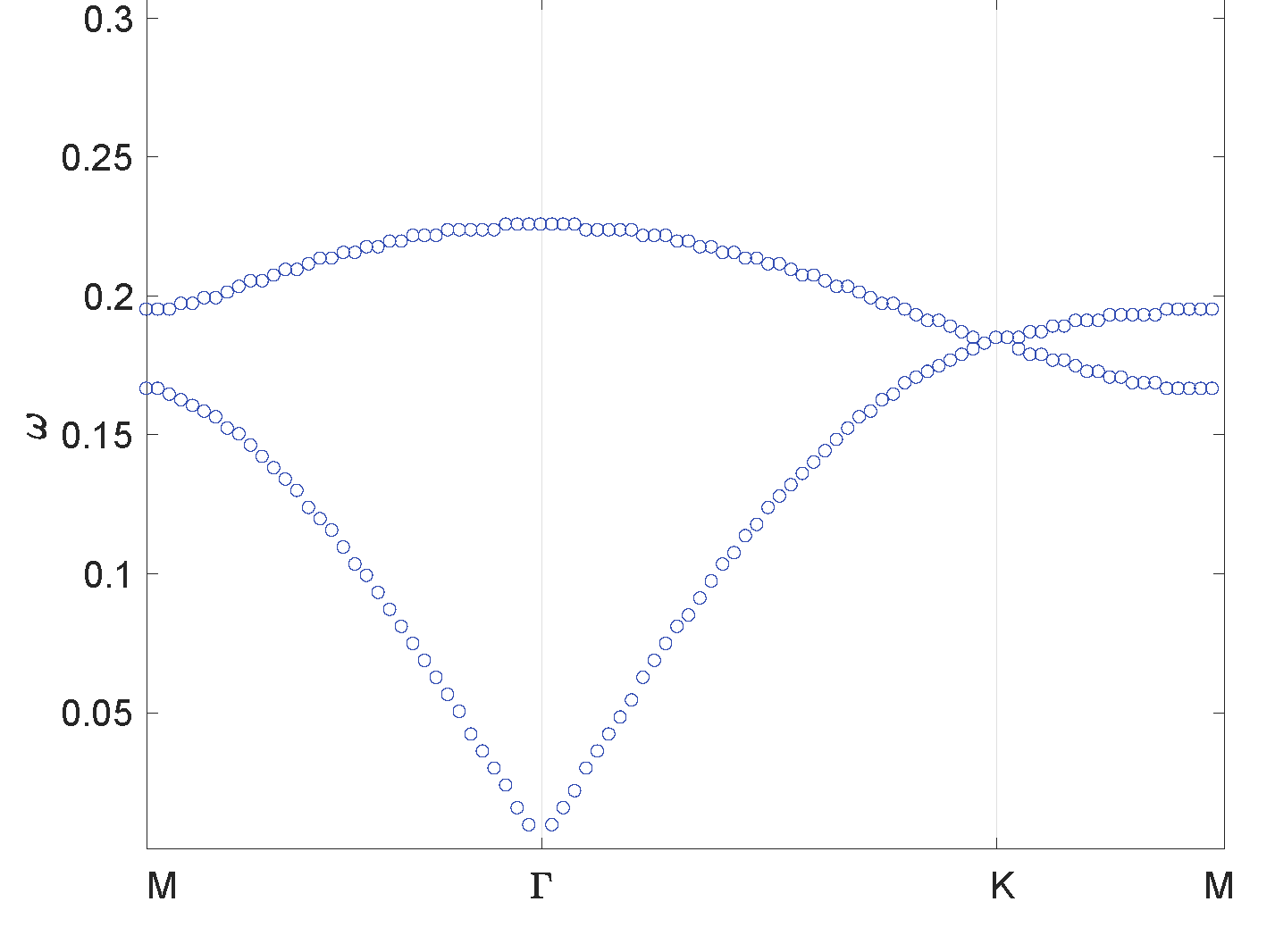}
  \caption{Band gap structure upon zooming in the subwavelength region for a honeycomb lattice of subwavelength resonators.}  \label{honeycombfz}
\end{center}
\end{figure}

\begin{figure}[!h]
\begin{center}
 \includegraphics[height=5.cm]{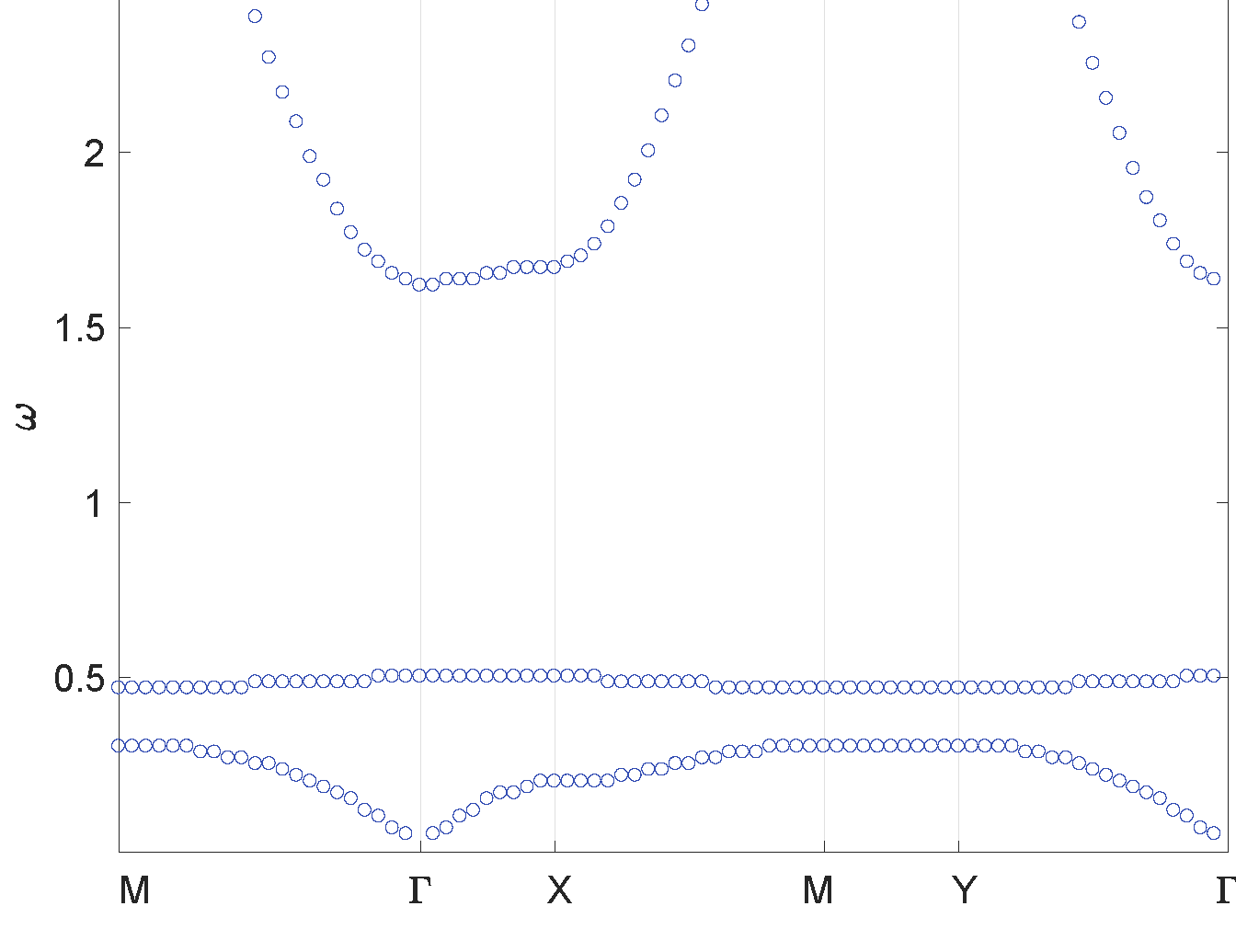}
 \hspace{0.25cm}
  \includegraphics[height=5.cm]{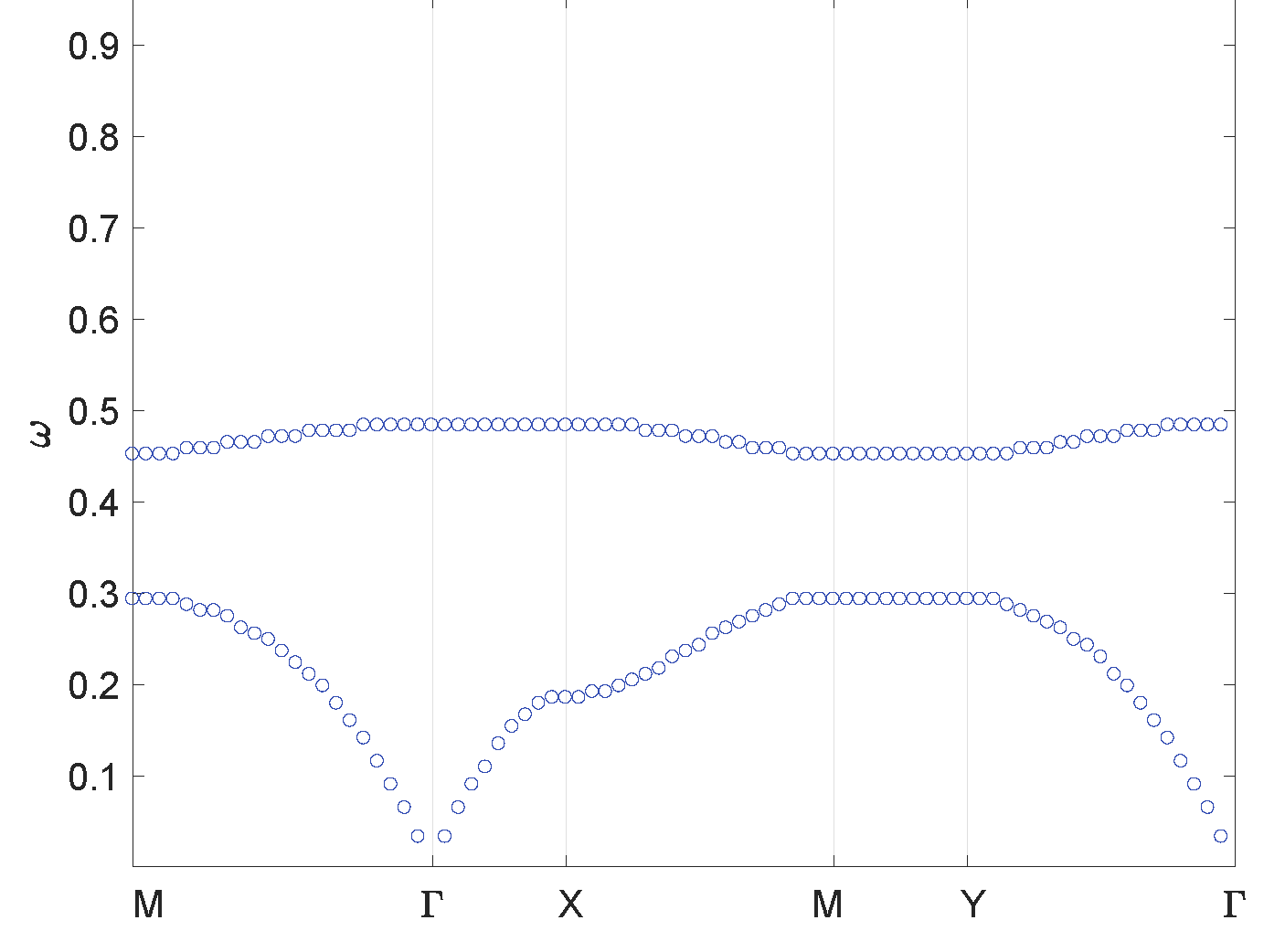}
   \caption{The band gap structure upon zooming in the subwavelength region for a rectangular array of subwavelength dimers.}  \label{squarefz}
\end{center}
\end{figure}

Next we investigate the asymptotic behaviour of the  Bloch eigenfunctions  near the Dirac points. Then we show that the envelopes of the Bloch eigenfunctions satisfy a Helmholtz equation with near-zero effective refractive index and  derive a two-dimensional homogenized equation of Dirac-type for the honeycomb crystal. These results are from \cite{nearzero}. 

We consider the rescaled  honeycomb crystal by replacing the lattice constant $L$ with $r L$ where $r>0$ is a small positive parameter. 
Let $\omega_j^\alpha,j=1,2$,  be the first two eigenvalues  and $u_j^\alpha$ be the associated Bloch eigenfunctions for the honeycomb crystal with lattice constant $L$. Then, by a scaling argument, the honeycomb crystal with lattice constant $r L$ has the first two Bloch eigenvalues
$$
\omega_{\pm,r}^{\alpha/r} = \frac{1}{r} \omega_\pm^\alpha,
$$
and the corresponding eigenfunctions are 
$$
u^{\alpha/r}_{\pm,r}(x) = u_\pm^\alpha\left( \frac{x}{r}\right).
$$
This shows that the Dirac cone is located at the point $\alpha_*/r$. 
We denote the Dirac frequency by
$$
\omega^r_* = \frac{1}{r}\omega_*.
$$

We have the following result for the Bloch eigenfunctions $u_{j,r}^{\alpha/r},j=1,2,$ for $\alpha/r$ near the Dirac points $\alpha_*/r$ \cite{nearzero}.

\begin{lemma}\label{lem:Bloch_scale}
We have
$$
u_{\pm,r}^{\alpha_*/r+\tilde{\alpha}}(x) = A_\pm e^{\i \widetilde\alpha \cdot x} S_{1} \left(\frac{x}{r}\right) + B_\pm e^{\i \widetilde\alpha \cdot x} S_{2} \left(\frac{x}{r}\right) + \O(\delta + r),
$$
where
$$
S_j (x) = \mathcal{S}_D^{\alpha_*,0} [\psi_j^{\alpha_*}](x), \quad j=1,2.
$$
The functions $S_1$ and $S_2$ describe the microscopic behaviour of the Bloch eigenfunction $u_{\pm,r}^{\alpha_*/r+\widetilde{\alpha}}$ while $A_\pm e^{\i \widetilde\alpha\cdot x}$ and  $B_\pm e^{\iu\tilde\alpha\cdot x}$ describe the macroscopic behaviour.
\end{lemma}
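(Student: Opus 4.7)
The plan is to reduce to the unscaled crystal via the scaling relation, identify a two-dimensional leading-order eigenspace of boundary densities through a Gohberg-Sigal perturbation of $\mathcal{A}(\omega,\delta)$, and then reinstate the macroscopic envelope $e^{\iu\widetilde\alpha\cdot x}$ by accounting for the mismatch between the quasi-periodicities of $u_{\pm,r}^{\alpha_*/r+\widetilde\alpha}$ and of $S_j(x/r)$. First, via the scaling identity $u_{\pm,r}^{\alpha_*/r+\widetilde\alpha}(x)=u_\pm^{\alpha_*+r\widetilde\alpha}(x/r)$, it suffices to analyse the unscaled Bloch eigenfunction $u_\pm^{\alpha_*+r\widetilde\alpha}$ on the unit cell $Y$, which I would represent through the layer-potential ansatz \eqref{Helm-solutionB} so that the Bloch condition becomes $\mathcal{A}(\omega_\pm^{\alpha_*+r\widetilde\alpha},\delta)[\Psi_\pm]=0$. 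At the unperturbed point $(\omega,\delta,\alpha)=(0,0,\alpha_*)$, $\mathrm{Ker}(-\tfrac12 I+\mathcal{K}_D^*)$ is spanned by the capacitance-type densities $\phi_j:=\mathcal{S}_D^{-1}[\chi_{\partial D_j}]$, one per connected component of $D$, and a direct check using \eqref{psi_def} gives $\mathrm{Ker}(\mathcal{A}_0)=\mathrm{span}\{(\phi_1,\psi_1^{\alpha_*}),(\phi_2,\psi_2^{\alpha_*})\}$; applying the single-layer recovery to either basis vector yields precisely $S_j$. This is the two-fold degeneracy underlying the Dirac cone of Theorem \ref{thm:honeycomb}. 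Expanding $\mathcal{A}(\omega,\delta)$ in $\omega$, $\delta$ and $r|\widetilde\alpha|$ using \eqref{eq:exp_S}--\eqref{eq:exp_K} and \eqref{series-s2}--\eqref{series-k2} and applying the generalized Rouch\'e theorem on a small contour around $\omega=0$ produces the two characteristic values $\omega_\pm^{\alpha_*+r\widetilde\alpha}=O(\sqrt\delta)$ with eigenvectors
\[
\Psi_\pm=A_\pm(\phi_1,\psi_1^{\alpha_*})+B_\pm(\phi_2,\psi_2^{\alpha_*})+O(\delta+r),
\]
where $(A_\pm,B_\pm)$ diagonalise the reduced $2\times 2$ problem for the capacitance matrix $C^{\alpha_*}$, in agreement with \eqref{c1c2_deri} and \eqref{eq:dirac}. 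Substituting back and using \eqref{series-s2} to replace $\mathcal{S}_D^{\alpha_*+r\widetilde\alpha,k_\pm}$ by $\mathcal{S}_D^{\alpha_*,0}$ at cost $O(\delta+r)$ gives, uniformly for $y\in\overline{Y}$, the identity $u_\pm^{\alpha_*+r\widetilde\alpha}(y)=A_\pm S_1(y)+B_\pm S_2(y)+O(\delta+r)$.

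Setting $y=x/r$ for $x\in rY$ and absorbing $e^{\iu\widetilde\alpha\cdot x}=1+O(r)$ into the remainder gives the claim on one fundamental cell. To extend the estimate to all of $\mathbb{R}^2$, I would write $x=x'+rn$ with $x'\in rY$ and $n\in\Lambda$: the left-hand side is $(\alpha_*/r+\widetilde\alpha)$-quasi-periodic with respect to $r\Lambda$, while $S_j(x/r)$ is merely $\alpha_*$-quasi-periodic with respect to $r\Lambda$, and the surplus phase $e^{\iu\widetilde\alpha\cdot rn}=e^{\iu\widetilde\alpha\cdot x}e^{-\iu\widetilde\alpha\cdot x'}$ is neutralised exactly by attaching the envelope $e^{\iu\widetilde\alpha\cdot x}$ in front of the $S_j$'s. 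Hence the one-cell estimate propagates uniformly to $\mathbb{R}^2$ with the same error order $O(\delta+r)$.

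The main obstacle is the Gohberg-Sigal analysis at a twofold characteristic value depending simultaneously on the two small parameters $\sqrt\delta$ and $r|\widetilde\alpha|$: one has to project $\mathcal{A}(\omega,\delta)$ onto $\mathrm{Ker}(\mathcal{A}_0)$ and verify that the resulting $2\times 2$ reduced eigenvalue problem is precisely the one whose spectrum produces the Dirac asymptotics \eqref{eq:dirac}, thereby identifying the coefficients $A_\pm,B_\pm$ and controlling the remainder rigorously in the $\mathcal{H}$-norm. Once this spectral reduction is in hand, the layer-potential recovery on the cell and the quasi-periodicity extension to $\mathbb{R}^2$ are routine.
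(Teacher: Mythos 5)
The review itself does not prove \Cref{lem:Bloch_scale} (it is quoted from \cite{nearzero}), so your proposal has to stand on its own. Your overall route is the natural one and matches the framework the paper sets up: the scaling identity $u_{\pm,r}^{\alpha_*/r+\widetilde\alpha}(x)=u_\pm^{\alpha_*+r\widetilde\alpha}(x/r)$, the layer-potential ansatz \eqref{Helm-solutionB}, the identification of the two-dimensional kernel of $\mathcal{A}(0,0)$ at $\alpha=\alpha_*$ spanned by $(\mathcal{S}_D^{-1}[\chi_{\partial D_j}],\psi_j^{\alpha_*})$ with the recovery of $S_j=\mathcal{S}_D^{\alpha_*,0}[\psi_j^{\alpha_*}]$, and the reinstatement of the envelope $e^{\i\widetilde\alpha\cdot x}$ by comparing the $(\alpha_*/r+\widetilde\alpha)$-quasi-periodicity of the eigenfunction over $r\Lambda$ with the mere $\alpha_*$-quasi-periodicity of $S_j(\cdot/r)$; that last cell-by-cell propagation argument is correct as you state it.

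There are, however, two genuine gaps. First, $(A_\pm,B_\pm)$ cannot be obtained by ``diagonalising the reduced problem for $C^{\alpha_*}$'': at the Dirac point $C_{12}^{\alpha_*}=0$, so $C^{\alpha_*}$ is a multiple of the identity and singles out no basis --- this degeneracy is exactly why the two bands touch. The coefficients are selected by the perturbed matrix $C^{\alpha_*+r\widetilde\alpha}$, whose off-diagonal entry is $\approx c\,r(\widetilde\alpha_1-\i\widetilde\alpha_2)$ by \eqref{c1c2_deri}, i.e.\ by the very linearisation that produces the Dirac system; the ``main obstacle'' you defer (the two-parameter Gohberg--Sigal reduction at a double characteristic value, with the splitting driven jointly by $\sqrt{\delta}$ and $r|\widetilde\alpha|$) is therefore not an optional refinement but the step that fixes $(A_\pm,B_\pm)$ and controls the eigenvector remainder, and without it the decomposition in the lemma is not established. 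Second, the error order: replacing $\mathcal{S}_D^{\alpha_*+r\widetilde\alpha,k}$ by $\mathcal{S}_D^{\alpha_*,0}$ via \eqref{series-s2} costs $\O(k^2+r)=\O(\delta+r)$ only for the exterior, quasi-periodic representation. The interior field is represented by the free-space operator $\mathcal{S}_D^{k_b}$, whose expansion \eqref{eq:exp_S} contains the term $k_b\mathcal{S}_{D,1}[\phi]=-\tfrac{\i k_b}{4\pi}\int_{\partial D}\phi\,\de\sigma$, of order $\sqrt{\delta}$ and not obviously negligible for these modes since $\int_{\partial D}\phi\,\de\sigma$ is generically of order one. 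As written, your argument only yields $\O(\sqrt{\delta}+r)$; to reach the stated $\O(\delta+r)$ you must show that this low-order interior contribution cancels, can be absorbed into the (unspecified) normalisation of $A_\pm,B_\pm$, or is handled differently in the genuinely two-dimensional setting treated in \cite{nearzero}, where the small-$k$ expansions involve logarithmic terms and the bookkeeping is not the one quoted from the three-dimensional formulas of Section~\ref{sec2}.
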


Now, we derive a homogenized equation near the Dirac frequency $\omega^r_*$. Recall that the Dirac frequency of the unscaled honeycomb crystal satisfies $\omega_* = \O(\sqrt\delta) $.
As in Theorem \ref{main2}, in order to make the order of $\omega^r_*$ fixed when $r$ tends to zero, we assume that
$\delta = \mu r^2$
for some fixed $\mu>0$. 
Then we have
$$
\omega^r_* = \frac{1}{r} \omega_*= \O(1) \quad \mbox{as } r\rightarrow 0.
$$
So, in what follows, we omit the subscript $r$ in $\omega^r_*$, namely, $\omega_*:=\omega^r_*$.
Suppose the frequency $\omega$ is close to $\omega_*$, \textit{i.e.},
$$
\omega-\omega_* = \beta \sqrt\delta \quad \mbox{for some constant } \beta.
$$
We need to find the Bloch eigenfunctions or $\widetilde{\alpha}$  such that
$$
\omega = \omega_{\pm,r}^{\alpha_*/r + \widetilde\alpha}.
$$ 
We have  that the corresponding $\widetilde{\alpha}$ satisfies
\begin{align*}
\lambda_0
\begin{bmatrix}
 0 &  c( \widetilde{\alpha}_1 - \i \widetilde{\alpha}_2)
 \\
 \overline{c}( \widetilde{\alpha}_1 + \i \widetilde{\alpha}_2) & 0
\end{bmatrix}
\begin{bmatrix}
A_\pm
\\
B_\pm
\end{bmatrix}
= \beta
\begin{bmatrix}
A_\pm
\\
B_\pm 
\end{bmatrix} + \O(s).
\end{align*}
So, it is immediate to see that the macroscopic field $
 [\tilde{u}_{1}, \tilde{u}_{2}]^T:=[A_\pm e^{\iu\tilde{\alpha}\cdot x}, B_\pm e^{\iu\tilde{\alpha}\cdot x}]^T$ satisfies the system of Dirac equations as follows:
$$
\lambda_0
\begin{bmatrix}
 0 & (-c\iu)( \p_1 - \iu \p_2)
 \\
 (-\overline{c}\iu)( \p_1 + \iu \p_2) & 0
\end{bmatrix}
\begin{bmatrix}
\tilde{u}_{1}
\\
\tilde{u}_{2}
\end{bmatrix}
= \beta
\begin{bmatrix}
\tilde{u}_{1}
\\
\tilde{u}_{2}
\end{bmatrix}.
$$ 
Here, the superscript $T$ denotes the transpose and $\partial_i$ is the partial derivative with respect to the $i$th variable. 
Note that the each component $\tilde{u}_j,j=1,2$, of the macroscopic field satisfies the Helmholtz equation
\begin{equation} \label{eq:hom}
\Delta \tilde{u}_j + \frac{\beta^2}{|c|^2\lambda_0^2} \tilde{u}_j = 0.
\end{equation}
Observe, in particular, that \eqref{eq:hom} describes a near-zero refractive index when $\beta$ is small.

The following is the main result on the homogenization theory for honeycomb  lattices of subwavelength resonators \cite{nearzero}.

\begin{theorem} \label{thm:main}
For frequencies $\omega$ close to the Dirac frequency $\omega_*$, namely, $\omega-\omega_* = \beta \sqrt\delta$, the following asymptotic behaviour of the Bloch eigenfunction $u^{\alpha_*/r + \tilde{\alpha}}_r$ holds:
$$
u_{r}^{\alpha_*/r+\tilde{\alpha}}(x) = A e^{\iu\tilde\alpha \cdot x} S_{1} \left(\frac{x}{s}\right) + B e^{\iu\tilde\alpha \cdot x} S_{2} \left(\frac{x}{s}\right) + \O(s),
$$  
where the macroscopic field $[\tilde{u}_{1}, \tilde{u}_{2}]^T:=[A e^{\iu\tilde{\alpha}\cdot x}, B e^{\iu\tilde{\alpha}\cdot x}]^T$ satisfies the two-dimensional Dirac equation
$$
\lambda_0
\begin{bmatrix}
 0 & (-c\iu)( \p_1 - \iu \p_2)
 \\
 (-\overline{c}\iu)( \p_1 + \iu \p_2) & 0
\end{bmatrix}
\begin{bmatrix}
\tilde{u}_{1}
\\
\tilde{u}_{2}
\end{bmatrix}
= \frac{\omega-\omega_*}{\sqrt\delta}
\begin{bmatrix}
\tilde{u}_{1}
\\
\tilde{u}_{2}
\end{bmatrix},
$$ 
which can be considered as a homogenized equation for the honeycomb lattice of subwavelength resonators while the microscopic fields $S_1$ and $S_2$ vary on the scale of $r$.
\end{theorem}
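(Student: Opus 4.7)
The plan is to combine Lemma~\ref{lem:Bloch_scale} with the asymptotic diagonalisation of the capacitance matrix $C^\alpha$ near the Dirac point supplied by Theorem~\ref{thm:honeycomb}, and then to translate the resulting algebraic relation between the amplitudes $(A,B)$ and the quasi-momentum offset $\widetilde\alpha$ into a differential equation for the macroscopic envelope.

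First I would start from the ansatz provided by Lemma~\ref{lem:Bloch_scale}. The microscopic profiles $S_1, S_2$ are fixed once and for all (they are evaluated at $\alpha=\alpha_*$), so the only remaining degrees of freedom are the amplitudes $A_\pm, B_\pm$ and the offset $\widetilde\alpha = \alpha - \alpha_*$. The requirement that $\omega$ be a Bloch eigenfrequency forces the pair $(A_\pm, B_\pm)^T$ to lie in the appropriate eigenspace of $C^{\alpha_*+\widetilde\alpha}$, with eigenvalue determined by the formula \eqref{eq:wasymp}.

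Next I would expand $C^{\alpha_*+\widetilde\alpha}$ using the derivative relations \eqref{c1c2_deri}. Here $c_1^\alpha$ is stationary at $\alpha_*$ and so contributes only $c_1^{\alpha_*} I$ at linear order, while $c_2^\alpha$ vanishes at $\alpha_*$ (which is the source of the Dirac degeneracy in Theorem~\ref{thm:honeycomb}) and has explicit linear part $c(\widetilde\alpha_1 - \iu\widetilde\alpha_2)$. Subtracting $c_1^{\alpha_*} I$, rescaling by $\lambda_0 = \tfrac{1}{2}\sqrt{v_b^2/(|D_1| c_1^{\alpha_*})}$, and substituting $\omega - \omega_* = \beta\sqrt{\delta}$ into the Dirac-cone formula $\omega_\pm^\alpha - \omega_* = \pm \sqrt{\delta}\,\lambda_0|c||\widetilde\alpha| + \O(\delta)$ from Theorem~\ref{thm:honeycomb} reduces the eigenvalue problem to the two-by-two matrix equation displayed just above the theorem. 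To pass from that algebraic system to the PDE, I would multiply through by the plane wave $e^{\iu\widetilde\alpha\cdot x}$ and invoke the identity $\widetilde\alpha_j e^{\iu\widetilde\alpha\cdot x} = -\iu\,\partial_j e^{\iu\widetilde\alpha\cdot x}$. This converts the off-diagonal multiplier $c(\widetilde\alpha_1 - \iu\widetilde\alpha_2)$ into the differential operator $-\iu c(\partial_1 - \iu\partial_2)$, and similarly for its conjugate, producing the two-dimensional Dirac system stated in the theorem for the macroscopic field $(\widetilde u_1, \widetilde u_2)^T = (A e^{\iu\widetilde\alpha\cdot x}, B e^{\iu\widetilde\alpha\cdot x})^T$.

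The hard part will be uniform control of the remainders. Both Lemma~\ref{lem:Bloch_scale} and Theorem~\ref{thm:honeycomb} carry errors depending jointly on $\delta$, $r$ and $|\widetilde\alpha|$, and in the regime $\delta = \mu r^2$ with $\omega - \omega_* = \O(\sqrt{\delta})$ one must show that the higher-order corrections to $C^\alpha$ and the tail terms in the quasi-periodic layer-potential expansions \eqref{series-s2}--\eqref{series-k2} contribute only $\O(r)$ to the final envelope, uniformly for $\widetilde\alpha$ in a fixed neighbourhood of the origin. A further subtlety is that the substitution $\widetilde\alpha_j \mapsto -\iu\,\partial_j$ is strictly valid only when the amplitudes $A_\pm, B_\pm$ are constants; extending the identification to genuine envelopes $\widetilde u_1, \widetilde u_2$ requires a superposition argument over nearby $\widetilde\alpha$, and the resulting error terms must again be absorbed into the $\O(r)$ remainder. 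This uniformity, rather than the clean algebra that turns the matrix equation into the Dirac system, is where the real analytic work sits.
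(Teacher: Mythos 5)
Your proposal follows essentially the same route as the paper: starting from the two-scale ansatz of Lemma~\ref{lem:Bloch_scale}, linearising the capacitance matrix at the Dirac point via \eqref{c1c2_deri} (using that $c_2^{\alpha_*}=0$ while $c_1^\alpha$ is stationary there) to obtain the $2\times2$ algebraic eigenvalue relation displayed just above the theorem, and then converting $\widetilde\alpha_j$ into $-\iu\p_j$ acting on the plane-wave envelopes to obtain the Dirac system. This matches the paper's derivation (whose uniform remainder estimates are deferred to the cited reference), and your closing remarks correctly identify where the remaining analytic work lies.
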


Figure \ref{honeycomb} shows a one-dimensional plot along the $x_1-$axis of the real part of the {Bloch} {eigenfunction} of the honeycomb lattice shown over many unit cells. 

\begin{figure}[h]
\begin{center} \includegraphics[scale=0.6]{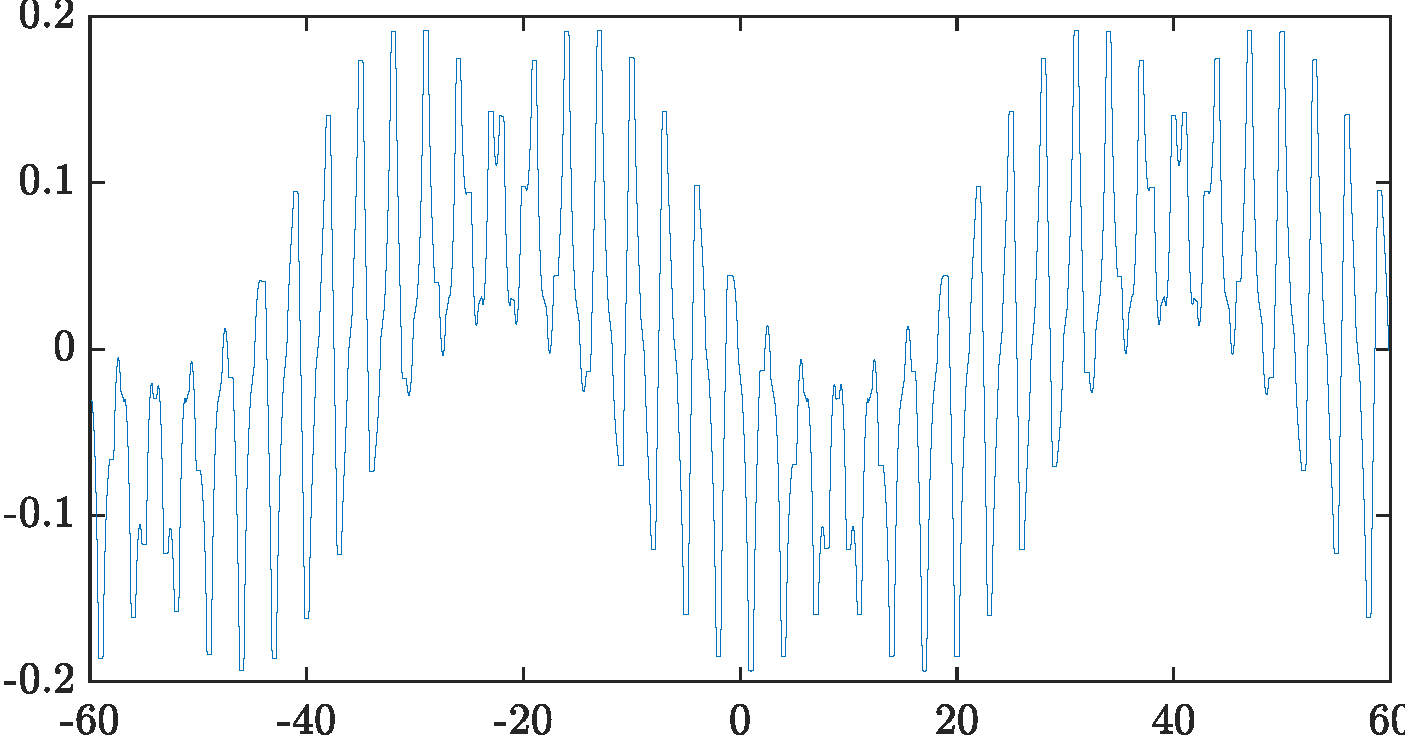}  
\caption{Real part of the first Bloch eigenfunction of the honeycomb lattice shown over many unit cells. } \label{honeycomb}  \end{center}
\end{figure}

\begin{remark} \label{rmk:alpha2}
	Theorem \ref{thm:main} is valid around the Dirac point $\alpha_* = \alpha_1^*$.	Around the other Dirac point, analogous arguments show that Theorem \ref{thm:main} is valid with all quantities instead defined using $\alpha_*=\alpha_2^*$ and the macroscopic field now satisfying 
	$$
	\lambda_0
	\begin{bmatrix}
	0 & (-c\iu)( \p_1 + \iu \p_2)
	\\
	(-\overline{c}\iu)( \p_1 - \iu \p_2) & 0
	\end{bmatrix}
	\begin{bmatrix}
	\tilde{u}_{1}
	\\
	\tilde{u}_{2}
	\end{bmatrix}
	= \frac{\omega-\omega_*}{\sqrt\delta}
	\begin{bmatrix}
	\tilde{u}_{1}
	\\
	\tilde{u}_{2}
	\end{bmatrix}.
	$$ 
\end{remark}

\subsection{Topologically protected edge modes}
A typical way to enable localized modes is to create a cavity inside a band gap structure. The idea is to make the frequency of the cavity mode fall within the band gap, whereby the mode will be localized to the cavity. However, localized modes created this way are highly sensitive to imperfections of the structure.

The principle that underpins the design of robust structures is that one is able to define topological invariants which capture the crystal's wave propagation properties. Then, if part of a crystalline structure is replaced with an arrangement that is associated with a different value of this invariant, not only will certain frequencies be localized to the interface but this behaviour will be stable with respect to imperfections. These eigenmodes are known as \emph{edge modes} and we say that they are \emph{topologically protected} to refer to their robustness. 

\subsubsection{Sensitivity to geometric imperfections} 
Subwavelength metamaterials can be used to achieve cavities of subwavelength dimensions. 
The key idea is to perturb the size of a single subwavelength resonator inside the crystal, thus creating a defect mode. Observe that if we remove one resonator inside the bubbly crystal, we cannot create a defect mode. The defect created in this fashion is actually too small to support a resonant mode. In  \cite{defectSIAM},  it is proved that by perturbing the radius of one resonator (see Figure \ref{fig:defect} where $D_\epsilon$ is the defect resonator) we create a detuned resonator with a resonant frequency that fall within the subwavelength band gap. Moreover, it is shown that the way to shift the frequency into the band gap depends on the crystal: in the dilute regime we have to decrease the defect resonator size while in the non-dilute regime we have to increase the size. 

\begin{figure}[tb]
\centering
    \begin{tikzpicture}[scale=1.6]
    \centering
    \draw[dashed] (0,0) circle (8pt) node{$D$};
    \draw (0,0) circle (12pt) node[yshift=-20pt, xshift=15pt ]{$D_\epsilon$};
    \draw (1,0) circle (8pt);
    \draw (0,1) circle (8pt);
    \draw (1,1) circle (8pt);
    \draw (-1,0) circle (8pt);
    \draw (0,-1) circle (8pt);
    \draw (1,-1) circle (8pt);
    \draw (-1,1) circle (8pt);
    \draw (-1,-1) circle (8pt);
    \draw (1.5,0) node{$\cdots$};
    \draw (-1.5,0) node{$\cdots$};
    \draw (0,1.5) node{$\vdots$};
    \draw (0,-1.5) node{$\vdots$};
    \end{tikzpicture}
    \hspace{0.5cm}
    {\includegraphics[width=7cm]{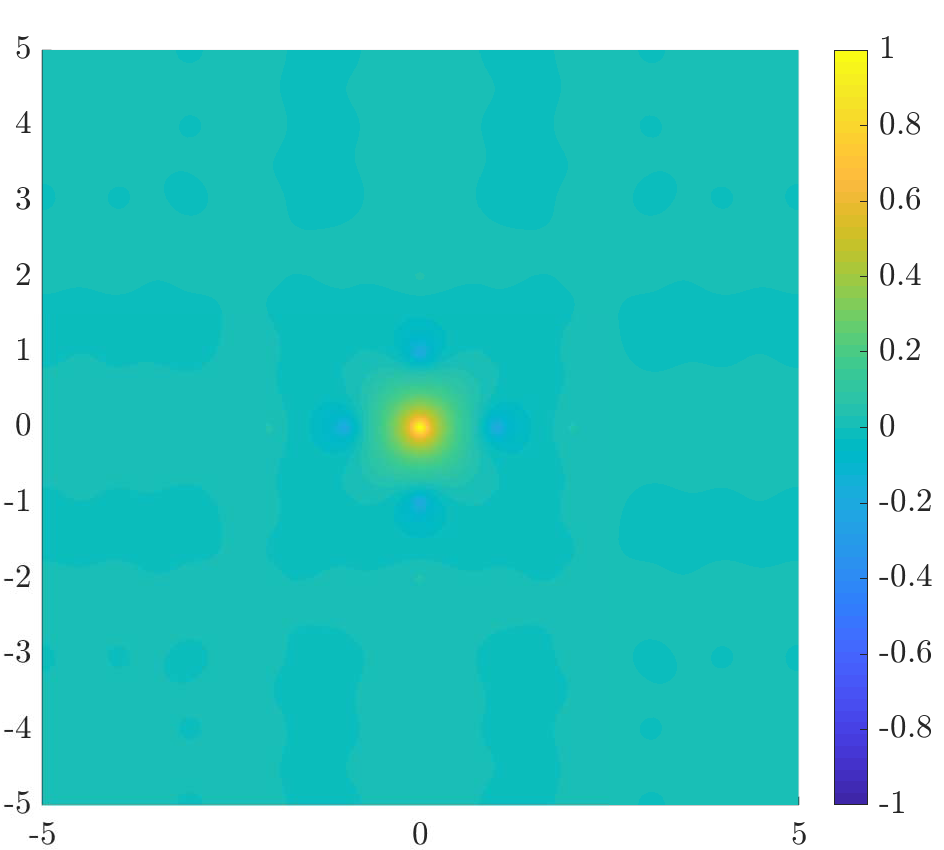}} 
   \caption{Illustration of the defect crystal and the defect mode.} \label{fig:defect}
\end{figure}

In \cite{linedefect},  a waveguide is created by modifying the sizes of the resonators along a line in a dilute two-dimensional crystal, thereby creating a line defect.
It is proved that the line defect indeed acts as a waveguide; waves of certain frequencies will be localized to, and guided along, the line defect. This is depicted in Figure \ref{linedefectf}.

In wave localization due to a point defect, if the defect size is small the band structure of the defect problem will be a small perturbation of the band structure of the original problem. This way, it is possible to shift the defect band upwards, and a part of the defect band will fall into the subwavelength band gap. In \cite{linedefect} it is shown that for arbitrarily small defects, a part of the defect band will lie inside the band gap. Moreover, it is shown that for suitably large perturbation sizes, the entire defect band will fall into the band gap, and the size of the perturbation needed in order to achieve this can be explicitly quantified. 
In order to have \textit{guided} waves along the line defect, the defect mode must not only be localized to the line, but also propagating along the line. In other words, we must exclude the case of standing waves in the line defect, \textit{i.e.}, modes which are localized in the direction of the line. Such modes are associated to a point spectrum of the perturbed operator which appears as a flat band in the dispersion relation. In \cite{linedefect}, it is shown that the defect band is nowhere flat, and hence does not correspond to bound modes in the direction of the line.

\begin{figure}[h]
    \centering
    \begin{tikzpicture}[scale=1.2]
    \draw[dashed] (0,0) circle (10pt);
    \draw (0,0) circle (6pt);
    \draw[dashed] (-2,0) circle (10pt);
    \draw[dashed] (-1,0) circle (10pt);
    \draw[dashed] (2,0) circle (10pt);
    \draw[dashed] (1,0) circle (10pt);
    \draw (-2,0) circle (6pt);
	\draw (-1,0) circle (6pt);
	\draw (2,0) circle (6pt);
	\draw (1,0) circle (6pt);    
    
    \draw (0,1) circle (10pt);
    \draw (1,1) circle (10pt);
    \draw (0,-1) circle (10pt);
    \draw (1,-1) circle (10pt);
    \draw (-1,1) circle (10pt);
    \draw (-1,-1) circle (10pt);
    \draw (2,1) circle (10pt);
    \draw (2,-1) circle (10pt);
    \draw (-2,1) circle (10pt);
    \draw (-2,-1) circle (10pt);
    \draw (2.65,0) node{$\cdots$};
    \draw (-2.6,0) node{$\cdots$};
    \draw (0,1.65) node{$\vdots$};
    \draw (0,-1.5) node{$\vdots$};
%
    \end{tikzpicture}
    \hspace{0.5cm}
 {\includegraphics[width=6.cm]{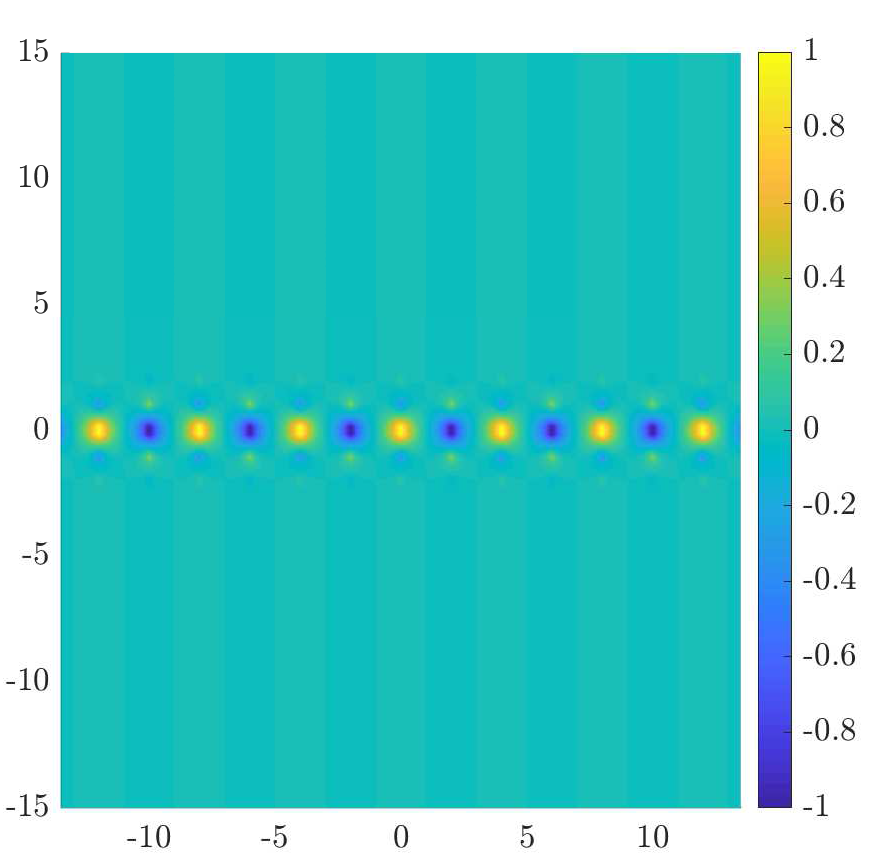}}
 \caption{Illustration of the line defect and the guided mode.} \label{linedefectf}
\end{figure}

One fundamental limitation of the above designs of subwavelength cavities and waveguides is that their properties are often very sensitive to imperfections in the crystal's structure. This is due, as illustrated in Figure \ref{defectotstable}, to the fact that the frequencies of the defect modes and guided waves are very close to the original band.    In order to be able to feasibly manufacture wave-guiding devices, it is important that we are able to design subwavelength crystals that exhibit stability with respect to geometric errors.

\begin{figure}
\centering
\includegraphics[height=4.9cm]{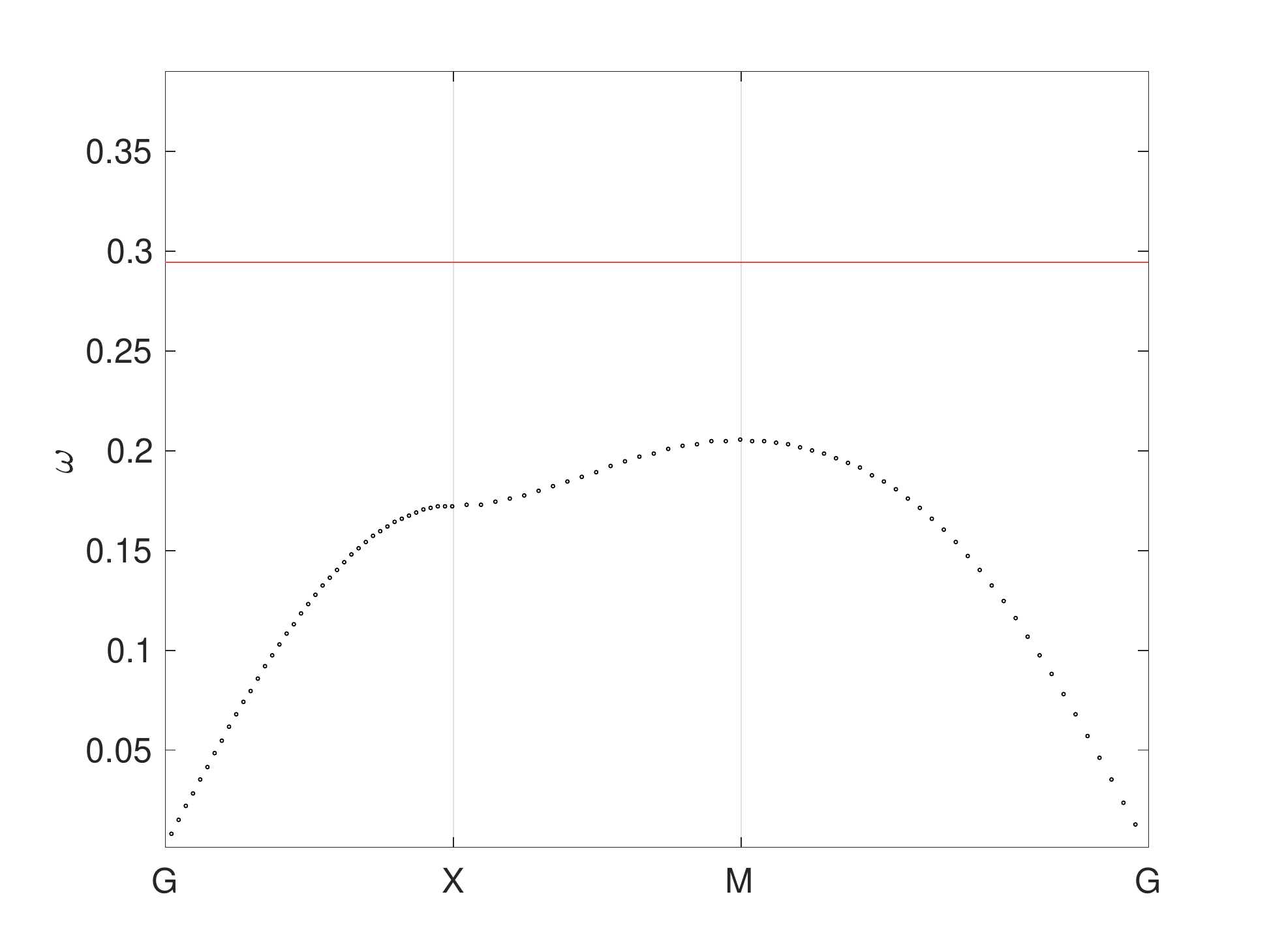} \hspace{0.25cm} \includegraphics[scale=0.38] {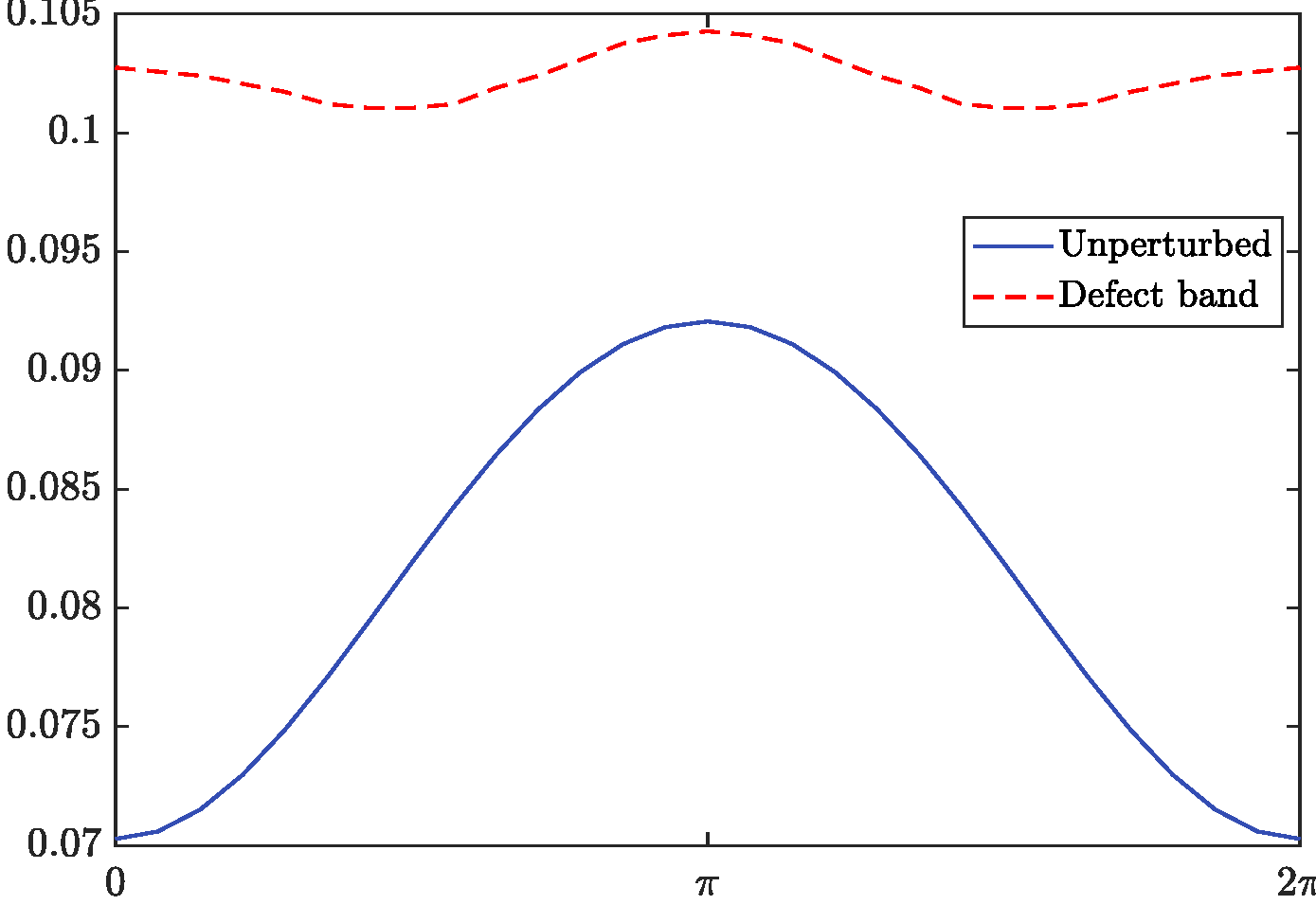} \caption{Frequencies of the defect modes and guided waves.} \label{defectotstable} \end{figure}

\subsubsection{Robustness properties of one-dimensional chains of subwavelength resonators with respect to imperfections}	
	
	In the case of one-dimensional crystals such as a chain of subwavelength resonators, the natural choice of topological invariant is the Zak phase \cite{zak}. Qualitatively, a non-zero Zak phase means that the crystal has undergone \emph{band inversion}, meaning that at some point in the Brillouin zone the monopole/dipole nature of the first/second Bloch eigenmodes has swapped. In this way, the Zak phase captures the crystal's wave propagation properties. If one takes two chains of subwavelength resonators with different Zak phases and joins half of one chain to half of the other to form a new crystal, this crystal will exhibit a topologically protected edge mode at the interface, as illustrated in Figure \ref{zakf}.

\begin{figure}[h]
\begin{center}
	\begin{tikzpicture}
	\pgfmathsetmacro{\cubex}{4}
	\pgfmathsetmacro{\cubey}{0.6}
	\pgfmathsetmacro{\cubez}{0.6}
	\draw [draw=blue, every edge/.append style={draw=blue, densely dashed, opacity=.5}, fill=blue!30!white]
	(0,0,0) coordinate (o) -- ++(-\cubex,0,0) coordinate (a) -- ++(0,-\cubey,0) coordinate (b) edge coordinate [pos=1] (g) ++(0,0,-\cubez)  -- ++(\cubex,0,0) coordinate (c) -- cycle
	(o) -- ++(0,0,-\cubez) coordinate (d) -- ++(0,-\cubey,0) coordinate (e) edge (g) -- (c) -- cycle
	(o) -- (a) -- ++(0,0,-\cubez) coordinate (f) edge (g) -- (d) -- cycle;
	\begin{scope}[xshift=4cm]
	\pgfmathsetmacro{\cubex}{4}
	\pgfmathsetmacro{\cubey}{0.6}
	\pgfmathsetmacro{\cubez}{0.6}
	\draw [draw=red, every edge/.append style={draw=red, densely dashed, opacity=.5}, fill=red!30!white]
	(0,0,0) coordinate (o) -- ++(-\cubex,0,0) coordinate (a) -- ++(0,-\cubey,0) coordinate (b) edge coordinate [pos=1] (g) ++(0,0,-\cubez)  -- ++(\cubex,0,0) coordinate (c) -- cycle
	(o) -- ++(0,0,-\cubez) coordinate (d) -- ++(0,-\cubey,0) coordinate (e) edge (g) -- (c) -- cycle
	(o) -- (a) -- ++(0,0,-\cubez) coordinate (f) edge (g) -- (d) -- cycle;
	\end{scope}
	\draw[<-,out=80,in=170] (0.2,0.4) to (1,0.6);
	\node at (1.65,0.6) {\scriptsize Interface};
	\end{tikzpicture}
\end{center}
\caption{When two crystals with different values of the topological invariant are joined together, a protected edge mode exists at the interface.} \label{zakf}	
\end{figure}
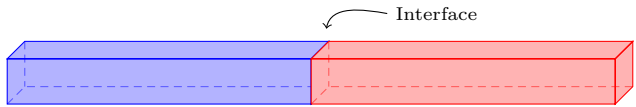
	
	In \cite{ammari2019topological}, the bulk properties of an infinitely periodic chain of subwavelength resonator dimers are studied. Using Floquet-Bloch theory, the resonant frequencies and associated eigenmodes of this crystal are derived, and further a non-trivial band gap is proved. The analogous Zak phase takes different values for different geometries and in the \emph{dilute regime} (that is, when the distance between the resonators is an order of magnitude greater than their size) explicit expressions for its value are given. Guided by this knowledge of how the infinite (bulk) chains behave, a finite chain of resonator dimers that has a topologically protected edge mode is designed. This configuration takes inspiration from the bulk-boundary correspondence in the well-known Su-Schrieffer-Heeger (SSH) model \cite{SSH}  by introducing an interface, on either side of which the resonator dimers can be associated with different Zak phases thus creating a topologically protected edge mode.

In order to present the main results obtained in \cite{ammari2019topological},  we first briefly review the topological nature of the Bloch eigenbundle. Observe that the Brillouin zone $Y^*$ has the topology of a circle. A natural question to ask, when considering the topological properties of a crystal, is whether properties are preserved after parallel transport around $Y^*$. In particular, a powerful quantity to study is the \textit{Berry-Simon connection} $A_n$, defined as
	$$A_n(\alpha) := \iu \int_D  u_n^\alpha \frac{\p}{\p \alpha} \overline{u_n^\alpha}\; \dx x.$$
	For any $\alpha_1,\alpha_2\in Y^*$, the parallel transport from $\alpha_1$ to $\alpha_2$ is $u_n^{\alpha_1}\mapsto e^{\iu \theta}u_n^{\alpha_2}$, where $\theta$ is given by
	\begin{equation*}
	\theta = \int_{\alpha_1}^{\alpha_2} A_n(\alpha) \; \dx \alpha.
	\end{equation*}
	Thus, it is enlightening to introduce the so-called \textit{Zak phase}, $\varphi_n^z$, defined as
	$$\varphi_n^{z} := \iu \int_{Y^*} \int_D u_n^\alpha \frac{\p }{\p \alpha} \overline{u_n^\alpha} \; \dx x \, \dx \alpha,$$
	which corresponds to parallel transport around the whole of $Y^*$. When $\varphi_n^z$ takes a value that is not a multiple of $2\pi$, we see that the eigenmode has gained a non-zero phase after parallel transport around the circular domain $Y^*$. In this way, the Zak phase captures topological properties of the crystal. For crystals with inversion symmetry, the Zak phase is known to only attain the values $0$ or $\pi$ \cite{zak}.

	Next, we study a periodic arrangement of subwavelength resonator dimers. This is an analogue of the SSH model. The goal is to derive a topological invariant which characterises the crystal's wave propagation properties and indicates when it supports topologically protected edge modes. 
	
	Assume we have a one-dimensional crystal in $\R^3$ with repeating unit cell $Y := [-\frac{L}{2}, \frac{L}{2}]\times \R^2$. Each unit cell contains a dimer surrounded by some background medium. Suppose the resonators together occupy the domain $D := D_1 \cup D_2$. We need two assumptions of symmetry for the analysis that follows. The first is that each individual resonator is symmetric in the sense that there exists some $x_1\in\mathbb{R}$ such that
	\begin{equation} \label{resonator_symmetry}
	R_1D_1 = D_1, \quad R_2D_2 = D_2,
	\end{equation}
	where $R_1$ and $R_2$ are the reflections in the planes $p_1=\{-x_1\}\times \R^2$ and $p_2=\{x_1\}\times \R^2$, respectively. We also assume that the dimer is symmetric in the sense that
	\begin{equation} \label{dimer_symmetry}
	D=-D.
	\end{equation}
	 Denote the full crystal by $\Crystal$, that is,
	\begin{equation} \label{crystal_def}
	\Crystal := \bigcup_{m\in \Z} \left(D + (mL,0,0)\right).
	\end{equation}
	We denote the separation of the resonators within each unit cell, along the first coordinate axis, by $d := 2x_1$ and the separation across the boundary of the unit cell by $d' := L - d$. See Figure \ref{fig:SSH}.
	
	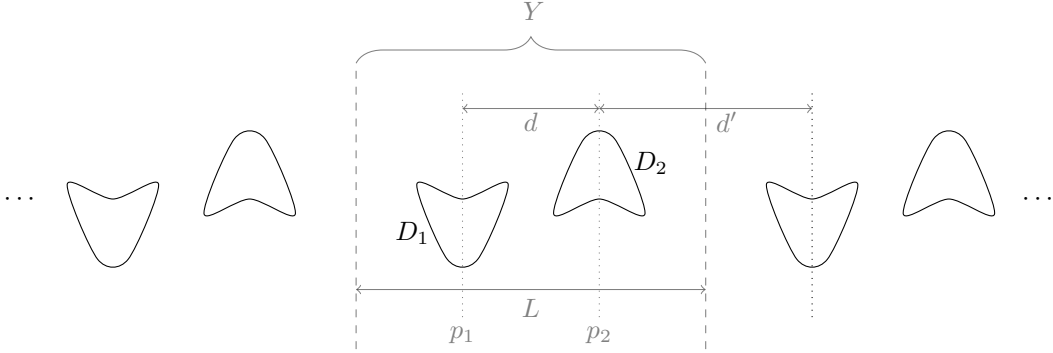
\begin{figure}[tbh]
		\centering
		\begin{tikzpicture}[scale=2]
		\begin{scope}
		\draw[dashed, opacity=0.5] (-0.5,0.85) -- (-0.5,-1);
		\draw[dashed, opacity=0.5]  (1.8,0.85) -- (1.8,-1)node[yshift=4pt,xshift=-7pt]{};
		\draw[{<[scale=1.5]}-{>[scale=1.5]}, opacity=0.5] (-0.5,-0.6) -- (1.8,-0.6)  node[pos=0.5, yshift=-7pt,]{$L$};
		\draw  plot [smooth cycle] coordinates {(-0.1,0.1) (0.2,0) (0.5,0.1) (0.3,-0.4) (0.1,-0.4)} node[xshift=-13pt, yshift=10pt]{$D_1$};
		\draw  plot [smooth cycle] coordinates {(0.8,-0.1) (1.1,0) (1.4,-0.1) (1.2,0.4) (1,0.4)} node[xshift=25pt, yshift=-9pt]{$D_2$};
		\draw[{<[scale=1.5]}-{>[scale=1.5]}, opacity=0.5] (0.2,0.6) -- (1.1,0.6) node[pos=0.5, yshift=-5pt,]{$d$};
		\draw[dotted,opacity=0.5] (0.2,0.7) -- (0.2,-0.8) node[at end, yshift=-0.2cm]{$p_1$};
		\draw[dotted,opacity=0.5] (1.1,0.7) -- (1.1,-0.8) node[at end, yshift=-0.2cm]{$p_2$};
		\draw[{<[scale=1.5]}-{>[scale=1.5]}, opacity=0.5] (1.1,0.6) -- (2.5,0.6) node[pos=0.6, yshift=-5pt,]{$d'$};
		\end{scope}
		\begin{scope}[xshift=-2.3cm]
		\draw  plot [smooth cycle] coordinates {(-0.1,0.1) (0.2,0) (0.5,0.1) (0.3,-0.4) (0.1,-0.4)};
		\draw  plot [smooth cycle] coordinates {(0.8,-0.1) (1.1,0) (1.4,-0.1) (1.2,0.4) (1,0.4)};
		\begin{scope}[xshift = 1.2cm]
		\draw (-1.6,0) node{$\cdots$};
		\end{scope};
		\end{scope}
		\begin{scope}[xshift=2.3cm]
		\draw  plot [smooth cycle] coordinates {(-0.1,0.1) (0.2,0) (0.5,0.1) (0.3,-0.4) (0.1,-0.4)};
		\draw  plot [smooth cycle] coordinates {(0.8,-0.1) (1.1,0) (1.4,-0.1) (1.2,0.4) (1,0.4)};
		\draw[dotted] (0.2,0.7) -- (0.2,-0.8);
		\begin{scope}[xshift = 1.1cm]
		\end{scope}
		\draw (1.7,0) node{$\cdots$};
		\end{scope}		
		\begin{scope}[yshift=0.9cm]
		\draw [decorate,opacity=0.5,decoration={brace,amplitude=10pt}]
		(-0.5,0) -- (1.8,0) node [black,midway]{};
		\node[opacity=0.5] at (0.67,0.35) {$Y$};	
		\end{scope}
		\end{tikzpicture}
		\caption{Example of a two-dimensional cross-section of a chain of subwavelength resonators satisfying the symmetry assumptions \eqref{resonator_symmetry}~and~\eqref{dimer_symmetry}. The repeating unit cell $Y$ contains the dimer $D_1 \cup D_2$.} \label{fig:SSH}
	\end{figure}	
	
	Wave propagation inside the infinite periodic structure is modelled by the Helmholtz problem
	\begin{equation} \label{eq:scatteringt}
	\left\{
	\begin{array} {ll}
	\ds \Delta {u}+ \omega^2 {u}  = 0 & \text{in } \R^3 \setminus \p \Crystal, \\
	\nm
	\ds  {u}|_{+} -{u}|_{-}  =0  & \text{on } \partial \Crystal, \\
	\nm
	\ds  \delta \frac{\partial {u}}{\partial \nu} \bigg|_{+} - \frac{\partial {u}}{\partial \nu} \bigg|_{-} =0 & \text{on } \partial \Crystal, \\
	\nm
	\ds u(x_1,x_2,x_3) & \text{satisfies the outgoing radiation condition as } \sqrt{x_2^2+x_3^2} \rightarrow \infty.
	\end{array}
	\right.
	\end{equation}
By applying the Floquet transform, the Bloch eigenmode $u_\alpha(x) := \mathcal{U}[u](x,\alpha)$ is the solution to the Helmholtz problem
	\begin{equation} \label{eq:scattering_quasi}
	\left\{
	\begin{array} {ll}
	\ds \Delta u_\alpha+ \omega^2 {u_\alpha}  = 0 &\text{in } \R^3 \setminus \p \Crystal, \\
	\nm
	\ds  {u_\alpha}|_{+} -{u_\alpha}|_{-}  =0  & \text{on } \partial \Crystal, \\
	\nm
	\ds  \delta \frac{\partial {u_\alpha}}{\partial \nu} \bigg|_{+} - \frac{\partial {u_\alpha}}{\partial \nu} \bigg|_{-} =0 & \text{on } \partial \Crystal, \\
	\nm
	\ds e^{-\iu  \alpha_1 x_1}  u_\alpha(x_1,x_2,x_3)  \,\,\,&  \mbox{is periodic in } x_1, \\
	\nm
	 \ds u_\alpha(x_1,x_2,x_3)& \text{satisfies the $\alpha$-quasi-periodic outgoing radiation condition} \\ &\hspace{0.5cm} \text{as } \sqrt{x_2^2+x_3^2} \rightarrow \infty.
	\end{array}
	\right.
	\end{equation} 
	 We  formulate the quasi-periodic resonance problem as an integral equation. 
	Let $\mathcal{S}_{D}^{\alpha,\omega}$ be the single-layer potential associated to the three-dimensional Green's function  which is quasi-periodic in one dimension,
	$$G^{\alpha,k}(x,y) := -\sum_{m \in \Z} \frac{e^{\iu k|x-y-(Lm,0,0)|}}{4\pi|x-y-(Lm,0,0)|}e^{\iu \alpha Lm}.$$	
 	The solution $u_\alpha$ of \eqref{eq:scattering_quasi} can be represented as
	\begin{equation*} \label{eq:helm-solution_quasi}
	u_\alpha = \mathcal{S}_{D}^{\alpha,\omega} [\Psi^\alpha],
	\end{equation*}
	for some density $\Psi^\alpha \in  L^2(\p D)$. Then, using the jump relations, it can be shown that~\eqref{eq:scattering_quasi} is equivalent to the boundary integral equation
	\begin{equation}  \label{eq:boundary_quasi}
	\mathcal{A}^\alpha(\omega, \delta)[\Psi^\alpha] =0,  
	\end{equation}
	where
	\begin{equation} \label{eq:A_quasi_defn}
	\mathcal{A}^\alpha(\omega, \delta) := -\lambda I + \left(\mathcal{K}_D^{ -\alpha,\omega}\right)^*, \quad \lambda := \frac{1+\delta}{2(1-\delta)}.
	\end{equation}

	Let $V_j^\alpha$ be the solution to 
	\begin{equation} \label{eq:V_quasi}
	\begin{cases}
	\ds \Delta V_j^\alpha =0 \quad &\mbox{in } \quad Y\setminus \overline{D},\\
	\ds V_j^\alpha = \delta_{ij} \quad &\mbox{on } \quad \partial D_i,\\
	\ds V_j^\alpha(x+(mL,0,0))= e^{\iu \alpha m} V_j^\alpha(x) & \forall m \in \Z, \\
	\ds V_j^\alpha(x_1,x_2,x_3) = O\left(\tfrac{1}{\sqrt{x_2^2+x_3^2}}\right) \quad &\text{as } \sqrt{x_2^2+x_2^2}\to\infty, \text{ uniformly in } x_1,
	\end{cases}
	\end{equation}
	where $\delta_{ij}$ is the Kronecker delta.
	Analogously to \eqref{defcapal}, we then define the quasi-periodic capacitance matrix $C^\alpha=(C_{ij}^\alpha)$ by
	\begin{equation} \label{eq:qp_capacitance} 
	C_{ij}^\alpha := \int_{Y\setminus \overline{D} }\overline{\nabla V_i^\alpha}\cdot\nabla V_j^\alpha  \; \dx x,\quad i,j=1, 2.
	\end{equation}
	Finding the eigenpairs of this matrix represents a leading order approximation to the differential problem \eqref{eq:scattering_quasi}. The following  properties of $C^\alpha$ are useful.

	\begin{lemma} \label{lem:quasi_matrix_form}
		The matrix $C^\alpha$ is Hermitian with constant diagonal, \ie{},
		$$C_{11}^\alpha = C_{22}^\alpha \in \R, \quad C_{12}^\alpha = \overline{C_{21}^\alpha} \in \mathbb{C}.$$
	\end{lemma}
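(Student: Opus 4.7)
My plan is to verify the three claims separately, two of which follow directly from the definition \eqref{eq:qp_capacitance} and one of which requires exploiting the dimer symmetry \eqref{dimer_symmetry}.

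First, I would observe that the Hermitian property $C_{ij}^\alpha=\overline{C_{ji}^\alpha}$ is immediate from the definition: complex-conjugating the integrand and relabelling gives
\[
\overline{C_{ji}^\alpha}=\overline{\int_{Y\setminus\overline{D}}\overline{\nabla V_j^\alpha}\cdot\nabla V_i^\alpha\dx x}=\int_{Y\setminus\overline{D}}\overline{\nabla V_i^\alpha}\cdot\nabla V_j^\alpha\dx x=C_{ij}^\alpha.
\]
In particular $C_{ii}^\alpha=\int_{Y\setminus\overline{D}}|\nabla V_i^\alpha|^2\dx x\in\mathbb R_{\ge 0}$, so both diagonal entries are real.

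The substantive step is to show $C_{11}^\alpha=C_{22}^\alpha$. Here I would use the symmetry assumption $D=-D$, which (together with the fact that $D_1$ and $D_2$ are disjoint and lie on opposite sides of the origin) forces $-D_1=D_2$ and $-D_2=D_1$. Define $W(x):=\overline{V_2^\alpha(-x)}$ for $x\in Y\setminus\overline{D}$. I would then check the four defining properties of $V_1^\alpha$ in \eqref{eq:V_quasi} for $W$: (i) $W$ is harmonic in $Y\setminus\overline{D}$ because harmonicity is preserved by $x\mapsto -x$ and by complex conjugation; (ii) the boundary values pass through the reflection: on $\partial D_1$ we have $-x\in\partial D_2$, so $V_2^\alpha(-x)=0$ gives $W=0$ on $\partial D_2$ after relabelling... more carefully, $V_2^\alpha(-x)=1$ if $-x\in\partial D_2$, i.e.\ if $x\in\partial D_1$, so $W=1$ on $\partial D_1$ and $W=0$ on $\partial D_2$; (iii) for the quasi-periodicity, a direct computation gives
\[
W(x+(mL,0,0))=\overline{V_2^\alpha(-x-(mL,0,0))}=\overline{e^{-\iu\alpha m}V_2^\alpha(-x)}=e^{\iu\alpha m}W(x),
\]
so $W$ is $\alpha$-quasi-periodic in the $x_1$-direction; (iv) the decay condition as $\sqrt{x_2^2+x_3^2}\to\infty$ is preserved since $(x_2,x_3)\mapsto(-x_2,-x_3)$ leaves the radius invariant. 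By uniqueness of the solution to \eqref{eq:V_quasi}, I conclude $V_1^\alpha(x)=\overline{V_2^\alpha(-x)}$. Substituting into the capacitance formula,
\[
C_{11}^\alpha=\int_{Y\setminus\overline{D}}|\nabla V_1^\alpha(x)|^2\dx x=\int_{Y\setminus\overline{D}}|\nabla V_2^\alpha(-x)|^2\dx x,
\]
and a change of variables $y=-x$, using $-Y=Y$ (since $Y=[-L/2,L/2]\times\R^2$ is symmetric) and $-D=D$, yields $C_{11}^\alpha=\int_{Y\setminus\overline{D}}|\nabla V_2^\alpha(y)|^2\dx y=C_{22}^\alpha$.

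The only step that requires any care is verifying that the reflected-and-conjugated function $W$ really satisfies all the defining conditions of $V_1^\alpha$; in particular one must be attentive to how the quasi-periodicity phase transforms under $x\mapsto -x$, and to the fact that $D=-D$ together with disjointness implies the pointwise swap $-D_1=D_2$. Once that identification $V_1^\alpha(x)=\overline{V_2^\alpha(-x)}$ is in hand, the equality of the diagonal entries is a one-line change of variables.
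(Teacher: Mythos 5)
Your proof is correct: the Hermitian structure and real non-negative diagonal follow immediately from the definition \eqref{eq:qp_capacitance}, and your identification $V_1^\alpha(x)=\overline{V_2^\alpha(-x)}$ via the inversion symmetry \eqref{dimer_symmetry} (with the quasi-periodicity phase and decay checked as you do) is exactly the standard symmetry argument behind this lemma. The review itself states the result without proof, deferring to the cited work on topological chains, and your argument matches that approach, so there is nothing to fix.
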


Since $C^\alpha$ is Hermitian, the following lemma follows directly.
	\begin{lemma} \label{lem:evec}
		The eigenvalues and corresponding eigenvectors of the quasi-periodic capacitance matrix are given by 
		\begin{align*}
		\lambda_1^\alpha &= C_{11}^\alpha - \left|C_{12}^\alpha \right|, \qquad
		\begin{pmatrix}
		a_1  \\ b_1
		\end{pmatrix} = \frac{1}{\sqrt{2}}\begin{pmatrix}
		- e^{\iu \theta_\alpha}  \\ 1
		\end{pmatrix}, \\
		\lambda_2^\alpha &= C_{11}^\alpha + \left|C_{12}^\alpha \right|, \qquad
		\begin{pmatrix}
		a_2  \\ b_2
		\end{pmatrix} = \frac{1}{\sqrt{2}}\begin{pmatrix}
		e^{\iu \theta_\alpha}  \\ 1
		\end{pmatrix},
		\end{align*}
		where, for $\alpha$ such that $C_{12}^\alpha\neq0$, $\theta_\alpha\in[0,2\pi)$ is defined to be such that
		\begin{equation}
			e^{\iu \theta_\alpha} = \frac{C_{12}^\alpha}{|C_{12}^\alpha|}.
		\end{equation}
	\end{lemma}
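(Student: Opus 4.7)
The proof is a direct computation leveraging the special structure of $C^\alpha$ established in Lemma~\ref{lem:quasi_matrix_form}. My plan is to write
\begin{equation*}
C^\alpha = \begin{pmatrix} C_{11}^\alpha & C_{12}^\alpha \\ \overline{C_{12}^\alpha} & C_{11}^\alpha \end{pmatrix},
\end{equation*}
and exploit that the diagonal entries are equal and real. I would first compute the characteristic polynomial, which factors as $(C_{11}^\alpha - \lambda)^2 - |C_{12}^\alpha|^2 = 0$, yielding immediately the two eigenvalues $\lambda_{1,2}^\alpha = C_{11}^\alpha \mp |C_{12}^\alpha|$.

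Next, I would substitute $C_{12}^\alpha = |C_{12}^\alpha| e^{\iu\theta_\alpha}$ (using the hypothesis $C_{12}^\alpha\neq 0$, so $\theta_\alpha$ is well-defined) and solve the two eigenvector equations. For $\lambda_1^\alpha$, the first row of $(C^\alpha - \lambda_1^\alpha I)\underline{v} = 0$ gives $|C_{12}^\alpha| a_1 + |C_{12}^\alpha| e^{\iu\theta_\alpha} b_1 = 0$, so that $a_1 = -e^{\iu\theta_\alpha} b_1$; normalising yields the stated eigenvector. For $\lambda_2^\alpha$, the first row becomes $-|C_{12}^\alpha| a_2 + |C_{12}^\alpha| e^{\iu\theta_\alpha} b_2 = 0$, so $a_2 = e^{\iu\theta_\alpha} b_2$, giving the second claimed eigenvector after normalisation.

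There is no genuine obstacle here: the only subtlety is the case $C_{12}^\alpha = 0$, which is explicitly excluded by the hypothesis on $\alpha$, and the observation that the two eigenvectors are automatically orthonormal (as they should be, since $C^\alpha$ is Hermitian). The factor $1/\sqrt{2}$ comes from the normalisation $|a_j|^2 + |b_j|^2 = 1$, using $|e^{\iu\theta_\alpha}| = 1$. Thus the proof reduces to two lines of linear algebra.
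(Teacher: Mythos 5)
Your proof is correct and follows essentially the same route as the paper, which simply notes that the result ``follows directly'' from the Hermitian, constant-diagonal structure of $C^\alpha$ established in Lemma~\ref{lem:quasi_matrix_form}; you have merely written out the short characteristic-polynomial and eigenvector computation that the authors leave implicit. Both the eigenvalues and the normalised eigenvectors you obtain agree with the statement, and your handling of the case $C_{12}^\alpha\neq 0$ for defining $\theta_\alpha$ is exactly as intended.
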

	
	In the dilute regime, we are able to compute asymptotic expansions for the band structure and topological properties. In this regime, we assume that the resonators can be obtained by rescaling fixed domains $B_1, B_2$ as follows:
	\begin{equation}\label{eq:dilute}
	D_1=\epsilon B_1 - \left(\frac{d}{2},0,0\right), \quad  D_2=\epsilon B_2 + \left(\frac{d}{2},0,0\right),
	\end{equation}
	for some small parameter $\epsilon > 0$. 
	
	Let $\textrm{Cap}_{B}$ denote the capacity of  $B = B_i$ for $i=1$ or $i=2$ (see \eqref{defcap} for the definition of the capacity). Due to symmetry, the capacitance is the same for the two choices $i =1, 2$. It is easy to see that, by a scaling argument, 
	\begin{equation}\label{eq:cap_scale}
	\textrm{Cap}_{\epsilon B} = \epsilon \textrm{Cap}_B.
	\end{equation}
	
	\begin{lemma}\label{lem:cap_estim_quasi}
		We assume that the resonators are in the dilute regime specified by \eqref{eq:dilute}. We also assume that $\alpha \neq 0$ is fixed. Then we have the following asymptotics of the capacitance matrix $C_{ij}^\alpha$ as $\epsilon\rightarrow 0$:
		\begin{align}
		C_{11}^\alpha &= \epsilon \mathrm{Cap}_B - \frac{(\epsilon \mathrm{Cap}_B)^2}{4\pi}\sum_{m \neq 0}  \frac{e^{\iu m \alpha L}}{  |mL| } + \O(\epsilon^3), \label{eq:c1q}
		\\
		C_{12}^\alpha &= -\frac{(\epsilon \mathrm{Cap}_B)^2}{4\pi}\sum_{m =-\infty}^\infty \frac{e^{\iu m \alpha L} }{  |mL + d| } + \O(\epsilon^3). \label{eq:c2q}
		\end{align}
		Taking the imaginary part of \eqref{eq:c2q}, the corresponding asymptotic formula holds uniformly in $\alpha \in Y^*$.
	\end{lemma}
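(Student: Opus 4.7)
The plan is to combine the single-layer-potential representation of $V_j^\alpha$ with a scale-separation argument exploiting that each resonator has diameter $\O(\epsilon)$ while the inter-resonator distance $d$ is fixed. First, I would write $V_j^\alpha = \mathcal{S}_D^{\alpha,0}[\psi_j^\alpha]$ with $\psi_j^\alpha := (\mathcal{S}_D^{\alpha,0})^{-1}[\chi_{\partial D_j}]$, which is well-defined for $\alpha\neq 0$ by the invertibility recalled above \eqref{series-s2}. Since $\mathcal{S}_D^{\alpha,0}[\psi_j^\alpha]$ is harmonic inside each $D_i$ with boundary value $\delta_{ij}$, it equals $\delta_{ij}$ throughout $D_i$, so its interior normal derivative vanishes and the jump relation gives $\psi_j^\alpha = \partial_\nu V_j^\alpha|_+$. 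A Green's-identity computation on $Y\setminus \overline{D}$, in which boundary terms on $\partial Y$ cancel by quasi-periodicity, then produces $C_{ij}^\alpha = -\int_{\partial D_i}\psi_j^\alpha\,\mathrm{d}\sigma$, reducing the task to an asymptotic expansion of $\psi_j^\alpha$.

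Next, I would split $G^{\alpha,0}(x-y) = G(x-y) + R^\alpha(x-y)$, where $G(z) = -1/(4\pi|z|)$ and the regular part $R^\alpha(z) := \sum_{m\neq 0} G(z-(mL,0,0))\,e^{\iu m\alpha L}$ is smooth near the origin for fixed $\alpha\neq 0$. Rescaling $x = \epsilon\tilde x + c_i$ on $\partial D_i$ with $c_1 = -(d/2,0,0)$ and $c_2 = (d/2,0,0)$, and setting $\tilde\psi_j^{(i)}(\tilde x) := \epsilon\,\psi_j^\alpha(\epsilon\tilde x + c_i)$, the equation $\mathcal{S}_D^{\alpha,0}[\psi_j^\alpha] = \chi_{\partial D_j}$ turns into a $2\times 2$ block integral system on $\partial B_1\cup\partial B_2$. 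The diagonal blocks are the free-space single-layer operators $\mathcal{S}_{B_i}$ plus an $\O(\epsilon)$ constant perturbation coming from $R^\alpha(0)$, while the off-diagonal blocks are, to leading order, rank-one operators with kernel $\epsilon\,G^{\alpha,0}(c_1,c_2)$ obtained by Taylor-expanding $G^{\alpha,0}$ about the centers.

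Expanding $\tilde\psi_j^{(i)} = \tilde\psi_j^{(0,i)} + \epsilon\,\tilde\psi_j^{(1,i)} + \O(\epsilon^2)$ and solving order by order, the leading equation is $\mathcal{S}_{B_i}[\tilde\psi_j^{(0,i)}] = \delta_{ij}$, whose unique solution is $\tilde\psi_j^{(0,i)} = \delta_{ij}\mathcal{S}_{B_i}^{-1}[\chi_{\partial B_i}]$; together with $\int_{\partial B_i}\mathcal{S}_{B_i}^{-1}[\chi_{\partial B_i}] = -\mathrm{Cap}_B$ and \eqref{eq:cap_scale}, this yields $C_{ii}^\alpha = \epsilon\,\mathrm{Cap}_B + \O(\epsilon^2)$ and $C_{12}^\alpha = \O(\epsilon^2)$. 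The first-order equation on $\partial B_1$ reads $\mathcal{S}_{B_1}[\tilde\psi_j^{(1,1)}] = R^\alpha(0)\,\mathrm{Cap}_B\,\delta_{1j} + G^{\alpha,0}(c_1,c_2)\,\mathrm{Cap}_B\,\delta_{2j}$, from which $\int_{\partial B_1}\tilde\psi_1^{(1,1)} = -R^\alpha(0)\,\mathrm{Cap}_B^2$ and $\int_{\partial B_1}\tilde\psi_2^{(1,1)} = -G^{\alpha,0}(c_1,c_2)\,\mathrm{Cap}_B^2$. Substituting $R^\alpha(0) = -\frac{1}{4\pi}\sum_{m\neq 0} e^{\iu m\alpha L}/|mL|$ and $G^{\alpha,0}(c_1,c_2) = -\frac{1}{4\pi}\sum_{m\in\mathbb{Z}} e^{\iu m\alpha L}/|mL+d|$ into $C_{ij}^\alpha = -\epsilon\int_{\partial B_i}\tilde\psi_j^{(0,i)}\,\mathrm{d}\sigma - \epsilon^2\int_{\partial B_i}\tilde\psi_j^{(1,i)}\,\mathrm{d}\sigma + \O(\epsilon^3)$ produces exactly \eqref{eq:c1q} and \eqref{eq:c2q}.

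The main obstacle is that the lattice sums defining $R^\alpha(0)$ and $G^{\alpha,0}(c_1,c_2)$ are only conditionally convergent and blow up as $\alpha\to 0$, which is why the pointwise expansion is restricted to fixed $\alpha\neq 0$. For the uniformity statement concerning $\mathrm{Im}(C_{12}^\alpha)$, the crucial observation is $\mathrm{Im}\sum_{m\in\mathbb{Z}} e^{\iu m\alpha L}/|mL+d| = \sum_{m\neq 0}\sin(m\alpha L)/|mL+d|$; writing $1/|mL+d| = 1/(|m|L) + \O(1/m^2)$ for large $|m|$ reduces the question to uniform boundedness on $Y^*$ of the Fourier series $\sum_{m\neq 0}\sin(m\alpha L)/(|m|L)$, which represents a bounded sawtooth function, while the remainder is absolutely summable uniformly in $\alpha$. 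This establishes uniformity up to and including $\alpha = 0$, where the sum vanishes by oddness of $\sin$.
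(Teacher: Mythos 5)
Your derivation of \eqref{eq:c1q}--\eqref{eq:c2q} is correct and follows essentially the route used in \cite{ammari2019topological}, where this lemma is proved (the review itself states it without proof): pass from the energy definition \eqref{eq:qp_capacitance} to $C_{ij}^\alpha=-\int_{\partial D_i}\psi_j^\alpha\,\de\sigma$ via the jump relation and Green's identity, split $G^{\alpha,0}=G+R^\alpha$ with $R^\alpha$ smooth near the origin for fixed $\alpha\neq0$, rescale to $B_1,B_2$, and solve order by order; your signs and the identifications of $R^\alpha(0)$ and $G^{\alpha,0}(c_1,c_2)$ with the lattice sums in \eqref{eq:c1q} and \eqref{eq:c2q} are right. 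Two small points: in the Green's identity the cell $Y$ is unbounded in the transverse directions, so you should also invoke the decay condition in \eqref{eq:V_quasi} to kill the boundary terms at infinity, not only the quasi-periodic cancellation on the faces $x_1=\pm L/2$; and to make the order-by-order inversion rigorous one should note that the perturbations of $\S_{B_i}$ are $\O(\epsilon)$ in operator norm for fixed $\alpha\neq0$, so a Neumann series controls the remainder.

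The genuine gap is in the last claim of the lemma, the uniformity in $\alpha$ of the formula for $\Im C_{12}^\alpha$. First, your reduction is slightly off: the ``sawtooth'' series $\sum_{m\neq 0}\sin(m\alpha L)/(|m|L)$ vanishes identically by the $m\mapsto-m$ antisymmetry; the correct observation is that pairing $m$ with $-m$ gives $\Im\sum_m e^{\iu m\alpha L}/|mL+d|=\sum_{m\geq1}\sin(m\alpha L)\bigl(\tfrac{1}{mL+d}-\tfrac{1}{mL-d}\bigr)$, an absolutely and uniformly convergent series. More importantly, uniform boundedness of the limiting sum is not what has to be shown: the statement is that $\Im C_{12}^\alpha$ equals this expression up to an error $\O(\epsilon^3)$ uniformly in $\alpha\in Y^*$, whereas the expansion you used to derive \eqref{eq:c2q} is not uniform --- $R^\alpha(0)$ and the lattice sum in $G^{\alpha,0}(c_1,c_2)$ diverge as $\alpha\to0$, so the order-by-order inversion and its error estimate degenerate there, and one cannot simply ``take imaginary parts'' of a non-uniform expansion. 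A complete argument must work with the imaginary part at the operator level, for instance splitting $\S_D^{\alpha,0}$ into real and imaginary parts and exploiting that $\Im G^{\alpha,0}(x,y)$ is given by an absolutely convergent series uniformly in $\alpha$ (again by the $m\leftrightarrow-m$ pairing), so that the imaginary parts of the densities and of the remainder stay uniformly controlled even where the real lattice sums are large. As written, your final sentence asserts uniformity up to $\alpha=0$ without addressing this degeneracy.
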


	Define normalized extensions of $V_j^\alpha$ as
	$$S_j^\alpha(x) := \begin{cases} \frac{1}{\sqrt{|D_1|}}\delta_{ij} \quad &x \in D_i, \ i=1,2, \\ \nm 
	\frac{1}{\sqrt{|D_1|}}V_j^\alpha(x) \quad &x \in Y\setminus \overline{D}, \end{cases}$$ where $|D_1|$ is the volume of one of the resonators ($|D_1|=|D_2|$ thanks to the dimer's symmetry \eqref{dimer_symmetry}). The following two approximation results hold. 
	
	\begin{theorem} \label{thm:char_approx_infinite}
		The characteristic values $\omega_j^\alpha=\omega_j^\alpha(\delta),~j=1,2$, of the operator $\mathcal{A}^{\alpha}(\omega,\delta)$, defined in \eqref{eq:A_quasi_defn}, can be approximated as
		$$ \omega_j^\alpha= \sqrt{\frac{\delta \lambda_j^\alpha }{|D_1|}}  + \O(\delta),$$
		where $\lambda_j^\alpha,~j=1,2$, are eigenvalues of the quasi-periodic capacitance matrix $C^\alpha$.
	\end{theorem}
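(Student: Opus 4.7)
The plan is to mirror the capacitance matrix analysis of Theorem~\ref{thm:res}, adapted to the single integral equation \eqref{eq:boundary_quasi}--\eqref{eq:A_quasi_defn}. Writing $\lambda(\delta)=\tfrac{1}{2}+\delta+O(\delta^2)$ and using \eqref{series-k2}, we have the expansion
\[
\mathcal{A}^\alpha(\omega,\delta)=\mathcal{A}_0^\alpha-\delta I+\omega^2\K_{D,1}^\alpha+O(\delta^2+\omega^3),
\]
where $\mathcal{A}_0^\alpha:=-\tfrac{1}{2}I+(\K_D^{-\alpha,0})^*$. The characteristic values of $\mathcal{A}^\alpha(\cdot,\delta)$ of small modulus will be found by locating the kernel of the unperturbed operator and then reducing to a finite-dimensional eigenvalue problem in the spirit of Lemma~\ref{lem:modal}.

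First I would verify that $\ker\mathcal{A}_0^\alpha=\mathrm{span}\{\psi_1^\alpha,\psi_2^\alpha\}$, with $\psi_j^\alpha$ defined by \eqref{psi_def}. Indeed, $u:=\S_D^{\alpha,0}[\psi_j^\alpha]$ is $\alpha$-quasi-periodic, harmonic in $Y\setminus\overline{D}$, and equal to $\chi_{\partial D_j}$ on $\partial D$; harmonic continuation forces $u\equiv\delta_{ij}$ inside $D_i$, so the interior conormal trace vanishes and the jump formula gives $(-\tfrac{1}{2}I+(\K_D^{-\alpha,0})^*)[\psi_j^\alpha]=0$. Invertibility of $\S_D^{\alpha,0}$ for $\alpha\neq 0$ ensures that $\psi_1^\alpha,\psi_2^\alpha$ are linearly independent. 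Since $\mathcal{A}_0^\alpha$ is a compact perturbation of $-\tfrac{1}{2}I$, it is Fredholm of index zero, and $0$ is a characteristic value of $\omega\mapsto\mathcal{A}^\alpha(\omega,0)$ of full multiplicity $2$. The generalized Rouch\'e theorem (Theorem~\ref{rouche}) then produces, for $\delta$ small enough, exactly two characteristic values of $\omega\mapsto\mathcal{A}^\alpha(\omega,\delta)$ in a neighbourhood of $0$, depending continuously on $\delta$ and vanishing at $\delta=0$.

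To pin down the leading asymptotics I would seek $\phi=c_1\psi_1^\alpha+c_2\psi_2^\alpha+O(\delta+\omega^2)$ with $\mathcal{A}^\alpha(\omega,\delta)[\phi]=0$, and integrate the equation over $\partial D_i$ for $i=1,2$. The leading-order term drops out thanks to the kernel identification, and the remainder is governed by two identities: (i) $\int_{\partial D_i}\psi_j^\alpha\, d\sigma=-C_{ij}^\alpha$, which is just the definition \eqref{defcapal}, and (ii) $\int_{\partial D_i}\K_{D,1}^\alpha[\psi_j^\alpha]\, d\sigma=-\delta_{ij}|D_1|$, the $\alpha$-quasi-periodic counterpart of the third identity of Lemma~\ref{lem:ints}, obtained from $\S_D^{\alpha,0}[\psi_j^\alpha]\equiv\delta_{ij}$ on $D_i$. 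Projecting produces the $2\times 2$ generalized eigenvalue problem
\[
\delta\, C^\alpha\underline{c}=\omega^2\,|D_1|\,\underline{c}+O(\delta^2+\omega^3),
\]
whose non-trivial solutions force $\omega^2=\delta\lambda_j^\alpha/|D_1|+O(\delta^{3/2})$, with $\lambda_j^\alpha$ the eigenvalues of $C^\alpha$. Selecting the root with positive real part yields the stated formula.

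The main obstacle is establishing identity (ii) rigorously. Starting from the integral-kernel representation of $\K_{D,1}^\alpha$ extracted from the expansion \eqref{eq:defGk2} of $G^{\alpha,k}$, one applies the divergence theorem inside $D_i$ using $\Delta_y G_1^{\alpha,\#}(x-y)=-G^{\alpha,0}(x,y)$ (up to a lattice-dependent constant killed by quasi-periodicity) to convert $\int_{\partial D_i}\K_{D,1}^\alpha[\psi]\, d\sigma$ into $-\int_{D_i}\S_D^{\alpha,0}[\psi]\, dx$; substituting $\psi=\psi_j^\alpha$ collapses this to $-\delta_{ij}|D_1|$. Once this quasi-periodic analogue of Lemma~\ref{lem:ints} is secured, the remainder is the standard bookkeeping of orders in $\delta$ and $\omega$ already deployed in the proof of Theorem~\ref{thm:res}.
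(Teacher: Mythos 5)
Your route — identify $\ker\big(-\tfrac12 I+(\K_D^{-\alpha,0})^*\big)=\mathrm{span}\{\psi_1^\alpha,\psi_2^\alpha\}$, expand $\mathcal{A}^\alpha(\omega,\delta)$ in $\omega$ and $\delta$, and integrate over $\partial D_1,\partial D_2$ to reduce to $\delta\, C^\alpha\underline{c}=\omega^2|D_1|\,\underline{c}$ at leading order — is exactly the capacitance-matrix scheme the paper uses for Theorem~\ref{thm:res} and Theorem~\ref{approx_thm} (the review states Theorem~\ref{thm:char_approx_infinite} without proof, citing the original work, which argues along the same lines), and both of your projection identities are correct, including the quasi-periodic analogue of Lemma~\ref{lem:ints} obtained from $\Delta_x$ applied to the $\omega^2$-coefficient of the Green's function.

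Three points need repair before this is a complete proof. First, the multiplicity count is wrong: since $(\K_D^{-\alpha,\omega})^*$ depends on $\omega$ only through $\omega^2$ at leading order, $\omega=0$ is a characteristic value of $\mathcal{A}^\alpha(\cdot,0)$ of full multiplicity $4$ (two kernel directions, each of rank two), so the generalized Rouch\'e theorem (Theorem~\ref{rouche}) yields four characteristic values near the origin for small $\delta$; your own reduced equation $\omega^2=\delta\lambda_j^\alpha/|D_1|$ has four roots, and $\omega_1^\alpha,\omega_2^\alpha$ are the two with positive real part — the claim of ``full multiplicity $2$ / exactly two characteristic values'' is inconsistent with your conclusion. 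Second, in this subsection $C^\alpha$ is defined by the energy form \eqref{eq:qp_capacitance}, not by \eqref{defcapal}; you should record the equivalence $C_{ij}^\alpha=-\int_{\partial D_i}\psi_j^\alpha\,\dx\sigma$, which follows from Green's identity in $Y\setminus\overline{D}$ (lateral boundary terms cancel by quasi-periodicity, transverse terms vanish by decay) together with $\tfrac{\partial V_j^\alpha}{\partial\nu}\big|_+=\psi_j^\alpha$, a consequence of the jump relation and $\big(-\tfrac12 I+(\K_D^{-\alpha,0})^*\big)[\psi_j^\alpha]=0$. Third, the kernel entering \eqref{eq:A_quasi_defn} is the Green's function quasi-periodic in one direction only (defined just above \eqref{eq:boundary_quasi}), whereas \eqref{eq:defGk2} and \eqref{series-k2} concern the fully periodic lattice of Section~\ref{sec5}; the expansion $(\K_D^{-\alpha,\omega})^*=(\K_D^{-\alpha,0})^*+\omega^2\K_{D,1}^\alpha+\O(\omega^3)$ and the relation $\Delta_x(\omega^2\text{-coefficient})=-G^{\alpha,0}$ do hold for the one-dimensionally quasi-periodic kernel (its $\O(\omega)$ term is independent of $x$, so it drops out of the Neumann--Poincar\'e operator), but this must be established for that kernel rather than quoted from Section~\ref{sec5}. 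You should also complete the kernel identification with the reverse inclusion (any element of the kernel generates a potential that is constant on each $D_i$, hence lies in $\mathrm{span}\{\psi_1^\alpha,\psi_2^\alpha\}$ by invertibility of $\mathcal{S}_D^{\alpha,0}$). With these repairs the argument is sound and gives the stated $\O(\delta)$ accuracy.
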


	\begin{theorem} \label{thm:mode_approx}
		The Bloch eigenmodes $u_j^\alpha,~j=1,2$, corresponding to the resonances $\omega_j^\alpha$, can be approximated as
		$$ u_j^\alpha(x) = a_j S^\alpha_1(x) + b_j S^\alpha_2(x) + \O(\delta),$$
		where $\left(\begin{smallmatrix}a_j\\b_j\end{smallmatrix}\right),~j=1,2,$
		 are the eigenvectors of the quasi-periodic capacitance matrix $C^\alpha$, as given by \Cref{lem:evec}.
	\end{theorem}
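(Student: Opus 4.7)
The plan is to extract the mode decomposition from the same asymptotic analysis of the boundary integral equation \eqref{eq:boundary_quasi} that underlies \Cref{thm:char_approx_infinite}. First I would identify the kernel of the leading-order operator: at $\delta=\omega=0$, the equation $\mathcal{A}^\alpha(\omega,\delta)[\Psi^\alpha]=0$ reduces to $(-\tfrac12 I+(\mathcal{K}_D^{-\alpha,0})^*)[\Psi^\alpha]=0$. Using the interior jump relation $\partial_\nu\mathcal{S}_D^{\alpha,0}[\phi]|_-=(-\tfrac12 I+(\mathcal{K}_D^{-\alpha,0})^*)[\phi]$, any $\phi$ in this kernel is such that $\mathcal{S}_D^{\alpha,0}[\phi]$ is harmonic in $D$ with vanishing Neumann data, hence equal to a constant $c_i$ on each component $D_i$. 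Inverting $\mathcal{S}_D^{\alpha,0}$ (invertible on $L^2(\partial D)$ for $\alpha\neq 0$) and using the definition \eqref{psi_def} identifies this kernel as $\mathrm{span}\{\psi_1^\alpha,\psi_2^\alpha\}$.

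Next I would write $\Psi^\alpha=c_1\psi_1^\alpha+c_2\psi_2^\alpha+\Psi_{\mathrm{corr}}$ with $\Psi_{\mathrm{corr}}=O(\delta)$, expand $\mathcal{A}^\alpha(\omega,\delta)$ using \eqref{series-s2}--\eqref{series-k2} and $\lambda=\tfrac12+\delta+O(\delta^2)$, and project the resulting equation by integrating over each $\partial D_i$. The test functions $\chi_{\partial D_i}$ annihilate the leading operator, since $\int_{\partial D_i}(-\tfrac12 I+(\mathcal{K}_D^{-\alpha,0})^*)[\phi]\,d\sigma=\int_{D_i}\Delta\mathcal{S}_D^{\alpha,0}[\phi]\,dx=0$ by harmonicity, so the projection eliminates $\Psi_{\mathrm{corr}}$ at leading order. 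The quasi-periodic analog of \Cref{lem:ints} then furnishes $\int_{\partial D_i}\psi_j^\alpha\,d\sigma=-C^\alpha_{ij}$ together with $\int_{\partial D_i}\mathcal{K}^\alpha_{D,1}[\psi_j^\alpha]\,d\sigma=-\delta_{ij}|D_1|$ (up to a sign convention fixed so as to recover \Cref{thm:char_approx_infinite}), yielding the $2\times 2$ eigenvalue problem
\begin{equation*}
\delta\,C^\alpha\begin{pmatrix}c_1\\c_2\end{pmatrix}=\omega^2|D_1|\begin{pmatrix}c_1\\c_2\end{pmatrix}+O(\delta^{3/2}).
\end{equation*}
This simultaneously recovers \Cref{thm:char_approx_infinite} and forces $(c_1,c_2)$ to be proportional to the eigenvector $(a_j,b_j)$ of $C^\alpha$ given in \Cref{lem:evec}.

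Finally, the mode is reconstructed via $u_j^\alpha=\mathcal{S}_D^{\alpha,\omega}[\Psi^\alpha]$. Since $\mathcal{S}_D^{\alpha,\omega}=\mathcal{S}_D^{\alpha,0}+O(\omega^2)=\mathcal{S}_D^{\alpha,0}+O(\delta)$, and since $\mathcal{S}_D^{\alpha,0}[\psi_j^\alpha]$ is the unique quasi-periodic function that is harmonic off $\partial D$ and equals $\chi_{\partial D_j}$ on $\partial D$ (equal to $\delta_{ij}$ on each $D_i$ and to $V_j^\alpha$ on $Y\setminus\overline D$), one has $\mathcal{S}_D^{\alpha,0}[\psi_j^\alpha]=\sqrt{|D_1|}\,S_j^\alpha$ on all of $\mathbb{R}^3$, giving
\begin{equation*}
u_j^\alpha(x)=a_j\,S_1^\alpha(x)+b_j\,S_2^\alpha(x)+O(\delta)
\end{equation*}
after absorbing the factor $\sqrt{|D_1|}$ into the overall normalisation of $u_j^\alpha$.

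The main obstacle is establishing the quasi-periodic analog of \Cref{lem:ints} with the correct signs; I would derive these identities from Green's formula on $D_i$ applied to $\mathcal{S}_D^{\alpha,0}[\psi_j^\alpha]$ and to the kernel $G_1^{\alpha,\#}$, using the Fourier series \eqref{eq:defGk2} to compute $\Delta_x G_1^{\alpha,\#}(x-y)$ in terms of $G^{\alpha,0}(x,y)$. A secondary bookkeeping point is to verify that the $\omega^3$-corrections from $(\mathcal{K}_D^{-\alpha,\omega})^*$ and the higher-order $\delta$-terms in $\lambda$ contribute only $O(\delta^{3/2})$ to the $2\times 2$ system; this follows from $\omega=O(\sqrt\delta)$ given by \Cref{thm:char_approx_infinite}, so it does not perturb the leading eigenvector identification.
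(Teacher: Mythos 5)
The review itself states this theorem without proof (it is imported from the cited source), so the comparison is with the capacitance-matrix methodology the paper uses elsewhere (Lemma~\ref{lem:modal}, Theorem~\ref{thm:res}); your outline reproduces exactly that standard argument: represent $u_j^\alpha=\mathcal{S}_D^{\alpha,\omega}[\Psi^\alpha]$, identify $\mathrm{Ker}\big(-\tfrac12 I+(\mathcal{K}_D^{-\alpha,0})^*\big)=\mathrm{span}\{\psi_1^\alpha,\psi_2^\alpha\}$ via the interior jump relation, project onto $\chi_{\partial D_i}$ to obtain the eigenvalue problem for $C^\alpha$, and reconstruct the mode through $\mathcal{S}_D^{\alpha,0}[\psi_j^\alpha]=\sqrt{|D_1|}\,S_j^\alpha$. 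Your plan to link the layer-potential quantity $-\int_{\partial D_i}\psi_j^\alpha\,\mathrm{d}\sigma$ with the energy definition \eqref{eq:qp_capacitance} by Green's formula is also the right bridge, and the identities you need are indeed the quasi-periodic analogues of Lemma~\ref{lem:ints}.

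The one concrete gap is in the final error bookkeeping. From $\delta\,C^\alpha\underline{c}=\omega^2|D_1|\,\underline{c}+\O(\delta^{3/2})$ you can only conclude, after dividing by $\delta$, that $\underline{c}$ lies within $\O(\delta^{1/2})$ of an eigenvector of the Hermitian matrix $C^\alpha$ (using that the spectral gap $2|C_{12}^\alpha|$ is $\O(1)$; note that at the degenerate point $C_{12}^\alpha=0$, \emph{i.e.} $d=d'$ and $\alpha=\pi/L$ by Lemma~\ref{lem:c=0}, the eigenvector identification breaks down for your argument and for the statement alike). This yields $u_j^\alpha=a_jS_1^\alpha+b_jS_2^\alpha+\O(\delta^{1/2})$, which is weaker than the claimed $\O(\delta)$. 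Your remark that the $\omega^3$-corrections ``do not perturb the leading eigenvector identification'' is true only at leading order; to obtain the stated remainder you must show the residual of the projected $2\times2$ system is in fact $\O(\delta^{2})$. This is where the structure of the quasi-periodic kernels enters: for fixed $\alpha\neq0$ and $\omega\to0$ the expansion of $G^{\alpha,\omega}$ contains only even powers of $\omega$ (compare \eqref{eq:defGk2}), so there is no genuine $\omega^3$ term to tolerate — its absence (or the vanishing of its projection onto $\chi_{\partial D_i}$ at the relevant order) is precisely what needs to be verified, rather than assumed harmless at $\O(\delta^{3/2})$. With that point repaired, the rest of your argument goes through as in the original proof.
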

	
Theorems~\ref{thm:char_approx_infinite} and \ref{thm:mode_approx} show that the capacitance matrix can be considered to be a discrete approximation of the differential problem \eqref{eq:scattering_quasi}, since its eigenpairs directly determine the resonant frequencies and the Bloch eigenmodes (at leading order in $\delta$).

	\begin{figure}[tb]
	\centering
	\begin{tikzpicture}[scale=2]
	\begin{scope}
	\draw[opacity=0.5] (-0.5,1) -- (-0.5,-1);
	\draw[opacity=0.5, dashed] (0.65,1) -- (0.65,-1);
	\draw[opacity=0.5, dotted] (1.225,0.9) -- (1.225,-0.9)node[right]{$q_2$};
	\draw[opacity=0.5, dotted] (0.075,0.9) -- (0.075,-0.9)node[left]{$q_1$};
	\draw[opacity=0.5, dotted] (1.1,0.9)  node[left]{$p_2$} -- (1.1,-0.9);
	\draw[opacity=0.5, dotted] (0.2,0.9)  node[right]{$p_1$} -- (0.2,-0.9);
	\draw[opacity=0.5]  (1.8,1) -- (1.8,-1);
	\draw  plot [smooth cycle] coordinates {(-0.1,0.1) (0.2,0) (0.5,0.1) (0.3,-0.4) (0.1,-0.4)} node[xshift=-13pt, yshift=10pt]{$D_1$};
	\draw  plot [smooth cycle] coordinates {(0.8,-0.1) (1.1,0) (1.4,-0.1) (1.2,0.4) (1,0.4)} node[xshift=25pt, yshift=-9pt]{$D_2$};
	\draw[<->, opacity=0.5] (0.2,0.6) -- (1.1,0.6) node[pos=0.65, yshift=-5pt,]{$d$};
	\end{scope}
	\draw (2.65,0) node{$\xrightarrow[\Ro_2]{\Ro_1}$};
	\begin{scope}[xshift=4cm]
	\draw[opacity=0.5] (-0.5,1) -- (-0.5,-1);
	\draw[opacity=0.5]  (1.8,1) -- (1.8,-1);
	\draw  plot [smooth cycle] coordinates {(-0.35,0.1) (-0.05,0) (0.25,0.1) (0.05,-0.4) (-0.15,-0.4)} node[xshift=25pt, yshift=10pt]{$D_1'$};
	\draw  plot [smooth cycle] coordinates {(1.05,-0.1) (1.35,0) (1.65,-0.1) (1.45,0.4) (1.25,0.4)} node[xshift=-15pt, yshift=-9pt]{$D_2'$};
	\draw[<->, opacity=0.5] (-0.05,0.6)  -- (1.35,0.6) node[pos=0.5, yshift=-5pt]{$d'$};
	\draw[opacity=0.5, dotted] (1.35,0.9)  node[left]{$p_2'$} -- (1.35,-0.9);
	\draw[opacity=0.5, dotted] (-0.05,0.9)  node[right]{$p_1'$} -- (-0.05,-0.9);
	\end{scope}
	\begin{scope}[yshift=1.05cm]
	\draw [decorate,opacity=0.5,decoration={brace,amplitude=10pt}]
	(-0.5,0) -- (0.64,0) node [black,midway]{};
	\draw [decorate,opacity=0.5,decoration={brace,amplitude=10pt}]
	(0.66,0) -- (1.8,0) node [black,midway]{};
	\node[opacity=0.5] at (0.1,0.3) {$Y_1$};
	\node[opacity=0.5] at (1.26,0.3) {$Y_2$};
	\end{scope}
	\begin{scope}[xshift=4cm,yshift=1.05cm]
	\draw [decorate,opacity=0.5,decoration={brace,amplitude=10pt}]
	(-0.5,0) -- (1.8,0) node [black,midway]{};
	\node[opacity=0.5] at (0.67,0.3) {$Y'$};	
	\end{scope}
	\end{tikzpicture}
	\caption{Reflections taking $D$ to $D'$.} \label{fig:YY'}
\end{figure}
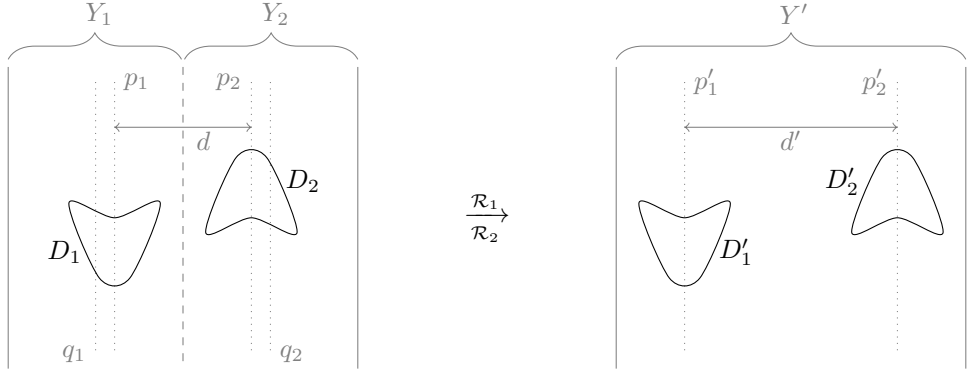

We now introduce notation which, thanks to the assumed symmetry of the resonators, will allow us to prove topological properties of the chain. Divide $Y$ into two subsets $Y=Y_1\cup Y_2$, where $Y_1 := [-\frac{L}{2},0]\times \R^2$ and let $Y_2 := [0,\frac{L}{2}]\times \R^2$, as depicted in \Cref{fig:YY'}. Define $q_1$ and $q_2$ to be the central planes of $Y_1$ and $Y_2$, that is, the planes $q_1 := \{ -\frac{L}{4}\} \times \R^2$ and $q_2 := \{ \frac{L}{4}\} \times \R^2$. Let $\Ro_1$ and $\Ro_2$ be reflections in the respective planes. Observe that, thanks to the assumed symmetry of each resonator \eqref{resonator_symmetry}, the ``complementary'' dimer $D' = D_1' \cup D_2'$, given by swapping $d$ and $d'$, satisfies $D_i' = \Ro_i D_i$ for $i=1,2$.
Define the operator $T_\alpha$ on the set of $\alpha$-quasi-periodic functions $f$ on $Y$ as
$$T_\alpha f(x) := \begin{cases} e^{-\iu \alpha L}\overline{f(\Ro_1x)}, \quad &x\in Y_1, \\ \overline{f(\Ro_2x)}, &x\in Y_2, \end{cases}$$
where the factor $e^{-\iu \alpha L}$ is chosen so that the image of a continuous ($\alpha$-quasi-periodic) function is continuous.

We now proceed to use $T_\alpha$ to analyse the different topological properties of the two dimer configurations. Define the quantity ${C_{12}^{\alpha}}'$ analogously to $C_{12}^\alpha$ but on the dimer $D'$, that is, to be the top-right element of the corresponding quasi-periodic capacitance matrix, defined in \eqref{eq:qp_capacitance}.

\begin{lemma}\label{lem:cc'}
	We have
	\begin{equation*}\label{eq:cc'}
		{C_{12}^{\alpha}}' = e^{-\iu \alpha L} \overline{C_{12}^\alpha}.\end{equation*}
	Consequently, if $d = d' = \frac{L}{2}$ then $C_{12}^{\pi/L} = 0$. 
\end{lemma}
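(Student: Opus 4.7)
The plan is to construct, from each quasi-periodic capacitance potential $V_j^\alpha$ on $Y\setminus\overline{D}$, a corresponding potential for the swapped dimer $D'$ via the operator $T_\alpha$. By the resonator symmetry assumption \eqref{resonator_symmetry}, the reflections satisfy $\Ro_i \partial D_i = \partial D_i'$ for $i=1,2$. The function $T_\alpha V_j^\alpha$ is harmonic on $Y\setminus\overline{D'}$ (since reflections and conjugation both preserve harmonicity), $\alpha$-quasi-periodic (by construction of $T_\alpha$), and decays at infinity. Computing its boundary values, on $\partial D_1'\subset Y_1$ we get $e^{-\iu\alpha L}\overline{\delta_{1j}}$, and on $\partial D_2'\subset Y_2$ we get $\overline{\delta_{2j}}$. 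Uniqueness of the BVP \eqref{eq:V_quasi} posed on $Y\setminus\overline{D'}$ therefore gives
$$T_\alpha V_1^\alpha = e^{-\iu\alpha L}\,{V_1^\alpha}',\qquad T_\alpha V_2^\alpha = {V_2^\alpha}'.$$

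Next I would substitute these identities into the definition of ${C_{12}^\alpha}'$. Pulling the $e^{\iu\alpha L}$ phase out of the complex conjugate produces
$${C_{12}^\alpha}' = e^{-\iu\alpha L}\int_{Y\setminus\overline{D'}}\overline{\nabla(T_\alpha V_1^\alpha)}\cdot \nabla(T_\alpha V_2^\alpha)\,\dx x.$$
On each piece $Y_j\setminus\overline{D_j'}$ I would perform the orthogonal change of variables $y=\Ro_j x$, which is measure preserving and, by the chain rule, conjugates the gradients by the orthogonal matrix $D\Ro_j$. The unit-modulus piecewise phase factors $e^{-\iu\alpha L}$ (on $Y_1$) and $1$ (on $Y_2$) multiply with their conjugates and cancel, so the integrand becomes $\nabla V_1^\alpha(y)\cdot\overline{\nabla V_2^\alpha(y)}$ on $Y_j\setminus\overline{D_j}$. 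Summing the two pieces yields $\int_{Y\setminus\overline{D}}\nabla V_1^\alpha\cdot\overline{\nabla V_2^\alpha}\,\dx y = \overline{C_{12}^\alpha}$, which produces the asserted identity ${C_{12}^\alpha}' = e^{-\iu\alpha L}\,\overline{C_{12}^\alpha}$.

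For the consequence, when $d = d' = L/2$ the swap is trivial and $D=D'$, so ${C_{12}^\alpha}' = C_{12}^\alpha$ and the identity collapses to $C_{12}^\alpha = e^{-\iu\alpha L}\overline{C_{12}^\alpha}$. Setting $\alpha=\pi/L$ gives $C_{12}^{\pi/L} = -\overline{C_{12}^{\pi/L}}$, so $C_{12}^{\pi/L} \in \iu\R$. Separately, at $\alpha=\pi/L$ the quasi-periodicity condition $V_j^\alpha(x+(L,0,0))=-V_j^\alpha(x)$ has a real coefficient, the PDE in \eqref{eq:V_quasi} has real coefficients, the boundary data $\delta_{ij}$ is real, and the decay condition is preserved under conjugation. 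Hence $\overline{V_j^{\pi/L}}$ solves the same BVP as $V_j^{\pi/L}$ and uniqueness forces $V_j^{\pi/L}$ to be real-valued; then $C_{12}^{\pi/L}=\int\overline{\nabla V_1^{\pi/L}}\cdot\nabla V_2^{\pi/L}\,\dx x$ is real. Being both real and purely imaginary, $C_{12}^{\pi/L}=0$.

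The main obstacle is the careful bookkeeping of complex conjugates together with the piecewise phase factor in the definition of $T_\alpha$ (which is $e^{-\iu\alpha L}$ on $Y_1$ but $1$ on $Y_2$); in particular one must verify that the asymmetric phase in $T_\alpha V_1^\alpha=e^{-\iu\alpha L}{V_1^\alpha}'$ versus $T_\alpha V_2^\alpha={V_2^\alpha}'$ is precisely what produces the single factor $e^{-\iu\alpha L}$ in the final identity, while all Jacobians and unit-modulus cross-terms introduced by the reflections cancel.
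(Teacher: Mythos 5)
Your proof is correct and follows essentially the intended route: the review states the lemma without proof (citing the original work), but introduces $T_\alpha$ precisely so that one shows $T_\alpha V_1^\alpha = e^{-\iu \alpha L}\,{V_1^\alpha}'$ and $T_\alpha V_2^\alpha = {V_2^\alpha}'$ by uniqueness of \eqref{eq:V_quasi} and then transforms the energy integral exactly as you do, and your two-step argument for the consequence (purely imaginary from the identity with $D=D'$, real because $V_j^{\pi/L}$ may be taken real-valued) is sound. The only details worth making explicit are that $D=D'$ when $d=d'=\frac{L}{2}$ relies on the resonator symmetry \eqref{resonator_symmetry} (so that $\Ro_i$ coincides with the reflection fixing $D_i$), and that harmonicity of the piecewise-defined $T_\alpha V_j^\alpha$ across the interface $\{0\}\times\R^2$ requires checking that the normal derivatives, not just the values, match there, which follows from the quasi-periodicity of $V_j^\alpha$.
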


\begin{lemma}\label{lem:c=0}
	We assume that $D$ is in the dilute regime specified by \eqref{eq:dilute}. Then, for $\epsilon$ small enough,
	\begin{itemize}
		\item[(i)] $\mathrm{Im}\ C_{12}^\alpha > 0$ for $0<\alpha<\pi/L$ and $\mathrm{Im}\ C_{12}^\alpha < 0$ for $-\pi/L<\alpha<0$. In particular, $\mathrm{Im}\ {C_{12}^{\alpha}}$ is zero if and only if $\alpha \in\{ 0, \pi/L \}$.
		\item [(ii)] $C_{12}^{\alpha}$ is zero if and only if both $d = d'$ and $\alpha = \pi/L$.
		\item [(iii)] $C_{12}^{\pi/L} < 0$  when $d<d'$ and $C_{12}^{\pi/L} > 0$ when $d>d'$. In both cases we have $C_{12}^{0} < 0$.
	\end{itemize}
\end{lemma}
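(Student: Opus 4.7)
The plan is to derive (i)--(iii) from the dilute asymptotic \eqref{eq:c2q} together with the symmetries encoded in Lemma~\ref{lem:quasi_matrix_form} and Lemma~\ref{lem:cc'}. The key technical step is to pair the $m=\pm k$ terms of the conditionally convergent lattice sum in \eqref{eq:c2q}; after this pairing, both the imaginary and real parts admit closed-form integral representations from which every sign statement follows by an elementary comparison of monomials on $(0,1)$.

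For (i), pairing $m=k$ with $m=-k$ in the imaginary part of \eqref{eq:c2q} (using $0<d<L$ so that $|kL+d|=kL+d$ and $|{-kL+d}|=kL-d$ for $k\geq 1$) gives
\begin{equation*}
\Im C_{12}^\alpha \;=\; \frac{d\,(\epsilon\,\mathrm{Cap}_B)^2}{2\pi}\sum_{k=1}^\infty \frac{\sin(k\alpha L)}{k^2L^2-d^2} \;+\; O(\epsilon^3).
\end{equation*}
I would then expand $(k^2L^2-d^2)^{-1}=L^{-2}\sum_{j\geq 0}(d/L)^{2j}/k^{2j+2}$ (convergent since $d/L<1$) and swap summations, writing the series as a positive combination of the sine sums $\sum_{k\geq 1}\sin(k\alpha L)/k^{2j+2}$, each strictly positive on $(0,\pi)$ by the Fej\'er--Rogosinski inequality (the weights are positive and convex). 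This yields $\Im C_{12}^\alpha>0$ on $(0,\pi/L)$. At $\alpha\in\{0,\pi/L\}$ every sine vanishes; to upgrade the $O(\epsilon^3)$ to an exact zero I would invoke the symmetry $C_{12}^{-\alpha}=\overline{C_{12}^\alpha}$, obtained by applying $x\mapsto-x$ to \eqref{eq:V_quasi} together with $-D=D$, combined with $2\pi/L$-periodicity of $\alpha\mapsto C_{12}^\alpha$: these jointly force $C_{12}^0$ and $C_{12}^{\pi/L}$ to be real. Antisymmetry of the sine then gives $\Im C_{12}^\alpha<0$ on $(-\pi/L,0)$.

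For (iii), the signs of $C_{12}^{\pi/L}$ reduce to the sign of $h(d):=\sum_{m\in\mathbb Z}(-1)^m/|mL+d|$ via $C_{12}^{\pi/L}=-\frac{(\epsilon\,\mathrm{Cap}_B)^2}{4\pi}h(d)+O(\epsilon^3)$. Splitting the sum at $m=0$ and using the Mellin identity $\sum_{n\geq 0}(-1)^n/(n+s)=\int_0^1 x^{s-1}/(1+x)\,\mathrm{d}x$, applied with both $s=d/L$ and $s=1-d/L$, collapses $h$ into the closed form
\begin{equation*}
L\cdot h(d) \;=\; \int_0^1 \frac{x^{d/L-1}-x^{-d/L}}{1+x}\,\mathrm{d}x.
\end{equation*}
Because $a\mapsto x^a$ is strictly decreasing on $(0,1)$, the sign of the integrand is the sign of $1-2d/L$, so $h(d)>0$ for $d<L/2$ (i.e.\ $d<d'$) and $h(d)<0$ for $d>L/2$ (i.e.\ $d>d'$), giving the claimed signs of $C_{12}^{\pi/L}$ for small enough $\epsilon$. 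The claim $C_{12}^0<0$ does not follow from the dilute expansion (the lattice sum diverges at $\alpha=0$); I would instead prove it by integration by parts combined with the strong maximum principle: the real-valued periodic harmonic function $V_2^0$ is zero on $\partial D_1$ yet strictly positive in $Y\setminus\overline D$ near $\partial D_1$, so its outward (from $D_1$) normal derivative there is strictly positive, whence $C_{12}^0=-\int_{\partial D_1}\partial_\nu V_2^0\,\mathrm{d}\sigma<0$.

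Part (ii) is then immediate. If $C_{12}^\alpha=0$, then $\Im C_{12}^\alpha=0$ forces $\alpha\in\{0,\pi/L\}$ by (i); $\alpha=0$ is excluded by $C_{12}^0<0$, and at $\alpha=\pi/L$ vanishing of the real part forces $d=d'$ via the integral formula for $h$. Conversely, if $d=d'$ then $D$ and $D'$ coincide up to translation, so ${C_{12}^{\pi/L}}'=C_{12}^{\pi/L}$; Lemma~\ref{lem:cc'} gives ${C_{12}^{\pi/L}}'=-C_{12}^{\pi/L}$ (using $e^{-\iu\pi}=-1$ and real-valuedness of $C_{12}^{\pi/L}$), and combining these forces $C_{12}^{\pi/L}=0$. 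The main obstacle is identifying the two closed forms---the Fej\'er-type series decomposition for the imaginary part and the Mellin-integral representation of $h$---since without them the sign of the alternating lattice sum is not transparent; once they are in hand every sign claim is read off by inspection.
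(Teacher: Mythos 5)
Your overall route is the intended one: the review states Lemma~\ref{lem:c=0} without reproducing a proof (it is imported from the cited work on topological chains), and the natural argument is exactly the one you give, namely the dilute expansion of Lemma~\ref{lem:cap_estim_quasi} combined with the symmetries of Lemmas~\ref{lem:quasi_matrix_form} and \ref{lem:cc'}. Your computations check out. The $m\leftrightarrow-m$ pairing does give $\Im C_{12}^\alpha=\frac{d(\epsilon\mathrm{Cap}_B)^2}{2\pi}\sum_{k\geq1}\sin(k\alpha L)/(k^2L^2-d^2)+\O(\epsilon^3)$, and positivity of that sine series on $(0,\pi/L)$ follows either from your reduction to the Clausen-type sums $\sum_k\sin(k\theta)/k^{2j+2}>0$ or directly from positivity, monotonicity and convexity of the coefficients; the Mellin closed form $L\,h(d)=\int_0^1(x^{d/L-1}-x^{-d/L})(1+x)^{-1}\,\dx x$ is correct and immediately yields (iii) (an elementary regrouping of the alternating series, using $h(L-d)=-h(d)$, gives the same sign and is perhaps closer to the source, but your version is clean); your recognition that $C_{12}^0<0$ cannot come from \eqref{eq:c2q} (the $\alpha=0$ lattice sum diverges) and must be proved exactly via $C_{12}^0=-\int_{\partial D_1}\partial_\nu V_2^0\,\dx\sigma$ and the maximum principle is exactly right — this is the classical negativity of off-diagonal capacitance entries; and (ii) follows as you say, the converse being already recorded as the final sentence of Lemma~\ref{lem:cc'} (when $d=d'$ the symmetry assumptions make $D'=D$, not merely a translate). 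Note also that the Hermitian relation $C_{12}^{-\alpha}=\overline{C_{12}^\alpha}$ follows from conjugating \eqref{eq:V_quasi} alone; you do not need $D=-D$ for that step.

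Two caveats. First, the only genuine soft spot is the uniformity in $\alpha$ in part (i): the leading term vanishes like $\sin(\alpha L)$ (indeed like $\alpha|\ln\alpha|$ near $0$ and linearly near $\pi/L$), so for a \emph{fixed} small $\epsilon$ it is eventually swamped near the endpoints by a merely uniform $\O(\epsilon^3)$ error; your symmetry argument pins the exact zeros at $\alpha\in\{0,\pi/L\}$ but says nothing about the sign in punctured neighbourhoods of these points. To get the statement as quantified (one $\epsilon_0$ working for all $\alpha\in(0,\pi/L)$) you need slightly more than Lemma~\ref{lem:cap_estim_quasi} as stated, e.g.\ an $\O(\epsilon^3)$ bound on the $\alpha$-derivative of the error in the imaginary part: since the error is odd about $\alpha=0$ and, by $2\pi/L$-periodicity, about $\alpha=\pi/L$, such a bound makes it vanish at least linearly at the endpoints, matching the rate of the main term; alternatively, state the strict sign on compact subsets of $(0,\pi/L)$. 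Second, a minor technical point: Hopf's lemma needs an interior ball ($C^{1,1}$), while $\partial D$ is only $C^{1,\eta}$; but you do not need Hopf — $V_2^0\geq0$ with $V_2^0=0$ on $\partial D_1$ gives $\partial_\nu V_2^0\geq0$ a.e., and strict negativity of $C_{12}^0$ then follows because $\partial_\nu V_2^0\equiv0$ together with the zero Dirichlet data would force $V_2^0\equiv0$ near $\partial D_1$ by unique continuation, contradicting $V_2^0=1$ on $\partial D_2$.
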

 This lemma describes the crucial properties of the behaviour of the curve $\{C_{12}^\alpha:\alpha\in Y^*\}$ in the complex plane. The periodic nature of $Y^*$ means that this is a closed curve. Part (i) tells us that this curve crosses the real axis in precisely two points. Taken together with (iii), we know that this curve winds around the origin in the case $d>d'$, but not in the case $d<d'$. The following band gap result is from \cite{ammari2020robust}.
\begin{theorem} \label{thm:band_gap}
 If $d\neq d'$, the first and second bands form a band gap:
	$$\max_{\alpha \in Y^*} \omega_1^\alpha < \min_{\alpha \in Y^*} \omega_2^\alpha,$$
	for small enough $\epsilon$ and $\delta$.
\end{theorem}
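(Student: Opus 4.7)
The plan is to reduce the band gap statement to an eigenvalue gap for the quasi-periodic capacitance matrix, and then compare the two sides at leading order in the dilute parameter $\epsilon$. By Theorem~\ref{thm:char_approx_infinite} we have $\omega_j^\alpha = \sqrt{\delta \lambda_j^\alpha/|D_1|} + \O(\delta)$ uniformly in $\alpha$, so it suffices to establish
$$\max_{\alpha\in Y^*}\lambda_1^\alpha \;<\; \min_{\alpha\in Y^*}\lambda_2^\alpha$$
with a margin strictly larger than what the $\O(\delta)$ correction can close at frequency scale $\O(\sqrt{\delta})$; for $\delta$ small enough, this transfers monotonically under $\sqrt{\cdot}$ to a genuine gap between $\max_\alpha \omega_1^\alpha$ and $\min_\alpha \omega_2^\alpha$.

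Substituting the eigenvalue formulas of Lemma~\ref{lem:evec} reduces the target inequality to
$$\max_{\alpha\in Y^*}\bigl(C_{11}^\alpha - |C_{12}^\alpha|\bigr) \;<\; \min_{\alpha\in Y^*}\bigl(C_{11}^\alpha + |C_{12}^\alpha|\bigr),$$
for which a clean sufficient condition is $\max_\alpha C_{11}^\alpha - \min_\alpha C_{11}^\alpha < 2\min_\alpha|C_{12}^\alpha|$. The first key input is Lemma~\ref{lem:c=0}(ii): since $d\neq d'$, the continuous map $\alpha\mapsto C_{12}^\alpha$ does not vanish on the compact torus $Y^*$, so $\min_\alpha |C_{12}^\alpha| > 0$. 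The second input is the dilute-regime expansion of Lemma~\ref{lem:cap_estim_quasi}: $C_{11}^\alpha = \epsilon\,\mathrm{Cap}_B + \O(\epsilon^2)$ with a non-constant $\alpha$-dependent correction of order $\epsilon^2$, while $|C_{12}^\alpha|$ is itself of order $\epsilon^2$. Thus the oscillation of $C_{11}^\alpha$ and the minimum of $|C_{12}^\alpha|$ sit at the same order in $\epsilon$, and the sufficient condition becomes a comparison of explicit coefficients appearing in Lemma~\ref{lem:cap_estim_quasi}.

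The main obstacle is precisely this coefficient-level comparison. The $\O(\epsilon^2)$ correction to $C_{11}^\alpha$ is the lattice sum $\sum_{m\neq 0} e^{i m\alpha L}/|mL|$, which omits the on-site term, whereas $C_{12}^\alpha$ is the full sum $\sum_{m\in\mathbb{Z}} e^{im\alpha L}/|mL+d|$ and retains the $m=0$ contribution $1/d$. This extra on-site piece is what makes $|C_{12}^\alpha|$ uniformly dominate the oscillation of $C_{11}^\alpha$, except at those special $\alpha$ where cancellation between the $m=0$ and $m=-1$ terms could annihilate $C_{12}^\alpha$; Lemma~\ref{lem:c=0}(ii)--(iii) is exactly what rules this out when $d\neq d'$ and gives the sign information at the only candidate points $\alpha=0$ and $\alpha=\pi/L$. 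In practice the argument will likely proceed by splitting into the cases $d<d'$ and $d>d'$, and handling the neighbourhood of $\alpha=0$ separately because of the logarithmic-type behaviour of the $C_{11}^\alpha$ series there. Once this comparison is secured, the sufficient inequality $\mathrm{osc}(C_{11}) < 2\min|C_{12}|$ holds for all sufficiently small $\epsilon$, and combined with the reduction of the first paragraph this yields the claimed band gap for small $\epsilon$ and $\delta$.
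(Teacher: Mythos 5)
Your opening reduction is sound: passing to the eigenvalues $\lambda_{1,2}^\alpha=C_{11}^\alpha\mp|C_{12}^\alpha|$ of the quasi-periodic capacitance matrix via Theorem~\ref{thm:char_approx_infinite} and Lemma~\ref{lem:evec}, and identifying that the hypothesis $d\neq d'$ enters through $C_{12}^{\pi/L}\neq 0$ (Lemma~\ref{lem:c=0}), is the natural route (the review itself does not prove this theorem but cites \cite{ammari2020robust}). The genuine gap is the step where you replace the band-gap inequality by the decoupled sufficient condition $\max_\alpha C_{11}^\alpha-\min_\alpha C_{11}^\alpha<2\min_\alpha|C_{12}^\alpha|$ and assert that both sides live at order $\epsilon^2$. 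That assertion is false. Lemma~\ref{lem:cap_estim_quasi} is stated for \emph{fixed} $\alpha\neq 0$; the sum $\sum_{m\neq0}e^{\iu m\alpha L}/|mL|$ diverges logarithmically as $\alpha\to0$, and the chain is genuinely degenerate there, in the same way as $\mathrm{Cap}_{D,\alpha}\to0$ in Section~\ref{sec5}: the potential of a uniformly charged periodic chain grows logarithmically in the transverse variable, so the monopole eigenvalue $\lambda_1^\alpha=C_{11}^\alpha-|C_{12}^\alpha|$ tends to $0$ as $\alpha\to0$, while the dipole eigenvalue $\lambda_2^\alpha$ remains of order $\epsilon\,\mathrm{Cap}_B$. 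Hence $C_{11}^\alpha=\tfrac12(\lambda_1^\alpha+\lambda_2^\alpha)$ drops from roughly $\epsilon\,\mathrm{Cap}_B$ (for $\alpha$ away from $0$) to roughly $\tfrac12\epsilon\,\mathrm{Cap}_B$ near $\alpha=0$, so the oscillation of $C_{11}^\alpha$ over $Y^*$ is of order $\epsilon$, whereas $\min_\alpha|C_{12}^\alpha|=\mathcal{O}(\epsilon^2)$. Your sufficient condition therefore fails for every small $\epsilon$, and "handling a neighbourhood of $\alpha=0$ separately" cannot rescue it, since that is exactly where it fails; even restricted to $|\alpha|\geq\alpha_0$, the oscillation of the $C_{11}$-sum depends on $\alpha_0$ and is not controlled by the on-site term $1/d$ alone, so the coefficient comparison you defer to is not secured by the cited lemmas.

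What makes the theorem true is precisely the joint $\alpha$-dependence that the decoupled condition throws away: where $C_{11}^\alpha$ drops (near $\alpha=0$), $|C_{12}^\alpha|$ grows at the same logarithmic rate, so $\lambda_2^\alpha$ stays bounded below while $\lambda_1^\alpha$ decreases; the only delicate region is near $\alpha=\pi/L$, where one must use $C_{12}^{\pi/L}\neq0$ for $d\neq d'$ together with Lemma~\ref{lem:c=0}(i) (the imaginary part vanishes only at $\alpha\in\{0,\pi/L\}$) to show, for instance, that up to admissible errors $\max_\alpha\lambda_1^\alpha$ and $\min_\alpha\lambda_2^\alpha$ are both attained at $\alpha=\pi/L$, where the gap is $2|C_{12}^{\pi/L}|>0$. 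In other words, the inequality to verify is $C_{11}^\alpha-C_{11}^{\alpha'}<|C_{12}^\alpha|+|C_{12}^{\alpha'}|$ for all pairs $(\alpha,\alpha')$, and it must be proved using this correlation rather than worst-case bounds on each factor. Finally, you would also need to address uniformity in $\alpha$ of the $\mathcal{O}(\delta)$ remainder in Theorem~\ref{thm:char_approx_infinite} near the degenerate point $\alpha=0$, in the spirit of the separate small-$\alpha$ argument in the proof of Theorem~\ref{main}; your proposal leaves this implicit.
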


Combining the above results, we obtain the following result concerning the band inversion that takes place between the two geometric regimes $d<d'$ and $d>d'$ as illustrated in Figure \ref{bandinvf}.
\begin{proposition} \label{prop:bandinv}
	For $\epsilon$ small enough, the band structure at $\alpha = \pi/L$ is inverted between the $d<d'$ and $d>d'$ regimes. In other words, the eigenfunctions associated with the first and second bands at $\alpha = \pi/L$ are given, respectively, by
	\begin{equation*}
	u_1^{\pi/L}(x) = S_1^{\pi/L}(x)+S_2^{\pi/L}(x)+ \O(\delta), \quad u_2^{\pi/L}(x) = S_1^{\pi/L}(x)-S_2^{\pi/L}(x)+ \O(\delta), 
	\end{equation*}
	when $d<d'$ and by
	\begin{equation*}
	u_1^{\pi/L}(x) = S_1^{\pi/L}(x)-S_2^{\pi/L}(x)+ \O(\delta), \quad u_2^{\pi/L}(x) = S_1^{\pi/L}(x)+S_2^{\pi/L}(x)+ \O(\delta), 
	\end{equation*}
	when $d>d'$.
\end{proposition}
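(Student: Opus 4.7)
The proof is a direct consequence of the leading-order modal description established earlier, combined with the sign information about $C_{12}^{\pi/L}$ in the dilute regime. By Theorem~\ref{thm:mode_approx}, the Bloch eigenmodes at the band edge $\alpha = \pi/L$ admit the representation
\begin{equation*}
u_j^{\pi/L}(x) = a_j\,S_1^{\pi/L}(x) + b_j\,S_2^{\pi/L}(x) + \O(\delta), \qquad j=1,2,
\end{equation*}
where $(a_j,b_j)^T$ are the eigenvectors of the quasi-periodic capacitance matrix $C^{\pi/L}$ associated, respectively, to the smaller and larger eigenvalues $\lambda_1^{\pi/L}$ and $\lambda_2^{\pi/L}$.

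The plan is therefore to track the phase $\theta_{\pi/L}$ appearing in Lemma~\ref{lem:evec}. By Lemma~\ref{lem:c=0}\,(iii), for $\epsilon$ small enough the off-diagonal entry $C_{12}^{\pi/L}$ is real, strictly negative when $d<d'$ and strictly positive when $d>d'$. In particular $C_{12}^{\pi/L}\neq 0$, so Lemma~\ref{lem:evec} applies and the phase is $\theta_{\pi/L}=\pi$ in the first regime and $\theta_{\pi/L}=0$ in the second. Substituting these values into the formulas of Lemma~\ref{lem:evec} gives, in the case $d<d'$,
\begin{equation*}
\begin{pmatrix} a_1\\ b_1\end{pmatrix} = \frac{1}{\sqrt 2}\begin{pmatrix} 1\\ 1\end{pmatrix},\qquad \begin{pmatrix} a_2\\ b_2\end{pmatrix} = \frac{1}{\sqrt 2}\begin{pmatrix} -1\\ 1\end{pmatrix},
\end{equation*}
and, in the case $d>d'$, the two eigenvectors are exchanged. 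Inserting into the expansion from Theorem~\ref{thm:mode_approx} and absorbing the harmless $\tfrac{1}{\sqrt 2}$ factors and overall sign changes into the normalisation of the eigenmodes yields the two sets of formulas in the statement.

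Only two small points require attention. First, one must verify that $\pi/L$ is a legitimate value of the quasi-momentum at which Theorem~\ref{thm:mode_approx} is valid; this is immediate since the dilute assumption \eqref{eq:dilute} and $\alpha=\pi/L\neq 0$ place us squarely in the setting where the capacitance analysis of Section~\ref{sec6} applies. Second, the overall sign of an eigenvector is not uniquely determined, which is why the statement is naturally expressed up to the identification $S_1^{\pi/L}+S_2^{\pi/L}$ versus $S_1^{\pi/L}-S_2^{\pi/L}$. The only mildly delicate step is the sign determination of $C_{12}^{\pi/L}$, but this is exactly the content of Lemma~\ref{lem:c=0}\,(iii), so no further calculation is required and the proof reduces to a short bookkeeping argument.
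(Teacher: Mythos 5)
Your proposal is correct and follows essentially the same route as the paper, which obtains the proposition precisely by combining Theorem~\ref{thm:mode_approx} with the eigenvector formulas of Lemma~\ref{lem:evec} and the sign information on $C_{12}^{\pi/L}$ from Lemma~\ref{lem:c=0}. The phase bookkeeping ($\theta_{\pi/L}=\pi$ for $d<d'$, $\theta_{\pi/L}=0$ for $d>d'$) and the identification of the first and second bands with the smaller and larger eigenvalues via Theorem~\ref{thm:char_approx_infinite} match the paper's intended argument.
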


\begin{figure}[h]
\centering
\begin{tikzpicture}[scale=1.4]
\draw [decorate,decoration={brace,amplitude=10pt}]
(-0.5,0) -- (3.5,0) node [red,midway]{};
\node at (1.5,0.6) {$d<d'$};	
\begin{scope}[xshift=5.3cm]
\draw [decorate,decoration={brace,amplitude=10pt}]
(-0.5,0) -- (3.5,0) node [red,midway]{};
\node at (1.5,0.6) {$d>d'$};
\end{scope}
\node at (0.1,-1.8) {\includegraphics[trim={0.65cm 0 2.6cm 0},clip,height=3.5cm]{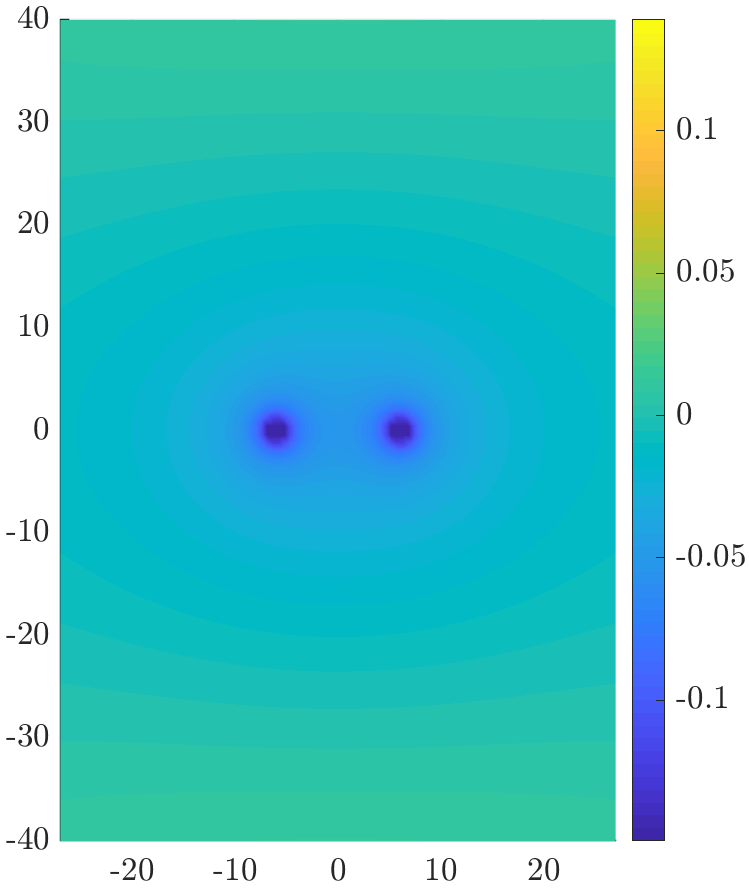}};
\node at (2.6,-1.8) {\includegraphics[trim={0.65cm  0 2.6cm 0},clip,height=3.5cm]{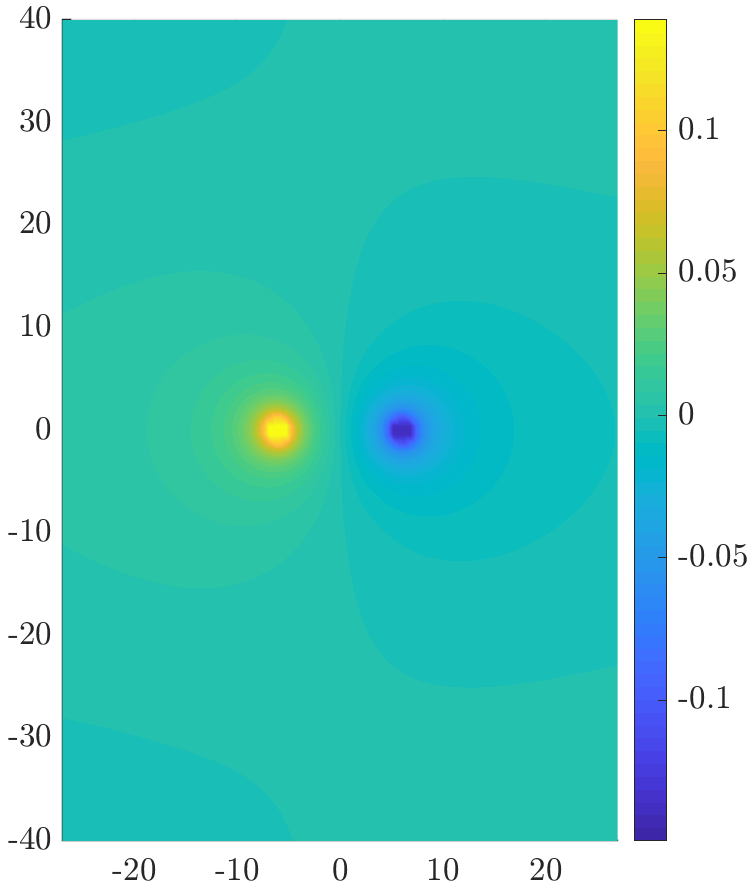}};
\begin{scope}[xshift=5.4cm]
\node at (0,-1.8) {\includegraphics[trim={0.65cm 0 2.6cm 0},clip,height=3.5cm]{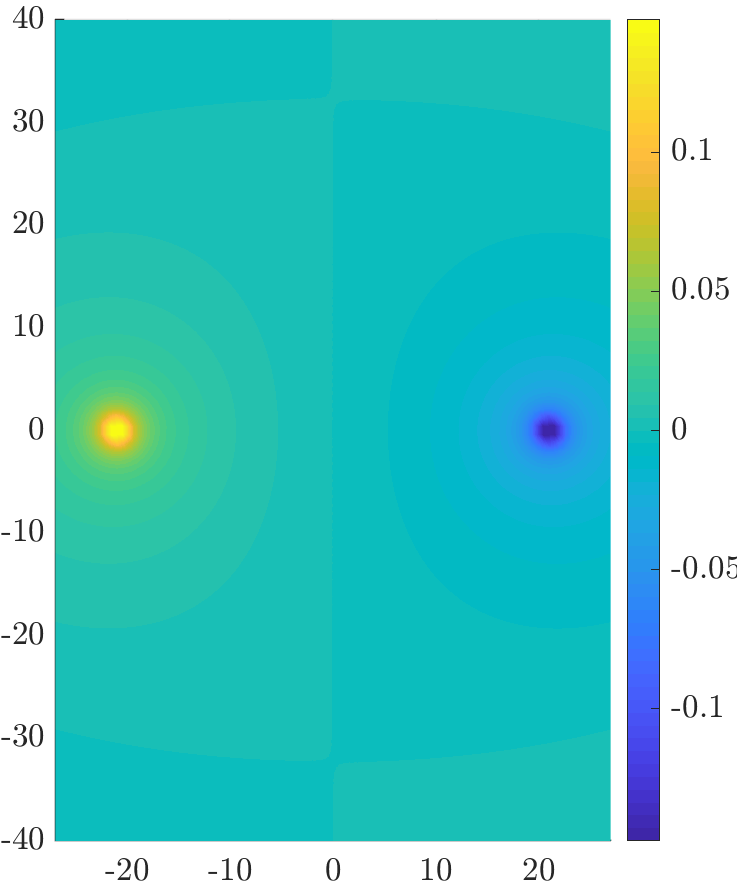}};
\node at (2.8,-1.8) {\includegraphics[trim={0.65cm  0 0 0},clip,height=3.5cm]{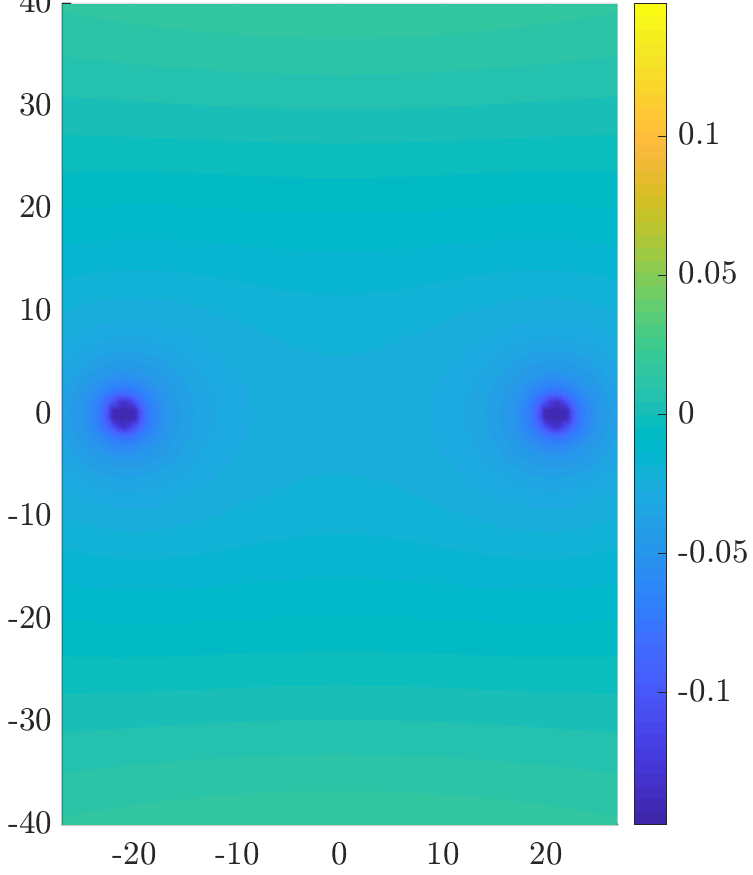}};
\end{scope}
\node at (0.9,-0.4) {\color{white}\small $u_1^{\pi/L}$};
\node at (3.4,-0.4) {\color{white}\small $u_2^{\pi/L}$};
\begin{scope}[xshift=5.3cm]
\node at (0.9,-0.4) {\color{white}\small $u_1^{\pi/L}$};
\node at (3.4,-0.4) {\color{white}\small $u_2^{\pi/L}$};
\end{scope}
\end{tikzpicture}
\caption{Band inversion: the monopole/dipole natures of the 1\textsuperscript{st} and 2\textsuperscript{nd} eigenmodes have swapped between the $d<d'$ and $d>d'$ regimes.}
\label{bandinvf}
\end{figure}

The eigenmode $S_1^{\pi/L}(x)+S_2^{\pi/L}(x)$ is constant and attains the same value on both resonators, while the eigenmode $S_1^{\pi/L}(x)-S_2^{\pi/L}(x)$ has values of opposite sign on the two resonators. They therefore correspond, respectively, to monopole and dipole modes, and \Cref{prop:bandinv} shows that the monopole/dipole nature of the first two Bloch eigenmodes are swapped between the two regimes. We will now proceed to define a topological invariant which we will use to characterise the topology of a chain and prove how its value depends on the relative sizes of $d$ and $d'$. This invariant is intimately connected with the band inversion phenomenon and is non-trivial only if $d>d'$ \cite{ammari2019topological}.

\begin{theorem} \label{thm:phase}
	We assume that $D$ is in the dilute regime specified by \eqref{eq:dilute}. Then the Zak phase $\varphi_j^z, j = 1,2$, defined by 
	$$\varphi_j^z := \iu \int_{Y^*} \int_D  u_j^\alpha  \frac{\p }{\p \alpha} \overline{u_j^\alpha} \; \dx x \, \dx \alpha,$$ 
	satisfies
	$$ \varphi_j^z = \begin{cases}
	0, \quad &\text{if} \ \ d < d', \\
	\pi, \quad &\text{if} \ \ d > d',
	\end{cases}$$
	for $\epsilon$ and $\delta$ small enough.
\end{theorem}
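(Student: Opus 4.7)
The plan is to exploit the modal approximation in \Cref{thm:mode_approx} to reduce the Zak phase integral to a winding-number calculation for the curve $\alpha \mapsto C_{12}^\alpha$. The crucial observation is that inside $D$ the functions $S_j^\alpha$ take the $\alpha$-independent values $\delta_{ij}/\sqrt{|D_1|}$ on $D_i$, so substituting $u_j^\alpha = a_j S_1^\alpha + b_j S_2^\alpha + O(\delta)$ into the defining integral and using $|D_1|=|D_2|$ would give
\[
\varphi_j^z \;=\; \iu \int_{Y^*}\!\bigl(a_j\overline{a_j'} + b_j\overline{b_j'}\bigr)\,d\alpha \;+\; O(\delta),
\]
where a prime denotes $d/d\alpha$. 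All the topological content is thus encoded in how the eigenvector components of the quasi-periodic capacitance matrix depend on $\alpha$.

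Next I would insert the explicit eigenvectors from \Cref{lem:evec}. Writing $C_{12}^\alpha = |C_{12}^\alpha|e^{\iu \theta_\alpha}$, the components take the form $(a_j,b_j) = \tfrac{1}{\sqrt{2}}(\pm e^{\iu \theta_\alpha},1)$, and a direct computation yields $a_j\overline{a_j'} + b_j\overline{b_j'} = -\iu \theta_\alpha'/2$ for both $j=1,2$, the overall sign of $a_j$ dropping out. Hence
\[
\varphi_j^z \;=\; \tfrac{1}{2}\int_{Y^*} \theta_\alpha'\,d\alpha \;+\; O(\delta) \;=\; \pi\,w \;+\; O(\delta),
\]
with $w \in \mathbb{Z}$ the winding number around the origin of the closed loop $\Gamma: \alpha \mapsto C_{12}^\alpha$. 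That $\Gamma$ avoids the origin, so that $w$ is well defined, is guaranteed by \Cref{lem:c=0}(ii) whenever $d\neq d'$.

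The winding number is then read off directly from \Cref{lem:c=0}. Part (i) shows that $\Gamma$ meets the real axis only at $\alpha=0$ and $\alpha=\pi/L$, lying strictly in the upper half plane on $(0,\pi/L)$ and strictly in the lower half plane on $(-\pi/L,0)$. By part (iii), $C_{12}^{0}<0$ in both regimes, so the winding is determined by the sign of $C_{12}^{\pi/L}$: when $d<d'$ it is again negative, so $\Gamma$ loops through the two half-planes without enclosing the origin and $w=0$; when $d>d'$ it is positive and $\Gamma$ encircles the origin exactly once, giving $w=1$. This yields $\varphi_j^z = 0$ or $\varphi_j^z = \pi$ at leading order in $\delta$, matching the two cases of the theorem.

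The main obstacle will be to control the $O(\delta)$ remainder and conclude that the exact Zak phase coincides with its leading-order value. The cleanest route I see is a symmetry/quantization argument: the inversion symmetry $D=-D$ forces $\varphi_j^z \in \{0,\pi\}$ modulo $2\pi$, so as soon as the error term is smaller than $\pi$ the leading-order computation pins down the answer. Making this rigorous would require carrying the $O(\delta)$ expansion of the Bloch eigenmodes uniformly in $\alpha$ on $Y^*$, for which the dilute-regime asymptotics of \Cref{lem:cap_estim_quasi} ensure smoothness of $C^\alpha$ in $\alpha$ and hence a smooth gauge for $u_j^\alpha$ away from $\alpha$ where $C_{12}^\alpha$ vanishes.
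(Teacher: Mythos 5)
Your proposal is correct, and it reconstructs essentially the argument that the paper itself defers to: this review states \Cref{thm:phase} without proof, citing \cite{ammari2019topological}, and the cited proof proceeds exactly as you do — insert the modal approximation of \Cref{thm:mode_approx} (constant values of $S_j^\alpha$ on $D_i$), use the eigenvectors of \Cref{lem:evec} to reduce $\varphi_j^z$ to $\tfrac12\int_{Y^*}\theta_\alpha'\,\dx\alpha$, and read off the winding of $\alpha\mapsto C_{12}^\alpha$ from \Cref{lem:c=0}, with the inversion-symmetry quantization $\varphi_j^z\in\{0,\pi\}$ handling the remainder. Two small points to tidy when writing this up: the winding number in the $d>d'$ case may be $-1$ rather than $+1$ depending on orientation, which is harmless since the Zak phase is only defined modulo $2\pi$; and the reduction requires control of $\p_\alpha$ applied to the $\O(\delta)$ remainder in \Cref{thm:mode_approx} (not just the remainder itself), uniformly in $\alpha$, which you correctly flag but which does need the smooth-in-$\alpha$ version of the eigenmode expansion established in \cite{ammari2019topological}.
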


Theorem \ref{thm:phase} shows that the Zak phase of the crystal is non-zero precisely when $d > d'$. The bulk-boundary correspondence suggests that we can create topologically protected subwavelength edge modes by joining half-space subwavelength crystals, one with $\varphi_j^z = 0$ and the other with $\varphi_j^z = \pi$.

\begin{remark}
A second approach to creating chains with robust subwavelength localized modes is to 
start with  a one-dimensional array of pairs of subwavelength resonators that exhibits a {subwavelength band gap}.  We then introduce a defect by adding a dislocation within one of the resonator pairs.  As shown in \cite{ammari2020robust}, as a result of this dislocation, mid-gap frequencies enter the band gap from either side and converge to a single frequency, within the band gap, as the dislocation becomes arbitrarily large.
Such frequency can place localized modes at any point within the band gap and corresponds to a robust {edge modes}. 
\end{remark}

\section{Mimicking the cochlea with an array of graded subwavelength resonators}
\label{sec7}

In \cite{davies2019fully} an array of subwavelength resonators is used to design a to-scale artificial cochlea that mimics the first stage of human auditory processing and present a rigorous analysis of its properties. In order to replicate the spatial frequency separation of the cochlea, the array should have a size gradient, meaning each resonator is slightly larger than the previous, as depicted in Fig.~\ref{fig:geom}. The size gradient is chosen so that the resonator array mimics the spatial frequency separation performed by the cochlea. In particular, the structure can reproduce the well-known (tonotopic) relationship between incident frequency and position of maximum excitation in the cochlea. This is a consequence of the asymmetry of the eigenmodes $u_n(x)$, see \cite{davies2019fully} and \cite{davies2020hopf} for details.

 \begin{figure}[h]
	\centering
	\includegraphics[width=11.4cm]{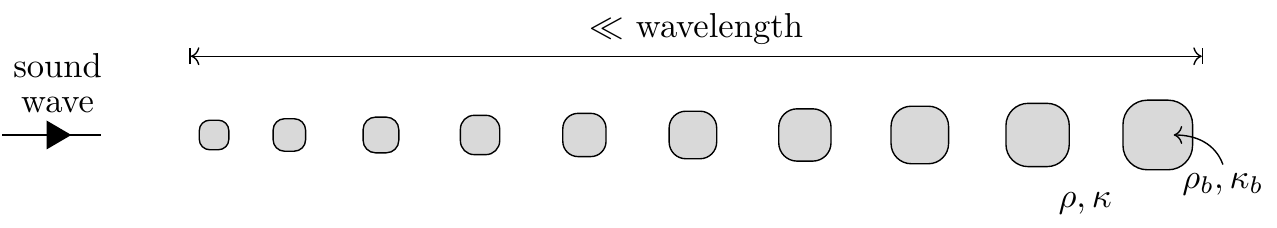}
	\caption{A graded array of subwavelength resonators mimics the biomechanical properties of the cochlea in response to a sound wave. } \label{fig:geom}
\end{figure}

Such graded arrays of subwavelength resonators can mimic the biomechanical properties of the cochlea, at the same scale. In \cite{perception}, a modal time-domain expansion for the scattered pressure field due to such a structure is derived from  first principles. It is  proposed there that these modes should form the basis of a signal processing architecture. The properties of such an approach is investigated and it is shown that higher-order gammatone filters appear by cascading. Further, an approach for extracting meaningful global properties from the coefficients, tailored to the statistical properties of so-called natural sounds is proposed. 
 
The subwavelength resonant frequencies of an array of $N=22$ resonators computed by using the formulation \eqref{eq-boundary}--\eqref{page450} are shown in Fig.~\ref{fig:spectrum}. This array measures 35~mm, has material parameters corresponding to air-filled resonators surrounded by water and has subwavelength resonant frequencies within the range 500~Hz~--~10~kHz. Thus, this structure has similar dimensions to the human cochlea, is made from realistic materials and experiences subwavelength resonance in response to frequencies that are audible to humans.

\begin{figure}[h]
	\centering
	\includegraphics[width=0.8\linewidth]{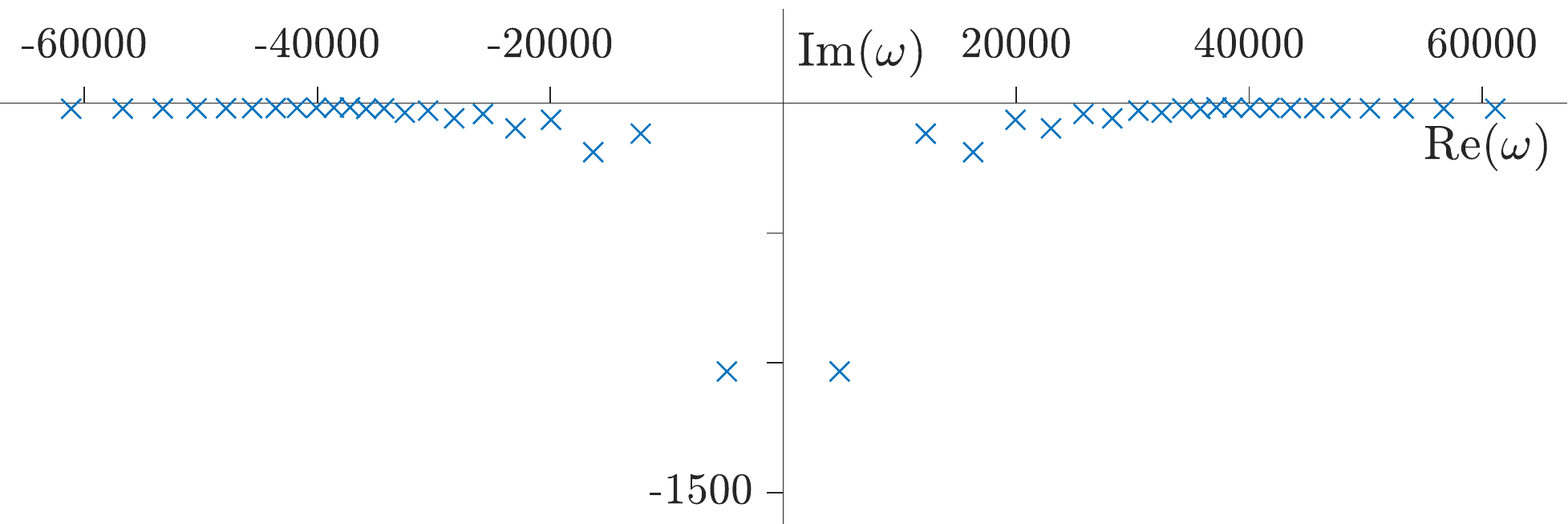}
	\caption{The resonant frequencies $\{\omega_n:n=1,\dots,N\}\subset\mathbb{C}$ lie in the right-hand complex plane, shown for an array of $N=22$ subwavelength resonators. The Helmholtz problem also has singularities in the left-hand plane, which are symmetric in the imaginary axis. The imaginary parts are all negative, due to energy losses. } \label{fig:spectrum}
\end{figure}

This analysis is useful not only for designing cochlea-like devices, but is also used in \cite{perception} as the basis for a machine hearing procedure which mimics neural processing in the auditory system. Consider the scattering of a signal, $s:[0,T]\to\mathbb{R}$, whose frequency support is wider than a single frequency and whose Fourier transform exists.  Consider the Fourier transform of the incoming pressure wave, given for $\omega\in\mathbb{C}$ and $x\in\mathbb{R}^3$ by
\begin{align*}
u^{in}(x,\omega)&=\int_{-\infty}^{\infty} s(x_1/v-t) e^{\i\omega t}\de t\\
&=e^{\i\omega x_1/v}\hat{s}(\omega) = \hat{s}(\omega)+ \O(\omega),
\end{align*}
where $\hat{s}(\omega):=\int_{-\infty}^{\infty} s(-u) e^{\i\omega u}\de u$. In \Cref{def:res^}, we defined resonant frequencies as having positive real parts. However, the scattering problem \eqref{eq:scattering} is known to be symmetric in the sense that if it has a pole at $\omega\in\mathbb{C}$ then it has a pole with the same multiplicity at $-\overline{\omega}$ \cite{dyatlov2019mathematical}. As depicted in \Cref{fig:spectrum}, this means the resonant spectrum is symmetric in the imaginary axis.

Suppose that the scattered acoustic pressure field $u$ in response to the Fourier transformed signal $\hat s$ can, for $x\in\D$, be decomposed as
\begin{equation} \label{eq:gen_modal_decomp} 
u(x,\omega)
= \sum_{n=1}^N \frac{-\hat{s}(\omega)\nu_n\Re(\omega_n^+)^2}{(\omega-\omega_n)(\omega+ \overline{\omega_n})} u_n(x) 
+ r(x,\omega),
\end{equation} 
for some remainder $r$. We are interested in signals whose energy is mostly concentrated within the subwavelength regime. In particular, we want that
\begin{equation} \label{eq:subw_regime}
\sup_{x\in\mathbb{R}^3}\int_{-\infty}^\infty |r(x,\omega) | \de\omega = \O(\delta).
\end{equation}
Then, under the assumptions \eqref{eq:gen_modal_decomp} and \eqref{eq:subw_regime}, we can apply the inverse Fourier transform \cite{perception} to find that the scattered pressure field satisfies, for $x\in\D$ and $t\in\mathbb{R}$, 
\begin{equation} \label{thm:timedom}
p(x,t)= \sum_{n=1}^N a_n[s](t) u_n(x) + \O(\delta),
\end{equation}
where the coefficients are given by the convolutions $a_n[s](t)=\left( s*h_n \right)(t)$ with the kernels
\begin{equation} \label{eq:hdef}
h_n(t)=
\begin{cases}
0, & t<0, \\
c_n e^{\Im(\omega_n)t} \sin(\Re(\omega_n)t), & t\geq0,
\end{cases}
\end{equation}
for $c_n=\nu_n\Re(\omega_n)$.
\begin{remark}
	The assumption \eqref{eq:subw_regime} is a little difficult to interpret physically. For the purposes of informing signal processing approaches, however, it is sufficient.
\end{remark}


\begin{figure}
	\centering
	\includegraphics[width=0.8\linewidth]{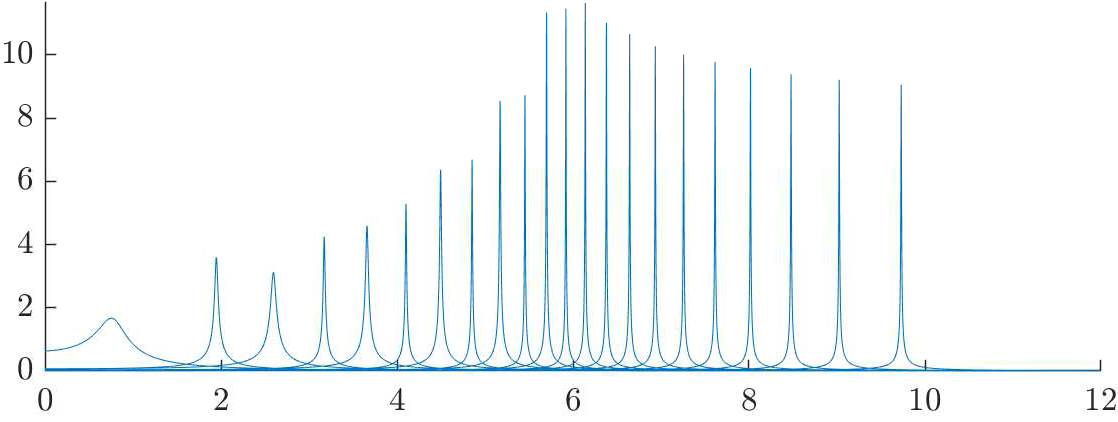}
	\caption{The frequency support of the band-pass filters $h_n$ created by an array of  22 subwavelength resonators. } \label{fig:bandpass}
\end{figure}

On the one hand, note that the fact that $h_n(t)=0$ for $t<0$ ensures the causality of the modal expansion in \eqref{thm:timedom}. 
Moreover, as shown in Figure~\ref{fig:bandpass}, $h_n$ is a windowed oscillatory mode that acts as a band-pass filter centred at $\Re(\omega_n)$. On the other hand, the asymmetry of the spatial eigenmodes $u_n(x)$ means that the decomposition from \eqref{thm:timedom} replicates the cochlea's  travelling wave behaviour. That is, in response to an impulse the position of maximum amplitude moves slowly from left to right in the array, see \cite{davies2019fully} for details. In \cite{perception}, a signal processing architecture is developed, based on the convolutional structure of \eqref{thm:timedom}. This further mimics the action of biological auditory processing by extracting global properties of behaviourally significant sounds, to which human hearing is known to be adapted. 

Finally, it is worth mentioning that biological hearing is an inherently nonlinear process. In \cite{davies2020hopf} nonlinear amplification is introduced to the model in order to replicate the behaviour of the cochlear amplifier. This active structure takes the form of a fluid-coupled array of Hopf resonators. Clarifying the details of the nonlinearities that underpin cochlear function remains the largest open question in understanding biological hearing. One of the motivations for developing devices such as the one analysed here is that it will allow for the investigation of these mechanisms, which is particularly difficult to do on biological cochleas.

\section{Concluding remarks} 

In this review, recent mathematical results on focusing, trapping, and guiding waves at subwavelength scales have been described in the Hermitian case. Systems of subwavelength resonators that exhibit topologically protected edge modes or that can mimic the biomechanical properties of the cochlea have been designed. A variety of mathematical tools for solving wave propagation problems at subwavelength scales have been introduced.  

When sources of energy gain and loss are introduced to a wave-scattering system, the underlying mathematical formulation will be non-Hermitian. This paves the way for
new ways to control waves at subwavelength scales \cite{nh1,nh2,miri2019exceptional}. 
In \cite{high-order,ammari2020exceptional},  the existence of asymptotic exceptional points,  where eigenvalues coincide and eigenmodes are linearly dependent at leading order,  in a parity--time-symmetric pair of subwavelength resonators is proved. Systems exhibiting exceptional points can be used for sensitivity enhancement. Moreover, a structure which exhibits asymptotic unidirectional reflectionless transmission at certain frequencies is designed. In \cite{active},  the phenomenon of topologically protected edge states in  systems of subwavelength resonators with gain and loss is studied. It is demonstrated that localized edge modes appear in a periodic structure of subwavelength resonators with a defect in the gain/loss distribution, and the corresponding frequencies and decay lengths are explicitly computed. Similarly to the Hermitian case, these edge modes can be attributed to the winding of the eigenmodes. In the non-Hermitian case the topological invariants fail to be quantized, but can nevertheless predict the existence of localized edge modes. 

The codes used for the numerical illustrations of the results described in this review can be downloaded at \url{http://www.sam.math.ethz.ch/~hammari/SWR.zip}.

\bibliographystyle{abbrv}
\bibliography{bibreview}{}

\end{document}